\numberwithin{equation}{section}
\newtheorem{theorem}{Theorem}[section]
\newtheorem{proposition}[theorem]{Proposition}
\newtheorem{definition}[theorem]{Definition}
\newtheorem{corollary}[theorem]{Corollary}
\newtheorem{lemma}[theorem]{Lemma}
\newtheorem{remark}[theorem]{Remark}
\newtheorem{example}[theorem]{Example}
\newcommand{\GL}{\operatorname{GL}}
\newcommand{\SL}{\operatorname{SL}}
\newcommand{\Mat}{\operatorname{Mat}}
\newcommand{\Hom}{\operatorname{Hom}}
\newcommand{\diag}{\operatorname{diag}}
\newcommand{\Irr}{\operatorname{Irr}}
\newcommand{\Reps}{\mathcal{M}}
\newcommand{\adm}{\operatorname{adm}}
\newcommand{\fl}{\operatorname{fl}}
\newcommand{\modulus}{\bmod}
\newcommand{\JH}{\operatorname{JH}}
\newcommand{\soc}{\operatorname{soc}}
\newcommand{\Ind}{\operatorname{Ind}}
\newcommand{\ind}{\operatorname{ind}}
\newcommand{\Res}{\operatorname{Res}}
\newcommand{\Ker}{\operatorname{Ker}}
\newcommand{\WCusp}{\operatorname{WCusp}}
\newcommand{\id}{\operatorname{id}}
\newcommand{\chr}{\operatorname{char}}
\newcommand{\der}{\operatorname{der}}
\newcommand{\LInd}{\mathcal{LI}}                        
\newcommand{\LRes}{\mathcal{LR}}                        
\newcommand{\ex}[2]{#1[#2]}                             
\newcommand{\sml}{\operatorname{sml}}                   
\newcommand{\lrg}{\operatorname{lrg}}                   
\newcommand{\lf}{\operatorname{lf}}                     
\newcommand{\hlf}{\mbox{-}\operatorname{lf}}
\newcommand{\tnsr}{\mathfrak{T}}                         
\newcommand{\PD}[1]{\operatorname{PD}(#1)}               
\newcommand{\bas}[1]{\underline{#1}}                     
\newcommand{\trns}{\mathfrak{F}}                         
\newcommand{\twst}{\sim}                               
\newcommand{\incvr}{\sim}                              
\newcommand{\sm}[4]{{\bigl(\begin{smallmatrix}{#1}&{#2}\\{#3}&{#4}\end{smallmatrix}\bigr)}}
\newcommand{\sqr}{\operatorname{sqr}}                  
\newcommand{\vol}{\operatorname{vol}}
\newcommand{\norm}[1]{\lVert#1\rVert}
\newcommand{\pr}{\bm{p}}
\newcommand{\sctn}{\bm{s}}
\newcommand{\bs}{\backslash}
\newcommand{\ndiv}{\nmid}
\newcommand{\sprod}[2]{\left\langle{#1},{#2}\right\rangle}        
\newcommand{\dsprod}[2]{\left\langle\langle{#1},{#2}\right\rangle\rangle}
\newcommand{\Z}{\mathbb{Z}}                             
\newcommand{\C}{\mathbb{C}}                             
\newcommand{\R}{\mathbb{R}}                             
\newcommand{\m}{\mathfrak{m}}                           
\newcommand{\ordr}{\mathfrak{o}}                        
\newcommand{\twstoper}{\mathcal{S}}                     
\newcommand{\TwstOper}{\mathcal{T}}
\newcommand{\intrchng}{\mathcal{R}}                     
\newcommand{\Jac}[2]{r_{{#1},{#2}}}
\newcommand{\Jacn}[3]{r_{{#1},{#2}}^{#3}}
\newcommand{\Jacbar}[2]{\overline r_{{#1},{#2}}}
\newcommand{\Jacbarn}[3]{\overline r_{{#1},{#2}}^{#3}}
\newcommand{\Pind}[2]{i_{{#1},{#2}}}
\newcommand{\Pindn}[3]{i_{{#1},{#2}}^{#3}}
\newcommand{\Pindbar}[2]{\overline i_{{#1},{#2}}}
\newcommand{\Pindbarn}[3]{\overline i_{{#1},{#2}}^{#3}}
\newcommand{\Pgp}[2]{P_{{#1},{#2}}}
\newcommand{\ordPgp}[2]{\bas{P}_{{#1},{#2}}}
\newcommand{\ordPgpbar}[2]{\bas{P}_{{#1},{#2}}^{-}}
\newcommand{\ordNgp}[2]{\bas{U}_{{#1},{#2}}}
\newcommand{\ordNgpbar}[2]{\bas{U}_{{#1},{#2}}^{-}}
\newcommand{\abs}[1]{\left|{#1}\right|}
\newcommand{\rest}{\lvert}
\begin{document}
	
\title[Classification of irreducible representations of covering groups]
{Classification of irreducible representations of metaplectic covers of the general linear group over a non-archimedean local field}

\author{Eyal Kaplan}
\address{Department of Mathematics, Bar-Ilan University, Ramat-Gan 5290002, Israel}
\thanks{E.K. partially supported by the Israel Science Foundation (grant numbers 376/21 and 421/17).}
\email{kaplaney@gmail.com}
\author{Erez Lapid}
\address{Department of Mathematics, Weizmann Institute of Science, Rehovot 7610001, Israel}
\email{erez.m.lapid@gmail.com}
\author{Jiandi Zou}
\thanks{J.Z. partially supported by the Israel Science Foundation (grant No. 737/20)}
\address{Mathematics Department, Technion -- Israel Institute of Technology, Haifa 3200003, Israel}
\email{idealzjd@gmail.com}


\begin{abstract}
Let $F$ be a non-archimedean local field.
The classification of the irreducible representations of $\GL_n(F)$, $n\ge0$
in terms of supercuspidal representations is one of the highlights of the Bernstein--Zelevinsky theory.
We give an analogous classification for metaplectic coverings of $\GL_n(F)$, $n\ge0$.
\end{abstract}

\maketitle
\setcounter{tocdepth}{1}
\tableofcontents

\section{Introduction}

A cornerstone in the representation theory of $p$-adic groups is the Bernstein--Zelevinsky theory,
culminating in the classification of the irreducible representations of the general linear groups $\GL_n(F)$, $n\ge0$
where $F$ is a non-archimedean local field. (We refer to them collectively as irreducible representations of $\GL$.
In this paper we only consider smooth, complex representations.)
The ``elementary particles'' in the classification are the irreducible supercuspidal representations,
which are treated as a black box.
Using them, one forms segments, which are simply finite sets of irreducible supercuspidal representations of the form
$\{\rho,\rho\cdot\abs{\cdot},\dots,\rho\cdot\abs{\cdot}^{k-1}\}$, $k\ge1$, where
$\rho\cdot\abs{\cdot}^i$ denotes twisting by the character $\abs{\det}^i$.
Remarkably, the irreducible representations of $\GL$ are classified by multisegments, which are nothing but
finite formal sums of segments -- an ostensibly simple combinatorial object.
This is the main result of Zelevinsky in \cite{MR584084}.
Subsequently, his proof was (slightly) simplified and extended to inner forms of $\GL$ and to representations
in characteristic $\ne p$.
See \cite{MR3573961}*{Appendix A} for a recent account and more details.

The importance of coverings of reductive $p$-adic (and adelic) groups (the so-called metaplectic groups) was realized early on by
Steinberg, Weil, Moore, Kubota, Matsumoto, Bass, Milnor and Serre among others \cites{MR0165033, MR204422, MR0255490, MR244258, MR0466335, MR240214, MR0244257}.
A broader scope for covering groups was later on conceived by Brylinski and Deligne in \cite{MR1896177}.
We refer to \cites{MR3802417, MR3802419} and the references therein for general conjectures and perspective.
The basic elements of the representation theory of reductive $p$-adic groups carry over to metaplectic groups
essentially without change (see \cites{MR3151110, MR3516189, MR3053009, 2206.06905}).

In this paper we will consider the metaplectic covers of $\GL_n(F)$ considered in the seminal work of Kazhdan and Patterson \cite{MR743816}.
The main goal is to extend the Bernstein--Zelevinsky classification to these groups.

Grosso modo, the classification scheme is similar to the linear case.
(See Theorem \ref{thm: main} for the precise statement.)
However, there are some subtle differences.
The main technical difficulty is to extend the notion of the Bernstein--Zelevinsky product of representations of $\GL$,
namely, the parabolic induction of the tensor product $\pi_1\otimes\dots\otimes\pi_k$
of representations $\pi_i$ of $\GL_{n_i}(F)$, viewed as a representation of the Levi subgroup of type $(n_1,\dots,n_k)$
of $\GL_n(F)$, where $n=n_1+\dots+n_k$.
While parabolic induction still makes sense in the context of covering groups and enjoys similar properties as in the linear case, the covering groups $G_i$ of the blocks
$\GL_{n_i}(F)$ do not pairwise commute in the covering group $G$ of $\GL_n(F)$.
Therefore, the covering group $M$ of the Levi subgroup is not simply a central quotient of the product of the $G_i$'s.

This problem was first dealt with by Banks--Levy--Sepanski who explicated 2-cocycles for the coverings defined by
Matsumoto and showed that they are ``block-compatible'' upon restriction from $\SL_{n+1}$ to $\GL_n$ \cite{MR1670203}.
Their work was used by Kable, Mezo and Takeda to construct
the Bernstein--Zelevinsky product, or more precisely, the covering analogue of the tensor product -- the so-called
``metaplectic tensor product'', at least in the case of irreducible representations \cites{MR2060495, MR1862025, MR3442519, MR3673084}.
We follow their approach and consolidate it by introducing the notion of a \emph{special subgroup} (see \S\ref{sec: specpairs}).
This is a normal subgroup of finite index satisfying certain conditions.
The main feature is that the ``genuine'' representation theory of the ambient group and its special subgroup are essentially
the same. An equivalence of categories is fulfilled by the so-called ``Lagrangian induction'' (see \S\ref{sec: LI}).

A typical case of a special subgroup is the center of a Heisenberg group $H$ over a finite field.
By the Stone--von-Neumann theorem, there is a unique irreducible representation of $H$
with a given nontrivial central character $\psi$. In fact, the category of \emph{all} representations of $H$ with central
character $\psi$ is equivalent to the category of vector spaces. Such an equivalence can be realized
(non-uniquely) by taking the Jacquet module with respect to $(L,\chi)$ where $L$ is a choice of a maximal abelian subgroup of $H$ and $\chi$ is
an extension of $\psi$ to a character of $L$. (See \cite{MR2552002} for a choice-free equivalence of categories.)

Using the notion of a special subgroup, we can compare the representation theories of any two covering groups
(of the same base group) that admit isomorphic special subgroups -- see \S\ref{sec: wellmatch}.
Specializing to the case at hand, the upshot is a \emph{metaplectic tensor product multifunctor}
\[
\Reps_{\omega_1}(G_{n_1})\times\dots\times\Reps_{\omega_k}(G_{n_k})\rightarrow\Reps_\omega(M)
\]
of categories of representations, which is multiadditive and multiexact (\S\ref{sec: mtp}).
Here $\omega_i$ and $\omega$ are characters of the center of $G_i$ and $M$ respectively,
which satisfy a certain compatibility relation.
The notation $\Reps_\chi(H)$ stands for the Serre subcategory of representations of $H$ that admit a finite filtration
such that the center $Z(H)$ acts by $\chi$ on the associated graded object.

This gives rise to a metaplectic analogue of the Bernstein--Zelevinsky product.
An important property of the metaplectic tensor product and Bernstein--Zelevinsky product is their associativity.

Another technical difficulty is the extension of the basic irreducibility result of Olshanski in the corank one case \cite{MR0499010}
to the metaplectic context.
Olshanski's result is based on a computation of the residue of an intertwining operator.
The covering case is more delicate, but eventually the result is almost the same.
It is carried out in \S\ref{sec: olshan}.

The argument yields the existence of a reducibility point $s=s_\rho>0$ of $\rho\times\rho\abs{\cdot}^s$.
The uniqueness of $s_\rho$ is proved separately. It is possible to determine $s_\rho$ using a trace formula
comparison with the linear groups. However, we will not carry this out in this paper. Instead, we will treat $s_\rho$
as a black box. At any rate, given $s_\rho$, one can construct segment representations as in the linear case.

With these two ingredients, the rest of the argument of the classification is essentially the same as in the linear case, following
Zelevinsky's approach, refined and extended by M\'{\i}nguez and S\'{e}cherre \cite{MR3178433} (see \S\ref{sec: seg}).
We will only sketch it, as it is very similar to \cite{MR3573961}*{Appendix A}.
The main difference is that one has to keep track of the central characters.
Alternatively, one works with irreducible representations up to twist
by characters whose order divides the degree of the covering.

The third-named author would like to thank Max Gurevich and Fan Gao for helpful correspondence.

\section{Notation and preliminaries}

Let $G$ be a group. We use the following notation.
\begin{itemize}
\item $Z(G)$ is the center of $G$.
\item $[\cdot,\cdot]:G\times G\rightarrow G$ is the commutator map $[x,y]=xyx^{-1}y^{-1}$.
\item For any elements $x,y\in G$, $x^y=y^{-1}xy$ is the conjugation of $x$ by $y$.
Also, ${}^yx=x^{y^{-1}}=yxy^{-1}$, so that $x^{yz}=(x^y)^z$ and ${}^{yz}x={}^y({}^zx)$.
\item Given subgroups $G_1,G_2$ of $G$, $Z_{G_2}(G_1)$ is the centralizer of $G_1$ in $G_2$
and $[G_1,G_2]$ is the subgroup of $G$ generated by the set of commutators $\{[g_1,g_2]\mid g_1\in G_1, g_2\in G_2\}$.
\end{itemize}

We will consider $\ell$-groups as defined by Bernstein--Zelevinsky.
We refer to \cite{MR0425030} and \cite{MR2567785} for standard facts about $\ell$-groups.
To simplify the discussion, we assume that all $\ell$-groups considered here are countable at infinity, i.e. they are a union of countably many compact subsets.

By convention, a subgroup of a topological group will always mean a \emph{closed} subgroup, unless specified otherwise.

Let $\alpha$ be an automorphism of $G$. We denote by $\modulus_G\alpha>0$ the module of $\alpha$ given by
\[
\int_Gf(\alpha^{-1}g)\ dg=\modulus_G\alpha\int_G f(g)\ dg
\]
where $dg$ is a (left or right) Haar measure on $G$ (whose choice is unimportant).
In particular, if $g\in G$ normalizes a subgroup $H$ of $G$, we denote by $\modulus_H(g)$ the module of the automorphism $h\mapsto{}^gh$
of $H$.

By a representation of an $\ell$-group we will always mean a complex smooth representation.
(By the same token, by a character of an $\ell$-group we always mean a locally constant homomorphism to $\C^{\times}$.)
Denote by $\Reps(G)$ the category of representations of $G$
and by $\Reps^{\adm}(G)$ and $\Reps^{\fl}(G)$ the full subcategories of admissible representations, and representations
of finite length, respectively.
Let $(\pi,V)$ be a representation of $G$. (By abuse of notation, we sometimes write $\pi\in\Reps(G)$.)
We denote by $m\cdot \pi$ the direct sum of $m$ copies of $\pi$ and
by $\pi^{\vee}$ the contragredient of $\pi$.
For an element $x\in G$, we denote by $\pi^x$ the representation on $V$ given by $\pi^x(g)=\pi({}^xg)$.
Note that $\pi^{xy}=(\pi^x)^y$.
If $\pi$ is of finite length, we denote by $\JH(\pi)$ the Jordan-H\"older series of $\pi$ as a multiset,
and by $\soc(\pi)$ (resp., $\cos(\pi)$) the socle (resp., cosocle) of $\pi$,
that is, the maximal semi-simple subrepresentation (resp., quotient) of $\pi$.

We denote by $\Irr(G)$ the set of equivalence classes of irreducible representations of $G$
and by $\Irr^{\sqr}(G)$ the subset of irreducible square-integrable representations.

Suppose that $(\pi,V)$ is an irreducible, essentially square-integrable representation of $G$.
We denote by $d_\pi$ the formal degree of $\pi$. We view it as the Haar measure on $Z(G)\bs G$ satisfying
\begin{equation} \label{eq: Schur}
\int_{Z(G)\bs G}\sprod{\pi(g)v}{v^\vee}\sprod{u}{\pi^\vee(g)u^\vee}\ d_\pi g=\sprod{v}{u^\vee}\sprod{u}{v^\vee},\ \ \forall u,v\in V, u^\vee,v^\vee\in V^\vee
\end{equation}
where $\sprod{\cdot}{\cdot}$ is the standard pairing on $V\times V^\vee$.
It is often useful to replace $Z(G)$ by a cocompact subgroup $A$ thereof and view $d_\pi$ as a Haar measure on $A\bs G$,
which we denote by $d_\pi^{A\bs G}$.
The pushforward of $d_\pi^{A\bs G}$ under $A\bs G\rightarrow Z(G)\bs G$ is $d_\pi^{Z(G)\bs G}$.

Let $H$ be a subgroup of $G$. We denote by
\[
\Ind_H^G,\ind_H^G:\Reps(H)\rightarrow\Reps(G),
\]
the (unnormalized) full induction and compact induction functors and by
\[
\Res_H^G:\Reps(G)\rightarrow\Reps(H)
\]
or simply $\rest_H$, the restriction functor. For any $\pi\in\Reps(G)$, $\tau\in\Reps(H)$ we have
\begin{subequations}
\begin{equation}
\label{eq: contrind} \modulus_G\cdot(\ind_H^G\tau)^\vee\simeq\Ind_H^G(\modulus_H\cdot\tau^\vee)
\end{equation}
and by Frobenius reciprocity
\begin{equation}
\label{eq: Frobrec} \Hom_G(\pi,\Ind_H^G\tau)=\Hom_H(\Res_H^G\pi,\tau).\\
\end{equation}
If $H$ is open in $G$ (in which case $\modulus_G=\modulus_H$ on $H$), then we have in addition
\begin{gather}
\label{eq: Frobrec2} \Hom_G(\ind_H^G\tau,\pi)=\Hom_H(\tau,\Res_H^G\pi),\\
\label{eq: indtensor} \ind_H^G(\pi\rest_H\otimes\tau)\simeq\pi\otimes\ind_H^G\tau.
\end{gather}
\end{subequations}

Let $A$ be an abelian $\ell$-group and let $\chi$ be a character of $A$.
We say that a representation $(\pi,V)$ of $A$ is \emph{locally-$\chi$} if for every $v\in V$ there exists an integer $m\ge0$
such that $(\pi(a)-\chi(a))^mv=0$ for all $a\in A$.
We denote by $\Reps_\chi(A)$ the full subcategory of $\Reps(A)$ consisting of locally-$\chi$ representations.

Every representation $(\pi,V)$ of $A$ admits a unique maximal locally-$\chi$ subrepresentation, denoted $(\pi^{(\chi)},V^{(\chi)})$, namely
\[
V^{(\chi)}=\bigcup_{m\ge0}\bigcap_{a\in A}\Ker(\pi(a)-\chi(a))^m.
\]
The sum
\[
\sum_\chi V^{(\chi)}
\]
over all the characters $\chi$ of $A$ is direct and is called the locally finite part of $\pi$, denoted $\pi_{\lf}$.
We say that $\pi$ is locally finite if $\pi=\pi_{\lf}$.
We denote by $\Reps_{\lf}(A)$ the full subcategory of locally finite representations of $A$. We have a decomposition
\[
\Reps_{\lf}(A)=\prod_\chi\Reps_\chi(A)
\]
over the characters of $A$.

More generally, let $G$ be an $\ell$-group and $A\subset G$ an abelian normal subgroup.
(Often, but not always, $A$ will be central in $G$.)
Let $\pi$ be a representation of $G$ and denote by $\tau$ its restriction to $A$.
For any character $\chi$ of $A$, the space
\[
\pi^{(\chi)}:=\sum_{g\in G/A}\tau^{(\chi^g)}=\sum_{g\in G/A}\pi(g)(\tau^{(\chi)})
\]
is a subrepresentation of $\pi$ that depends only on the $G$-orbit of $\chi$ under conjugation.
We say that $\pi$ is locally-$\chi$ if $\pi^{(\chi)}=\pi$.
We denote by $\Reps_\chi(G)$ the full subcategory of $\Reps(G)$ consisting of locally-$\chi$ representations.
(We do not include $A$ in the notation as it is encoded in $\chi$.)
Of course, $\Reps_\chi(G)$ depends only on the $G$-orbit of $\chi$.

In general, the sum
\[
\sum_\chi\pi^{(\chi)}
\]
over the $G$-orbits of characters of $A$ is direct and is called the $A$-locally finite part of $\pi$, denoted $\pi_{A\hlf}$.
Note that $\Res_A^G\pi_{A\hlf}=\tau_{\lf}$.
We say that $\pi$ is $A$-locally finite if $\pi=\pi_{A\hlf}$, or equivalently, $\tau$ is locally finite.
We denote by $\Reps_{A\hlf}(G)$ the full subcategory of $A$-locally finite representations of $G$.
We have a decomposition
\[
\Reps_{A\hlf}(G)=\prod_\chi\Reps_\chi(G)
\]
over the $G$-orbits of characters of $A$.
The subcategories $\Reps_\chi(G)$ and $\Reps_{A\hlf}(G)$ of $\Reps(G)$ are closed under subobjects, quotients and extensions, i.e.,
they are Serre subcategories.

We note that if $B$ is a finite index subgroup of $A$ which is normal in $G$, then
\begin{subequations}
\begin{equation} \label{eq: lfAB}
	\Reps_{B\hlf}(G)=\Reps_{A\hlf}(G).
\end{equation}
Moreover, for any character $\psi$ of $B$ we have
\begin{equation} \label{eq: extpsi}
	\Reps_\psi(G)=\oplus_\chi\Reps_\chi(G)
\end{equation}
\end{subequations}
where $\chi$ ranges over the $G$-orbits of the characters of $A$ that extend a character in the $G$-orbit of $\psi$.

In the special case where $A=Z(G)$, by Schur's lemma every representation of finite length is $A$-locally finite.

\section{Clifford theory}

\subsection{General settings}

We recall some standard results from Clifford theory.
Throughout this section, let $G$ be an $\ell$-group and let $H$ be a normal subgroup of finite index.
Let $\Gamma$ be the finite quotient group $G/H$.

By Mackey's formula, for any $\tau\in\Reps(H)$ we have
\begin{equation} \label{eq: Mackey}
\Res_H^G(\Ind_H^G\tau)=\oplus_{\gamma\in\Gamma}\tau^\gamma.
\end{equation}

For any representation $\tau$ of $H$ we denote by
\[
G_\tau=\{g\in G\mid\tau^g\simeq\tau\}
\]
the stabilizer of $\tau$ in $G$. Thus, $G_\tau$ is a subgroup of $G$ containing $H$.

Recall that by definition, a representation is completely reducible if every subrepresentation admits a complement.

\begin{lemma}\cite{MR0425030}*{Lemma 2.9} \label{lem: GHresmult}
\begin{enumerate}
\item Let $\pi$ be a representation of $G$. Then,
\begin{enumerate}
\item $\pi$ is completely reducible if and only if $\Res_H^G\pi$ is completely reducible.
\item $\pi$ is of finite length if and only if $\Res_H^G\pi$ is of finite length.
\end{enumerate}
\item Let $\tau$ be a representation of $H$. Then,
\begin{enumerate}
\item $\tau$ is completely reducible if and only if $\Ind_H^G\tau$ is completely reducible.
\item $\tau$ is of finite length if and only if $\Ind_H^G\tau$ is of finite length.
\end{enumerate}
\end{enumerate}
\end{lemma}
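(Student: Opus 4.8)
The statement to prove is Lemma~\ref{lem: GHresmult}, which is quoted from \cite{MR0425030}*{Lemma 2.9}, so the plan is really to recall the short Clifford-theoretic argument. The key tools are Mackey's formula \eqref{eq: Mackey}, Frobenius reciprocity \eqref{eq: Frobrec} and its open-subgroup companion \eqref{eq: Frobrec2} (valid here since $H$ has finite index, hence is open), and the fact that $\Ind_H^G=\ind_H^G$ in this finite-index situation. I would first record two structural observations that make all four equivalences fall out in parallel: (i) $\Res_H^G$ and $\Ind_H^G$ are both exact and both send finitely generated (resp. finite length, resp. semisimple) objects to objects of the same kind \emph{up to} the bookkeeping supplied by $\Gamma=G/H$; (ii) for $\pi\in\Reps(G)$ there is a natural surjection $\Ind_H^G\Res_H^G\pi\twoheadrightarrow\pi$ (adjunction counit, using \eqref{eq: Frobrec2}) and a natural injection $\pi\hookrightarrow\Ind_H^G\Res_H^G\pi$ (from \eqref{eq: Frobrec}), so $\pi$ is a subquotient of $\Ind_H^G\Res_H^G\pi$, and dually $\Res_H^G\Ind_H^G\tau=\oplus_{\gamma\in\Gamma}\tau^\gamma$ by \eqref{eq: Mackey}, so $\tau$ is a summand of $\Res_H^G\Ind_H^G\tau$.

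**The finite length equivalences (1b) and (2b).**
For (1b): if $\Res_H^G\pi$ has finite length, then $\Ind_H^G\Res_H^G\pi$ has finite length — one checks this directly since induction from a finite-index subgroup is exact and $\Ind_H^G$ of an irreducible $H$-representation has finite length (its length is bounded by $[\Gamma:1]$, e.g. again via \eqref{eq: Mackey} applied to a constituent). Since $\pi$ is a quotient of $\Ind_H^G\Res_H^G\pi$, it has finite length. Conversely if $\pi$ has finite length, then $\Res_H^G\pi$ is a subquotient of $\Res_H^G\Ind_H^G\Res_H^G\pi=\oplus_{\gamma}(\Res_H^G\pi)^\gamma$ — wait, more simply: $\Res_H^G$ is exact, so it suffices to treat $\pi$ irreducible, and then $\Res_H^G\pi\hookrightarrow\Res_H^G\Ind_H^G\Res_H^G\pi$; better still, for irreducible $\pi$, pick any irreducible $H$-subrepresentation $\sigma\subset\Res_H^G\pi$ (exists since $\pi$ admissible/finitely generated forces $\Res_H^G\pi$ to have an irreducible sub), and then $\Res_H^G\pi$ is a quotient of $\Res_H^G\Ind_H^G\sigma=\oplus_\gamma\sigma^\gamma$ via the adjunction map, hence has length $\le|\Gamma|$. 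For (2b): $\Res_H^G\Ind_H^G\tau=\oplus_{\gamma\in\Gamma}\tau^\gamma$ has finite length iff $\tau$ does (each $\tau^\gamma$ has the same length as $\tau$), so (2b) reduces to (1b) applied to $\pi=\Ind_H^G\tau$.

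**The complete reducibility equivalences (1a) and (2a), and the main obstacle.**
For (2a): if $\tau$ is completely reducible, write $\tau=\oplus_i\sigma_i$ with $\sigma_i$ irreducible; since $\Ind_H^G$ commutes with direct sums it suffices to show $\Ind_H^G\sigma$ is completely reducible for $\sigma$ irreducible, and this is the standard Clifford-theory fact: $\Ind_H^G\sigma$ is semisimple because $\Res_H^G\Ind_H^G\sigma=\oplus_\gamma\sigma^\gamma$ is semisimple and any $G$-subrepresentation $W\subset\Ind_H^G\sigma$, being a sub of a semisimple $H$-module, is an $H$-direct summand, and one averages the $H$-projection over $\Gamma$ to get a $G$-equivariant projection (this averaging is legitimate because $\Gamma$ is finite). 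Conversely if $\Ind_H^G\tau$ is completely reducible then $\Res_H^G\Ind_H^G\tau=\oplus_\gamma\tau^\gamma$ is completely reducible by the ``only if'' direction of (1a) applied to the $G$-representation $\Ind_H^G\tau$, and $\tau$ is a direct summand of it, hence completely reducible. For (1a): if $\Res_H^G\pi$ is completely reducible, the same averaging-over-$\Gamma$ argument upgrades $H$-complements to $G$-complements inside $\pi$, so $\pi$ is completely reducible; conversely, if $\pi$ is completely reducible then $\pi\hookrightarrow$ something, or more efficiently reduce to $\pi$ irreducible and use that $\Res_H^G\pi$ is a quotient of the semisimple $\oplus_\gamma\sigma^\gamma$ as in the length argument. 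The only genuinely delicate point — the ``main obstacle'' — is the averaging step in the non-compact $\ell$-group setting: one must check that for a $G$-stable $W$ and an $H$-equivariant projection $p$ onto $W$, the operator $\tilde p=\frac1{|\Gamma|}\sum_{\gamma\in\Gamma}\gamma\circ p\circ\gamma^{-1}$ is well-defined on the smooth vectors and is $G$-equivariant; this is where smoothness and the finiteness of $\Gamma$ are used, and it is exactly the content of the cited \cite{MR0425030}*{Lemma 2.9}, so I would simply invoke it rather than reprove it.
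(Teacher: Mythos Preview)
Your proposal is correct and matches the paper's approach: the paper does not prove part (1) at all but simply cites \cite{MR0425030}*{Lemma 2.9}, and then (in Remark~\ref{remarksubpairspecial}) observes that part (2) follows from part (1) via Mackey's formula \eqref{eq: Mackey}, which is exactly your reduction. Your additional sketch of the averaging argument for part (1) is fine but goes beyond what the paper does; one small caveat is that in the finite-length direction of (1b) you should justify the existence of an irreducible $H$-sub (or quotient) of $\Res_H^G\pi$ for $\pi$ irreducible via finite generation rather than admissibility, since admissibility is not assumed.
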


\begin{remark}\label{remarksubpairspecial}
The second part of Lemma \ref{lem: GHresmult} is not explicitly stated in \cite{MR0425030} but it follows from the first part by \eqref{eq: Mackey}.
In fact, the lemma holds without the assumption that $H$ is normal. One reduces to this case by
considering a finite index normal subgroup of $G$ contained in $H$.
\end{remark}

\begin{lemma} \label{lem: distcent}
Suppose that $L$ is a subgroup of $Z(H)$ that is normal in $G$. Let $\psi$ be a character of $L$.
Assume that $G_\psi=H$.
Then, the induction functor defines an equivalence of categories
\begin{equation} \label{eq: Indhere}
\Ind_H^G:\Reps_\psi(H)\rightarrow\Reps_\psi(G).
\end{equation}
An inverse is given by
\[
\pi\mapsto(\Res_H^G\pi)^{(\psi)}.
\]
Moreover, if $\pi\in\Reps_\psi(G)$ and $\tau=\Res_H^G\pi$, then
\begin{equation} \label{eq: restdecomp}
\tau=\oplus_{g\in \Gamma}\tau^{(\psi^g)}=\oplus_{g\in \Gamma}\pi(g)(\tau^{(\psi)}).
\end{equation}
Finally, suppose that $L\cap Z(G)$ is cocompact in $Z(H)$.
Let $\sigma\in\Irr_\psi(H)$ and $\pi=\Ind_H^G\sigma$.
Then, $\sigma$ is essentially square-integrable if and only if $\pi$ is essentially square-integrable. In this case,
the formal degree $d_\sigma^{(L\cap Z(G))\bs H}$ is the restriction of the formal degree $d_\pi^{(L\cap Z(G))\bs G}$.
\end{lemma}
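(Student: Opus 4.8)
The plan is to prove the statements in the order given, since each builds on the previous. For the equivalence of categories \eqref{eq: Indhere}, I would first establish \eqref{eq: restdecomp}, which is the structural heart of the lemma. The key point is that $L$ is central in $H$ and normal in $G$, so for $\pi\in\Reps_\psi(G)$ the restriction $\tau=\Res_H^G\pi$ lies in $\Reps_\psi(H)$ as an $L$-representation, and by \eqref{eq: Mackey} applied to $\Ind_H^G(\tau^{(\psi)})$ — or directly by the definition of $\pi^{(\psi)}$ as a sum over $G/A$-translates — one gets $\tau=\sum_{g\in\Gamma}\pi(g)(\tau^{(\psi)})=\sum_{g\in\Gamma}\tau^{(\psi^g)}$. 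The hypothesis $G_\psi=H$ forces the characters $\psi^g$, $g\in\Gamma$, to be pairwise distinct, hence the generalized eigenspaces $\tau^{(\psi^g)}$ for distinct cosets intersect trivially and the sum is direct. Once \eqref{eq: restdecomp} is in hand, the unit and counit of the adjunction $(\ind_H^G,\Res_H^G)=(\Ind_H^G,\Res_H^G)$ (these agree since $[G:H]<\infty$) are seen to be isomorphisms on the subcategories in question: the counit $\Ind_H^G\Res_H^G\pi\to\pi$ restricted to the $\psi$-part and the unit $\tau\to(\Res_H^G\Ind_H^G\tau)^{(\psi)}$ follow from \eqref{eq: Mackey} and the fact that $(\tau^\gamma)^{(\psi)}=0$ unless $\gamma\in H$. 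This gives the quasi-inverse $\pi\mapsto(\Res_H^G\pi)^{(\psi)}$.

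For the square-integrability statement, the strategy is to use \eqref{eq: restdecomp} to compare matrix coefficients of $\sigma$ and $\pi=\Ind_H^G\sigma$. Set $A=L\cap Z(G)$, which is cocompact in $Z(H)$ by hypothesis, hence also cocompact in $Z(G)$ (as $Z(G)\subseteq Z(H)$, or at least $A$ is cocompact in $Z(G)$ via the cocompactness chain), so both $A\bs H$ and $A\bs G$ are the relevant spaces on which formal degrees live. Since $\sigma\in\Irr_\psi(H)$, the abelian group $L$ acts on $\sigma$ through a finite quotient twisted by $\psi$, so after fixing a section one identifies $V_\sigma$ with the $\psi$-isotypic piece of $V_\pi=\Res_H^G V_\pi$, and by \eqref{eq: restdecomp} the space $V_\pi$ is the direct sum of $[\Gamma:1]=[G:H]$ translates of it, each $\pi(g_i)V_\sigma$ carrying the central character $\psi^{g_i}$ under $L$. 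A matrix coefficient $g\mapsto\sprod{\pi(g)v}{v^\vee}$ with $v\in V_\sigma$, $v^\vee\in V_\sigma^\vee$ is then supported, modulo $H$, on the single coset giving a nonzero pairing, and its restriction to $H$ is exactly a matrix coefficient of $\sigma$. Square-integrability of one is thus equivalent to square-integrability of the other over $A\bs H$ versus $A\bs G$, because $A\bs G$ fibers over $A\bs H$ with finite fiber $\Gamma$.

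Finally, to get the precise compatibility of formal degrees, I would plug the above description of the matrix coefficients of $\pi$ into the Schur orthogonality relation \eqref{eq: Schur} for $\pi$ over $A\bs G$ and unfold the integral as a sum of $[G:H]$ integrals over $A\bs H$, using that only the diagonal translate contributes to $\sprod{\pi(g)v}{v^\vee}\sprod{u}{\pi^\vee(g)u^\vee}$ for $v,u\in V_\sigma$. Normalizing Haar measures compatibly (the measure on $A\bs G$ restricting to that on $A\bs H$, with counting measure on $\Gamma$), the sum collapses and one recovers \eqref{eq: Schur} for $\sigma$ over $A\bs H$, which pins down $d_\sigma^{A\bs H}$ as the restriction of $d_\pi^{A\bs G}$. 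The main obstacle I anticipate is bookkeeping: making sure that the identification of $V_\sigma$ with the $\psi$-isotypic subspace of $\Res_H^G\Ind_H^G\sigma$ is compatible with the pairings $\sprod{\cdot}{\cdot}$ on both sides and that the Haar measure normalizations match, so that the constant in Schur orthogonality transfers with no spurious factor of $[G:H]$; everything else is a direct consequence of \eqref{eq: Mackey}, \eqref{eq: restdecomp}, and the hypothesis $G_\psi=H$.
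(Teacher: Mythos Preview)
Your proposal is correct and follows essentially the same approach as the paper: both use the distinctness of the conjugates $\psi^g$ (from $G_\psi=H$) together with Mackey's formula \eqref{eq: Mackey} to establish \eqref{eq: restdecomp} and the equivalence via Frobenius reciprocity, and both obtain the square-integrability and formal degree statements by realizing matrix coefficients of $\sigma$ (extended by zero off $H$) as matrix coefficients of $\Ind_H^G\sigma$. The paper is terser on the formal degree computation (it simply says ``the last part follows''), whereas you spell out the unfolding of the Schur orthogonality integral; one small slip is your parenthetical ``$Z(G)\subseteq Z(H)$'', which need not hold, but your hedge via the cocompactness chain (using $Z(G)\cap H\le Z(H)$ and $[Z(G):Z(G)\cap H]<\infty$) is the correct argument.
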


\begin{proof}
By assumption, the subcategories $\Reps_{\psi^g}(H)$, $g\in \Gamma$ of $\Reps(H)$ are pairwise disjoint.
If $\tau\in\Reps_\psi(H)$, then by \eqref{eq: Mackey} we have $\Ind_H^G(\tau)\in\Reps_\psi(G)$ and
\[
(\Res_H^G\Ind_H^G\tau)^{(\psi)}\simeq\tau.
\]
Let $\pi\in\Reps_\psi(G)$ and $\tau=\Res_H^G\pi$.
The relation \eqref{eq: restdecomp} follows from the assumption on $\psi$.

Moreover, it is easy to see that the morphism
$\pi\rightarrow\Ind_H^G\tau^{(\psi)}$ corresponding under \eqref{eq: Frobrec} to the projection
$\tau\rightarrow\tau^{(\psi)}$, is an isomorphism.
(Up to a scalar, its inverse is the map corresponding to the embedding
$\tau^{(\psi)}\hookrightarrow\tau$ under \eqref{eq: Frobrec2}.)

Finally, for any $\sigma\in\Reps(H)$ we can realize any matrix coefficient of $\sigma$ (extended by $0$
outside $H$) as a matrix coefficient of $\Ind_H^G\sigma$. The last part follows.
\end{proof}

In the opposite extreme we have the following result.

\begin{lemma} \label{lem: centext}
Assume that $G=HL$ with $L\le Z(G)$. Let $\psi$ be a character of $L\cap H$.
Then,
\begin{enumerate}
\item \label{part: ext}
For any extension $\chi$ of $\psi$ to $L$, the restriction functor $\Res_H^G$ defines an isomorphism (and in particular, an equivalence) of categories
\[
\Reps_\chi(G)\rightarrow\Reps_\psi(H).
\]
Thus, every $\tau\in\Reps_\psi(H)$ admits a unique extension $\ex{\tau}{\chi}$ to a
locally-$\chi$ representation of $G$ (on the same space of $\tau$).
Moreover,
\begin{equation} \label{eq: tauxhicont}
(\ex{\tau}{\chi})^\vee=\ex{\tau^\vee}{\chi^{-1}}.
\end{equation}
\item \label{part: chi'}
If $\chi'$ is another character of $L$ extending $\psi$, then
\[
\ex{\tau}{\chi'}=\ex{\tau}{\chi}\cdot\mu
\]
where $\mu=\chi'\chi^{-1}$ (viewed as a character of $\Gamma\simeq L/(L\cap H)$).
\item \label{part: restau} For any $\tau\in\Reps_\psi(H)$ we have a natural isomorphism
\begin{equation} \label{eq: totind}
\Ind_H^G\tau\simeq\oplus_{\chi}\ex{\tau}{\chi}
\end{equation}
where $\chi$ ranges over the extensions of $\psi$ to $L$ (whose number is $[G:H]$).
\item \label{part: sqrintext} Suppose that $L$ is cocompact in $Z(G)$. Let $\tau\in\Irr_\psi(H)$.
Then, $\tau$ is essentially square-integrable if and only if $\ex{\tau}{\chi}$ is essentially square-integrable,
in which case $d_\tau^{(H\cap L)\bs H}=d_{\ex{\tau}{\chi}}^{L\bs G}$ under the isomorphism $L\bs G\simeq (H\cap L)\bs H$.
\end{enumerate}
\end{lemma}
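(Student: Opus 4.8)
The plan is to establish part~\eqref{part: ext} by an explicit construction and to deduce parts \eqref{part: chi'}--\eqref{part: sqrintext} from it. Write $M=L\cap H$; since $L\le Z(G)$, $M$ is central in $H$ and $L/M\cong G/H=\Gamma$ is finite. Because $G=HL$ with $L$ central, the multiplication map induces an isomorphism $G\cong(H\times L)/\{(z^{-1},z):z\in M\}$, so a representation $\pi$ of $G$ is the same datum as a representation $\tau$ of $H$ together with a homomorphism $\rho\colon L\to\operatorname{End}_H(\tau)^{\times}$ satisfying $\rho|_M=\tau|_M$, and $\pi\in\Reps_\chi(G)$ precisely when $\rho$ is locally-$\chi$. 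Hence part~\eqref{part: ext} reduces to the following: for a fixed extension $\chi$ of $\psi$ to $L$, every locally-$\psi$ homomorphism $M\to\operatorname{End}_H(\tau)^{\times}$ extends \emph{uniquely} to a locally-$\chi$ homomorphism of $L$. Dividing through by $\chi$, this becomes the claim that a homomorphism $u$ from the finite-index subgroup $M$ of the abelian group $L$ into $\operatorname{End}_H(\tau)^{\times}$ for which every $u(z)-1$ is locally nilpotent extends uniquely to one of $L$ with the same property.

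For this, each $\log u(z):=-\sum_{k\ge1}(1-u(z))^{k}/k$ is a well-defined, locally nilpotent operator commuting with $\tau(H)$, and for $l\in L$ and any $n\ge1$ with $l^{n}\in M$ I would set $\lambda(l):=\tfrac1n\log u(l^{n})$. A short computation in the commutative subalgebra generated by the $u(z)$ --- choosing a common exponent $n$, possible since $L/M$ is finite --- shows that $\lambda(l)$ is independent of $n$ and that $\lambda$ is additive, so that $\widetilde u:=\exp\circ\lambda$ is a homomorphism extending $u$; uniqueness is forced, since any locally unipotent extension $w$ satisfies $n\log w(l)=\log w(l^{n})=\log u(l^{n})$. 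Smoothness of the resulting $G$-representation is clear because $H$ is open in $G$, and $(\ex\tau\chi)^{\vee}$ restricts to $\tau^{\vee}$ on $H$ and is visibly locally-$\chi^{-1}$ on $L$, so it equals $\ex{\tau^{\vee}}{\chi^{-1}}$ by uniqueness, which is \eqref{eq: tauxhicont}. The functor $\tau\mapsto\ex\tau\chi$ and $\Res_H^G$ are literally mutually inverse on objects and on morphisms, hence give an isomorphism of categories; in particular $\ex\tau\chi$ is irreducible iff $\tau$ is. I expect this step --- the canonical ``$n$-th root'' of the locally unipotent central operators, where both the commutativity of $L$ and the finiteness of $[G:H]$ are essential --- to be the only genuine difficulty; the remaining parts are bookkeeping.

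Part~\eqref{part: chi'} is immediate from the uniqueness in~\eqref{part: ext}: for a character $\mu$ of $\Gamma$ inflated to $G$, the representation $\ex\tau\chi\cdot\mu$ restricts to $\tau$ on $H$ and to $\chi\mu=\chi'$ on $L$, hence equals $\ex\tau{\chi'}$. For part~\eqref{part: restau}, note that $\ind_H^G=\Ind_H^G$ as $H$ is open of finite index; enumerate the $N=[G:H]$ extensions $\chi_1,\dots,\chi_N$ of $\psi$ to $L$ and let $\iota_i\colon\ex\tau{\chi_i}\to\Ind_H^G\tau$ be the $G$-map corresponding under \eqref{eq: Frobrec} to $\id\in\Hom_H(\tau,\Res_H^G\ex\tau{\chi_i})$, namely $\iota_i(v)=[g\mapsto\ex\tau{\chi_i}(g)v]$. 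I would then show that the assembled map $\Psi\colon\bigoplus_i\ex\tau{\chi_i}\to\Ind_H^G\tau$ is an isomorphism by restricting to $H$: realizing $\Ind_H^G\tau$ through the values of its functions on a set $l_1,\dots,l_N$ of representatives of $M$ in $L$, the restriction $\Res_H^G\Psi$ sends $(v_i)_i$ to the tuple whose $j$-th entry is $\sum_i\ex\tau{\chi_i}(l_j)v_i=\widetilde u(l_j)\sum_i\chi_i(l_j)v_i$ with $\widetilde u$ as above; this is bijective because each $\widetilde u(l_j)$ is invertible and the matrix $(\chi_i(l_j))_{i,j}$ --- the character table of the finite abelian group $L/M$ up to an invertible diagonal twist --- is invertible. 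A $G$-map whose restriction to $H$ is bijective is bijective, so $\Psi$ is an isomorphism.

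Part~\eqref{part: sqrintext} follows the template of the last assertion of Lemma~\ref{lem: distcent}. By irreducibility and Schur's lemma, $M$ acts on $\tau$ through $\psi$ and $L$ acts on $\ex\tau\chi$ through $\chi$, so $\ex\tau\chi(hl)=\chi(l)\tau(h)$, and via the identification of contragredients in \eqref{eq: tauxhicont} the matrix coefficients satisfy $\abs{\sprod{\ex\tau\chi(hl)v}{v^{\vee}}}=\abs{\sprod{\tau(h)v}{v^{\vee}}}$ for $v\in V$, $v^{\vee}\in V^{\vee}$. Since $G=HL$, the assignment $Mh\mapsto Lh$ is an isomorphism of homogeneous spaces $(H\cap L)\bs H\xrightarrow{\sim}L\bs G$ matching Haar measures, and (as in Lemma~\ref{lem: distcent}, using $Z(H)=Z(G)\cap H$ together with the finiteness of $[G:H]$) cocompactness of $L$ in $Z(G)$ yields cocompactness of $H\cap L$ in $Z(H)$. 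Transporting the defining integral \eqref{eq: Schur} of the formal degree across this isomorphism converts Schur's relation for $\ex\tau\chi$ into that for $\tau$; hence $\ex\tau\chi$ is essentially square-integrable if and only if $\tau$ is, and $d_{\ex\tau\chi}^{L\bs G}$ corresponds to $d_\tau^{(H\cap L)\bs H}$.
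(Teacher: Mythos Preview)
Your argument is correct and, for parts~\ref{part: ext}, \ref{part: chi'} and \ref{part: sqrintext}, essentially coincides with the paper's: the paper also builds $\ex{\tau}{\chi}$ by taking the canonical $d$-th root of the locally unipotent operator $\psi(a^d)^{-1}\tau(a^d)$ (it writes this via the binomial series $T^{1/d}=\sum_n\binom{1/d}{n}(T-\id)^n$ rather than via $\exp\circ\frac1d\log$, but the two formulas are interchangeable). The only real difference is in part~\ref{part: restau}. You assemble the map $\Psi:\bigoplus_i\ex{\tau}{\chi_i}\to\Ind_H^G\tau$ by Frobenius reciprocity and verify bijectivity by hand via the invertibility of the character matrix of the finite abelian group $L/M$; the paper instead invokes the projection formula \eqref{eq: indtensor} with $\tau$ trivial to get $\Ind_H^G(\pi\rest_H)\simeq\bigoplus_\mu\pi\cdot\mu$ for any $\pi\in\Reps(G)$, and then applies this with $\pi=\ex{\tau}{\chi}$ together with part~\ref{part: chi'}. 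The paper's route is slicker and yields the decomposition as the $L$-isotypic decomposition \eqref{eq: extpsi} of $\Ind_H^G\tau$ without any matrix computation; your route is more explicit and makes the isomorphism concrete in coordinates, which is occasionally useful but not needed here.
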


\begin{proof}
We construct the inverse
\[
\Reps_\psi(H)\rightarrow\Reps_\chi(G).
\]

For any vector space $V$ and a nonzero integer $k$, the map $T\mapsto T^k$
is a bijection on the set of linear operators on $V$ such that $T-\id_V$ is locally nilpotent.
Its inverse, denoted $T\mapsto T^{1/k}$, is given by the Taylor series of $x^{1/k}$ around $1$. Thus,
\[
T^{1/k}v=\sum_{n=0}^\infty\binom{\frac1k}n(T-\id_V)^nv,\ \ v\in V,
\]
where only finitely many terms are nonzero for any $v$.

By assumption, $\Gamma\simeq L/(H\cap L)$.
Let $(\tau,V)\in\Reps_\psi(H)$. We extend $\tau$ to $G$ by setting
\begin{equation} \label{def: tau[chi]}
\ex{\tau}{\chi}(a)=\chi(a)(\psi(a^d)^{-1}\tau(a^d))^{1/d},\ \ a\in L,
\end{equation}
where $d\ne0$ is such that $a^d\in H$.
It is easy to see that $\ex{\tau}\chi$ is well defined and it is the unique extension of $\tau$ to a locally-$\chi$ representation of $G$.
Moreover, for any $\tau,\tau'\in\Reps_\psi(H)$ we have
\[
\Hom_G(\ex{\tau}\chi,\ex{\tau'}\chi)=\Hom_H(\tau,\tau').
\]
The relation \eqref{eq: tauxhicont} is clear.
This proves part \ref{part: ext}. Part \ref{part: chi'} is clear.

Let $\pi$ be a representation of $G$.
Since $H$ is normal in $G$ and $\Gamma$ is abelian, we have
\[
\Ind_H^G(\pi\rest_H)\simeq\oplus_{\mu}\pi\cdot\mu,
\]
where $\mu$ ranges over the characters of $\Gamma$.
(This follows from \eqref{eq: indtensor} by taking $\tau$ to be the identity one-dimensional representation.)
It follows that if $\tau\in\Reps_\psi(H)$, then the decomposition of $\Ind_H^G\tau$ according to \eqref{eq: extpsi}
(with respect to $B=H\cap L\subset A=L$) is given by
\[
\Ind_H^G\tau\simeq\oplus_{\chi}\ex{\tau}{\chi}
\]
where $\chi$ ranges over the extensions of $\psi$ to a character of $L$.
This proves part \ref{part: restau}.

Finally, part \ref{part: sqrintext} is straightforward.
\end{proof}	

\begin{remark}
Lemmas \ref{lem: distcent} and \ref{lem: centext} are dual in some sense.
In Lemma \ref{lem: distcent}, the induction restricts to an equivalence of categories once we fix $\psi$
and the restriction is given as a direct sum.
In Lemma \ref{lem: centext}, the roles of induction and restriction are interchanged.
\end{remark}

\subsection{Groups of Heisenberg type} \label{sec: Heisen}
Consider now the following situation.
Let $N$ be an $\ell$-group and let $A$ be a finite cyclic subgroup of $Z(N)$.
We will say that the pair $(N,A)$ (or simply $N$ itself) is of Heisenberg type if $[N,N]\subset A$ and the group $X=N/Z(N)$ is finite.

\begin{lemma} \label{lem: heisen}
Suppose that $(N,A)$ is of Heisenberg type. Then,
\begin{enumerate}
\item The group $X=N/Z(N)$ is abelian.
\item \label{part: commiso} The commutator induces a bimultiplicative, alternate and non-degenerate pairing
\[
[\cdot,\cdot]:X\times X\rightarrow A
\]
and hence a group isomorphism
\[
\iota:X\rightarrow\Hom(X,A)\simeq\PD{X},\ \ x\mapsto [x,\cdot],
\]
where $\PD{X}$ is the Pontryagin dual of $X$
\end{enumerate}
Moreover, let $L$ be a subgroup of $N$ containing $Z(N)$. Then,
\begin{enumerate}[resume]
\item The image of $L/Z(N)$ under $\iota$ is $\Hom(N/Z_N(L),A)$, i.e., the annihilator of $Z_N(L)/Z(N)$ in $\PD{X}$.
In particular,
\[
[L:Z(N)]=[N:Z_N(L)]
\]
and
\begin{equation} \label{eq: dcent}
Z_N(Z_N(L))=L.
\end{equation}
\item Suppose that there exists a genuine character $\chi$ of $L$, i.e., such that $\chi\rest_A$ is faithful.
Then, $N_\chi=Z_N(L)$. In particular, $L$ is abelian.

\item \label{part: st} Suppose that $L$ is abelian and let $\chi$ be a genuine character of $L$.
Then, $N/Z_N(L)$ acts simply transitively (under conjugation)
on the set of characters of $L$ that extend $\chi\rest_{Z(N)}$.
Moreover, the following conditions are equivalent.
\begin{enumerate}
\item $[L:Z(N)]=[N:L]$.
\item $Z_{N}(L)=L$.
\item $L$ is a maximal abelian subgroup of $N$.
\item The homomorphism $L/Z(N)\rightarrow\Hom(N/L,A)$, $x\mapsto [x,\cdot]$ is an isomorphism.
\end{enumerate}
In this case, $L$ is called a \emph{Lagrangian subgroup} of $N$.

In particular, a Lagrangian subgroup of $N$ exists and $[N:Z(N)]$ is a perfect square.

\item \label{part: SvN}
Let $\psi$ be a genuine character of $Z(N)$.
Suppose that $L$ is a Lagrangian subgroup of $N$ and $\chi$ is an extension of $\psi$ to $L$.
Then, the functor
\[
\Reps_\psi(Z(N))\rightarrow\Reps_\psi(N),\ \ \tau\mapsto\Ind_L^N\ex{\tau}{\chi}
\]
is an equivalence of categories.
\end{enumerate}
\end{lemma}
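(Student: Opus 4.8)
The plan is to prove the items in order, each using the previous ones.

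\emph{Parts (1)--(3): the commutator pairing.}
Since $[N,N]\subseteq A\subseteq Z(N)$, every commutator is trivial in $X=N/Z(N)$, so $X$ is abelian and $[\cdot,\cdot]$ takes values in $A$; as $A$ is central, $[x,y]$ is unchanged when an argument is multiplied by a central element, so it descends to a map $X\times X\to A$. The commutator identities and the centrality of $A$ make it bimultiplicative, $[x,x]=1$ makes it alternating, and it is non-degenerate: if $x\in X$ pairs trivially with all of $X$ then a lift of $x$ to $N$ commutes with all of $N$, hence lies in $Z(N)$, so $x=1$. Thus $\iota\colon X\to\Hom(X,A)$ is injective. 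For finite abelian groups one always has $\abs{\Hom(X,A)}\le\abs{X}$ (compare cyclic factors), so $\iota$ is an isomorphism and $\abs{\Hom(X,A)}=\abs{X}$; the latter forces every cyclic factor of $X$ to have order dividing $\abs{A}$, whence $\Hom(-,A)$ coincides with Pontryagin duality on subquotients of $X$ (in particular $\Hom(X,A)\simeq\PD{X}$), and the usual orthogonality holds: for $Y\le X$, $\abs{Y}\cdot\abs{Y^\perp}=\abs{X}$ and $(Y^\perp)^\perp=Y$, where $Y^\perp=\{x\in X:[x,y]=1\ \text{for all}\ y\in Y\}$. Now put $\overline L=L/Z(N)$. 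An element $y\in N$ centralizes $L$ iff $[\overline y,\overline x]=1$ for all $\overline x\in\overline L$, so $Z_N(L)/Z(N)=\overline L^\perp$; therefore $Z_N(Z_N(L))/Z(N)=(\overline L^\perp)^\perp=\overline L$, i.e. $Z_N(Z_N(L))=L$, which is \eqref{eq: dcent}, and $[L:Z(N)]=\abs{\overline L}=\abs{X}/\abs{\overline L^\perp}=[N:Z_N(L)]$. Finally $\iota$ sends $\overline L$ isomorphically onto $\{\phi\in\Hom(X,A):\phi\rest_{\overline L^\perp}=0\}=\Hom(X/\overline L^\perp,A)=\Hom(N/Z_N(L),A)$, the annihilator of $Z_N(L)/Z(N)$ in $\PD{X}$.

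\emph{Parts (4)--(5).}
Let $\chi$ be a genuine character of $L$. For $\ell,\ell'\in L$ we have $\chi([\ell,\ell'])=1$ (as $\chi$ maps into an abelian group), while $[\ell,\ell']\in A$ (since $[L,L]\subseteq A$) and $\chi\rest_A$ is faithful, so $[\ell,\ell']=1$ and $L$ is abelian. As $L\supseteq[N,N]$, $L$ is normal in $N$, so $\chi^g$ is defined for $g\in N$; from $g\ell g^{-1}=[g,\ell]\ell$ we get $\chi^g=\chi\iff\chi([g,\ell])=1\ \text{for all}\ \ell\iff[g,\ell]=1\ \text{for all}\ \ell\iff g\in Z_N(L)$, so $N_\chi=Z_N(L)$ (and $L\subseteq Z_N(L)$). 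For (5): the characters of $L$ extending $\chi\rest_{Z(N)}$ form a torsor under $\PD{\overline L}$, of size $[L:Z(N)]$; $N$ acts on it (conjugation fixes the restriction to the central $Z(N)$), $Z_N(L)$ acts trivially, and the stabilizer of any such extension $\chi'$ — again genuine, as $\chi'\rest_A=\chi\rest_A$ — equals $N_{\chi'}=Z_N(L)$ by the computation above; hence $N/Z_N(L)$ acts freely, and its order $[N:Z_N(L)]=[L:Z(N)]$ (part (3)) equals the size of the torsor, so the action is simply transitive. The equivalences (a)--(d) follow: from $[N:L]=[N:Z_N(L)]\,[Z_N(L):L]$ (using $L\subseteq Z_N(L)$) and $[N:Z_N(L)]=[L:Z(N)]$ we get (a)$\iff Z_N(L)=L\iff$(b); (b)$\iff$(c) since a proper abelian overgroup of $L$ lies in $Z_N(L)$, while conversely for any $g\in Z_N(L)\setminus L$ the group $L\langle g\rangle$ is abelian and properly contains $L$; (b)$\iff$(d) since when $Z_N(L)=L$ part (3) makes $\iota\rest_{\overline L}$ an isomorphism onto $\Hom(N/L,A)$, while (d) forces $\abs{\overline L}=\abs{\Hom(N/L,A)}=[N:L]$, i.e. (a). A maximal abelian subgroup $L$ of $N$ contains $Z(N)$ and satisfies (c), hence is Lagrangian; and (b)$\Rightarrow$(a) gives $[N:Z(N)]=[N:L]\,[L:Z(N)]=[L:Z(N)]^2$.

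\emph{Part (6).}
Let $L$ be Lagrangian and let $\chi$ extend the genuine character $\psi$ of $Z(N)$ to $L$; then $\chi$ is genuine. By Lemma \ref{lem: centext}, applied to the abelian group $L$ and its subgroup $Z(N)$, the functor $\tau\mapsto\ex{\tau}{\chi}$ is an equivalence $\Reps_\psi(Z(N))\to\Reps_\chi(L)$. Since $L$ is abelian, normal in $N$, contains $Z(N)$, and $N_\chi=Z_N(L)=L$ (parts (4)--(5), $L$ being Lagrangian), Lemma \ref{lem: distcent}, applied to $L\trianglelefteq N$ with the character $\chi$, shows that $\Ind_L^N\colon\Reps_\chi(L)\to\Reps_\chi(N)$ is an equivalence. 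Finally $\Reps_\chi(N)=\Reps_\psi(N)$: the inclusion $\subseteq$ holds because $Z(N)$ is central with $\chi\rest_{Z(N)}=\psi$, and $\supseteq$ follows from \eqref{eq: extpsi}, which decomposes $\Reps_\psi(N)$ as a sum over the $N$-orbits of characters of $L$ extending $\psi$ — of which there is only the orbit of $\chi$, by part (5). Composing the two equivalences yields the asserted equivalence $\tau\mapsto\Ind_L^N\ex{\tau}{\chi}$.

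\emph{Main obstacle.}
The only genuinely substantive point is the observation in Part (2) that non-degeneracy of the commutator pairing forces the exponent of $X$ to divide $\abs{A}$, so that the finite cyclic group $A$ serves as a dualizing module exactly as $\C^\times$ does; granting this, Part (3) is pure orthogonal-complement bookkeeping, Parts (4)--(5) are short, and Part (6) just requires chaining Lemmas \ref{lem: centext} and \ref{lem: distcent} in the right order together with the identification $\Reps_\chi(N)=\Reps_\psi(N)$ coming from the transitivity in Part (5).
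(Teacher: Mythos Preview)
Your proof is correct and follows essentially the same approach as the paper's own proof: the paper also deduces from the injectivity of $\iota$ that the exponent of $X$ divides $\abs{A}$ (so that $\Hom(X,A)\simeq\PD{X}$ and $\iota$ is bijective by counting), handles part \ref{part: st} via the identity $\chi^x\chi^{-1}=\chi([x,\cdot])$, and obtains part \ref{part: SvN} by chaining Lemma \ref{lem: centext} (for $Z(N)\le L$) with Lemma \ref{lem: distcent} (for $L\le N$, using $N_\chi=L$). Your write-up simply fills in the details the paper leaves as ``elementary'' or ``straightforward.''
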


\begin{proof}
This is elementary.

For part \ref{part: commiso}, observe that since $\iota$ is (clearly) injective,
the order of every element of $X$ divides the order of $A$. Hence, by choosing an embedding of $A$ in $\C^*$, we may identify
$\Hom(X,A)$ with $\PD{X}$. Since $X$ is finite, $\iota$ is surjective.

For part \ref{part: st},  observe that $\chi^x\chi^{-1}=\chi([x,\cdot])$, $x\in N/Z_N(L)$ ranges over the characters of $L/Z(N)$.

Part \ref{part: SvN} follows by combining Lemma \ref{lem: centext} for $Z(G)\le L$ and Lemma \ref{lem: distcent}
for $L\le N$ -- the latter is applicable since $N_\chi=L$.

The rest of the statements are straightforward.
\end{proof}

\begin{remark}
The last part of the previous lemma is a version of the classical Stone--von-Nuemann theorem.
Note that the functor depends on the choice of $L$ and $\chi$ although its domain and codomain do not.

Note that if $N$ is finite, then the forgetful functor from $\Reps_\psi(Z(N))$ to the category of vector spaces
(i.e., representations of the trivial group) is an isomorphism of categories.
\end{remark}

\subsection{Special pairs} \label{sec: specpairs}

Suppose that $\bas{G}$ is an $\ell$-group and $A$ is a finite cyclic group.

By an $A$-covering of $\bas{G}$ we mean a central topological extension of $\bas{G}$ by $A$, i.e.
a short exact sequence of topological groups\footnote{In particular, $\pr$ is continuous and open.}
\[
1\rightarrow A\rightarrow G\xrightarrow{\pr}\bas{G}\rightarrow 1
\]
where $A$ is central in $G$. This automatically implies that $G$ is an $\ell$-group and every sufficiently small
open subgroup of $\bas{G}$ splits.\footnote{Indeed, since $\pr$ is continuous, open and its fibers are finite of constant size,
it is easy to see that $\pr$ is a topological covering map. Therefore, $G$ is an $\ell$-space, and hence an $\ell$-group.}

By general theory, the isomorphism classes of $A$-coverings of $\bas{G}$ are classified by the second cohomology group $H^2(G,A)$ defined in terms of
Borel cochains. (See \cite{MR414775} for more details.)\footnote{By the above, in the case at hand
we can use locally constant cochains instead.}

For the rest of the section we assume that $G$ is an $A$-covering of $\bas{G}$.
By our convention, if $\bas{H}$ is a subgroup of $\bas{G}$, we will denote by $H$ the preimage of $\bas{H}$
under $\pr$. We say that a character $\chi$ of $H$ is genuine if its restriction to $A$ is faithful.

We would like to enhance the discussion of \S\ref{sec: Heisen} to more general groups.

\begin{definition}
Let $\bas{H}$ be a finite index normal subgroup of $\bas{G}$.
We say that $H$ is \emph{special} if there exists a subgroup $\bas{N}$ of $Z(\bas{G})$
such that $N\le Z_G(H)$ and
\begin{equation}
\label{eq: zgzhnh} Z_G(H\cap N)=NH.
\end{equation}
In this case, we will also say that the pair $(H,N)$ is special.
\end{definition}

For instance, if $(N,A)$ is a Heisenberg pair, then $(Z(N),N)$ is special pair in $N$.

Note that in general, $N$ is not uniquely determined by $H$, but given $H$ there exists $N_{\max}$ such that $N\le N_{\max}$
for any $N$ such that $(H,N)$ is special (see below).

As we will soon see, if $H$ is special, then the representation theories of $G$ and $H$ are essentially the same (Proposition \ref{prop: lind}).

We start with some basic properties of special pairs.

\begin{lemma} \label{lem: specp}
Suppose that $(H,N)$ is a special pair and $N'$ is a subgroup of $N$. Then,
\begin{enumerate}
\item \label{part: hnzhzn} $H\cap N=Z(H)\cap Z(N)$.
\item \label{part: nzn} The group $N$ (or more precisely, the pair $(N,A)$) is of Heisenberg type.
In particular, $N/Z(N)$ is a finite abelian group whose order is a perfect square.
\item \label{part: nmax} Let $N_{\max}=Z_{\pr^{-1}(Z(\bas{G}))}(H)$.
Then, $N\le N_{\max}$ and $(H,N'')$ is special for any $N\le N''\le N_{\max}$.
\item \label{part: commd}
The commutator defines a non-degenerate bimultiplicative pairing
\[
G/Z_G(N')\times N'/Z_{N'}(G)\rightarrow A.
\]
Consequently, the groups $G/Z_G(N')$ and $N'/Z_{N'}(G)$ are finite abelian groups of exponent dividing $\abs{A}$,
that are in Pontryagin duality.
\item If $N'\supset N\cap H$, then
\begin{subequations}
\begin{equation} \label{eq: zgn'}
Z_G(N')=Z_N(N')H.
\end{equation}
In particular, (denoting the Pontryagin dual of a group $X$ by $\PD{X}$)
\begin{align}
\label{eq: zgznnh} Z_G(Z(N))&=NH,\\
\label{eq: zgnznh} Z_G(N)&=Z(N)H,\\
\label{eq: znhzn} \PD{G/Z(N)H}&\simeq N/Z_N(G),\\
\label{eq: pdgnh}
\begin{split}
\PD{G/NH}&\simeq Z(N)/Z_N(G)\\&\simeq (N\cap H)/Z_{N\cap H}(G).
\end{split}
\end{align}
Thus,
\begin{align}
\label{eq: znznghn} Z(N)&=Z_N(G)(N\cap H),\\
\label{eq: zgnzngh} Z_G(N)&=Z_N(G)H,\\
\label{eq: zgzngzhg} Z(G)&=Z_N(G)Z_H(G).
\end{align}
\end{subequations}
\item \label{part: all exsts}
Suppose that $N'$ contains $A$ and that there exists a genuine character $\chi$ of $N'$.
Then, $G_\chi=Z_G(N')$. In particular, $N'$ is abelian.
Denote by $\psi$ the restriction of $\chi$ to $Z_{N'}(G)$.
Then, the $G$-orbit of $\chi$ consists of the characters of $N'$ that extend $\psi$. Moreover,
\begin{equation} \label{eq: repspsichi}
\Reps_{\chi}(G)=\Reps_{\psi}(G).
\end{equation}
\item \label{part: stabtau}
Suppose that $\tau\in\Reps_\psi(H)$ where $\psi$ is a genuine character of $H\cap N$.
Then,
\begin{equation} \label{eq: stabtau}
G_\tau=HN.
\end{equation}
\item \label{part: hered}
Let $G'$ be a subgroup of $G$ containing $H$ (resp., $N$). Then,
\begin{equation} \label{eq: hered}
\text{$(H,N\cap G')$ (resp., $(H\cap G',N)$) is a special pair in $G'$.}
\end{equation}
\end{enumerate}
\end{lemma}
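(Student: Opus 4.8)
\emph{Proof proposal.} The lemma is a chain of elementary computations, and the plan is to run all of them off two observations. (a) Since $\bas{N}\le Z(\bas{G})$, for every $g\in G$ and $n\in N$ the commutator $[g,n]$ lies in the central subgroup $A$; hence $(g,n)\mapsto[g,n]$ is bimultiplicative, $N$ is normal in $G$, and any subgroup $N'$ of $N$ containing $A$ is normal in $G$ (because ${}^gn'=[g,n']n'$ with $[g,n']\in A\subseteq N'$). (b) As $\bas{H}$ has finite index in $\bas{G}$, the group $N\cap H=\pr^{-1}(\bas{N}\cap\bas{H})$ has finite index in $N$; also $A\subseteq H\cap N$ since $A=\Ker\pr\subseteq H$ and $A\subseteq N$ by definition. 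Granting these, parts \ref{part: hnzhzn}--\ref{part: nmax} are immediate: for \ref{part: hnzhzn}, an $x\in H\cap N$ lies in $Z(H)$ because $N\le Z_G(H)$ and in $Z(N)$ because for $n\in N$ we have $[x,n]\in A$ yet $[x,n]=1$ (as $x\in H$, $n\in Z_G(H)$), the reverse containment being trivial; for \ref{part: nzn}, $[N,N]\subseteq A$ with $A$ finite cyclic central, and $N/Z(N)$ is finite because $N\cap H\subseteq Z(N)$ has finite index in $N$, so $(N,A)$ is of Heisenberg type and Lemma \ref{lem: heisen} gives the rest; for \ref{part: nmax}, $N\le N_{\max}$ is clear, and for $N\le N''\le N_{\max}$ one has $H\cap N\subseteq H\cap N''\subseteq Z(H)$, so $N''H\subseteq Z_G(H\cap N'')$, while $Z_G(H\cap N'')\subseteq Z_G(H\cap N)=NH\subseteq N''H$ by \eqref{eq: zgzhnh}.

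For part \ref{part: commd}, the left and right radicals of $(g,n')\mapsto[g,n']$ are $Z_G(N')$ and $Z_{N'}(G)$ and the values lie in $A$, so only finiteness needs an argument: for $n'\in N'\subseteq N\le Z_G(H)$ the homomorphism $[n',\cdot]$ kills $H$, hence factors through the finite group $\Gamma=G/H$, so $N'/Z_{N'}(G)$ is finite, and then $G/Z_G(N')$ injects into its finite Pontryagin dual, yielding the duality and the exponent bound. The identities of part~5 then follow in order. In \eqref{eq: zgn'}, $Z_N(N')H\subseteq Z_G(N')$ since $H$ centralizes $N'\subseteq N$; conversely $g\in Z_G(N')$ lies in $Z_G(H\cap N)=NH$ by \eqref{eq: zgzhnh} (as $H\cap N\subseteq N'$), and in $g=nh$ with $h\in H\subseteq Z_G(N')$ one gets $n\in Z_N(N')$. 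Taking $N'=N$ and $N'=Z(N)$ gives \eqref{eq: zgnznh} and \eqref{eq: zgznnh}; plugging these into the duality of \ref{part: commd} (with $Z_{Z(N)}(G)=Z_N(G)$, and using $Z_G(H\cap N)=NH$ once more) gives \eqref{eq: znhzn} and \eqref{eq: pdgnh}. Hence $[Z(N):Z_N(G)]=[N\cap H:Z_{N\cap H}(G)]=[G:NH]$; since $Z_N(G)$ and $N\cap H$ both sit inside the abelian group $Z(N)$ with $Z_N(G)\cap(N\cap H)=Z_{N\cap H}(G)$, a finite index count forces $Z(N)=Z_N(G)(N\cap H)$, which is \eqref{eq: znznghn}. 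Then \eqref{eq: zgnzngh} results from substituting \eqref{eq: znznghn} into \eqref{eq: zgnznh}, and for \eqref{eq: zgzngzhg} any $z\in Z(G)$ centralizes $N$, hence $z=n_0h\in Z_N(G)H$ by \eqref{eq: zgnzngh}, so $h=n_0^{-1}z\in Z(G)\cap H=Z_H(G)$.

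For part \ref{part: all exsts}, faithfulness of $\chi\rest_A$ forces $\chi([x,y])=1$ hence $[x,y]=1$ for $x,y\in N'$, so $N'$ is abelian; the identity $\chi^g(n')=\chi([g,n'])\chi(n')$ and faithfulness give $G_\chi=Z_G(N')$; by \ref{part: commd} the assignment $g\mapsto[g,\cdot]\rest_{N'}$ surjects onto $\Hom(N'/Z_{N'}(G),A)$, which—$N'/Z_{N'}(G)$ having exponent dividing $\abs{A}$—identifies the $G$-orbit of $\chi$ with all characters of $N'$ extending $\psi:=\chi\rest_{Z_{N'}(G)}$; finally $Z_{N'}(G)$ is central and $N'$ is normal in $G$, so \eqref{eq: extpsi} applies with $B=Z_{N'}(G)\subseteq A=N'$ and collapses to \eqref{eq: repspsichi} because the orbit exhausts all extensions. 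For part \ref{part: stabtau} (with $\tau\ne0$): $HN\subseteq G_\tau$ since $H$ is normal and $N\le Z_G(H)$; conversely $\tau\rest_{H\cap N}$ is locally-$\psi$ and $\tau^g\rest_{H\cap N}$ is locally-$\psi^g$ with $\psi^g(x)=\psi([g,x])\psi(x)$, so $\tau^g\simeq\tau$ forces $\psi^g=\psi$, i.e. $\psi([g,x])=1$, i.e. (faithfulness) $[g,x]=1$ for all $x\in H\cap N$, i.e. $g\in Z_G(H\cap N)=HN$. Part \ref{part: hered} is a direct set computation: using $A\subseteq H\subseteq G'$ (resp. $A\subseteq N\subseteq G'$) one checks $H$ (resp. $H\cap G'$, $N$) is a full $\pr$-preimage inside $G'$ and $\bas{N}\cap\bas{G'}\subseteq Z(\bas{G})\cap\bas{G'}\subseteq Z(\bas{G'})$, while $H\cap(N\cap G')=H\cap N$ (resp. $(H\cap G')\cap N=H\cap N$) and $Z_{G'}(H\cap N)=Z_G(H\cap N)\cap G'=NH\cap G'=(N\cap G')H$ (resp. $=N(H\cap G')$), the last step by splitting an element $nh$ of $NH\cap G'$ using that $h$ (resp. $n$) already lies in $G'$.

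I expect part~5 to be the main obstacle: converting the numerical index coincidences coming from the Pontryagin duality of \ref{part: commd} into the honest group \emph{equalities} \eqref{eq: znznghn}, \eqref{eq: zgnzngh} and \eqref{eq: zgzngzhg}, and, throughout, keeping careful track of which of the many subgroups involved are normal and of finite index so that Lemma \ref{lem: heisen} and \eqref{eq: extpsi} may be legitimately applied.
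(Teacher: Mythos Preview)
Your proof is correct and follows essentially the same route as the paper's own proof, which is extremely terse (most parts are dismissed as ``clear'' or ``straightforward''); you have simply fleshed out the details the authors leave implicit, in particular the index computation deriving \eqref{eq: znznghn} from \eqref{eq: pdgnh} and the explicit verification of \eqref{eq: hered}.
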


\begin{proof}

Parts \ref{part: hnzhzn} and \ref{part: nzn} are clear. Note that $N/Z(N)\simeq NH/Z(N)H$ is finite.

Clearly, $N\le N_{\max}$. On the other hand, if $N''\ge N$ then $N''$ satisfies \eqref{eq: zgzhnh}
while if $N''\le N_{\max}$ then $N''$ centralizes $H$ and $\pr(N'')$ is central in $\bas{G}$.
Part \ref{part: nmax} follows.

Part \ref{part: commd} is also straightforward. Note that $G/Z_G(N')$ is finite since $H\le Z_G(N')$.

The relation \eqref{eq: zgn'} holds since $H\le Z_G(N')\le Z_G(N\cap H)=NH$;
\eqref{eq: znhzn} and \eqref{eq: pdgnh} follow from part \ref{part: commd}, \eqref{eq: zgznnh}, \eqref{eq: zgnznh} and \eqref{eq: zgzhnh};
\eqref{eq: znznghn} follows from \eqref{eq: pdgnh}; \eqref{eq: zgnzngh} follows from \eqref{eq: zgnznh} and \eqref{eq: znznghn};
\eqref{eq: zgzngzhg} follows from \eqref{eq: zgnzngh}.

Part \ref{part: all exsts} follows from the fact that $[N,G]\subset A$ and $\chi^g\chi^{-1}=\chi([g,\cdot])$, $g\in G/Z_G(N')$
ranges over all characters of $N'/Z_{N'}(G)$, because of the group isomorphism
\[
G/Z_G(N')\rightarrow\Hom(N'/Z_{N'}(G),A),\ \ g\mapsto[g,\cdot].
\]
For \eqref{eq: repspsichi} we use \eqref{eq: extpsi}.

Part \ref{part: stabtau} follows from the fact that $G_\psi=NH$.

Part \ref{part: hered} is straightforward.
\end{proof}

\subsection{Lagrangian induction} \label{sec: LI}
Assume that $H$ is a special group in $G$.
The main result of this section is to relate the representation theory of $G$ and $H$.

Given two subgroups $H_1,H_2$ of $G$, we say that characters $\chi_i$ of $H_i$, $i=1,2$
are \emph{consistent} if they agree on the intersection $H_1\cap H_2$.
In the case where $H_1$ commutes with $H_2$, this condition implies that the character $\chi_1\chi_2$ of the group $H_1H_2$
is well defined (and extends both $\chi_i$).

\begin{proposition} \label{prop: lind}
Assume that $(H,N)$ is a special pair in $G$ with $[N:Z(N)]=d^2$, $d\ge0$.
Let $\chi$ and $\psi$ be consistent genuine characters of $N\cap Z(G)=Z_N(G)$ and $N\cap H$
and denote by $\varphi$ their common restriction to $Z_{N\cap H}(G)$.
Then,
\begin{enumerate}
\item There is an equivalence of categories (``Lagrangian induction'')
\begin{equation} \label{def: LIND}
\LInd_{H,\psi}^{G,\chi}=\LInd_{H,\psi,N}^{G,\chi}:\Reps_\psi(H)\rightarrow\Reps_\chi(G).
\end{equation}
Up to natural equivalence, this functor does not depend on additional choices.
\item \label{part: indcmp} For any $\tau\in\Reps_\psi(H)$ we have a natural isomorphism
\[
\Ind_H^G\tau\simeq d\cdot\oplus_{\chi'}\LInd_{H,\psi}^{G,\chi'}\tau,
\]
where $\chi'$ ranges over the characters of $N\cap Z(G)$ that are consistent with $\psi$
(the number of which is $[Z_N(G):Z_{N\cap H}(G)]=[Z(N):N\cap H]$).
\item \label{part: resdcmp} For any $\pi\in\Reps_\chi(G)$ we have a natural isomorphism,
\[
\Res_H^G\pi\simeq d\cdot\oplus_{\psi'}\LRes_{H,\psi'}^{G,\chi}\pi,
\]
where $\psi'$ ranges over the characters of $N\cap H$ that are consistent with $\chi$ (the number of which is
$[N\cap H:Z_{N\cap H}(G)]=[Z(N):Z_N(G)]$) and
$\LRes_{H,\psi'}^{G,\chi}$ (``Lagrangian restriction'') is an inverse to $\LInd_{H,\psi'}^{G,\chi}$.
\item \label{part: contr} We have $(\LInd_{H,\psi}^{G,\chi}\tau)^\vee=\LInd_{H,\psi^{-1}}^{G,\chi^{-1}}\tau^\vee$ for any
$\tau\in\Reps_\psi(H)$.
\item \label{part: equconj}
$\LInd_{H,\psi^g}^{G,\chi}(\tau^g)\simeq\LInd_{H,\psi}^{G,\chi}\tau$ for any $\tau\in\Reps_\psi(H)$ and $g\in G$.
Thus, for any $\pi\in\Irr_\chi(G)$, the equivalence classes of $\LRes_{H,\psi'}^{G,\chi}(\pi)$,
where $\psi'$ ranges over the characters of $N\cap H$ that extend $\varphi$ (i.e., are consistent with $\chi$), form
a $G/NH$-orbit in $\Irr_\varphi(H)$ under conjugation.

\item \label{part: twistw} For any character $\omega$ of $\bas{G}$ we have
\begin{equation} \label{eq: twistomega}
\LInd_{H,\psi\omega\rest_H}^{G,\chi\omega\rest_{Z_N(G)}}(\tau\cdot\omega\rest_H)\simeq(\LInd_{H,\psi}^{G,\chi}\tau)\cdot\omega
\end{equation}
for any $\tau\in\Reps_\psi(H)$.
In particular, if $\omega$ is trivial on $HZ(N)=HZ_N(G)$, then
\[
(\LInd_{H,\psi}^{G,\chi}(\tau))\cdot\omega\simeq\LInd_{H,\psi}^{G,\chi}(\tau).
\]
Thus, for any $\pi\in\Reps_\chi(G)$, up to isomorphism $\pi\cdot\omega$ depends only
on the restriction of $\omega$ to $HZ(N)$.

\item \label{part: twistab}
Suppose that $\Gamma=G/H$ is abelian.
Then, for any $\tau\in\Irr_\psi(H)$, the equivalence classes of $\LInd_{H,\psi}^{G,\chi'}(\tau)$,
where $\chi'$ ranges over the characters of $Z_N(G)$ that extend $\varphi$ (i.e., are consistent with $\psi$), form
a $\PD{\Gamma}$-orbit in $\Irr_\varphi(G)$ under twisting.

\item \label{part: corres}
Suppose that $\Gamma$ is abelian.
Then, we have a natural bijection between the $\Gamma$-orbits in $\Irr_\varphi(H)$ under conjugation and
the $\PD{\Gamma}$-orbits in $\Irr_\varphi(G)$ under twisting.

\item \label{part: fd}
Suppose that $N\cap H$ is cocompact in $Z(H)$ and let $\tau\in\Irr_\psi(H)$ and $\pi=\LInd_{H,\psi,N}^{G,\chi}\tau$.
Then, $\pi$ is essentially square-integrable if and only if $\tau$ is essentially square-integrable. In this case,
\begin{subequations}
\begin{equation} \label{eq: fdlind1}
d_\tau^{(H\cap N)\bs H}=d^{-1}\cdot p_*(d_\pi^{Z_N(G)\bs G}\rest_{Z_N(G)\bs NH})
\end{equation}
where $p_*$ is the pushforward of measures with respect to the projection
$p:Z_N(G)\bs NH\rightarrow N\bs NH\simeq (H\cap N)\bs H$ and
$\rest$ denotes restriction of measures.
Equivalently,
\begin{equation} \label{eq: fdlind2}
d_\tau^{Z_{H\cap N}(G)\bs H}=
([N:N\cap H][Z_N(G):Z_{N\cap H}(G)])^{\frac12}\cdot
d_\pi^{Z_{N\cap H}(G)\bs G}\rest_{Z_{H\cap N}(G)\bs H}.
\end{equation}
\end{subequations}
\end{enumerate}
\end{proposition}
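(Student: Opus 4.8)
\emph{Proof proposal.}
The plan is to realize $\LInd_{H,\psi,N}^{G,\chi}$ as a composition of equivalences attached to the chain of finite index subgroups $H\le HL\le HN\le G$, where $L$ is an auxiliary Lagrangian subgroup of the Heisenberg group $N$ (Lemma~\ref{lem: heisen}, part~\ref{part: st}); recall that $[N:L]=[L:Z(N)]=d$. We may assume $N=N_{\max}$, so that $Z(N)$ and $HN=Z_G(H\cap N)$, as well as $H$, are normal in $G$ (Lemma~\ref{lem: specp}, part~\ref{part: nmax}); independence on $N$ (after extending $\chi,\psi$ accordingly) is part of the assertion and is dealt with at the end. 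Fix $L$. Since $Z(N)=Z_N(G)\cdot(N\cap H)$ by~\eqref{eq: znznghn} and $\chi,\psi$ are consistent, their product $\chi\psi$ is a well-defined genuine character of $Z(N)$; extend it (in one of $[L:Z(N)]=d$ ways) to a character $\hat\psi$ of $L$. Because $N\cap H\le Z(N)\le L$ we have $L\cap H=N\cap H$, and because $L$ commutes with $H$ we have $L\le Z(HL)$, so Lemma~\ref{lem: centext}, part~\ref{part: ext}, provides the extension $\ex{\tau}{\hat\psi}$ of $\tau$ to $HL$. We set
\[
\LInd_{H,\psi,N}^{G,\chi}\tau:=\Ind_{HL}^{G}\bigl(\ex{\tau}{\hat\psi}\bigr)=\Ind_{HN}^{G}\,\Ind_{HL}^{HN}\bigl(\ex{\tau}{\hat\psi}\bigr).
\]

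Next I would check that each arrow in the chain is an equivalence onto the indicated block. First, $\tau\mapsto\ex{\tau}{\hat\psi}$ is an isomorphism of categories $\Reps_\psi(H)\to\Reps_{\hat\psi}(HL)$ by Lemma~\ref{lem: centext}, part~\ref{part: ext}. Second, since $H$ centralizes $L$ and, $\hat\psi$ being genuine and $L$ a Lagrangian, $N_{\hat\psi}=Z_N(L)=L$ (Lemma~\ref{lem: heisen}, part~\ref{part: st}), the stabilizer of $\hat\psi$ in $HN$ is exactly $HL$; as $L\triangleleft N$ forces $HL\triangleleft HN$, Lemma~\ref{lem: distcent} shows $\Ind_{HL}^{HN}$ is an equivalence $\Reps_{\hat\psi}(HL)\to\Reps_{\hat\psi}(HN)$. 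Since $L\triangleleft HN$ and the $HN$-orbit of $\hat\psi$ consists of all $d$ extensions of $\chi\psi$ to $L$, \eqref{eq: extpsi} gives $\Reps_{\hat\psi}(HN)=\Reps_{\chi\psi}(HN)$. Finally $G_{\chi\psi}=Z_G(Z(N))=NH$ by~\eqref{eq: zgznnh}, so Lemma~\ref{lem: distcent} again shows $\Ind_{HN}^{G}$ is an equivalence $\Reps_{\chi\psi}(HN)\to\Reps_{\chi\psi}(G)$, and $\Reps_{\chi\psi}(G)=\Reps_{\chi}(G)$ by~\eqref{eq: repspsichi}. Composing the quasi-inverses supplied by Lemmas~\ref{lem: centext} and~\ref{lem: distcent} also exhibits an inverse functor, namely $\pi\mapsto\Res_H^{HL}\bigl((\Res_{HL}^{HN}(\Res_{HN}^G\pi)^{(\chi\psi)})^{(\hat\psi)}\bigr)$. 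This proves the first part apart from the independence statement.

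The independence of the construction on $(L,\hat\psi,N)$ is the main obstacle. Here I would first establish part~\ref{part: indcmp} directly for the construction above: by transitivity of induction, Mackey's formula~\eqref{eq: Mackey} and Lemma~\ref{lem: centext}, part~\ref{part: restau}, one has $\Ind_{H}^{HN}\tau\simeq\bigoplus_{\hat\psi'}\Ind_{HL}^{HN}\ex{\tau}{\hat\psi'}$ over the $[HL:H]$ extensions $\hat\psi'$ of $\psi$ to $L$; grouping them by their restrictions to $Z(N)$ and using that $\Ind_{HL}^{HN}$ carries each family of $d$ conjugate extensions to $d$ copies of a single object, this becomes $d\cdot\bigoplus_{\chi'}(\cdots)$ with $\chi'$ ranging over the extensions of $\varphi$ to $Z_N(G)$; applying $\Ind_{HN}^G$ and~\eqref{eq: repspsichi} yields $\Ind_H^G\tau\simeq d\cdot\bigoplus_{\chi'}\LInd_{H,\psi}^{G,\chi'}\tau$. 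The family $\{\LInd_{H,\psi}^{G,\chi'}\tau\}_{\chi'}$ is thereby pinned down by $\tau$ up to the ambiguity of ``dividing by $d$'', and the remaining rigidity comes from the Stone--von-Neumann mechanism (Lemma~\ref{lem: heisen}, part~\ref{part: SvN}): any two Lagrangians of $N$, with compatible character extensions, give canonically naturally isomorphic equivalences, and this canonicity is inherited by $\LInd$ since $\LInd$ uses $N$ only through that equivalence. Independence on $N$ is then reduced to the comparison with $N=N_{\max}$ via Lemma~\ref{lem: specp}, part~\ref{part: nmax}. Part~\ref{part: resdcmp} is the mirror statement, obtained by tracing $\Res_H^G$ through the same chain, and it identifies $\LRes_{H,\psi'}^{G,\chi}$ as the inverse of $\LInd_{H,\psi'}^{G,\chi}$.

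The remaining parts follow by unwinding the construction. Part~\ref{part: contr}: since $HL$ is open of finite index in $G$, \eqref{eq: contrind} together with~\eqref{eq: indtensor} shows $\Ind_{HL}^G$ commutes with $(-)^\vee$, and $(\ex{\tau}{\hat\psi})^\vee=\ex{\tau^\vee}{\hat\psi^{-1}}$ by~\eqref{eq: tauxhicont}. Parts~\ref{part: equconj} and~\ref{part: twistw}: conjugating $\tau$ and $\psi$ simultaneously by $g\in G$ (use $\Ind_{K^g}^G(\sigma^g)\simeq\Ind_K^G\sigma$, $\ex{\tau^g}{\hat\psi^g}=(\ex{\tau}{\hat\psi})^g$, and the choice-independence just proved), resp. twisting by a character $\omega$ of $\bas G$ (use~\eqref{eq: indtensor} and Lemma~\ref{lem: centext}, part~\ref{part: chi'}, to get $\ex{\tau}{\hat\psi}\cdot\omega\rest_{HL}=\ex{\tau\omega\rest_H}{\hat\psi\omega\rest_L}$); the vanishing of the twist when $\omega$ is trivial on $HZ(N)=HZ_N(G)=Z_G(N)$ follows from~\eqref{eq: twistomega} since then $\omega\rest_H$ and $\omega\rest_{Z_N(G)}$ are trivial, and the last assertion uses that $G/Z_G(N)$ is finite (indeed $G/Z_G(N)\simeq\PD{N/Z_N(G)}$ by Lemma~\ref{lem: specp}, part~\ref{part: commd}). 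Parts~\ref{part: twistab} and~\ref{part: corres} are the ``abelian $\Gamma$'' refinements; they follow from parts~\ref{part: equconj}, \ref{part: resdcmp}, \ref{part: twistw}, the fact (from Lemma~\ref{lem: distcent}) that $\LInd$ preserves irreducibility, and the standard Clifford-theoretic duality between conjugation orbits in $\Irr_\varphi(H)$ and twisting orbits in $\Irr_\varphi(G)$. Finally, part~\ref{part: fd}: one first propagates the cocompactness hypothesis along the chain --- $Z(HL)=Z(H)L$ with $Z(H)\cap L=N\cap H$, $Z(HN)=Z(H)Z(N)$, and $Z(N)/Z_N(G)\simeq(N\cap H)/Z_{N\cap H}(G)$ is finite --- and then applies the formal-degree clauses of Lemma~\ref{lem: centext}, part~\ref{part: sqrintext}, and of Lemma~\ref{lem: distcent} to the three steps $H\to HL\to HN\to G$ in turn. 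Tracking the indices $[L:Z(N)]=[N:L]=d$ produces the factor $d^{-1}$ and, after restricting the resulting measure on $Z_N(G)\bs G$ to $Z_N(G)\bs NH$ and pushing it forward along $p$, the equality~\eqref{eq: fdlind1}; the equivalent form~\eqref{eq: fdlind2} is an elementary rewriting using $[N:N\cap H]=d^2[Z_N(G):Z_{N\cap H}(G)]$ and the compatibility of the formal-degree normalizations for nested cocompact subgroups. I expect the genuinely delicate points to be the choice-independence in the first part and the Haar-measure bookkeeping in part~\ref{part: fd}.
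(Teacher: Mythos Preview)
Your construction is essentially the paper's: define $\LInd_{H,\psi}^{G,\chi}\tau=\Ind_{HL}^G\ex{\tau}{\theta}$ with $L$ a Lagrangian of $N$ and $\theta$ an extension of $\chi\psi$ to $L$, realized as the composite of Lemma~\ref{lem: centext} (extension $H\to HL$) with Lemma~\ref{lem: distcent} (induction $HL\to G$). Two points of comparison are worth noting.

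First, your extra stop at $HN$ is harmless but unnecessary. The paper goes directly from $HL$ to $G$ in a single application of Lemma~\ref{lem: distcent}, because $G_\theta=Z_G(L)=LH$ by Lemma~\ref{lem: specp} part~\ref{part: all exsts} together with~\eqref{eq: zgn'}. Your two-step version ($HL\to HN\to G$) is just transitivity of induction rewritten, and the reduction to $N=N_{\max}$ is not needed: $N$, $H\cap N$, $Z(N)$ and $HN=Z_G(H\cap N)$ are already normal in $G$ for any special pair.

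Second --- and this is the one genuine soft spot --- your argument for independence of $(L,\hat\psi)$ is not a proof. Appealing to ``the Stone--von-Neumann mechanism'' via Lemma~\ref{lem: heisen} part~\ref{part: SvN} only says that each choice gives \emph{some} equivalence $\Reps_\psi(Z(N))\to\Reps_\psi(N)$; it does not furnish a natural isomorphism between the equivalences for different $(L,\hat\psi)$ (cf.\ the Remark after Lemma~\ref{lem: heisen}). Likewise, your attempt to pin down $\LInd^{G,\chi'}$ via part~\ref{part: indcmp} only determines it ``up to dividing by $d$'', which is not canonical on non-semisimple objects. The paper closes this gap by writing down an explicit intertwining operator
\[
T_{L,L'}:\Ind_{LH}^G\ex{\tau}{\theta}\rightarrow\Ind_{L'H}^G\ex{\tau}{\theta'},\qquad f\mapsto\sum_{\gamma\in (L\cap L')\bs L'}\ex{\tau}{\theta'}(\gamma)^{-1}f(\gamma\,\cdot),
\]
(for $\theta,\theta'$ consistent) and computing $T_{L',L}\circ T_{L,L'}=\#((L\cap L')\bs L)\cdot\id$; independence of $\theta$ for fixed $L$ is handled separately by the $N$-transitivity on extensions of $\chi\psi$. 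This short calculation is the actual content behind ``choice-independence'', and you should supply it. The remaining parts (\ref{part: contr}--\ref{part: corres}) are as you say; for part~\ref{part: fd} the paper also works with the two-step chain $H\to HL\to G$ and the measure identities \eqref{eq: 1iota2}--\eqref{eq: 1p2}, so your ``tracking the indices'' is on the right track but will require those identities explicitly.
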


\begin{proof}
The functor $\LInd_{H,\psi}^{G,\chi}$ is defined using a choice of a Lagrangian subgroup $L$ of $N$ and a character $\theta$ of $L$
that extends the character $\chi\psi$ of $Z(N)=(N\cap Z(G))(N\cap H)$.

Note that by Lemma \ref{lem: specp} part \ref{part: hnzhzn}, we have $L\cap H=N\cap H=Z(N)\cap H$ and $Z(LH)\supset L$.
By Lemma \ref{lem: centext}, we have an equivalence (and in fact, an isomorphism) of categories
\[
\Reps_\psi(H)\rightarrow\Reps_{\theta}(LH),\ \ \tau\mapsto\ex{\tau}{\theta}.
\]

On the other hand, since $\theta$ is genuine, $G_\theta=Z_G(L)=Z_N(L)H=LH$ by Lemma \ref{lem: specp} part \ref{part: all exsts}
and \eqref{eq: zgn'} since $L$ is Lagrangian.
Therefore, by Lemma \ref{lem: distcent} and \eqref{eq: repspsichi} we have an equivalence of categories
\[
\Ind_{LH}^G:\Reps_{\theta}(LH)\rightarrow\Reps_{\theta}(G)=\Reps_{\chi\psi}(G)=\Reps_\chi(G).
\]
The sought-after functor is the composition of the two equivalences above:
\[
\LInd_{H,\psi}^{G,\chi}\tau=\Ind_{LH}^G\ex{\tau}{\theta}.
\]
Recall that by Lemma \ref{lem: heisen} part \ref{part: st}, $N$  acts transitively (by conjugation) on the set of characters of $L$ extending $\chi\psi$.
Since
\[
\Ind_{LH}^G\ex{\tau}{\theta}\simeq\Ind_{LH}^G(\ex{\tau}{\theta})^g=\Ind_{LH}^G\ex{\tau}{\theta^g},\ \ g\in N
\]
we infer that up to a natural equivalence, $\LInd_{H,\psi}^{G,\chi}$ is independent of the choice of $\theta$.

To show independence of $L$, suppose that $L'$ is another Lagrangian subgroup of $N$ and
$\theta'$ is a character of $L'$ extending $\chi\psi$.
By the above, we may assume without loss of generality that $\theta'$ is consistent with $\theta$.
In this case, $\ex{\tau}{\theta}(\gamma)=\ex{\tau}{\theta'}(\gamma)$ for every $\gamma\in L\cap L'$, and
\[
T_{L,L'}:\Ind_{LH}^G\ex{\tau}{\theta}\rightarrow\Ind_{L'H}^G\ex{\tau}{\theta'},\ \
f\mapsto\sum_{\gamma\in (L\cap L')\bs L'}\ex{\tau}{\theta'}(\gamma)^{-1}(f(\gamma\cdot))
\]
defines an intertwining operator.
Moreover, it is clear from the definition \eqref{def: tau[chi]} that the operators $\ex{\tau}{\theta}(\gamma)$, $\gamma\in L$
and $\ex{\tau}{\theta'}(\gamma')$, $\gamma'\in L'$ pairwise commute. Therefore,
\begin{gather*}
T_{L',L}\circ T_{L,L'}f(g)=\sum_{\gamma'\in (L\cap L')\bs L'}\ex{\tau}{\theta'}(\gamma')^{-1}
\sum_{\gamma\in (L\cap L')\bs L}\ex{\tau}{\theta}(\gamma)^{-1}f(\gamma'\gamma g)\\=
\sum_{\gamma'\in (L\cap L')\bs L'}\ex{\tau}{\theta'}(\gamma')^{-1}
\sum_{\gamma\in (L\cap L')\bs L}\psi([\gamma,\gamma'])^{-1}f(\gamma'g)=
\#((L\cap L')\bs L)f(g).
\end{gather*}
Thus, up to natural equivalence, $\LInd_{H,\psi}^{G,\chi}$ is independent of the choice of $L$.

Finally, using \eqref{eq: totind},
\[
\Ind_H^G\tau=\Ind_{LH}^G\Ind_H^{LH}\tau=\oplus_{\chi'}\Ind_{LH}^G\ex{\tau}{\chi'}
\]
where $\chi'$ ranges over the characters of $L$ extending $\psi$.
As we have already noted, up to isomorphism $\Ind_{LH}^G\ex{\tau}{\chi'}$ depends only on the restriction of $\chi'$ to $Z(N)$.
Part \ref{part: indcmp} follows since $d=[L:Z(N)]$.

Part \ref{part: resdcmp} is similar:
an inverse $\LRes_{H,N}^{G,\chi}$ is given by $\Res_H^{LH}(\Res_{LH}^G\pi)^{(\theta)}$.

Part \ref{part: contr} is clear from \eqref{eq: contrind}, \eqref{eq: tauxhicont} and the construction.

The first statement of part \ref{part: equconj} is straightforward.
The other statement follows from Lemma \ref{lem: specp} part \ref{part: all exsts}.

The relation \eqref{eq: twistomega} is straightforward and it implies the rest of part \ref{part: twistw}.
Part \ref{part: twistab} follows from part \ref{part: twistw} and the fact that every character of $Z_N(G)/Z_{N\cap H}(G)$ can be extended to a character of $\Gamma$
(since $\Gamma$ is abelian by assumption).

Part \ref{part: corres} follows from parts \ref{part: equconj} and \ref{part: twistab}.

Before proving part \ref{part: fd} we make a straightforward remark.
\begin{subequations}
Suppose that we have a commutative triangle of $\ell$-groups
\[
\begin{tikzcd}
H_1\arrow[r,hook]\arrow[dr,twoheadrightarrow,"p'"]&H_2\arrow[d,twoheadrightarrow,"p"]\\
&G
\end{tikzcd}
\]
with $[H_2:H_1]<\infty$. Then, for a Haar measure $dh_2$ on $H_2$ we have
\begin{equation} \label{eq: 1iota2}
p'_*(dh_2\rest_{H_1})=[H_2:H_1]^{-1}\cdot p_*(dh_2) =[\Ker p:\Ker p']^{-1}\cdot p_*(dh_2).
\end{equation}
Dually, suppose that we have a commutative triangle of $\ell$-groups
\[
\begin{tikzcd}
G_1\arrow[r,twoheadrightarrow,"q"]&G_2\\
H\arrow[u,hook]\arrow[ur,hook]
\end{tikzcd}
\]
with finite $\Ker q$. Then, for a Haar measure $dg_1$ on $G_1$ we have
\begin{equation} \label{eq: 1p2}
(q_*(dg_1))\rest_H=(\#\Ker q)\cdot dg_1\rest_H.
\end{equation}
\end{subequations}

Suppose now that $N\cap H$ is cocompact in $Z(H)$ and let $\tau\in\Irr_\psi(H)$ and $\pi=\LInd_{H,\psi,N}^{G,\chi}\tau$.
Note that $Z_N(G)$ is cocompact in $Z(G)$ since by \eqref{eq: zgzngzhg}, $Z_N(G)\bs Z(G)\simeq Z_{N\cap H}(G)\bs Z_H(G)$ is a closed subgroup
of $(N\cap H)\bs Z(H)$.
Therefore, by Lemma \ref{lem: distcent} and Lemma \ref{lem: centext} part \ref{part: sqrintext}
the essential square-integrability of $\tau$, $\ex{\tau}{\theta}$ and $\pi$ are equivalent.
Moreover, in this case $d_{\ex{\tau}{\theta}}^{L\bs HL}=d_\tau^{(L\cap H)\bs H}$ under the isomorphism $L\bs HL\simeq (L\cap H)\bs H=(N\cap H)\bs H$
while $d_{\ex{\tau}{\theta}}^{Z_L(G)\bs HL}=d_\pi^{Z_L(G)\bs G}\rest_{Z_L(G)\bs HL}$.
Note that $Z_L(G)=Z_N(G)$.
Applying the relation \eqref{eq: 1iota2} to
\[
\begin{tikzcd}
HL/Z_N(G)\arrow[r,hook]\arrow[dr,twoheadrightarrow,"p'"]&HN/Z_N(G)\arrow[d,twoheadrightarrow,"p"]\\
&HN/N=HL/L
\end{tikzcd}
\]
we obtain \eqref{eq: fdlind1} since $[HN:HL]=[N:L]=d$. In order to deduce \eqref{eq: fdlind2}, consider the commutative diagram
\[
\begin{tikzcd}
G/Z_{N\cap H}(G)\arrow[r,twoheadrightarrow,"q"]&G/Z_N(G)\\
H/Z_{N\cap H}(G)\arrow[u,hook]\arrow[r,hook]\arrow[ur,hook]\arrow[rd,twoheadrightarrow,"p''"]&
HN/Z_N(G)\arrow[u,hook]\arrow[d,twoheadrightarrow,"p"]\\
&H/(N\cap H)=HN/N
\end{tikzcd}
\]
Using \eqref{eq: 1iota2} and \eqref{eq: 1p2}, for any Haar measure $dg$ on $G/Z_{N\cap H}(G)$ we have
\begin{gather*}
p_*((q_*(dg))\rest_{HN/Z_N(G)})=
[HN:Z_N(G)H]\cdot p''_*((q_*(dg))\rest_{H/Z_{N\cap H}(G)})=\\
[N:Z(N)]\cdot p''_*((q_*(dg))\rest_{H/Z_{N\cap H}(G)})=
[N:Z(N)][Z_N(G):Z_{H\cap N}(G)]\cdot p''_*(dg\rest_{H/Z_{H\cap N}(G)}).
\end{gather*}
Note that
\[
d^2=[N:Z(N)]=[N:Z_N(G)(H\cap N)]=\frac{[N:N\cap H]}{[Z_N(G)(N\cap H):N\cap H]}=
\frac{[N:N\cap H]}{[Z_N(G):Z_{N\cap H}(G)]}.
\]
The relation \eqref{eq: fdlind2} therefore follows from \eqref{eq: fdlind1}. The proof of the proposition is complete.
\end{proof}

\begin{remark} \label{rem: ggorbit}
Let $\Gamma'=Z_N(G)/Z_{N\cap H}(G)=Z_N(G)H/H\le\Gamma$.
Suppose that $\Gamma$ is abelian, so that the restriction map $\PD{\Gamma}\rightarrow\PD{\Gamma'}$ is surjective.
Let $\pi$ be a genuine irreducible representation of $G$ and $\omega\in\PD{\Gamma}$.
Then, up to isomorphism the twist of $\pi$ by $\omega$ depends only on the restriction of $\omega$ to $\Gamma'$.
Thus, we will use the notation $\pi\cdot\omega$ for $\omega\in\PD{\Gamma'}$.
The $\PD{\Gamma}$-orbit of $\pi$ in $\Irr(G)$ coincides with its $\PD{\Gamma'}$-orbit.
\end{remark}

\begin{remark}
As was pointed out above, up to natural equivalence the functor $\LInd_{H,\psi}^{G,\chi}$ does not depend on the choice of
a pair $(L,\theta)$ consisting of a Lagrangian $L$ of $N$ and a character $\theta$ of $L$ extending $\chi\psi$.
Although it will not be consequential for the purpose of this paper, it would be desirable to have a \emph{canonical} functor.
A natural way to do that would be to define for any two pairs $(L_i,\theta_i)$, $i=1,2$ as above (possibly with additional data)
a functorial isomorphism
\[
T_{(L_1,\theta_1)}^{(L_2,\theta_2)}:\Ind_{L_1H}^G\ex{\tau}{\theta_1}\rightarrow
\Ind_{L_2H}^G\ex{\tau}{\theta_2},\ \ \tau\in\Reps_\psi(H)
\]
such that for any three pairs $(L_i,\theta_i)$, $i=1,2,3$ we have
\[
T_{(L_1,\theta_1)}^{(L_3,\theta_3)}=T_{(L_2,\theta_2)}^{(L_3,\theta_3)}\circ T_{(L_1,\theta_1)}^{(L_2,\theta_2)}.
\]
A closely related problem was considered in \cite{MR2801175} (and the references therein), although the setup of [ibid.] unfortunately
excludes groups with nonabelian $2$-part.
\end{remark}

\subsection{Compatibility}

The following lemma is elementary.
\begin{lemma} \label{lem: diam}
Let $D$ be an abelian $\ell$-group. Let $A_1,A_2,B_1,B_2$ be subgroups of $D$ such that
\[
A_2\le A_1,\ B_2\le B_1,\ D=A_1B_2=A_2B_1,\ A_1\cap B_2=A_2\cap B_1(=A_2\cap B_2),.
\]
Let $\psi_2$ and $\chi_2$ be consistent characters of $A_2$ and $B_2$.
Denote by $X$ (resp., $Y$) the set of extensions of $\psi_2$ (resp., $\chi_2$) to a character of $A_1$ (resp., $B_1$).
Let $Z$ be the set of pairs $(\psi_1,\chi_1)\in X\times Y$ of consistent characters.
Then,
\begin{enumerate}
\item The set $Z$ is in bijection with set of extensions of $\chi_2\psi_2$ to a character of $D$.
\item The set $Z$ is the graph of a bijection between $X$ and $Y$.
\end{enumerate}
\end{lemma}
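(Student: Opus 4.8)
The plan is to prove part (1) by a direct ``gluing'' of characters and then to deduce part (2) from it (or, equivalently, to run the same gluing argument in one variable). First I would record the elementary consequences of the hypotheses that do all the work. Since $A_2\le A_1$ and $B_2\le B_1$, the relations $D=A_1B_2=A_2B_1$ upgrade to $D=A_1B_1=A_1B_2=A_2B_1$. From $A_2\cap B_2\subseteq A_1\cap B_2$, $A_2\cap B_2\subseteq A_2\cap B_1$ and the hypothesis $A_1\cap B_2=A_2\cap B_1$ we get $A_1\cap B_2=A_2\cap B_1=A_2\cap B_2$. Writing any $b\in B_1$ as $b=ab_2$ with $a\in A_1$, $b_2\in B_2$ (possible since $D=A_1B_2$) gives $a=bb_2^{-1}\in B_1$, hence $B_1=(A_1\cap B_1)B_2$, and symmetrically $A_1=(A_1\cap B_1)A_2$. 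The same manipulation shows $A_1\cap A_2B_2=A_2$, which is convenient for the alternative torsor argument below.

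For part (1): since $D$ is abelian and $D=A_1B_1$, a consistent pair $(\psi_1,\chi_1)\in Z$ produces a well-defined character $\psi_1\chi_1$ of $D$ by $(\psi_1\chi_1)(ab)=\psi_1(a)\chi_1(b)$, well-definedness being exactly the consistency of $\psi_1,\chi_1$ on $A_1\cap B_1$; this character is locally constant, restricts to $\psi_2$ on $A_2$ and to $\chi_2$ on $B_2$, hence extends $\chi_2\psi_2$. Conversely, restricting an extension $\Lambda$ of $\chi_2\psi_2$ to $A_1$ and to $B_1$ yields a pair in $X\times Y$ that is consistent because both components are restrictions of $\Lambda$. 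Using $D=A_1B_1$ to recover $\Lambda$ from its restrictions shows these two assignments are mutually inverse, which is the asserted bijection.

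For part (2): I would show that the projection $Z\to X$, $(\psi_1,\chi_1)\mapsto\psi_1$, is a bijection (and by the $A\leftrightarrow B$ symmetry so is $Z\to Y$), which is precisely the statement that $Z$ is the graph of a bijection $X\leftrightarrow Y$. For injectivity, if $(\psi_1,\chi_1),(\psi_1,\chi_1')\in Z$ then $\chi_1$ and $\chi_1'$ agree with $\psi_1$ on $A_1\cap B_1$ and with $\chi_2$ on $B_2$, hence agree on $B_1=(A_1\cap B_1)B_2$, so $\chi_1=\chi_1'$. For surjectivity, given $\psi_1\in X$ define $\chi_1$ on $B_1=(A_1\cap B_1)B_2$ by $\chi_1(ab_2)=\psi_1(a)\chi_2(b_2)$ for $a\in A_1\cap B_1$, $b_2\in B_2$; if $ab_2=a'b_2'$ then $c:=a(a')^{-1}=b_2'b_2^{-1}$ lies in $(A_1\cap B_1)\cap B_2=A_1\cap B_2=A_2\cap B_2$, where $\psi_1=\psi_2=\chi_2$ by consistency, so the formula is well-defined; it is a locally constant homomorphism extending $\chi_2$ and agreeing with $\psi_1$ on $A_1\cap B_1$, so $(\psi_1,\chi_1)\in Z$.

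The main obstacle --- really the only point that is not pure bookkeeping --- is the well-definedness of the glued character in the surjectivity step: it is exactly there that the hypothesis $A_1\cap B_2=A_2\cap B_2$ enters, together with the consistency of $\psi_2$ and $\chi_2$ on $A_2\cap B_2$. All characters produced are automatically locally constant, being built by restriction and pointwise multiplication from locally constant homomorphisms, so no topological subtleties arise. Alternatively, part (2) follows formally from part (1): extensions of a character of a closed subgroup $E\le D$ to $D$ form a torsor under $\PD{D/E}$, and since $A_1\cap A_2B_2=A_2$ and $D=A_1\cdot A_2B_2$ we have $A_1/A_2\cong D/A_2B_2$, so restriction from $D$ to $A_1$ is an equivariant map of torsors over isomorphic groups, hence a bijection from the extensions of $\chi_2\psi_2$ to $D$ onto the extensions of $\psi_2$ to $A_1$ --- and under the identification of part (1) this composite is the projection $Z\to X$.
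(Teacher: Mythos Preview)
Your proof is correct and complete. The paper itself does not supply a proof of this lemma at all: it simply introduces it with ``The following lemma is elementary'' and moves on, so there is nothing to compare against beyond noting that your argument is exactly the kind of routine gluing-and-torsor verification the authors had in mind when they omitted it.
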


We continue to assume that $G$ is an $A$-covering of $\bas{G}$.
We have the following compatibility result.

\begin{lemma} \label{lem: compn1n2}
Suppose that $(H,N_1)$ and $(H,N_2)$ are two special pairs in $G$ such that $N_2\le N_1$. Then,
\begin{enumerate}
\item $N_1H=N_2H$, so that $N_1=(N_1\cap H)N_2$.
\item \label{part: zn1n1hzn2} $Z(N_1)=(N_2\cap Z(G))(N_1\cap H)=(N_1\cap Z(G))(N_2\cap H)$.
\item \label{part: zn1gzng2nhg} $Z_{N_1}(G)=Z_{N_2}(G)Z_{H\cap N_1}(G)$.
\item \label{part: n1n21}
Let $\psi_1$ and $\chi_1$ be consistent genuine characters of $N_1\cap H$ and $N_1\cap Z(G)$.
Let $\psi_2$ (resp., $\chi_2$) be the restriction of $\psi_1$ (resp., $\chi_1$) to $N_2\cap H$ (resp., $N_2\cap Z(G)$).
Then, $\LInd_{H,\psi_1,N_1}^{G,\chi_1}$ is the restriction of $\LInd_{H,\psi_2,N_2}^{G,\chi_2}$
to $\Reps_{\psi_1}(H)$.
\item \label{part: chi12}
Let $\psi_2$ and $\chi_2$ be consistent genuine characters of $N_2\cap H$ and $N_2\cap Z(G)$.
Denote by $X$ (resp., $Y$) the set of extensions of $\psi_2$ (resp., $\chi_2$) to a character of $N_1\cap H$ (resp., $N_1\cap Z(G)$).
Then, the set $Z$ of pairs $(\psi_1,\chi_1)\in X\times Y$ of consistent characters (which is in bijection with set of extensions
of $\chi_2\psi_2$ to a character of $Z(N_1)$) is the graph of a bijection between $X$ and $Y$.
Moreover, under the decompositions
\[
\Reps_{\psi_2}(H)=\oplus_{\psi_1\in X}\Reps_{\psi_1}(H),\ \ \Reps_{\chi_2}(G)=\oplus_{\chi_1\in Y}\Reps_{\chi_1}(G),
\]
(cf.\ \eqref{eq: extpsi}) we have
\[
\LInd_{H,\psi_2,N_2}^{G,\chi_2}=\oplus_{(\psi_1,\chi_1)\in Z}\LInd_{H,\psi_1,N_1}^{G,\chi_1}:\oplus_{\psi_1\in X}\Reps_{\psi_1}(H)
\rightarrow\oplus_{\chi_1\in Y}\Reps_{\chi_1}(G).
\]
\end{enumerate}
\end{lemma}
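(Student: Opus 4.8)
The plan is to prove (1)--(5) in turn; the group-theoretic identities (1)--(3) feed the functorial statements (4)--(5). Set $A_i=N_i\cap Z(G)=Z_{N_i}(G)$ and $B_i=N_i\cap H$. Then $B_i\le Z(N_i)$ by Lemma \ref{lem: specp} part \ref{part: hnzhzn}, $A_i\le Z(G)\le Z(N_i)$, $Z(N_i)=A_iB_i$ by \eqref{eq: znznghn}, and $[N_i,G]\subset A$ because $\pr(N_i)\le Z(\bas{G})$.

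For (1): $N_iH=Z_G(N_i\cap H)$ by \eqref{eq: zgzhnh}, so $N_2\cap H\le N_1\cap H$ forces $N_1H\le N_2H$; hence $N_1H=N_2H$ and $N_1=N_1\cap(N_2H)=N_2(N_1\cap H)$ by the modular law. The one genuinely nontrivial extra input I would establish is the identity $N_1=N_2A_1$. First $Z_G(N_1)=Z_G(N_2)$: one inclusion is clear, and conversely $Z_G(N_1)=Z_G(N_2)\cap Z_G(N_1\cap H)=Z_G(N_2)\cap N_1H$ by (1) and \eqref{eq: zgzhnh}, while $Z_G(N_2)=Z(N_2)H\subset N_2H=N_1H$ by \eqref{eq: zgnznh}. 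Then Lemma \ref{lem: specp} part \ref{part: commd} gives perfect commutator pairings $G/Z_G(N_i)\times N_i/A_i\to A$ with the same left factor $G/Z_G(N_1)$, under which each $N_i/A_i$ is identified with $\PD{G/Z_G(N_1)}$; the inclusion-induced map $N_2/A_2\to N_1/A_1$ (injective since $N_2\cap A_1=A_2$) is compatible with these pairings, hence an isomorphism, so $N_1=N_2A_1$. Granting $N_1=N_2A_1=N_2B_1$ and the easy observation $N_2\cap Z(N_1)=Z(N_2)$, parts (2), (3) are bookkeeping: for $z\in Z(N_1)$, writing $z=n_2b_1\in N_2B_1$ gives $n_2=zb_1^{-1}\in N_2\cap Z(N_1)=Z(N_2)=A_2B_2$, so $Z(N_1)=A_2B_1$; writing instead $z=n_2a_1\in N_2A_1$ gives $Z(N_1)=A_1B_2$; and $A_2\le A_1\le Z(N_1)=A_2B_1$ yields $A_1=A_2(A_1\cap B_1)=A_2Z_{N_1\cap H}(G)$ by the modular law, which is (3). (The reverse inclusions in (2) are immediate.)

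For (4), recall from Proposition \ref{prop: lind} that $\LInd_{H,\psi_i,N_i}^{G,\chi_i}$ is, up to natural equivalence, $\tau\mapsto\Ind_{L_iH}^G\ex{\tau}{\theta_i}$ for any Lagrangian $L_i$ of $N_i$ and any character $\theta_i$ of $L_i$ extending $\chi_i\psi_i$ on $Z(N_i)$. I would fix such $(L_2,\theta_2)$ and set $L_1:=L_2(N_1\cap H)=L_2B_1$. Using (1), (2) one checks that $L_1$ is a Lagrangian of $N_1$: it is abelian (its factors are abelian and commute, as $L_2\le N_2\le Z_G(H)$), it contains $Z(N_1)=A_2B_1\subset L_2B_1$, and $Z_{N_1}(L_1)=L_1$ (if $n_1=n_2b_1\in N_2B_1$ centralizes $L_1$, then $n_2=n_1b_1^{-1}$ centralizes $L_2$, so $n_2\in Z_{N_2}(L_2)=L_2$ and $n_1\in L_1$). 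Next set $\theta_1:=\theta_2\cdot\psi_1$ on $L_1=L_2B_1$: this is well defined since $L_2\cap B_1=B_2$ and $\theta_2\rest_{B_2}=\psi_2=\psi_1\rest_{B_2}$, it restricts to $\chi_1\psi_1$ on $Z(N_1)=A_2B_1$ (since $\theta_2\rest_{A_2}=\chi_2=\chi_1\rest_{A_2}$ and $\theta_1\rest_{B_1}=\psi_1$), and crucially $\theta_1\rest_{L_2}=\theta_2$. As $L_1H=L_2H$, for $\tau\in\Reps_{\psi_1}(H)\subset\Reps_{\psi_2}(H)$ the extension $\ex{\tau}{\theta_1}$ of $\tau$ to $L_1H$ is locally-$\theta_1$, hence locally-$\theta_2$, hence equals $\ex{\tau}{\theta_2}$ by the uniqueness of the extension in Lemma \ref{lem: centext} part \ref{part: ext}; therefore $\LInd_{H,\psi_1,N_1}^{G,\chi_1}\tau=\Ind_{L_1H}^G\ex{\tau}{\theta_1}=\Ind_{L_2H}^G\ex{\tau}{\theta_2}=\LInd_{H,\psi_2,N_2}^{G,\chi_2}\tau$, and the same identification of extensions gives agreement on morphisms, proving (4).

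Finally, (5): I would apply Lemma \ref{lem: diam} with $D=Z(N_1)$ and with the pair $(N_i\cap H,\,N_i\cap Z(G))$ playing the role of $(A_i,B_i)$ there; its hypotheses follow from (2) together with the fact that all three relevant pairwise intersections equal $N_2\cap H\cap Z(G)$ (which holds because $N_2\le N_1$), so the first assertion of (5) is Lemma \ref{lem: diam}. The displayed decomposition then follows from (4), applied to each consistent pair $(\psi_1,\chi_1)\in Z$, together with the decompositions \eqref{eq: extpsi} of $\Reps_{\psi_2}(H)$ and $\Reps_{\chi_2}(G)$: by (4) the restriction of $\LInd_{H,\psi_2,N_2}^{G,\chi_2}$ to $\Reps_{\psi_1}(H)$ is $\LInd_{H,\psi_1,N_1}^{G,\chi_1}$, which lands in $\Reps_{\chi_1}(G)$, and as $(\psi_1,\chi_1)$ runs over the bijection $X\to Y$ we conclude that $\LInd_{H,\psi_2,N_2}^{G,\chi_2}$ is the direct sum of the $\LInd_{H,\psi_1,N_1}^{G,\chi_1}$. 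I expect the main obstacle to be the group theory of (1)--(3) — in particular the identity $N_1=N_2(N_1\cap Z(G))$, the single point that genuinely needs the perfect commutator pairing rather than the modular law — and, within (4), verifying that $L_2(N_1\cap H)$ is a Lagrangian of $N_1$ carrying an extension $\theta_1$ of $\theta_2$.
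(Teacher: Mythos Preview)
Your proof is correct and follows essentially the same route as the paper's: the group-theoretic identities (1)--(3) via the commutator pairings of Lemma~\ref{lem: specp}, then (4) via a compatible choice of Lagrangians $L_2\subset L_1$ with $HL_1=HL_2$ so that $\ex{\tau}{\theta_1}=\ex{\tau}{\theta_2}$, and finally (5) via Lemma~\ref{lem: diam} and (4). The only cosmetic differences are that the paper obtains the second equality in (2) from \eqref{eq: pdgnh} (comparing $(N_i\cap H)/Z_{N_i\cap H}(G)$) rather than from your identity $N_1=N_2(N_1\cap Z(G))$, and in (4) it takes $L_1$ to be any Lagrangian of $N_1$ containing $L_2$ (which is forced to equal your explicit $L_2(N_1\cap H)$) and defines $\theta_2$ as the restriction of a chosen $\theta_1$ rather than extending in the other direction.
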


\begin{remark}
It follows from Lemma \ref{lem: compn1n2}, together with Lemma \ref{lem: specp} part \ref{part: nmax},
that for any special pair $(H,N)$, we can recover $\LInd_{H,\psi,N}^{G,\chi}$
from $\LInd_{H,\psi',N_{\max}}^{G,\chi'}$ as we vary over
the consistent characters $\psi'$ and $\chi'$ of $N_{\max}\cap H$ and $Z(G)$ extending $\psi$ and $\chi$.
For this reason, we often suppress the subgroup $N$ from the notation $\LInd_{H,\psi,N}^{G,\chi}$.
(In any case, the subgroups $N\cap H$ and $N\cap Z(G)$, and consequently $NH$ and $Z(N)$, are recovered from
the domains of $\psi$ and $\chi$.)
\end{remark}

\begin{proof}
First note that using \eqref{eq: znznghn},
\[
Z(N_2)=(N_2\cap H)Z_{N_2}(G)\le (N_1\cap H)Z_{N_1}(G)=Z(N_1).
\]
Also, since $N_1\le Z_G(H\cap N_2)=N_2H$, we have $N_1=N_2(N_1\cap H)$.
Together with \eqref{eq: znznghn} (for $N_2$) it follows that
\[
Z(N_1)=Z(N_2)(N_1\cap H)=(N_2\cap Z(G))(N_1\cap H).
\]
By \eqref{eq: pdgnh} we also have
\[
(N_1\cap H)/Z_{N_1\cap H}(G)\simeq (N_2\cap H)/Z_{N_2\cap H}(G)
\]
since $HN_1=HN_2$. Hence, $N_1\cap H\le (N_2\cap H)(N_1\cap Z(G))$.
Part \ref{part: zn1n1hzn2} follows once again from \eqref{eq: znznghn} (applied to $N_1$).

Part \ref{part: zn1gzng2nhg} follows from the relation $Z(G)=Z_{N_2}(G)Z_H(G)$ \eqref{eq: zgzngzhg}.

Let $L_2$ be a Lagrangian subgroup of $N_2$ and let $L_1$ be a Lagrangian subgroup of $N_1$ containing $L_2$.
By \eqref{eq: zgn'} we have
\[
L_1\le Z_G(L_2)=Z_{N_2}(L_2)H=L_2H.
\]
Hence, $HL_1=HL_2$.

Let $\chi_1$ and $\psi_1$ be consistent characters of $N_1\cap Z(G)$ and $N_1\cap H$.
Let $\theta_1$ be a character of $L_1$ extending the character $\chi_1\psi_1$ of $Z(N_1)$.
Let $\theta_2$ be the restriction of $\theta_1$ to $L_2$.
Then, for any $\tau\in\Reps_{\psi_1}(H)$ we have
\[
\ex{\tau}{\theta_1}=\ex{\tau}{\theta_2}
\]
as representations of $HL_1=HL_2$.
Part \ref{part: n1n21} now follows from the definition of $\LInd$.

Part \ref{part: chi12} follows from Lemma \ref{lem: diam} together with parts \ref{part: zn1n1hzn2} and \ref{part: n1n21}.
\end{proof}

Next, we study the relation between different special subgroups.

\begin{lemma} \label{lem: seesaw}
Let $(H_1,N_1)$, $(H_2,N_2)$ be two special pairs in $G$.
Assume that $H_1\le H_2$ and $N_2\le N_1$.

Let $N_1'=N_1\cap H_2$, $N_{1,2}'=N_1'N_2$, $N_1''=Z_{N_1}(H_2)=N_1\cap Z(H_2)$, $N_{1,2}''=N_1''N_2$.
Then,
\begin{enumerate}
\item \label{part: n1'n2} The groups $N_1'$ and $N_2$ commute and their intersection is $N_2\cap H_2=Z(N_1')\cap Z(N_2)$.
\item \label{part: spec1} $(H_1,N_1')$ is a special pair in $H_2$ with $H_1\cap N_1'=H_1\cap N_1$ and
$Z(N'_1)=N''_1(N_1\cap H_1)$.
\item \label{part: spec2} $(H_2,N_{1,2}'')$ is a special pair in $G$ with
\begin{subequations}
\begin{equation} \label{eq: n1''}
H_2\cap N_{1,2}''=N''_1=Z(H_2)\cap N'_1=Z_{N_1''}(G)(N_2\cap H_2)=Z_{N_1'}(G)(N_2\cap H_2)
\end{equation}
and
\begin{equation} \label{eq: zn12''}
Z_{N_{1,2}''}(G)=Z_{N_1}(G)=Z_{N_2}(G)Z_{N_1'}(G).
\end{equation}
\item \label{part: cois} $N'_{1,2}$ is coisotropic in $N_1$, i.e., $Z_{N_1}(N'_{1,2})=Z(N'_{1,2})$. Moreover,
\begin{equation} \label{eq: zn12'}
Z(N_{1,2}')=Z(N_1')Z(N_2)=Z(N_1)(N_2\cap H_2).
\end{equation}
\end{subequations}
\item \label{part: fnctrltn}
Let $\chi_1$ and $\psi_1$ be consistent genuine characters of $Z_{N_1}(G)$ and $N_1\cap H_1$.
Let $\psi_2$ be a character of $N_2\cap H_2$ that is consistent with the character $\chi_1\psi_1$ of $Z(N_1)=Z_{N_1}(G)(N_1\cap H_1)$.
Let $\chi'_1$ be the restriction of $\chi_1$ to $Z_{N'_1}(G)$ and let $\eta$ be the character $\chi'_1\psi_2$ of $N_1''$.
Then, we have a natural equivalence of functors
\[
\LInd_{H_1,\psi_1,N_1}^{G,\chi_1}=\LInd_{H_2,\eta,N''_{1,2}}^{G,\chi_1}\circ\LInd_{H_1,\psi_1,N_1'}^{H_2,\eta}:
\Reps_{\psi_1}(H_1)\rightarrow\Reps_{\chi_1}(G)
\]
Moreover, $\LInd_{H_2,\eta,N''_{1,2}}^{G,\chi_1}$ is the restriction of $\LInd_{H_2,\psi_2,N_2}^{G,\chi_2}$ to $\Reps_{\eta}(H_2)$, where
$\chi_2$ is the restriction of $\chi_1$ to $Z_{N_2}(G)$.
\item \label{part: n2h2zn1s} Suppose that $Z_{N_1'}(G)\le H_1$.
Then, $Z_{N_2\cap H_2}(N_1)=N_2\cap H_1$.
Therefore, if $\psi_i$ are consistent genuine characters of $H_i\cap N_i$, $i=1,2$ and
$\chi_1$ is a character of $Z_{N_1}(G)$ that is consistent with $\psi_1$,
then $\psi_2$ is consistent with $\chi_1\psi_1$. Moreover, in this case the character $\eta$ above
is the restriction of $\psi_1\psi_2$ to $N_1''=Z_{N_1'}(G)(N_2\cap H_2)\le(N_1\cap H_1)(N_2\cap H_2)$.
\end{enumerate}
\end{lemma}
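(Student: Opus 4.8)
The plan is to verify the assertions roughly in the order they are stated, reducing each to the structural identities for special pairs collected in Lemma \ref{lem: specp}, and then to use the transitivity properties of Lagrangian induction (Proposition \ref{prop: lind} and Lemma \ref{lem: compn1n2}) for the functorial statement in part \ref{part: fnctrltn}.

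First, for parts \ref{part: n1'n2}--\ref{part: cois} I would set up the basic commutation facts. Since $(H_2, N_1\cap H_2)$ and $(H_2, N_2)$ are special pairs in $G$ by Lemma \ref{lem: specp} part \ref{part: hered} (restricting $N_1$ and $N_2$, which both centralize $H_2\supseteq H_1$... wait, one must check $N_2$ centralizes $H_2$; this holds because $N_2\le N_1$ and we are given $(H_2,N_2)$ special, so $N_2\le Z_G(H_2)$). The group $N_1' = N_1\cap H_2$ lies in $Z_G(H_2)\cap H_2$ only after checking $N_1\le Z_G(H_2)$; in fact $N_1\le Z_G(H_1)$ from $(H_1,N_1)$ special, but for the seesaw we use that $N_1\cap H_2$ commutes with $N_2$ because $N_2\le Z_G(H_2)$ and $N_1'\le H_2$. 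Their intersection is computed via Lemma \ref{lem: specp} part \ref{part: hnzhzn}: $N_1'\cap N_2 = (N_1\cap N_2)\cap H_2 = (N_2)\cap H_2$ since $N_2\le N_1$, and this equals $Z(N_1')\cap Z(N_2)$ by the same part applied to the special pairs $(H_1,N_1')$ in $H_2$ and $(H_2,N_2)$ in $G$. For part \ref{part: spec1}, $(H_1,N_1')$ is special in $H_2$: apply Lemma \ref{lem: specp} part \ref{part: hered} to the subgroup $H_2\supseteq H_1$ of $G$, giving $(H_1, N_1\cap H_2)$ special in $H_2$; the equalities $H_1\cap N_1' = H_1\cap N_1$ and $Z(N_1') = N_1''(N_1\cap H_1)$ then follow from \eqref{eq: znznghn} (with $Z_{N_1'}(H_2) = N_1''$ playing the role of the center-of-$G$ part). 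Part \ref{part: spec2}: that $(H_2, N_{1,2}'')$ is special in $G$ follows since $N_1'' = Z_{N_1}(H_2)$ centralizes $H_2$, $N_2$ centralizes $H_2$, hence $N_{1,2}'' = N_1''N_2\le Z_G(H_2)$, and the condition \eqref{eq: zgzhnh} for $N_{1,2}''$ reduces to that for $N_2$ together with $N_1''\le Z_G(H_2)$; the chain of equalities \eqref{eq: n1''} and \eqref{eq: zn12''} are then assembled from \eqref{eq: zgzngzhg}, \eqref{eq: znznghn} and part \ref{part: zn1gzng2nhg} of Lemma \ref{lem: compn1n2} applied to $N_2\le N_1$. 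Part \ref{part: cois}: $N_{1,2}' = N_1'N_2$, and coisotropy $Z_{N_1}(N_{1,2}') = Z(N_{1,2}')$ follows from the non-degeneracy of the commutator pairing on $N_1/Z(N_1)$ (Lemma \ref{lem: heisen} part \ref{part: commiso}) together with \eqref{eq: dcent}; the identity \eqref{eq: zn12'} combines $Z(N_1')Z(N_2)$ with $Z(N_1)(N_2\cap H_2)$ via part \ref{part: zn1n1hzn2} of Lemma \ref{lem: compn1n2}.

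For part \ref{part: fnctrltn} the strategy is to choose Lagrangians compatibly. Pick a Lagrangian $L_1'$ of $N_1'$ in $H_2$ and a Lagrangian $L_1''\supseteq$ (the image of) $N_1''$... more precisely, choose a Lagrangian $L$ of $N_1$ adapted to the filtration $N_2\le N_{1,2}''\le N_1$: first a Lagrangian $L_2$ of $N_2$, extend inside $N_{1,2}''$ and then inside $N_1$, using that the commutator pairing on $N_1/Z(N_1)$ restricts compatibly (this is the standard "polarization step by step" in a symplectic-type space). With such an $L$, the representation $\ex{\tau}{\theta}$ for the big induction $\LInd_{H_1,\psi_1,N_1}^{G,\chi_1}$ factors through the two-step induction: $\Ind_{LH_1}^G = \Ind_{LH_2}^G\circ\Ind_{LH_1}^{LH_2}$, and one identifies $LH_2$-level data with $\LInd_{H_1,\psi_1,N_1'}^{H_2,\eta}$ on the inside and $\LInd_{H_2,\eta,N_{1,2}''}^{G,\chi_1}$ on the outside, where $\eta = \chi_1'\psi_2$; the consistency of $\eta$ with the relevant characters is exactly what parts \ref{part: spec1}--\ref{part: spec2} guarantee. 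The second assertion — that $\LInd_{H_2,\eta,N_{1,2}''}^{G,\chi_1}$ is the restriction of $\LInd_{H_2,\psi_2,N_2}^{G,\chi_2}$ — is precisely an instance of Lemma \ref{lem: compn1n2} part \ref{part: n1n21} applied to the pair $N_2\le N_{1,2}''$ of special subgroups of $G$ relative to $H_2$ (valid because $N_{1,2}''\subseteq Z_G(H_2)$ and contains $N_2$), once one checks $N_{1,2}''H_2 = N_2H_2$, which follows from $N_1''\le Z_G(H_2)\cap(\text{the }N_2\text{-closure})$, i.e. $N_1''\le N_2H_2$ by \eqref{eq: zgzhnh} for $N_2$. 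Part \ref{part: n2h2zn1s}: under the hypothesis $Z_{N_1'}(G)\le H_1$, compute $Z_{N_2\cap H_2}(N_1)$ using the commutator pairing — an element of $N_2\cap H_2$ centralizes $N_1$ iff it centralizes $N_1'$ and $N_1''$, and one teases out that this forces it into $N_2\cap H_1$; the remaining consistency assertions and the formula $\eta = (\psi_1\psi_2)\rest_{N_1''}$ are then bookkeeping on which characters agree on which intersections.

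The main obstacle is the "step-by-step polarization" in part \ref{part: fnctrltn}: one must produce a single Lagrangian $L$ of $N_1$ that simultaneously restricts to a Lagrangian of $N_2$, meshes with $N_{1,2}''$, and whose image in $H_2$ is a Lagrangian of $N_1'$, so that the three Lagrangian-induction functors literally compose on the nose (not merely up to the abstract equivalence of Proposition \ref{prop: lind}). This requires knowing that the commutator form behaves well under the coisotropy relation \eqref{eq: zn12'}, i.e. that $N_{1,2}'/Z(N_{1,2}')$ sits inside $N_1/Z(N_1)$ as a coisotropic subspace whose "radical-reduction" is $N_1'/Z(N_1')$ — this is exactly why part \ref{part: cois} is proved first. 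Once the Lagrangian is chosen this way, the identity of functors is the purely formal transitivity $\Ind_{LH_1}^G = \Ind_{LH_2}^G\circ\Ind_{LH_1}^{LH_2}$ together with the already-established independence-of-choices from Proposition \ref{prop: lind}, so no further genuine difficulty arises.
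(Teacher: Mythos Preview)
Your plan is broadly correct and follows the paper's route. Two points need correction.

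For part~\ref{part: fnctrltn}, your initial filtration $N_2\le N_{1,2}''\le N_1$ is not the scaffolding the paper uses (and it is not clear it would work); your final paragraph correctly switches to the coisotropic subgroup $N_{1,2}'$. Concretely, the paper takes $L_1'$ a Lagrangian of $N_1'$ and $L_2$ a Lagrangian of $N_2$ independently --- they commute by part~\ref{part: n1'n2} --- and sets $L_1=L_1'L_2$. One checks $L_1$ is Lagrangian in $N_{1,2}'$, hence in $N_1$ by coisotropy of $N_{1,2}'$; the paper proves that coisotropy directly via $Z_{N_1}(N_{1,2}')\le Z_{N_1}(N_2\cap H_2)=N_1\cap N_2H_2=N_{1,2}'$, using \eqref{eq: zgzhnh} for $(H_2,N_2)$, rather than the double-centralizer identity you cite. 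Since $L_1'\le H_2$ one has $H_2L_1=H_2L_2$ and $H_2\cap H_1L_1=H_1L_1'$, so $\Ind_{H_1L_1}^G=\Ind_{H_2L_2}^G\circ\Ind_{H_1L_1}^{H_2L_2}$ and the inner induction, restricted to $H_2$, is $\Ind_{H_1L_1'}^{H_2}$. The final clause is indeed Lemma~\ref{lem: compn1n2} applied to $N_2\le N_{1,2}''$ as special pairs for $H_2$, exactly as you say. (Your earlier invocation of Lemma~\ref{lem: compn1n2} for \eqref{eq: n1''}--\eqref{eq: zn12''} is premature, however: that lemma compares two $N$'s for the \emph{same} $H$, which you do not yet have; the paper derives those identities directly from Lemma~\ref{lem: specp}.)

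For part~\ref{part: n2h2zn1s}, your reduction is vacuous as stated: you claim an element of $N_2\cap H_2$ centralizes $N_1$ iff it centralizes $N_1'$ and $N_1''$, but $N_1''\le N_1'$ by \eqref{eq: n1''}, and every element of $N_2\cap H_2\le Z_G(H_2)$ already centralizes $N_1'\le H_2$, so your condition holds for all of $N_2\cap H_2$. The paper's argument is direct: since $N_2\cap H_2\le N_2\le N_1$ one has $Z_{N_2\cap H_2}(N_1)=N_2\cap H_2\cap Z(N_1)$; then $Z(N_1)=Z_{N_1}(G)(N_1\cap H_1)$ by \eqref{eq: znznghn}, so intersecting with $H_2$ gives $N_2\cap(N_1\cap H_1)Z_{N_1'}(G)$, which lies in $N_2\cap H_1$ under the hypothesis $Z_{N_1'}(G)\le H_1$.
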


\begin{proof}
Part \ref{part: n1'n2} is clear.

Part \ref{part: spec1} follows from \eqref{eq: hered} and \eqref{eq: znznghn}.

Clearly, $N''_{1,2}\le Z_G(H_2)$ and $\pr(N''_{1,2})\le Z(\bas{G})$ since $N''_{1,2}\le N_1$.
Also,
\[
Z_G(H_2\cap N''_{1,2})\le Z_G(N_2\cap H_2)=N_2H_2=N''_{1,2}H_2.
\]
Thus, $(H_2,N''_{1,2})$ is special.

The first equality in \eqref{eq: n1''} holds since $N_1''\le H_2$ and $H_2\cap N_2\le N_1''$.
The second equality is trivial.

Applying Lemma \ref{lem: specp} part \ref{part: commd} (with respect to $(H,N)=(H_1,N_1)$ and $N'=N_1''$) we have
\[
N_1''/Z_{N_1''}(G)\simeq\PD{G/Z_G(N_1'')}.
\]
Clearly, $N_2H_2\le Z_G(N_1'')$. On the other hand, since $N_1''\supset N_2\cap H_2$, we have $Z_G(N_1'')\le N_2H_2$.
Thus, $Z_G(N_1'')=N_2H_2$. Using \eqref{eq: pdgnh} (with respect to $(H,N)=(H_2,N_2)$) we conclude that
\[
N_1''/Z_{N_1''}(G)\simeq (N_2\cap H_2)/Z_{N_2\cap H_2}(G).
\]
Thus, $N_1''=Z_{N_1''}(G)(N_2\cap H_2)$. Finally, $Z_{N_1''}(G)=Z_{N_1'}(G)$.
This concludes the proof of \eqref{eq: n1''}.

To show \eqref{eq: zn12''}, recall that $Z(G)=Z_{N_2}(G)Z_{H_2}(G)$ by \eqref{eq: zgzngzhg}.
Hence,
\[
Z_{N_1}(G)=Z_{N_2}(G)Z_{N_1\cap H_2}(G)\le N''_{1,2}.
\]
This concludes the proof of part \ref{part: spec2}.

The first statement in part \ref{part: cois} holds since
\[
Z_{N_1}(N'_{1,2})\le Z_{N_1}(H_2\cap N_2)=N_1\cap Z_G(H_2\cap N_2)=N_1\cap N_2H_2=N'_{1,2}.
\]
The first equality in \eqref{eq: zn12'} holds since $N'_1$ and $N_2$ commute.
Clearly, $N_2\cap H_2\le Z(N_2)\le Z(N'_{1,2})$ and $Z(N_1)\le Z(N'_{1,2})$ by the above.
Conversely, $Z(N_2)=Z_{N_2}(G)(N_2\cap H_2)\le Z(N_1)(N_2\cap H_2)$ and by part \ref{part: spec1} and \eqref{eq: n1''},
\[
Z(N'_1)=N_1''(N_1\cap H_1)=Z_{N_1'}(G)(N_2\cap H_2)(N_1\cap H_1)\le Z(N_1)(N_2\cap H_2).
\]
Part \ref{part: cois} follows.

Let $L_1'$ be a Lagrangian subgroup of $N_1'$ and let $L_2$ be a Lagrangian subgroup of $N_2$.
(In particular, both $L_1'$ and $L_2$ contain $N_2\cap H_2$.)
Then, $L_1=L_1'L_2$ is a Lagrangian subgroup of $N'_{1,2}$, and hence of $N_1$ since $N'_{1,2}$ is coisotropic.
(Indeed, if $L$ is a Lagrangian subgroup of $N_1$ containing $L_1$, then
\[
L\le Z_{N_1}(L_1)\le Z_{N_1}(Z(N'_{1,2}))=Z_{N_1}(Z_{N_1}(N'_{1,2}))=N'_{1,2}
\]
by \eqref{eq: dcent}. Hence, $L=L_1$.)
Note that $H_1L_1\le H_2L_2$ and
\[
L_1\cap H_2=L_1'L_2\cap H_2=L_1'(L_2\cap H_2)=L_1'.
\]
Also, $L_1\supset Z(N'_{1,2})$.

Let $\chi=(\chi_1\psi_1)\cdot\psi_2$ be the character of $Z(N_{1,2}')=Z(N_1)(N_2\cap H_2)$ and let $\theta_1$ be an extension of $\chi$ to a character of $L_1$.
For any $\tau\in\Reps_{\psi_1}(H_1)$ consider
\[
\Ind_{H_1L_1}^G\ex{\tau}{\theta_1}=
\Ind_{H_2L_2}^G\Ind_{H_1L_1}^{H_2L_2}\ex{\tau}{\theta_1}.
\]
We claim that
\[
\Ind_{H_1L_1}^{H_2L_2}\ex{\tau}{\theta_1}=
\ex{\big(\Ind_{H_1L_1'}^{H_2}\ex{\tau}{\theta_1'}\big)}{\theta_2}
\]
where $\theta_1'$ (resp., $\theta_2$) is the restriction of $\theta_1$ to $L_1'$ (resp., $L_2$).
Indeed, it is enough to note that
\[
\Res_{H_2}^{H_2L_2}\Ind_{H_1L_1}^{H_2L_2}\ex{\tau}{\theta_1}=
\Ind_{H_1L_1'}^{H_2}\ex{\tau}{\theta_1'}
\]
since $H_2L_1=H_2L_2$ and
\[
H_2\cap H_1L_1=H_1(L_1\cap H_2)=H_1L_1'.
\]

It follows that
\[
\LInd_{H_2,\psi_2,N_2}^{G,\chi_2}\LInd_{H_1,\psi_1,N_1'}^{H_2,\eta}\tau=\LInd_{H_1,\psi_1,N_1}^{G,\chi_1}\tau.
\]
(Note that $N_1\cap H_1=N_1'\cap H_1$.)
Part \ref{part: fnctrltn} follows now from Lemma \ref{lem: compn1n2}.

Finally, note that $N_2\cap H_1\le N_2\cap H_2\cap Z(N_1)$. Conversely,
\[
N_2\cap H_2\cap Z(N_1)=N_2\cap H_2\cap Z_{N_1}(G)(N_1\cap H_1)=N_2\cap (N_1\cap H_1)Z_{N_1'}(G)
\]
which by our assumption is contained in $N_2\cap H_1$.
Part \ref{part: n2h2zn1s} follows.
\end{proof}

\subsection{Well-matched covering groups} \label{sec: wellmatch}

As before, suppose that $A$ is a finite cyclic group and $\bas{G}$ is an $\ell$-group.

We would like to compare, under suitable conditions, the representation theories of two $A$-coverings of $\bas{G}$.

\begin{definition}
Let
\[
\pr_i:G_i\rightarrow\bas{G},\ \ i=1,2
\]
be two $A$-coverings of $\bas{G}$.
We say that $G_1$ and $G_2$ are \emph{well matched} if $G_i$ admits a special pair $(H_i,N_i)$, $i=1,2$, such that the following conditions hold.
\begin{enumerate}
\item $\pr_1(H_1)=\pr_2(H_2)$, $\pr_1(N_1)=\pr_2(N_2)$ and $\pr_1(Z_{H_1\cap N_1}(G))=\pr_2(Z_{H_2\cap N_2}(G))$.
\item $H_1$ and $H_2$ are isomorphic as covering groups.
\end{enumerate}
In this case we will also say that the pairs $(H_i,N_i)$ are well matched.
\end{definition}

Assume that $(H_i,N_i)$ are well-matched special pairs in $G_i$, $i=1,2$.
We will write $\bas{H}=\pr_i(H_i)$ and $\bas{N}=\pr_i(N_i)$ and $\Gamma=\bas{G}/\bas{H}\simeq G_i/H_i$.
Fix an isomorphism of covering groups
\begin{equation} \label{def: iota12}
\iota_{H_1}^{H_2}:H_1\rightarrow H_2.
\end{equation}
This isomorphism induces a natural equivalence of categories
\begin{equation}\label{eqIH1H2}
I_{H_1}^{H_2}:\Reps(H_1)\rightarrow\Reps(H_2).
\end{equation}

\begin{definition} \
\begin{enumerate}
\item Let $\bas{B}$ be a subgroup of $\bas{H}$. Let $B_i=\pr_i^{-1}(\bas{B})$, $i=1,2$
so that $\iota_{H_1}^{H_2}(B_1)=B_2$.
We say that characters $\theta_i$ of $B_i$ are \emph{congruous} if $\theta_1=\theta_2\circ\iota_{H_1}^{H_2}$.

\item Let $\chi_i$ be genuine characters of $Z_{N_i}(G_i)$, $i=1,2$. We say that $\chi_1$ and $\chi_2$ are \emph{compatible}
if their restrictions to $Z_{H_i\cap N_i}(G_i)$ are congruous.
\end{enumerate}
\end{definition}

\begin{proposition} \label{prop: transferwm}
Suppose that $(H_i,N_i)$ are well-matched special pairs for $G_i$, $i=1,2$.
Let $\chi_i$ be compatible genuine characters of $Z_{N_i}(G_i)$, $i=1,2$.
Then, we have an equivalence of categories
\[
\trns_{G_1,H_1,\chi_1}^{G_2,H_2,\chi_2}:\Reps_{\chi_1}(G_1)\rightarrow\Reps_{\chi_2}(G_2).
\]
It satisfies
\[
(\trns_{G_1,H_1,\chi_1}^{G_2,H_2,\chi_2}\tau)^\vee\simeq\trns_{G_1,H_1,\chi_1^{-1}}^{G_2,H_2,\chi_2^{-1}}\tau^\vee,\ \ \tau\in\Reps_{\chi_1}(G_1)
\]
and for any character $\omega$ of $\bas{G}$
\[
(\trns_{G_1,H_1,\chi_1}^{G_2,H_2,\chi_2}\tau)\cdot\omega=\trns_{G_1,H_1,\chi_1\omega_1}^{G_2,H_2,\chi_2\omega_2}(\tau\cdot\omega)
\]
where $\omega_i$ is the restriction of $\omega\circ\pr_i$ to $Z_{N_i}(G)$.

Suppose that $\Gamma$ is abelian. Let $\varphi_i$ be congruous characters of $Z_{H_i\cap N_i}(G)$, $i=1,2$.
Then, we have a bijection between the sets of $\PD{\Gamma}$-orbits under twisting in $\Irr_{\varphi_i}(G_i)$, $i=1,2$.
\end{proposition}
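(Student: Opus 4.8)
The plan is to realize $\trns_{G_1,H_1,\chi_1}^{G_2,H_2,\chi_2}$ as a composition of three equivalences of categories: Lagrangian restriction on $G_1$, transport along $\iota_{H_1}^{H_2}$, and Lagrangian induction on $G_2$. Write $\varphi_i$ for the restriction of $\chi_i$ to $Z_{H_i\cap N_i}(G_i)$, so that the compatibility of $\chi_1$ and $\chi_2$ says precisely that $\varphi_1,\varphi_2$ are congruous. Each of $H_i$, $N_i$, $H_i\cap N_i$ and $Z_{H_i\cap N_i}(G_i)$ contains $A$ and is the $\pr_i$-preimage of a subgroup of $\bas{G}$ which, by the well-matched hypothesis, does not depend on $i$; hence $\iota_{H_1}^{H_2}$ carries $H_1\cap N_1$ onto $H_2\cap N_2$ and $Z_{H_1\cap N_1}(G_1)$ onto $Z_{H_2\cap N_2}(G_2)$. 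I would choose a genuine character $\psi_1$ of $H_1\cap N_1$ extending $\varphi_1$ (which exists by Pontryagin duality and is automatically genuine) and put $\psi_2=\psi_1\circ(\iota_{H_1}^{H_2})^{-1}$. One checks directly that $\psi_2$ restricts to $\varphi_2$ on $Z_{H_2\cap N_2}(G_2)$, so that $\psi_i$ is consistent with $\chi_i$, and that $\psi_1,\psi_2$ are congruous, so that the equivalence $I_{H_1}^{H_2}$ of \eqref{eqIH1H2} restricts to an equivalence $\Reps_{\psi_1}(H_1)\to\Reps_{\psi_2}(H_2)$. I then set
\[
\trns_{G_1,H_1,\chi_1}^{G_2,H_2,\chi_2}=\LInd_{H_2,\psi_2,N_2}^{G_2,\chi_2}\circ I_{H_1}^{H_2}\circ\LRes_{H_1,\psi_1}^{G_1,\chi_1},
\]
which is an equivalence by the first part of Proposition \ref{prop: lind} together with part \ref{part: resdcmp}. (The construction depends on the auxiliary choices of $\iota_{H_1}^{H_2}$ and $\psi_1$, but this is immaterial for the statement.)

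For the contragredient identity I would apply part \ref{part: contr} of Proposition \ref{prop: lind} to the outer two factors, together with the evident compatibility of $I_{H_1}^{H_2}$ with $\vee$, using that $\psi_i^{-1}$ is again a legitimate auxiliary choice for $\chi_i^{-1}$ and that $\chi_1^{-1},\chi_2^{-1}$ remain compatible. For the twisting identity, let $\omega$ be a character of $\bas{G}$ and $\omega_i=(\omega\circ\pr_i)\rest_{Z_{N_i}(G_i)}$. Since $\pr_2\circ\iota_{H_1}^{H_2}=\pr_1$, the functor $I_{H_1}^{H_2}$ intertwines twisting by $\omega\circ\pr_1$ on $H_1$ with twisting by $\omega\circ\pr_2$ on $H_2$; combining this with part \ref{part: twistw} of Proposition \ref{prop: lind} applied to the $\LRes$ and $\LInd$ factors gives $(\trns_{G_1,H_1,\chi_1}^{G_2,H_2,\chi_2}\tau)\cdot\omega\simeq\trns_{G_1,H_1,\chi_1\omega_1}^{G_2,H_2,\chi_2\omega_2}(\tau\cdot\omega)$, after checking along the way that $\psi_i\cdot(\omega\circ\pr_i)\rest_{H_i}$ is a consistent choice for $\chi_i\omega_i$ and that $\chi_1\omega_1,\chi_2\omega_2$ stay compatible.

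For the final assertion, assume $\Gamma$ is abelian and let $\varphi_i$ be genuine congruous characters of $Z_{H_i\cap N_i}(G_i)$; fix genuine extensions $\chi_i$ of $\varphi_i$ to $Z_{N_i}(G_i)$, which are then automatically compatible. By \eqref{eq: extpsi} we have $\Reps_{\varphi_i}(G_i)=\oplus_{\chi_i'}\Reps_{\chi_i'}(G_i)$, the sum running over the extensions $\chi_i'$ of $\varphi_i$ to $Z_{N_i}(G_i)$. For $\omega\in\PD{\Gamma}$ the twist $\pi\cdot\omega$ of $\pi\in\Irr_{\varphi_i}(G_i)$ depends, up to isomorphism, only on the restriction of $\omega$ to $H_iZ(N_i)$ (part \ref{part: twistw} of Proposition \ref{prop: lind}; see also Remark \ref{rem: ggorbit}); using \eqref{eq: znznghn}, the assignment $\omega\mapsto(\omega\circ\pr_i)\rest_{Z_{N_i}(G_i)}$ identifies this restriction with an arbitrary character of $Z_{N_i}(G_i)/Z_{H_i\cap N_i}(G_i)$. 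It follows that each $\PD{\Gamma}$-orbit in $\Irr_{\varphi_i}(G_i)$ meets each summand $\Irr_{\chi_i'}(G_i)$ in exactly one point, so that sending an orbit to its unique representative in $\Irr_{\chi_i}(G_i)$ defines a bijection $\{\PD{\Gamma}\text{-orbits in }\Irr_{\varphi_i}(G_i)\}\xrightarrow{\sim}\Irr_{\chi_i}(G_i)$. Composing the bijection for $i=1$ with $\trns_{G_1,H_1,\chi_1}^{G_2,H_2,\chi_2}\colon\Irr_{\chi_1}(G_1)\xrightarrow{\sim}\Irr_{\chi_2}(G_2)$ and with the inverse of the one for $i=2$ produces the desired bijection.

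I expect the main difficulty to be bookkeeping rather than anything conceptual: at each stage one must keep track of which subgroups are $\pr_i$-preimages and verify that the triples of characters feeding the various instances of $\LInd$ and $\LRes$ are consistent in the sense of Proposition \ref{prop: lind}, which is exactly the point at which the well-matched, compatibility and congruity hypotheses are used. Once these verifications are carried out, every clause of the proposition follows formally from the corresponding property of Lagrangian induction recorded in Proposition \ref{prop: lind}.
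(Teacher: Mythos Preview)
Your construction is exactly the paper's: the transfer is defined as $\LInd_{H_2,\psi_2}^{G_2,\chi_2}\circ I_{H_1}^{H_2}\circ\LRes_{H_1,\psi_1}^{G_1,\chi_1}$ for a congruous pair $(\psi_1,\psi_2)$ consistent with $(\chi_1,\chi_2)$, and the contragredient and twisting identities are deduced from the corresponding parts of Proposition~\ref{prop: lind}.

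The one point where you diverge from the paper is your remark that the dependence on $\psi_1$ ``is immaterial for the statement.'' The paper does not leave this hanging: it observes that the set of admissible pairs $(\psi_1,\psi_2)$ forms a single $\bas{G}/\bas{N}\bas{H}$-orbit under simultaneous conjugation (Lemma~\ref{lem: specp} part~\ref{part: all exsts} together with \eqref{eq: zgzhnh}), and then invokes Proposition~\ref{prop: lind} part~\ref{part: equconj} to conclude that up to natural equivalence the composite is independent of $\psi_1$. This matters because the notation $\trns_{G_1,H_1,\chi_1}^{G_2,H_2,\chi_2}$ is later used as though it names a well-defined functor (e.g.\ in the transitivity statement of Corollary~\ref{cor: trns123}). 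For the final assertion on $\PD{\Gamma}$-orbits, your argument (each orbit meets each $\Irr_{\chi_i'}(G_i)$ exactly once, then transport via $\trns$) is a perfectly valid and somewhat more explicit variant of what the paper has in mind, namely invoking part~\ref{part: corres} of Proposition~\ref{prop: lind} on both sides and transporting $\Gamma$-orbits in $\Irr_{\varphi_i}(H_i)$ via $I_{H_1}^{H_2}$.
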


\begin{proof}
Let $X$ be the set of pairs $(\psi_1,\psi_2)$ of congruous characters of $H_i\cap N_i$
such that $\psi_i$ and $\chi_i$ are consistent for $i=1,2$.
The set $X$ is non-empty by the condition on $\chi_i$. In fact, $X$ is the graph of a bijection
between the sets of characters of $H_i\cap N_i$ that are consistent with $\chi_i$, $i=1,2$.
By Lemma \ref{lem: specp} part \ref{part: all exsts} and \eqref{eq: zgzhnh}, $X$ is an orbit
of $\bas{G}/\bas{N}\bas{H}$ under diagonal action by conjugation.

Fixing $(\psi_1,\psi_2)\in X$ we define
\begin{equation} \label{def: trns}
\trns_{G_1,H_1,\chi_1}^{G_2,H_2,\chi_2}=\LInd_{H_2,\psi_2}^{G_2,\chi_2}\circ
I_{H_1,\psi_1}^{H_2,\psi_2}\circ\LRes_{H_1,\psi_1}^{G_1,\chi_1}
\end{equation}
where $I_{H_1,\psi_1}^{H_2,\psi_2}:\Reps_{\psi_1}(H_1)\rightarrow\Reps_{\psi_2}(H_2)$ is the restriction of the functor $I_{H_1}^{H_2}$ in \eqref{eqIH1H2} to $\Reps_{\psi_1}(H_1)$, which also induces an equivalence of categories.
In other words, we have a commutative diagram of equivalences of categories
\[
\begin{tikzcd}
\Reps_{\chi_1}(G_1) \arrow[d,"\LRes_{H_1,\psi_1}^{G_1,\chi_1}"'] \arrow[r, "\trns_{G_1,H_1,\chi_1}^{G_2,H_2,\chi_2}"] &
\Reps_{\chi_2}(G_2) \arrow[d,"\LRes_{H_2,\psi_2}^{G_2,\chi_2}"] \\
\Reps_{\psi_1}(H_1)\arrow[r,"I_{H_1,\psi_1}^{H_2,\psi_2}"] &\Reps_{\psi_2}(H_2)
\end{tikzcd}
\]
Using Proposition \ref{prop: lind} part \ref{part: equconj}, the functor $\trns_{G_1,H_1,\chi_1}^{G_2,H_2,\chi_2}$ is independent of the choice of $\psi_i$.

The claimed properties of $\trns_{G_1,H_1,\chi_1}^{G_2,H_2,\chi_2}$ follow directly from the corresponding properties
of $\LInd_{H_i,\psi_i}^{G_i,\chi_i}$ (Proposition \ref{prop: lind}).
\end{proof}

As before, we analyze the effect of $N_i$ in the construction.

\begin{lemma}
Suppose that $(H_i,N_i)$ and $(H'_i,N'_i)$ are two well-matched special pairs for $G_i$, $i=1,2$.
Assume that
\begin{enumerate}
\item $H_i\le H'_i$ and $N'_i\le N_i$, $i=1,2$.
\item The isomorphism $\iota_{H_1}^{H_2}$ of \eqref{def: iota12} is the restriction of $\iota_{H_1'}^{H_2'}$.
\item $Z_{N_i\cap H_i'}(G_i)\le H_i$, $i=1,2$.
\end{enumerate}
Let $\chi_i$ be compatible genuine characters of $Z_{N_i}(G_i)$, $i=1,2$.
Let $\chi'_i$ be the restriction of $\chi_i$ to $Z_{N'_i}(G_i)$.
Then, $\chi'_i$ are compatible genuine characters of $Z_{N'_i}(G_i)$, $i=1,2$ and
$\trns_{G_1,H_1,\chi_1}^{G_2,H_2,\chi_2}$ is the restriction of $\trns_{G_1,H_1',\chi'_1}^{G_2,H_2',\chi'_2}$ to
$\Reps_{\chi_1}(G_1)$.
\end{lemma}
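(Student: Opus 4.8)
The plan is to reduce the statement to Lemma \ref{lem: seesaw}, applied separately inside $G_1$ and $G_2$ with the relabeling in which that lemma's $(H_1,H_2,N_1,N_2,G)$ is our $(H_i,H'_i,N_i,N'_i,G_i)$; under this relabeling the lemma's subgroup $N_1'=N_1\cap H_2$ becomes $N_i\cap H'_i$, so hypothesis (3) is exactly the condition $Z_{N_1'}(G)\le H_1$ needed to invoke part \ref{part: n2h2zn1s} of that lemma. Throughout, I use that any subgroup of $G_i$ containing $A$ is the full $\pr_i$-preimage of its image, and that an isomorphism of covering groups over $\bas{H'}$ carries full preimages to full preimages and preserves centers.

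I would first settle the claim about $\chi'_i$. Restriction of a genuine character to $Z_{N'_i}(G_i)$ is genuine since $A\le N'_i\cap Z(G_i)$. For compatibility, $Z_{H'_i\cap N'_i}(G_i)=H'_i\cap N'_i\cap Z(G_i)\subseteq Z_{N_i\cap H'_i}(G_i)\subseteq H_i$ by (3), so $Z_{H'_i\cap N'_i}(G_i)\subseteq Z_{H_i\cap N_i}(G_i)$; moreover this subgroup contains $A$ and lies in $H'_i$, and the well-matched condition for $(H'_i,N'_i)$ says its image in $\bas G$ is the same for $i=1,2$, so $\iota_{H'_1}^{H'_2}$ (which restricts to $\iota_{H_1}^{H_2}$) maps $Z_{H'_1\cap N'_1}(G_1)$ onto $Z_{H'_2\cap N'_2}(G_2)$. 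Since $\chi'_i$ agrees with $\chi_i$ there and the restrictions of $\chi_i$ to $Z_{H_i\cap N_i}(G_i)$ are congruous, the $\chi'_i$ are compatible.

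For the equality of functors, I would fix, as in \eqref{def: trns} for $\trns_{G_1,H_1,\chi_1}^{G_2,H_2,\chi_2}$, a congruous pair $(\psi_1,\psi_2)$ of genuine characters of $N_i\cap H_i$ with $\psi_i$ consistent with $\chi_i$; then choose a genuine character $\psi'_1$ of $N'_1\cap H'_1$ consistent with $\psi_1$ and put $\psi'_2=\psi'_1\circ(\iota_{H'_1}^{H'_2}\rest_{N'_1\cap H'_1})^{-1}$. One checks that $(\psi'_1,\psi'_2)$ is a congruous pair of genuine characters of $N'_i\cap H'_i$, each consistent with $\psi_i$ and hence, by Lemma \ref{lem: seesaw} part \ref{part: n2h2zn1s}, with $\chi_i\psi_i$ and with $\chi'_i$; so $(\psi'_1,\psi'_2)$ is a legitimate choice for $\trns_{G_1,H'_1,\chi'_1}^{G_2,H'_2,\chi'_2}$. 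Setting $\eta_i=(\psi_i\psi'_i)\rest_{N_i\cap Z(H'_i)}$ -- the character ``$\eta$'' of Lemma \ref{lem: seesaw} -- the pair $(\eta_1,\eta_2)$ is congruous and $(H_i,N_i\cap H'_i)$ are well-matched special pairs in $H'_i$ with matching isomorphism $\iota_{H_1}^{H_2}$. By Lemma \ref{lem: seesaw} part \ref{part: fnctrltn} (and its ``moreover'' clause), on $\Reps_{\chi_1}(G_1)$ we get $\LRes_{H_1,\psi_1}^{G_1,\chi_1}=\LRes_{H_1,\psi_1,N_1\cap H'_1}^{H'_1,\eta_1}\circ\LRes_{H'_1,\psi'_1}^{G_1,\chi'_1}$, with image inside $\Reps_{\eta_1}(H'_1)$, and dually $\LInd_{H_2,\psi_2}^{G_2,\chi_2}=\LInd_{H'_2,\psi'_2}^{G_2,\chi'_2}\circ\LInd_{H_2,\psi_2,N_2\cap H'_2}^{H'_2,\eta_2}$. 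Substituting these into \eqref{def: trns} for the two transfer functors, the outer functors $\LRes_{H'_1,\psi'_1}^{G_1,\chi'_1}$ and $\LInd_{H'_2,\psi'_2}^{G_2,\chi'_2}$ are precisely those of $\trns_{G_1,H'_1,\chi'_1}^{G_2,H'_2,\chi'_2}$, and the claim reduces to the identity of functors $\Reps_{\eta_1}(H'_1)\to\Reps_{\eta_2}(H'_2)$
\[
\LInd_{H_2,\psi_2,N_2\cap H'_2}^{H'_2,\eta_2}\circ I_{H_1,\psi_1}^{H_2,\psi_2}\circ\LRes_{H_1,\psi_1,N_1\cap H'_1}^{H'_1,\eta_1}=I_{H'_1}^{H'_2}\rest_{\Reps_{\eta_1}(H'_1)},
\]
that is, to the assertion that the transfer functor $\trns_{H'_1,H_1,\eta_1}^{H'_2,H_2,\eta_2}$ (the left-hand side, by \eqref{def: trns}) coincides with the equivalence induced by the isomorphism of covering groups $\iota_{H'_1}^{H'_2}\colon H'_1\to H'_2$, which restricts to the chosen matching isomorphism. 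This is checked directly: write $\LInd_{H_1,\psi_1,N_1\cap H'_1}^{H'_1,\eta_1}\tau=\Ind_{L_1H_1}^{H'_1}\ex{\tau}{\theta_1}$ for a Lagrangian $L_1$ of $N_1\cap H'_1$ and a character $\theta_1$ of $L_1$ extending $\eta_1\psi_1$, and set $L_2=\iota_{H'_1}^{H'_2}(L_1)$ (a Lagrangian of $N_2\cap H'_2$) and $\theta_2=\theta_1\circ(\iota_{H'_1}^{H'_2})^{-1}$. Then $\iota_{H'_1}^{H'_2}$ carries $L_1H_1$ onto $L_2H_2$, and by the uniqueness of the extension (Lemma \ref{lem: centext} part \ref{part: ext}) it carries $\ex{\tau}{\theta_1}$ to $\ex{(I_{H_1}^{H_2}\tau)}{\theta_2}$; hence $I_{H'_1}^{H'_2}(\LInd_{H_1,\psi_1}^{H'_1,\eta_1}\tau)\simeq\Ind_{L_2H_2}^{H'_2}\ex{(I_{H_1}^{H_2}\tau)}{\theta_2}=\LInd_{H_2,\psi_2}^{H'_2,\eta_2}(I_{H_1}^{H_2}\tau)$, using that $\LInd$ does not depend on the auxiliary choices (Proposition \ref{prop: lind}). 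Precomposing with $\LRes_{H_1,\psi_1,N_1\cap H'_1}^{H'_1,\eta_1}$ and using $\LInd_{H_1,\psi_1}^{H'_1,\eta_1}\circ\LRes_{H_1,\psi_1,N_1\cap H'_1}^{H'_1,\eta_1}=\id$ then gives the displayed identity.

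The main obstacle is the bookkeeping in the third paragraph: arranging $\psi_i$, $\psi'_i$, $\eta_i$, $\chi'_i$ so that all the congruity and consistency relations hold at once, and checking that $(H_i,N_i\cap H'_i)$ is a well-matched special pair in $H'_i$. This is exactly where hypothesis (3) is used (via Lemma \ref{lem: seesaw} part \ref{part: n2h2zn1s}) and where the independence of $\LInd$, $\LRes$ and $\trns$ from auxiliary data is invoked. Once the data are aligned, each step is a direct appeal to Lemma \ref{lem: seesaw}, to Lemma \ref{lem: centext}, or to the elementary compatibility of compact induction with an isomorphism of groups.
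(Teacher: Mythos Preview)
Your proof is correct and follows essentially the same route as the paper's: both fix congruous $(\psi_1,\psi_2)$ and $(\psi'_1,\psi'_2)$, apply Lemma \ref{lem: seesaw} parts \ref{part: fnctrltn} and \ref{part: n2h2zn1s} to factor $\LInd_{H_i,\psi_i,N_i}^{G_i,\chi_i}$ through $H'_i$, and reduce to the compatibility of the inner Lagrangian inductions with the transport-of-structure functor $I_{H'_1}^{H'_2}$. The paper records this last step as a commutative diagram stated without proof, whereas you spell out the verification by transporting a Lagrangian and its character through $\iota_{H'_1}^{H'_2}$ and invoking the uniqueness in Lemma \ref{lem: centext}; this is a welcome bit of extra detail rather than a different argument.
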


\begin{proof}
The characters $\chi'_i$ are compatible since $Z_{N'_i\cap H'_i}(G_i)\le Z_{N_i\cap H_i}(G_i)$ by assumption.

Let $\psi_i$, $i=1,2$ be congruous characters of $H_i\cap N_i$ such that $\psi_i$ and $\chi_i$
are consistent for $i=1,2$. Then, by \eqref{def: trns}
\[
\trns_{G_1,H_1,\chi_1}^{G_2,H_2,\chi_2}\circ\LInd_{H_1,\psi_1,N_1}^{G_1,\chi_1}=
\LInd_{H_2,\psi_2,N_2}^{G_2,\chi_2}\circ I_{H_1,\psi_1}^{H_2,\psi_2}.
\]
Let $\psi_1'$ be a character of $H_1'\cap N'_1$ that is consistent with $\psi_1$.
Let $\psi_2'$ be the character of $H_2'\cap N'_2$ that is congruous to $\psi_1'$.
Then, $\psi_2'$ is consistent with $\psi_2$.
Let $\eta_i$ be the character $\psi_i\rest_{Z_{N_i\cap H_i'}(G_i)}\psi_i'$ of $Z_{N_i\cap H_i'}(G_i)(N'_i\cap H'_i)=
Z(H_i')\cap N_i$ (see \eqref{eq: n1''}).
Then, by Lemma \ref{lem: seesaw} parts \ref{part: fnctrltn} and \ref{part: n2h2zn1s},
\[
\LInd_{H_i,\psi_i,N_i}^{G_i,\chi_i}=\LInd_{H_i',\psi_i',N'_i}^{G_i,\chi_i'}\rest_{\Reps_{\eta_i}(H_i')}\circ\LInd_{H_i,\psi_i,N_i\cap H'_i}^{H_i',\eta_i},\ \ i=1,2.
\]
Using the commutative diagram
\[
\begin{tikzcd}
\Reps_{\psi_1}(H_1) \arrow[d,"\LInd_{H_1,\psi_1}^{H'_1,\eta_1}"'] \arrow[r, "I_{H_1,\psi_1}^{H_2,\psi_2}"] &
\Reps_{\psi_2}(H_2) \arrow[d,"\LInd_{H_2,\psi_2}^{H'_2,\eta_2}"] \\
\Reps_{\eta_1}(H'_1)\arrow[r,"I_{H_1',\eta_1}^{H'_2,\eta_2}"] &\Reps_{\eta_2}(H'_2)
\end{tikzcd}
\]
it follows that
\[
\trns_{G_1,H_1,\chi_1}^{G_2,H_2,\chi_2}\circ
\LInd_{H_1',\psi_1',N'_1}^{G_1,\chi_1'}\rest_{\Reps_{\eta_1}(H_1')}=
\LInd_{H_2',\psi_2',N'_2}^{G_2,\chi_2'}\rest_{\Reps_{\eta_2}(H_2')}\circ I_{H_1',\eta_1}^{H_2',\eta_2}.
\]
On the other hand, $\psi'_i$ is consistent with $\chi_i'$ and therefore, again by \eqref{def: trns}
\[
\trns_{G_1,H_1',\chi_1'}^{G_2,H_2',\chi_2'}\circ\LInd_{H_1',\psi_1',N_1'}^{G_1,\chi_1'}=
\LInd_{H_2',\psi_2',N_2'}^{G_2,\chi_2'}\circ I_{H_1',\psi_1'}^{H_2',\psi_2'}.
\]
The lemma follows.
\end{proof}

Finally, we address transitivity of this construction.

\begin{corollary} \label{cor: trns123}
Let
\[
\pr_i:G_i\rightarrow\bas{G},\ i=1,2,3
\]
be three $A$-coverings of $\bas{G}$. Let $\bas{H}\le\bas{H}'$ and $\bas{N}'\le\bas{N}$ be four subgroups of $\bas{G}$.
Let $H_i,H'_i,N_i,N'_i$ be the inverse images of $\bas{H},\bas{H}',\bas{N},\bas{N}'$ under $\pr_i$ in $G_i$, $i=1,2,3$.
Assume that
\begin{enumerate}
\item For $1\le i<j\le 3$ $(H_i,N_i)$ and $(H_j,N_j)$ are well-matched special pairs for $G_i$ and $G_j$.
\item $(H_2',N'_2)$ and $(H_3',N_3')$ are well-matched special pairs for $G_2$ and $G_3$.
\item $\iota_{H_1}^{H_3}=\iota_{H_2}^{H_3}\circ\iota_{H_1}^{H_2}$.
\item $\iota_{H_2}^{H_3}$ is the restriction of $\iota_{H_2'}^{H_3'}$.
\item $Z_{N_i\cap H_i'}(G_i)\le H_i$, $i=2,3$.
\end{enumerate}
Let $\chi_i$ be a genuine character of $Z_{N_i}(G)$, $i=1,2,3$ such that $\chi_i$ and $\chi_j$ are compatible for $i<j$.
Let $\chi'_i$ be the restriction of $\chi_i$ to $Z_{N_i'}(G)$, $i=2,3$.
Then,
\[
\trns_{G_1,H_1,\chi_1}^{G_3,H_3,\chi_3}=
\trns_{G_2,H_2',\chi'_2}^{G_3,H_3',\chi'_3}\rest_{\Reps_{\chi_2}(G_2)}\circ
\trns_{G_1,H_1,\chi_1}^{G_2,H_2,\chi_2}.
\]
\end{corollary}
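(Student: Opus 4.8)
The plan is to unwind both sides of the claimed identity through the definition \eqref{def: trns} of the transfer functor and reduce the statement to the functoriality of Lagrangian induction already established in Lemma \ref{lem: seesaw}, together with the compatibility of the category equivalences $I_{H_i}^{H_j}$. First I would fix, for each pair $i<j$, congruous genuine characters $\psi_i$ of $H_i\cap N_i$ that are consistent with $\chi_i$; by the compatibility hypotheses on the $\chi_i$ and Lemma \ref{lem: specp} part \ref{part: all exsts}, such a simultaneously congruous triple $(\psi_1,\psi_2,\psi_3)$ exists (it lives in a single $\bas{G}/\bas{N}\bas{H}$-orbit, which has representatives compatible across all three coverings), and by Proposition \ref{prop: lind} part \ref{part: equconj} the transfer functors do not depend on these choices. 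Then I would choose, for $i=2,3$, congruous characters $\psi_i'$ of $H_i'\cap N_i'$ consistent with $\psi_i$, and set $\eta_i=\psi_i\rest_{Z_{N_i\cap H_i'}(G_i)}\psi_i'$ as in the previous lemma, noting that $\eta_2$ and $\eta_3$ are congruous since $\iota_{H_2'}^{H_3'}$ restricts to $\iota_{H_2}^{H_3}$ and the $\psi_i,\psi_i'$ are congruous.

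Next I would expand $\trns_{G_1,H_1,\chi_1}^{G_3,H_3,\chi_3}$ using \eqref{def: trns} as $\LInd_{H_3,\psi_3}^{G_3,\chi_3}\circ I_{H_1,\psi_1}^{H_3,\psi_3}\circ\LRes_{H_1,\psi_1}^{G_1,\chi_1}$, and expand the right-hand side as the composition $\LInd_{H_3',\psi_3'}^{G_3,\chi_3'}\rest\circ I_{H_2',\psi_2'}^{H_3',\psi_3'}\circ\LRes_{H_2',\psi_2'}^{G_2,\chi_2'}$ after $\LInd_{H_2,\psi_2}^{G_2,\chi_2}\circ I_{H_1,\psi_1}^{H_2,\psi_2}\circ\LRes_{H_1,\psi_1}^{G_1,\chi_1}$. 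The $\LRes_{H_1,\psi_1}^{G_1,\chi_1}$ on the far right is common to both, so it suffices to prove the corresponding identity of functors out of $\Reps_{\psi_1}(H_1)$, i.e. to show
\[
\LInd_{H_3,\psi_3}^{G_3,\chi_3}\circ I_{H_1,\psi_1}^{H_3,\psi_3}
=\LInd_{H_3',\psi_3'}^{G_3,\chi_3'}\rest\circ I_{H_2',\psi_2'}^{H_3',\psi_3'}\circ\LRes_{H_2',\psi_2'}^{G_2,\chi_2'}\circ\LInd_{H_2,\psi_2}^{G_2,\chi_2}\circ I_{H_1,\psi_1}^{H_2,\psi_2}.
\]
On the $G_2$ side, Lemma \ref{lem: seesaw} part \ref{part: fnctrltn} (with hypothesis \ref{part: n2h2zn1s} supplied by $Z_{N_2\cap H_2'}(G_2)\le H_2$) gives $\LInd_{H_2,\psi_2,N_2}^{G_2,\chi_2}=\LInd_{H_2',\psi_2',N_2'}^{G_2,\chi_2'}\rest_{\Reps_{\eta_2}(H_2')}\circ\LInd_{H_2,\psi_2,N_2\cap H_2'}^{H_2',\eta_2}$, and hence applying $\LRes_{H_2',\psi_2'}^{G_2,\chi_2'}$ cancels the outer $\LInd\rest$, leaving $\LInd_{H_2,\psi_2}^{H_2',\eta_2}\circ I_{H_1,\psi_1}^{H_2,\psi_2}$. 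On the $G_3$ side, the analogous application of Lemma \ref{lem: seesaw} part \ref{part: fnctrltn} rewrites $\LInd_{H_3,\psi_3}^{G_3,\chi_3}$ as $\LInd_{H_3',\psi_3',N_3'}^{G_3,\chi_3'}\rest_{\Reps_{\eta_3}(H_3')}\circ\LInd_{H_3,\psi_3,N_3\cap H_3'}^{H_3',\eta_3}$.

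After these substitutions, the needed identity becomes
\[
\LInd_{H_3,\psi_3}^{H_3',\eta_3}\circ I_{H_1,\psi_1}^{H_3,\psi_3}
=I_{H_2',\eta_2}^{H_3',\eta_3}\circ\LInd_{H_2,\psi_2}^{H_2',\eta_2}\circ I_{H_1,\psi_1}^{H_2,\psi_2},
\]
which I would establish by combining two commutative squares: first, $\LInd_{H_3,\psi_3}^{H_3',\eta_3}\circ I_{H_2,\psi_2}^{H_3,\psi_3}=I_{H_2',\eta_2}^{H_3',\eta_3}\circ\LInd_{H_2,\psi_2}^{H_2',\eta_2}$, which holds because $\iota_{H_2'}^{H_3'}$ restricts to $\iota_{H_2}^{H_3}$ and Lagrangian induction is defined via ordinary induction and extension-of-characters, both of which transport through a covering isomorphism (this is the square drawn in the proof of the previous lemma); and second, $I_{H_1,\psi_1}^{H_3,\psi_3}=I_{H_2,\psi_2}^{H_3,\psi_3}\circ I_{H_1,\psi_1}^{H_2,\psi_2}$, which is the restriction to the relevant subcategories of the cocycle relation $\iota_{H_1}^{H_3}=\iota_{H_2}^{H_3}\circ\iota_{H_1}^{H_2}$ from hypothesis (3). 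Chaining these gives the claim. The main obstacle is bookkeeping: keeping the characters $\psi_i,\psi_i',\eta_i,\chi_i,\chi_i'$ consistent and congruous across all three coverings at once, and checking that the hypothesis $Z_{N_i\cap H_i'}(G_i)\le H_i$ for $i=2,3$ is exactly what licenses the use of Lemma \ref{lem: seesaw} part \ref{part: n2h2zn1s} so that the $\eta_i$ are the restrictions of $\psi_i\psi_i'$ and are therefore congruous; everything else is a formal diagram chase using already-established functorial equivalences.
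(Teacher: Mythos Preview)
Your approach is correct and has the same content as the paper's, but it is organized less efficiently. The paper's proof is two lines: first invoke the previous lemma (the one immediately preceding this corollary) to conclude that $\trns_{G_2,H_2',\chi'_2}^{G_3,H_3',\chi'_3}\rest_{\Reps_{\chi_2}(G_2)}=\trns_{G_2,H_2,\chi_2}^{G_3,H_3,\chi_3}$, and then observe that the resulting identity
\[
\trns_{G_1,H_1,\chi_1}^{G_3,H_3,\chi_3}=\trns_{G_2,H_2,\chi_2}^{G_3,H_3,\chi_3}\circ\trns_{G_1,H_1,\chi_1}^{G_2,H_2,\chi_2}
\]
is immediate from \eqref{def: trns}, since $\LRes_{H_2,\psi_2}^{G_2,\chi_2}\circ\LInd_{H_2,\psi_2}^{G_2,\chi_2}=\id$ and $I_{H_1,\psi_1}^{H_3,\psi_3}=I_{H_2,\psi_2}^{H_3,\psi_3}\circ I_{H_1,\psi_1}^{H_2,\psi_2}$ by hypothesis (3).

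What you do instead is expand both sides and apply Lemma \ref{lem: seesaw} directly on the $G_2$ and $G_3$ sides; but that is precisely the content of the previous lemma's proof (indeed you even cite ``the square drawn in the proof of the previous lemma''). So you are re-deriving that lemma inline rather than citing its conclusion. The bookkeeping you flag as the main obstacle (congruity of $\eta_2,\eta_3$, the role of hypothesis (5)) is exactly the bookkeeping already handled there. Your argument is sound, but you can shorten it substantially by quoting the previous lemma and then noting that transitivity of $\trns$ along the single chain $(H_i,N_i)$ is a one-line cancellation.
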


\begin{proof}
Indeed, by the previous lemma, it is enough to check that
\[
\trns_{G_1,H_1,\chi_1}^{G_3,H_3,\chi_3}=
\trns_{G_2,H_2,\chi_2}^{G_3,H_3,\chi_3}\circ
\trns_{G_1,H_1,\chi_1}^{G_2,H_2,\chi_2}.
\]
This is immediate from the definition \eqref{def: trns}.
\end{proof}

\section{Metaplectic tensor product} \label{sec: mtp}

For the rest of the paper, let $F$ be a non-archimedean locally compact field of residue characteristic $p$.
The $\ell$-groups that will be considered henceforth are the $F$-points of reductive groups over $F$ (in the $p$-adic topology)
as well as central extensions thereof.

As pointed out in \cite{MR3053009} and other sources, the basic ingredients of the representation theory of $p$-adic groups
continue to hold for covering groups (see also \cite{2206.06905}).

Let $F^{\times}$ be the multiplicative group of $F$,
$\ordr_F$ the ring of integers of $F$ and $\abs{\cdot}=\abs{\cdot}_F$ the normalized absolute value on $F$.

Throughout, we fix a positive integer $n\ge1$.
We assume that  the cyclic group $\mu_n:=\mu_n(F)$ of the $n$-th roots of unity in $F^{\times}$ is of order $n$.
In particular, if $\chr(F)=p$, then $p\nmid n$.
We write $F^{\times n}$ for the finite-index open subgroup $\{x^n\mid x\in F^{\times}\}$ of $n$-powers in $F^{\times}$.
For $a,b\in F^{\times}$ we write $a\equiv_nb$ if $a/b\in F^{\times n}$.
We denote by $X_n(F^{\times})$ the finite group of characters $\chi$ of $F^\times$ such that $\chi^n=1$, i.e.,
the characters that factor through $F^{\times}/F^{\times n}$.

Denote by
\[
(\cdot,\cdot)_n:F^{\times}\times F^{\times}\rightarrow \mu_n
\]
the $n$-th order \emph{Hilbert symbol} (see \cite{MR1344916}*{XIII.\S 5}).
It is a bimultiplicative, antisymmetric pairing that descends to a non-degenerate pairing on
$F^{\times}/F^{\times n}\times F^{\times}/F^{\times n}$.

Moreover, for any integer $k$ and $x\in F^{\times}$ we have
\[
(x,y^k)_n=1\text{ for all }y\in F^{\times}\iff x^k\equiv_n1.
\]
Dually, we have
\begin{equation} \label{eq: kpwrhs}
(x,y)_n=1\text{ for all }y\in F^{\times}\text{ such that }y^k\equiv_n1\iff\exists z\in F^{\times}\text{ such that }x\equiv_n z^k.
\end{equation}

Note that if $p\ndiv n$, then $(x,y)_n=1$ for every $y\in \ordr_F^{\times}$ if and only if $x\in \ordr_F^{\times}F^{\times n}$.

\subsection{Kazhdan-Patterson covering groups}\label{subsectionKPgroup}

In this subsection, we recall the definition and properties of Kazhdan--Patterson covering groups following \cite{MR743816} and \cite{MR1670203}.

Let $r$ be a positive integer. We write $\bas{G}_r:=\GL_r(F)$ and
\[
\bas{Z}_r=Z(\bas{G}_r)=\{\lambda I_r\mid\lambda\in F^\times\},
\]
the center of $G_r$. Let $\nu$ be the character $\abs{\det\cdot}_F$ of $\bas{G}_r$.

We will consider $\mu_n$-coverings of $\bas{G}_r$. Recall that they are given by elements of $H^2(\bas{G}_r,\mu_n)$.
The basic example is the Hilbert symbol which defines a 2-cocycle on $F^{\times}$.
A 2-cocycle for a split simple simply connected $p$-adic group was considered by Matsumoto in \cite{MR240214}.
(See also \cite{MR0349811}*{\S11-12} for the special linear group.)
In the case at hand the Steinberg symbol is $(\cdot,\cdot)_n^{-1}$ (cf.\ \cite{MR1670203}).
Specializing to $\SL_{r+1}(F)$, let $\sigma^{(0)}$ be the pullback of this 2-cocycle to $\bas{G}_r$ via the embedding
\[
\bas{G}_r\rightarrow\SL_{r+1}(F),\quad g\mapsto\diag(\det(g)^{-1},g).
\]
More generally, for any $c\in\Z/n\Z$ let $\sigma^{(c)}$
be the product of $\sigma^{(0)}$ with the pullback of $(\cdot,\cdot)^c$ via $\det$, i.e.
\[
\sigma^{(c)}(g_1,g_2)=\sigma^{(0)}(g_1,g_2)\cdot(\det(g_1),\det(g_2))_n^{c},\quad g_1,g_2\in\bas{G}_r.
\]
These 2-cocycles were considered by Kazhdan and Patterson in \cite{MR743816} and explicated in \cite{MR1670203}.
We denote by $G_r$ the corresponding $n$-th fold cover of $\bas{G}_r$.
Note that the cohomology classes in $H^2(\bas{G}_r,\mu_n)$ for different $c$'s in $\Z/n\Z$
may coincide.

From now on we fix $c\in\Z$ and write $c'=2c+1$.

As a rule, for a subgroup $\bas{H}$ of $\bas{G}_r$, we often denote its inverse image $\pr^{-1}(\bas{H})$ in $G_r$ simply by $H$.

The commutator $[\cdot,\cdot]:G_r\times G_r\rightarrow G_r$ factors through $\bas{G}_r\times\bas{G}_r$.
We denote by
\[
[\cdot,\cdot]_{\incvr}:\bas{G}_r\times\bas{G}_r\rightarrow G_r
\]
the resulting map. (It should not be confused with the commutator in $\bas{G}_r$ itself.)
Note that if $g_1$ and $g_2$ commute in $\bas{G}_r$, then
\begin{equation}\label{eqcommutator}
[g_1,g_2]_{\incvr}=\sigma^{(c)}(g_1,g_2)\sigma^{(c)}(g_2,g_1)^{-1}\in\mu_n.
\end{equation}
Likewise, the conjugation action of $G_r$ on itself factors through $\bas{G}_r$.
Thus, we sometimes write $x^y$ for $x\in G_r$ and $y\in\bas{G}_r$

Let $\beta=(r_1,\dots,r_k)$ be a composition of $r$, i.e., $r_1,\dots,r_k$ are positive integers such that
$r_1+\dots+r_k=r$.
Let $\bas{G}_\beta$ be the standard Levi subgroup of $\bas{G}_r$ isomorphic to
$\bas{G}_{r_1}\times\dots\times\bas{G}_{r_k}$ via the block diagonal embedding.
We call $G_\beta$ the \emph{Kazhdan-Patterson covering group} of $\bas{G}_\beta$ (with respect to $n$ and $c$).
Of course, when $k=1$ and $\beta=(r)$ we have $G_\beta=G_r$.

We write $\bas{Z}_\beta=Z(\bas{G}_\beta)$ for the center of $\bas{G}_\beta$, isomorphic to $k$ copies of $F^\times$.
Let
\begin{gather*}
\bas{Z}_{\beta,\sml}=\{\diag(\lambda_1I_{r_1},\dots,\lambda_kI_{r_k})\mid\lambda_i\equiv_n1\text{ for all }i\},\\
\bas{Z}_{\beta,\lrg}=\{\diag(\lambda_1I_{r_1},\dots,\lambda_kI_{r_k})\mid\lambda_i^{r_i}\equiv_n1\text{ for all }i\}.
\end{gather*}
Note that by our convention $Z_\beta$ is $\pr^{-1}(\bas{Z}_\beta)$, rather than $Z(G_\beta)$ (which is smaller).

We record the following basic facts.

\begin{subequations}
\begin{lemma} \
\begin{enumerate}
\item For $r=1$ we have $\sigma^{(c)}(x,y)=(x,y)_n^{c}$ for $x,y\in F^{\times}$.
\item (\cite{MR1670203}*{Theorem 11}) The restriction of $\sigma^{(c)}$ to $\bas{G}_\beta$ is given by
\begin{equation}\label{eqblockdecomp}
\begin{aligned}
&\sigma^{(c)}(\diag(g_1,\dots,g_k),\diag(g_1',\dots,g_k'))=\\&
\bigg[\prod_{i=1}^k\sigma^{(c)}(g_i,g_i')\bigg]\cdot\bigg[\prod_{1\leq i<j\leq k}(\det(g_i),\det(g_j'))_n^{c+1}\cdot(\det(g_j),\det(g_i'))_n^{c}\bigg].    \end{aligned}
\end{equation}
In particular, for $x_1,\dots,x_r,x_1',\dots,x_r'\in F^{\times}$, we have
\begin{align*}
	\sigma^{(c)}(\diag(x_1,\dots,x_r),\diag(x_1',\dots,x_r'))=
	\bigg[\prod_{i=1}^r(x_i,x_i')_n^{c}\bigg]\cdot\bigg[\prod_{1\leq i<j\leq r}(x_i,x_j')_n^{c+1}\cdot(x_j,x_i')_n^{c}\bigg].
\end{align*}

\item (\cite{MR3442519}*{Lemma 3.9} and \cite{MR3038716}*{\S 2.1, Lemma 1})
For $z=\lambda I_r\in\bas{Z}_r$ and $g\in\bas{G}_r$, we have
\begin{equation}\label{eqwtzwtg}
[z,g]_{\incvr}=(\lambda,\det(g))_n^{rc'-1}.
\end{equation}
In particular,
\begin{equation} \label{eq: cntrofcfr}
Z(G_r)=\pr^{-1}(\{\lambda I_r\mid\lambda^{rc'-1}\equiv_n1\}).
\end{equation}

\item For any commuting elements $g=\diag(g_1,\dots,g_k), g'=\diag(g_1',\dots,g_k')\in\bas{G}_\beta$, we have
\begin{equation} \label{eq: commblocks}
[g,g']_{\incvr}=\prod_{i=1}^k[g_i,g_i']_{\incvr}\cdot\prod_{i\neq j}(\det(g_i),\det(g_j'))_n^{c'}.
\end{equation}

\item For $z=\diag(\lambda_1I_{r_1},\dots,\lambda_kI_{r_k})\in\bas{Z}_\beta$ and
$g=\diag(g_1,\dots,g_k)\in\bas{G}_\beta$, we have
\begin{equation}\label{eqwtzwtgblock}
	[z,g]_{\incvr}=\prod_{i=1}^k (\lambda_i,\det(g_i))_n^{r_ic'-1}\cdot\prod_{i\neq j}(\lambda_i,\det(g_j))_n^{r_ic'}=\prod_{i=1}^k(\lambda_i,\det(g)^{r_ic'}\det(g_i)^{-1})_n.
\end{equation}
Thus,
\begin{equation} \label{cntrbs}
Z(G_\beta)=Z(G_r)Z_{\beta,\sml}
\end{equation}
and
\begin{equation} \label{eq: zsmlrg}
Z_{\beta,\lrg}\cap Z(G_\beta)=Z_{\beta,\sml}.
\end{equation}
\end{enumerate}
\end{lemma}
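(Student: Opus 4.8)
The plan is to treat parts (1)--(3) as input: (2) is \cite{MR1670203}*{Theorem 11} and (3) is \cite{MR3442519}*{Lemma 3.9} together with \cite{MR3038716}, while (1) records the normalization of the cocycle on $\GL_1$. Since $\sigma^{(c)}=\sigma^{(0)}\cdot(\det\cdot,\det\cdot)_n^c$ and $\det$ is the identity on $\bas{G}_1$, part (1) is the assertion that $\sigma^{(0)}$ is trivial on $\bas{G}_1=F^\times$, which is immediate from its definition as the pullback of Matsumoto's cocycle along $x\mapsto\diag(x^{-1},x)$ (equivalently, it is the degenerate case $k=r$, all $r_i=1$, of (2)). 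Everything else is then a formal consequence of (2), (3), and the bimultiplicativity, antisymmetry and non-degeneracy of the Hilbert symbol; the only care needed is the bookkeeping of the exponents $c$, $c+1$ and $c'=2c+1$.

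For part (4): since $g=\diag(g_1,\dots,g_k)$ and $g'=\diag(g_1',\dots,g_k')$ commute in $\bas{G}_\beta$, each $g_i$ commutes with $g_i'$ in $\bas{G}_{r_i}$, so \eqref{eqcommutator} gives $[g,g']_{\incvr}=\sigma^{(c)}(g,g')\sigma^{(c)}(g',g)^{-1}$ and $[g_i,g_i']_{\incvr}=\sigma^{(c)}(g_i,g_i')\sigma^{(c)}(g_i',g_i)^{-1}$. Expanding $\sigma^{(c)}(g,g')$ and $\sigma^{(c)}(g',g)$ by \eqref{eqblockdecomp} and dividing, the diagonal factors combine to $\prod_i[g_i,g_i']_{\incvr}$, while for each pair $i<j$ the off-diagonal contribution, after using antisymmetry to rewrite the two terms coming from $\sigma^{(c)}(g',g)^{-1}$, is $(\det(g_i),\det(g_j'))_n^{c'}(\det(g_j),\det(g_i'))_n^{c'}$. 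Letting $i<j$ range over all such pairs amounts to letting $(i,j)$ range over all ordered pairs with $i\ne j$, which yields \eqref{eq: commblocks}.

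For part (5): the first equality in \eqref{eqwtzwtgblock} is \eqref{eq: commblocks} applied to the commuting pair $(\diag(\lambda_iI_{r_i}),\diag(g_i))$, using \eqref{eqwtzwtg} (for $\bas{G}_{r_i}$) to evaluate the diagonal factor $[\lambda_iI_{r_i},g_i]_{\incvr}=(\lambda_i,\det(g_i))_n^{r_ic'-1}$ and $\det(\lambda_iI_{r_i})=\lambda_i^{r_i}$ to turn the off-diagonal factor into $(\lambda_i,\det(g_j))_n^{r_ic'}$; collecting, for each $i$, the diagonal term with the off-diagonal terms and using $\prod_j\det(g_j)=\det(g)$ gives the second expression. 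For \eqref{cntrbs}, note that since $\mu_n$ is central in $G_\beta$ and the commutator factors through $\bas{G}_\beta$, a point of $G_\beta$ is central iff its image $z$ lies in $\bas{Z}_\beta$ and $[z,g]_{\incvr}=1$ for all $g\in\bas{G}_\beta$; thus $Z(G_\beta)=\pr^{-1}(\{z\in\bas{Z}_\beta\mid[z,g]_{\incvr}=1\ \forall g\in\bas{G}_\beta\})$. The inclusion $\supseteq$ holds because $(\lambda_i,\cdot)_n$ is trivial when $\lambda_i\in F^{\times n}$, so $Z_{\beta,\sml}$ lies in the kernel, and because specializing \eqref{eqwtzwtgblock} to $\lambda_1=\dots=\lambda_k=\lambda$ gives $[z,g]_{\incvr}=(\lambda,\det(g)^{rc'-1})_n=(\lambda^{rc'-1},\det(g))_n$, which vanishes for all $g$ exactly when $\lambda^{rc'-1}\equiv_n1$, i.e.\ exactly on $Z(G_r)$ by \eqref{eq: cntrofcfr}. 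For $\subseteq$, suppose $z=\diag(\lambda_iI_{r_i})$ has $[z,g]_{\incvr}=1$ for all $g$; specializing $g$ so that $\det(g_i)=1$ for all but one index and invoking non-degeneracy of the Hilbert symbol forces $\lambda_j\equiv_n\Lambda$ for every $j$, where $\Lambda:=\prod_i\lambda_i^{r_ic'}$; then $\Lambda\equiv_n\prod_i\Lambda^{r_ic'}=\Lambda^{rc'}$, so $\Lambda^{rc'-1}\equiv_n1$, and writing $z=(\Lambda I_r)\cdot\diag((\lambda_i\Lambda^{-1})I_{r_i})$ exhibits $z\in Z(G_r)Z_{\beta,\sml}$. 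Finally, for \eqref{eq: zsmlrg}: $Z_{\beta,\sml}\subseteq Z_{\beta,\lrg}\cap Z(G_\beta)$ is clear; conversely, an element $\diag(\lambda_iI_{r_i})$ of the intersection has, by \eqref{cntrbs}, all $\lambda_i\equiv_n\lambda$ for a common $\lambda$ with $\lambda^{rc'-1}\equiv_n1$, while membership in $Z_{\beta,\lrg}$ forces $\lambda^{r_i}\equiv_n1$ for all $i$; hence the order of $\lambda$ in $F^\times/F^{\times n}$ divides every $r_i$, hence $r=\sum_i r_i$, hence $rc'$, and hence also $rc'-1$, so that order equals $1$ and $z\in Z_{\beta,\sml}$.

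I do not expect a serious obstacle here: once (2) and (3) are available the content is entirely formal. The two places that require attention are the exponent bookkeeping in (4)--(5) (keeping $c$, $c+1$ and $c'=2c+1$ straight under antisymmetry of the Hilbert symbol) and, in the converse inclusion of \eqref{cntrbs}, choosing the one-variable specializations of $g$ that make non-degeneracy directly applicable. If one wanted a genuinely self-contained treatment of (1), the normalization of Matsumoto's (equivalently Kubota's) cocycle on the diagonal torus of $\SL_2(F)$ would be the only external ingredient.
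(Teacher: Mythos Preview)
Your proposal is correct and follows essentially the same approach as the paper: parts (1)--(3) are taken as cited input, \eqref{eq: commblocks} is obtained from \eqref{eqcommutator} and \eqref{eqblockdecomp}, \eqref{eqwtzwtgblock} from \eqref{eqwtzwtg} and \eqref{eq: commblocks}, and \eqref{cntrbs}, \eqref{eq: zsmlrg} from these together with non-degeneracy of the Hilbert symbol. The only minor variation is in the converse inclusion of \eqref{cntrbs}: you specialize $g$ one block at a time to force $\lambda_m\equiv_n\Lambda$, whereas the paper first observes $Z_{Z_\beta}(G_\beta\cap\pr^{-1}(\SL_r(F)))=Z_{\beta,\sml}Z_r$ and then uses $Z_{Z_r}(G_\beta)=Z(G_r)$; both routes are equally direct.
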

\end{subequations}

Indeed, \eqref{eq: commblocks} follows from \eqref{eqcommutator} and \eqref{eqblockdecomp}
while \eqref{eqwtzwtgblock} follows from \eqref{eqwtzwtg} and \eqref{eq: commblocks}.
The relations \eqref{cntrbs} holds since $Z(G_\beta)\le Z_\beta$ and by \eqref{eqwtzwtgblock} and \eqref{eq: cntrofcfr}
\[
Z_{\beta,\sml}Z(G_r)\le Z(G_\beta),\ \ Z_{Z_\beta}(G_\beta\cap\pr^{-1}(\SL_r(F)))=Z_{\beta,\sml}Z_r\text{ and }Z_{Z_r}(G_\beta)=Z(G_r).
\]
Finally, by \eqref{cntrbs} and \eqref{eq: cntrofcfr}, we have
\[
Z_{\beta,\lrg}\cap Z(G_\beta)=Z_{\beta,\lrg}\cap Z(G_r)Z_{\beta,\sml}=Z_{\beta,\sml}(Z_{\beta,\lrg}\cap Z(G_r))=Z_{\beta,\sml}.
\]

From now on we fix a faithful character $\epsilon$ of $\mu_n$.
Let $H$ be a subgroup of $G_r$ containing $\mu_n$.
By a genuine representation of $H$ we will mean that $\mu_n$ acts by $\epsilon$
(i.e., an object of $\Reps_\epsilon(H)$).

For any representation $\pi$ of $H$ and a character $\chi$ of $F^{\times}$, we denote by $\pi\chi$
the twist of $\pi$ by the pullback of $\chi$ to $H$ via $\det\circ\pr$.
Clearly, if $\pi$ is genuine, then so is $\pi\chi$.

\begin{definition}\label{defequivclass}
Two representations $\pi,\pi'\in\Irr_{\epsilon}(G_\beta)$ are called \emph{weakly equivalent} if
they are in the same $X_n(F^{\times})$-orbit under twisting.
We denote by $[\pi]$ the weak equivalence class of $\pi$ and by
\[
\Irr_{\epsilon,\twst}(G_\beta)
\]
the set of $X_n(F^{\times})$-orbits in $\Irr_\epsilon(G_\beta)$.
\end{definition}

See Remark \ref{rem: realtwist} below for an equivalent definition.

Let $\bas{P}$ be a parabolic subgroup of $\bas{G}_r$ defined over $F$ and let $\bas{U}$ be the unipotent radical of $\bas{P}$.
Since by assumption $\chr F\ndiv n$, there is a unique lifting
\begin{equation} \label{eq: cansec}
\sctn_U:\bas{U}\rightarrow U
\end{equation}
i.e., a group homomorphism (necessarily continuous) such that $\pr\circ\sctn_U=\id_{\bas{U}}$.
(See \cite{MR1361168}*{Appendix I}.
Note that the proof in characteristic $0$ works also in positive characteristic not dividing $n$.)
In fact, we will only use that $\sctn_U$ is equivariant under $\bas{P}$-conjugation.
We will identify $\bas{U}$ with its image under $\sctn_U$, a subgroup of $U$.
Thus, if $\bas{M}$ is a Levi subgroup of $\bas{P}$, then we have a decomposition $P=M\ltimes\bas{U}$.

Let $W_r$ be the Weyl group of $\bas{G}_r$, which is isomorphic to the symmetric group on $r$ elements.
We identify $W_r$ with the subgroup of permutation matrices of $\bas{G}_r$.

\subsection{Metaplectic tensor product}\label{subsectionMTP}

Let $\beta=(r_1,\dots,r_k)$ be a composition of $r$.

In the linear case, $\bas{G}_\beta\simeq\bas{G}_{r_1}\times\dots\times\bas{G}_{r_k}$ and there is a multiexact, multiadditive multifunctor
\[
\Reps(\bas{G}_{r_1})\times\dots\times\Reps(\bas{G}_{r_k})\rightarrow\Reps(\bas{G}_\beta),
\]
given by the tensor product. It gives rise to a bijection\footnote{In contrast, it is hopeless to classify indecomposable representations
of groups as simple as $\Z\times\Z$ (cf.\ \cite{MR0254068}).}
\[
\Irr(\bas{G}_{r_1})\times\dots\times\Irr(\bas{G}_{r_k})\rightarrow\Irr(\bas{G}_\beta).
\]
On the other hand, for the covering case it is no longer true that the blocks $G_{r_i}$ commute in $G_\beta$.
Thus, $G_\beta$ is not isomorphic to $G^\beta:=(G_{r_1}\times\dots\times G_{r_k})/\Xi$ where
\[
\Xi=\{(\zeta_1,\dots,\zeta_k)\in\mu_n\times\dots\times\mu_n\mid\zeta_1\zeta_2\dots\zeta_k=1\}.
\]

Nevertheless, as we shall see below, the representation theories of $G_\beta$ and $G^\beta$ are essentially identical.

Let
\[
\bas{H}_r=\{g\in\bas{G}_r\mid\det g\equiv_n1\},
\]
and more generally
\[
\bas{H}_\beta=\{g=\diag(g_1,\dots,g_k)\in\bas{G}_\beta\mid g_i\in\bas{H}_{r_i}\text{ for all }i\}.
\]

\begin{lemma}\label{lemmaGbetaspecial}
The pair $(H_\beta,Z_\beta)$ is special in $G_\beta$. Moreover, $Z_\beta=Z_{G_\beta}(H_\beta)$.
\end{lemma}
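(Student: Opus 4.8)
The plan is to verify the two defining conditions of a special pair for $(H_\beta,Z_\beta)$ directly from the commutator formulas, and then to pin down the centralizer $Z_{G_\beta}(H_\beta)$ exactly. First I would record that $\bas{Z}_\beta\le Z(\bas{G}_\beta)$, so $\pr(Z_\beta)\le Z(\bas{G}_\beta)$, and that $H_\beta$ is a finite-index normal subgroup of $G_\beta$ (its image $\bas H_\beta$ has finite index by the finiteness of $F^\times/F^{\times n}$, and it is normal because $\det g\equiv_n 1$ is a conjugation-invariant condition on each block). The two things that must be checked are (i) $Z_\beta\le Z_{G_\beta}(H_\beta)$ and (ii) $Z_{G_\beta}(H_\beta\cap Z_\beta)=Z_\beta H_\beta$. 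For (i), apply \eqref{eqwtzwtgblock}: for $z=\diag(\lambda_1I_{r_1},\dots,\lambda_kI_{r_k})\in\bas Z_\beta$ and $g=\diag(g_1,\dots,g_k)\in\bas H_\beta$ we have $[z,g]_{\incvr}=\prod_i(\lambda_i,\det(g)^{r_ic'}\det(g_i)^{-1})_n$; since $\det(g_i)\equiv_n1$ for every $i$, we also have $\det(g)=\prod_i\det(g_i)\equiv_n1$, so every Hilbert symbol is trivial and $z$ centralizes $H_\beta$. Hence in fact $Z_\beta\le Z_{G_\beta}(H_\beta)$, which is the inclusion ``$\le$'' of the last assertion as well.

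For the reverse inclusion $Z_{G_\beta}(H_\beta)\le Z_\beta$ (which simultaneously settles the final claim and gives ``$\le$'' in condition (ii), since $Z_\beta H_\beta\supseteq Z_\beta$ trivially and one still needs to see nothing outside $Z_\beta$ centralizes even the smaller group $H_\beta\cap Z_\beta$), I would argue as follows. Suppose $\tilde g\in G_\beta$ centralizes $H_\beta$ with image $g=\diag(g_1,\dots,g_k)$. Testing against the block-unipotent subgroups $\bas U_{(r_i)}\subset\bas H_{r_i}$ (which lie in $H_\beta$ since unipotent elements have determinant $1$) forces $g$ to centralize each $\bas{SL}_{r_i}(F)$, hence each $g_i$ is scalar, so $g\in\bas Z_\beta$ and $\tilde g\in Z_\beta$. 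This gives $Z_{G_\beta}(H_\beta)=Z_\beta$, proving ``Moreover, $Z_\beta=Z_{G_\beta}(H_\beta)$.''

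It remains to establish condition (ii), i.e.\ $Z_{G_\beta}(H_\beta\cap Z_\beta)=Z_\beta H_\beta$. The inclusion ``$\supseteq$'' is immediate: $H_\beta$ centralizes $H_\beta\cap Z_\beta$ since $H_\beta\cap Z_\beta\subseteq Z_{\beta,\sml}$ consists of blockwise scalars with $\lambda_i\equiv_n 1$, and for such scalars \eqref{eqwtzwtgblock} again gives trivial symbols against \emph{any} $h\in H_\beta$; and $Z_\beta$ centralizes $H_\beta\cap Z_\beta$ because $\bas Z_\beta$ is abelian and the commutator pairing on $Z_\beta\times Z_\beta$ is alternating on blockwise scalars of the form $\lambda\equiv_n1$. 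For ``$\subseteq$'', take $\tilde g\in G_\beta$ with image $g=\diag(g_1,\dots,g_k)$ centralizing $H_\beta\cap Z_\beta=Z_{\beta,\sml}$; multiplying $\tilde g$ on the right by a suitable element of $H_\beta$ (adjusting each block $g_i$ by an element of $\bas{GL}_{r_i}$ of the appropriate determinant class) I can reduce to the case $\det g_i\equiv_n1$ for all $i$, i.e.\ $g\in\bas H_\beta$, and then $\tilde g\in H_\beta$ already; the point is that the quotient $\bas G_\beta/(\bas Z_\beta\bas H_\beta)$ is detected entirely by the determinant classes $(\det g_1,\dots,\det g_k)$ modulo $F^{\times n}$, which I can kill using the $H_\beta$-cosets. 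I expect the main obstacle to be this last reduction step: one must check carefully, using \eqref{eqwtzwtgblock} and the non-degeneracy of the Hilbert symbol on $F^\times/F^{\times n}$, that the only way for $g$ to centralize \emph{all} of $Z_{\beta,\sml}$ is for each $\det(g)^{r_ic'}\det(g_i)^{-1}$ to be an $n$-th power, and then to see that this condition is exactly ``$g\in\bas Z_\beta\bas H_\beta$'' — equivalently, that the characters $\lambda\mapsto(\lambda,\det(g)^{r_ic'}\det(g_i)^{-1})_n$ of $F^{\times}/F^{\times n}$ vanish — which by \eqref{eq: kpwrhs} translates the centralizing condition into the required divisibility/membership statement.
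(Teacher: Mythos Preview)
Your overall strategy matches the paper's, but there is a genuine error that breaks the key step. You write $H_\beta\cap Z_\beta=Z_{\beta,\sml}$, i.e.\ blockwise scalars with $\lambda_i\equiv_n1$. This is wrong: the condition for $\diag(\lambda_1I_{r_1},\dots,\lambda_kI_{r_k})$ to lie in $\bas{H}_\beta$ is $\det(\lambda_iI_{r_i})=\lambda_i^{r_i}\equiv_n1$, so $H_\beta\cap Z_\beta=Z_{\beta,\lrg}$, which in general strictly contains $Z_{\beta,\sml}$. With your identification the ``$\subseteq$'' direction of condition (ii) collapses: if $\lambda_i\in F^{\times n}$ then every Hilbert symbol $(\lambda_i,\cdot)_n$ is trivial, so \emph{every} $g\in G_\beta$ centralizes $Z_{\beta,\sml}$, and you extract no constraint on $g$ whatsoever.

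Once you correct this to $Z_{\beta,\lrg}$, the argument you sketch at the end does work, and is essentially what the paper does, though the paper's computation is slightly cleaner. Testing against $z$ supported in the $i$-th block with $\lambda_i^{r_i}\equiv_n1$, formula \eqref{eqwtzwtgblock} gives $[z,g]_{\incvr}=(\lambda_i,\det(g)^{r_ic'}\det(g_i)^{-1})_n$; since $\lambda_i^{r_i}\equiv_n1$ forces $(\lambda_i,\det g)_n^{r_ic'}=((\lambda_i^{r_i},\det g)_n)^{c'}=1$, this simplifies to $(\det g_i,\lambda_i)_n$. Now \eqref{eq: kpwrhs} applies directly and yields $\det g_i\equiv_n\nu_i^{r_i}$ for some $\nu_i$, hence $g\in\bas{Z}_\beta\bas{H}_\beta$. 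Note that the conclusion you anticipated, that $\det(g)^{r_ic'}\det(g_i)^{-1}$ must itself be an $n$-th power, is stronger than what you can get from testing against $Z_{\beta,\lrg}$; the correct conclusion from \eqref{eq: kpwrhs} is only that it is an $r_i$-th power modulo $n$-th powers, which still suffices.
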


\begin{proof}
For simplicity, let $G=G_\beta$, $H=H_\beta$ and $N=Z_G(H)$.

Let $\bas{G}_\beta^{\der}\simeq\SL_{r_1}(F)\times\dots\times\SL_{r_k}(F)$ denote the derived
group of $\bas{G}_\beta$.
Clearly, $Z_{\bas{G}_\beta}(\bas{H}_\beta)=\bas{Z}_\beta$
since $\bas{H}_\beta\supset\bas{G}_\beta^{\der}$ and $Z_{\bas{G}_\beta}(\bas{G}_\beta^{\der})=\bas{Z}_\beta$.
Hence,
\[
\pr(N)\subset\bas{Z}_\beta.
\]
On the other hand by \eqref{eqwtzwtgblock}, $\pr(N)\supset\bas{Z}_\beta$. Hence, $N=Z_\beta$.

It remains to show \eqref{eq: zgzhnh}.
Obviously, $Z_G(N\cap H)\supset NH$.
Suppose that $g=\diag(g_1,\dots,g_k)\in\pr(Z_G(N\cap H))$. By \eqref{eqwtzwtgblock},
for every $i$ we have $(\det g_i,\lambda_i)_n=1$ whenever $\lambda_i^{r_i}\equiv_n 1$.
It follows from \eqref{eq: kpwrhs} that $\det g_i\equiv_n\nu_i^{r_i}$ for some $\nu_i\in F^\times$.
Therefore, $g\in\bas{Z}_\beta\bas{H}_\beta=\pr(NH)$.

In conclusion, $(H,N)$ is special.
The lemma follows.
\end{proof}

\begin{remark} \label{rem: realtwist}
It follows from the lemma, together with Remark \ref{rem: ggorbit} that $\pi,\pi'\in\Irr_\epsilon(G_\beta)$ are weakly equivalent if
they are in the same $\PD{G_\beta/H_\beta}$-orbit under twisting.
Moreover, if $\omega\in\PD{G_\beta/H_\beta}$, then up to isomorphism $\pi\cdot\omega$ depends only
on the restriction of $\omega$ to the subgroup $Z(G_\beta)/Z_{H_\beta}(G_\beta)\simeq Z(G_r)/Z_{r,\sml}$.
(The last equality follows from \eqref{cntrbs} and \eqref{eq: zsmlrg}.)
\end{remark}

We consider $G^\beta=(G_{r_1}\times\dots G_{r_k})/\Xi$ as a covering group of $\bas{G}_\beta$:
\[
G^\beta\xrightarrow{\pr^\beta}\bas{G}_\beta.
\]
In general, for a subgroup $\bas{H}$ of $\bas{G}_\beta$, we will denote by $H^\beta$ its preimage in $G^\beta$ under $\pr^\beta$.
(In the case $\beta=(r)$, $H^{(r)}=H$.)

Recall that by \eqref{eqblockdecomp}, for any $s\le r$ and $0\le t\le r-s$ the pullback of the cocycle $\sigma^{(c)}$ defining $G_r$ to
$\bas{G}_s$ via $x\mapsto\diag(I_t,x,I_{r-s-t})$ is independent of $t$.
Thus, we can identify $G_{r_i}$ with the ``$i$-th block'' of $G_\beta$, as well as with the $i$-th block in $G^\beta$.
Of course, the different blocks do not commute with each other in $G_\beta$, only in $G^\beta$.
However, the images of $H_{r_i}$ in $G_\beta$ pairwise commute by \eqref{eq: commblocks}.
Thus, we get an isomorphism of covering groups of $\bas{H}_\beta$
\[
\iota_\beta:H_\beta\rightarrow H_\beta^\beta.
\]

For inductive arguments, we will also need to consider the relative case.
Given two compositions $\gamma$ and $\beta$ or $r$ we write $\gamma\prec\beta$ if $\bas{G}_\gamma\le\bas{G}_\beta$,
i.e., if $\gamma$ is a refinement of $\beta$.
In this case we can write $\gamma$ in the form $(\gamma_1,\dots,\gamma_k)$,
where $\gamma_i$ is a composition of $r_i$ for each $i=1,\dots,k$.

Let $\gamma$ and $\beta_1$ be compositions of $r$ such that $\gamma\prec\beta_1$. Define
\[
\bas{H}_{\gamma,\beta_1}=\bas{G}_\gamma\cap\bas{H}_{\beta_1}\le\bas{G}_{\beta_1}.
\]
Recall that according to our convention, for any additional composition $\beta_2$ such that $\beta_1\prec\beta_2$,
$H_{\gamma,\beta_1}^{\beta_2}$ is the inverse image under $\pr^{\beta_2}$ of $\bas{H}_{\gamma,\beta_1}$.
As before, the covering groups $H_{\gamma,\beta_1}^{\beta_2}$ of $\bas{H}_{\gamma,\beta_1}$ essentially do not depend on $\beta_2$.
In particular, $H_{\gamma,\beta}^\beta$ is naturally isomorphic to $H_{\gamma,\beta}=H_{\gamma,\beta}^{(r)}$ for $\gamma\prec\beta$.
We will denote by
\[
\iota_{\gamma,\beta}:H_{\gamma,\beta}^\beta\rightarrow H_{\gamma,\beta}
\]
the natural isomorphism for any $\gamma\prec\beta$.

As in Lemma \ref{lemmaGbetaspecial}, we have
\begin{lemma} \label{lem: specbeta12}
Let $\gamma$, $\beta_1$ and $\beta_2$ be compositions such that $\gamma\prec\beta_1\prec\beta_2$.
Let $G_i=G_\gamma^{\beta_i}$, $H_i=H_{\gamma,\beta_1}^{\beta_i}$, $N_i=Z_{\beta_1}^{\beta_i}$, $i=1,2$.
Then, $(H_i,N_i)$ are well-matched special pairs in $G_i$, $i=1,2$ and
\[
N_i\cap H_i=Z_{\beta_1,\lrg}^{\beta_i},\
Z_{N_i\cap H_i}(G_i)=Z_{\beta_1,\sml}^{\beta_i}.
\]
\end{lemma}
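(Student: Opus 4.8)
The plan is to argue exactly as in the proof of Lemma~\ref{lemmaGbetaspecial}, after reducing all commutator computations to a single general linear block. Fix $i\in\{1,2\}$ and, using $\gamma\prec\beta_1\prec\beta_i$, write $\beta_i=(t_1,\dots,t_m)$ and accordingly $\beta_1=(\beta_1^{(1)},\dots,\beta_1^{(m)})$, $\gamma=(\gamma^{(1)},\dots,\gamma^{(m)})$, where $\gamma^{(a)}\prec\beta_1^{(a)}$ are compositions of $t_a$. Since the $\beta_i$-blocks commute in $G^{\beta_i}$, the group $G_i=G_\gamma^{\beta_i}$ is the quotient by $\Xi$ of $G_{\gamma^{(1)}}\times\dots\times G_{\gamma^{(m)}}$, each factor lying over $\bas{G}_{\gamma^{(a)}}\le\GL_{t_a}$; likewise $N_i$ lies over $\bas{Z}_{\beta_1}=\prod_a\bas{Z}_{\beta_1^{(a)}}$ and $H_i$ over $\bas{H}_{\gamma,\beta_1}=\prod_a(\bas{G}_{\gamma^{(a)}}\cap\bas{H}_{\beta_1^{(a)}})$. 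Hence, whenever $g,h\in G_i$ project to commuting elements of $\bas{G}_\gamma$ one has $[g,h]_{\incvr}=\prod_a[g^{(a)},h^{(a)}]_{\incvr}$ with the $a$-th commutator taken in $G_{t_a}$; so every commutator below is computed in a single $\GL_{t_a}$ by means of \eqref{eqwtzwtgblock} and \eqref{eq: kpwrhs}.

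First I would show $(H_i,N_i)$ is special. As $\gamma\prec\beta_1$, the scalars $\bas{Z}_{\beta_1}$ lie in $Z(\bas{G}_\gamma)$. Applying \eqref{eqwtzwtgblock} blockwise and using that every $\beta_1$-block of an element of $\bas{H}_{\gamma,\beta_1}$ has determinant $\equiv_n 1$, one gets $[z,h]_{\incvr}=1$ for $z\in N_i$, $h\in H_i$, so $N_i\le Z_{G_i}(H_i)$. Since $\bas{Z}_{\beta_1}\subseteq\bas{G}_\gamma$, the group $N_i\cap H_i$ is the preimage of $\bas{Z}_{\beta_1}\cap\bas{H}_{\beta_1}=\bas{Z}_{\beta_1,\lrg}$, which gives $N_i\cap H_i=Z_{\beta_1,\lrg}^{\beta_i}$ and makes the inclusion $N_iH_i\subseteq Z_{G_i}(N_i\cap H_i)$ formal. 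For the reverse inclusion \eqref{eq: zgzhnh}, take $g\in Z_{G_i}(N_i\cap H_i)$, write $\bar{g}=\pr^{\beta_i}(g)$, and test against elements of $Z_{\beta_1,\lrg}^{\beta_i}$ supported in a single $\beta_i$-block $a$: \eqref{eqwtzwtgblock} yields, for each $\beta_1$-block $j$, that $(\mu,\det(\bar{g}^{(a)})^{r_jc'}\det(\bar{g}^{(a)}_j)^{-1})_n=1$ for all $\mu$ with $\mu^{r_j}\equiv_n 1$, hence by \eqref{eq: kpwrhs} $\det(\bar{g}^{(a)}_j)\equiv_n\nu_j^{r_j}$ for some $\nu_j\in F^\times$; therefore $\bar{g}\in\bas{Z}_{\beta_1}\bas{H}_{\gamma,\beta_1}=\pr^{\beta_i}(N_iH_i)$ and $g\in N_iH_i$. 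This is the Hilbert-symbol argument of Lemma~\ref{lemmaGbetaspecial}.

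Next I would compute $Z_{N_i\cap H_i}(G_i)=(N_i\cap H_i)\cap Z(G_i)$. The inclusion $\supseteq Z_{\beta_1,\sml}^{\beta_i}$ is immediate, because an element over $\bas{Z}_{\beta_1,\sml}$ makes every Hilbert symbol appearing in \eqref{eqwtzwtgblock} have first argument in $F^{\times n}$. For $\subseteq$, take $x\in Z_{\beta_1,\lrg}^{\beta_i}\cap Z(G_i)$ and fix a block $a$, say $\bar{x}^{(a)}=\diag(\lambda_jI_{r_j})_j$ with $\lambda_j^{r_j}\equiv_n 1$. Centrality forces $\bar{x}^{(a)}$ to commute with all lifts of the diagonal torus of $\GL_{t_a}$, so \eqref{eqwtzwtgblock} gives $\prod_j(\lambda_j,(\prod_{j'}d_{j'})^{r_jc'}d_j^{-1})_n=1$ for arbitrary $d_j\in F^\times$ (the determinant of the $j$-th $\beta_1$-block of the torus element). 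Regrouping by $d_{j'}$ rewrites this as $\prod_{j'}(\lambda_{j'}^{-1}\Lambda^{c'},d_{j'})_n=1$ with $\Lambda=\prod_j\lambda_j^{r_j}$; nondegeneracy of the Hilbert symbol gives $\lambda_{j'}\equiv_n\Lambda^{c'}$, and since $\Lambda\equiv_n 1$ (each $\lambda_j^{r_j}\equiv_n 1$) we conclude $\lambda_{j'}\equiv_n 1$, i.e.\ $\bar{x}\in\bas{Z}_{\beta_1,\sml}$. Thus $Z_{N_i\cap H_i}(G_i)=Z_{\beta_1,\sml}^{\beta_i}$.

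Finally, for well-matchedness: the three image conditions hold since $\pr^{\beta_i}(H_i)=\bas{H}_{\gamma,\beta_1}$, $\pr^{\beta_i}(N_i)=\bas{Z}_{\beta_1}$ and $\pr^{\beta_i}(Z_{H_i\cap N_i}(G_i))=\bas{Z}_{\beta_1,\sml}$ for both $i=1,2$, and $H_1\cong H_2$ as covering groups of $\bas{H}_{\gamma,\beta_1}$ via the identifications of $H_{\gamma,\beta_1}^{\beta_1}$ and $H_{\gamma,\beta_1}^{\beta_2}$ with $H_{\gamma,\beta_1}$ noted before the lemma (namely $\iota_{\gamma,\beta_1}$ and its analogue, the covering $H_{\gamma,\beta_1}^{\beta}$ being independent of $\beta\succ\beta_1$). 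The step I expect to be the main obstacle is bookkeeping — keeping the three nested compositions $\gamma\prec\beta_1\prec\beta_i$ and the induced block decompositions of $N_i$, $H_i$ and $Z(G_i)$ straight — together with the Hilbert-symbol rearrangement in the centrality computation, which combines the $\lrg$-condition with the twist exponent $c'$.
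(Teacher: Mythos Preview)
Your proof is correct and follows exactly the route the paper indicates: the paper gives no separate proof of this lemma, simply prefacing it with ``As in Lemma~\ref{lemmaGbetaspecial}'', and you have carried out that analogy in detail, reducing block by block along $\beta_i$ and then repeating the Hilbert-symbol argument via \eqref{eqwtzwtgblock} and \eqref{eq: kpwrhs}. The only mild addition is your direct torus computation for $Z_{N_i\cap H_i}(G_i)=Z_{\beta_1,\sml}^{\beta_i}$, which in the paper's case $\beta_2=(r)$ is recorded separately as \eqref{eq: zsmlrg}; your argument is the natural extension and is fine.
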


By Proposition \ref{prop: transferwm},9 we conclude
\begin{corollary}
For any compatible genuine characters $\chi_i$ of $Z_{N_i}(G_i)=Z(G_{\beta_1}^{\beta_i})$ we have an equivalence of categories
\begin{equation} \label{def: tnsrf}
\tnsr_{\gamma,\beta_1,\chi_1}^{\beta_2,\chi_2}:
=\trns_{G_1,H_1,\chi_1}^{G_2,H_2,\chi_2}:\Reps_{\chi_1}(G_\gamma^{\beta_1})
\rightarrow\Reps_{\chi_2}(G_\gamma^{\beta_2}).
\end{equation}
\end{corollary}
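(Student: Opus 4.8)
The plan is to read off the corollary from Proposition~\ref{prop: transferwm}, applied to the well-matched special pairs produced by Lemma~\ref{lem: specbeta12}. By that lemma, $(H_1,N_1)$ and $(H_2,N_2)$ are well-matched special pairs in $G_1=G_\gamma^{\beta_1}$ and $G_2=G_\gamma^{\beta_2}$, with $N_i\cap H_i=Z_{\beta_1,\lrg}^{\beta_i}$ and $Z_{N_i\cap H_i}(G_i)=Z_{\beta_1,\sml}^{\beta_i}$. Proposition~\ref{prop: transferwm} then associates to any pair of compatible genuine characters $\chi_i$ of $Z_{N_i}(G_i)$ the equivalence of categories $\trns_{G_1,H_1,\chi_1}^{G_2,H_2,\chi_2}\colon\Reps_{\chi_1}(G_1)\to\Reps_{\chi_2}(G_2)$; we simply name this functor $\tnsr_{\gamma,\beta_1,\chi_1}^{\beta_2,\chi_2}$, as in \eqref{def: tnsrf}. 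The only point that is not purely formal is to check that the group $Z_{N_i}(G_i)$ occurring in the hypothesis of Proposition~\ref{prop: transferwm} is the group $Z(G_{\beta_1}^{\beta_i})$ named in the corollary, and that ``compatible'' means the same in both places.

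First I would prove the identification $Z_{N_i}(G_i)=Z(G_{\beta_1}^{\beta_i})$. Since $\gamma\prec\beta_1$ we have $\bas{Z}_{\beta_1}\subseteq\bas{G}_\gamma\subseteq\bas{G}_{\beta_1}$, so $N_i=Z_{\beta_1}^{\beta_i}$ is contained both in $G_i=G_\gamma^{\beta_i}$ and in $G_{\beta_1}^{\beta_i}$. Because $N_i$ consists of lifts of block-$\beta_1$-scalar matrices, the commutator $[z,g]_{\incvr}$ with $z\in N_i$ depends on $g\in\bas{G}_{\beta_1}$ only through the determinants of its $\beta_1$-blocks, by the blockwise formula \eqref{eqwtzwtgblock} (more precisely, its analogue on $G^{\beta_i}$, obtained from \eqref{eqblockdecomp} exactly as in Lemma~\ref{lemmaGbetaspecial}). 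For $g$ ranging over $\bas{G}_\gamma$ these block-determinants already range over all of $F^\times$ independently, since each $\beta_1$-block of an element of $\bas{G}_\gamma$ is block-$\gamma$-diagonal and its determinant is an arbitrary element of $F^\times$. Hence an element of $N_i$ centralizes $G_i$ if and only if it centralizes $G_{\beta_1}^{\beta_i}$; such an element, lying in $N_i\subseteq G_{\beta_1}^{\beta_i}$, is by definition in the center of $G_{\beta_1}^{\beta_i}$, and conversely $Z(G_{\beta_1}^{\beta_i})\subseteq N_i$ since the center of the cover maps into the center $\bas{Z}_{\beta_1}$ of the base. This yields $Z_{N_i}(G_i)=Z(G_{\beta_1}^{\beta_i})$. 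Finally, under the isomorphism $\iota_{H_1}^{H_2}$ of the well-matched structure the subgroup $Z_{H_i\cap N_i}(G_i)=Z_{\beta_1,\sml}^{\beta_i}$ for $i=1$ is carried to the one for $i=2$, so compatibility of $\chi_1,\chi_2$ in the sense of Proposition~\ref{prop: transferwm} (congruence of their restrictions to these subgroups) is exactly the compatibility hypothesis in the corollary.

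With the identification in hand the corollary is immediate: Proposition~\ref{prop: transferwm} gives the equivalence of categories, and we define $\tnsr_{\gamma,\beta_1,\chi_1}^{\beta_2,\chi_2}:=\trns_{G_1,H_1,\chi_1}^{G_2,H_2,\chi_2}$. I expect the centralizer identification of the second paragraph to be the only genuine step; it is elementary, but one must keep track of the subgroup $\Xi$ and of the fact, used throughout, that the cocycle \eqref{eqblockdecomp} restricted to a single block agrees with the Kazhdan--Patterson cocycle on that block, exactly as in the proof of Lemma~\ref{lemmaGbetaspecial}.
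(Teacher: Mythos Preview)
Your proof is correct and follows the same approach as the paper, which simply states ``By Proposition~\ref{prop: transferwm}, we conclude'' immediately after Lemma~\ref{lem: specbeta12}. You have additionally supplied a clean verification of the identification $Z_{N_i}(G_i)=Z(G_{\beta_1}^{\beta_i})$, which the paper asserts in the statement of the corollary without spelling out; your argument via the block-determinant dependence of the commutator is the natural one.
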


In particular taking $\beta_1=\beta$ and $\beta_2=(r)$ in the above lemma, we have the following special case.

\begin{corollary}
Let $\gamma$ and $\beta$ be two compositions of $r$ such that $\gamma\prec\beta$.
Let $\omega^\beta$ and $\omega_\beta$ be compatible characters of $Z(G^\beta)=(Z(G_{r_1})\times\dots\times Z(G_{r_k}))/\Xi$ and $Z(G_\beta)$.
Then, we have an equivalence of categories
\begin{equation} \label{def: tnsrbeta}
\tnsr_{\gamma,\beta,\omega^\beta}^{\omega_\beta}:\Reps_{\omega^\beta}(G_\gamma^\beta)\rightarrow\Reps_{\omega_\beta}(G_\gamma).
\end{equation}
It respects the contragredient.
\end{corollary}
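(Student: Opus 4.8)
The plan is to obtain this corollary as the special case $\beta_1=\beta$, $\beta_2=(r)$ of the corollary immediately preceding it (hence of Proposition~\ref{prop: transferwm} together with Lemma~\ref{lem: specbeta12}). Since $\bas{G}_\beta\le\bas{G}_r$, every composition $\beta$ of $r$ refines $(r)$, so we have a chain $\gamma\prec\beta\prec(r)$ and Lemma~\ref{lem: specbeta12} applies, yielding well-matched special pairs $(H_1,N_1)$ in $G_1:=G_\gamma^\beta$ and $(H_2,N_2)$ in $G_2:=G_\gamma^{(r)}$. I would then unwind the conventions of \S\ref{subsectionMTP}: $G_\gamma^{(r)}=G_\gamma$ and $H_{\gamma,\beta}^{(r)}=H_{\gamma,\beta}$, while $G_\beta^\beta$ is the full preimage of $\bas{G}_\beta$ in $G^\beta$, i.e.\ $G^\beta$ itself; hence $Z_{N_1}(G_1)=Z(G_\beta^\beta)=Z(G^\beta)$ and $Z_{N_2}(G_2)=Z(G_\beta^{(r)})=Z(G_\beta)$. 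One also checks the concrete description $Z(G^\beta)=(Z(G_{r_1})\times\dots\times Z(G_{r_k}))/\Xi$, which holds because the blocks commute in $G^\beta$, so an element of $G^\beta$ is central precisely when each of its block components is central in the corresponding $G_{r_i}$. Under these identifications $\omega^\beta$ and $\omega_\beta$ are exactly a pair of compatible genuine characters $\chi_1$, $\chi_2$ of $Z_{N_1}(G_1)$, $Z_{N_2}(G_2)$.

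With the dictionary in place, the equivalence of categories is simply
\[
\tnsr_{\gamma,\beta,\omega^\beta}^{\omega_\beta}:=\trns_{G_1,H_1,\omega^\beta}^{G_2,H_2,\omega_\beta}:\Reps_{\omega^\beta}(G_\gamma^\beta)\rightarrow\Reps_{\omega_\beta}(G_\gamma),
\]
provided by Proposition~\ref{prop: transferwm}. For the last assertion, note that the inverses of a pair of compatible genuine characters are again compatible and genuine, and that Proposition~\ref{prop: transferwm} gives $(\trns_{G_1,H_1,\chi_1}^{G_2,H_2,\chi_2}\tau)^\vee\simeq\trns_{G_1,H_1,\chi_1^{-1}}^{G_2,H_2,\chi_2^{-1}}\tau^\vee$; specializing, this reads $(\tnsr_{\gamma,\beta,\omega^\beta}^{\omega_\beta}\tau)^\vee\simeq\tnsr_{\gamma,\beta,(\omega^\beta)^{-1}}^{(\omega_\beta)^{-1}}\tau^\vee$, which is the sense in which the functor respects the contragredient.

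There is no genuine obstacle at this level: the statement is a transcription of the preceding corollary into the notation $G^\beta$, $G_\beta$. All the substance sits upstream — in Lemma~\ref{lem: specbeta12} and, behind it, Lemma~\ref{lemmaGbetaspecial} together with the Hilbert-symbol computations \eqref{eqwtzwtgblock} and \eqref{eq: commblocks} that identify $Z_\beta=Z_{G_\beta}(H_\beta)$ and pin down the relevant centers, and in Proposition~\ref{prop: transferwm}, which builds the transfer equivalence out of Lagrangian induction and restriction. The only point requiring care in the present step is the bookkeeping of the identifications $G_\beta^\beta=G^\beta$, $H^{(r)}=H$, and the matching of $Z_{N_i}(G_i)$ with $Z(G^\beta)$ and $Z(G_\beta)$.
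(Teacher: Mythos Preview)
Your proposal is correct and matches the paper's approach exactly: the paper obtains this corollary precisely by specializing the preceding corollary (and hence Proposition~\ref{prop: transferwm} via Lemma~\ref{lem: specbeta12}) to $\beta_1=\beta$, $\beta_2=(r)$, and you have carried out the necessary identifications $G_\gamma^{(r)}=G_\gamma$, $G_\beta^\beta=G^\beta$, $Z(G_{\beta_1}^{\beta_i})=Z(G^\beta),Z(G_\beta)$ correctly. Your derivation of the contragredient compatibility from the corresponding clause of Proposition~\ref{prop: transferwm} is also exactly what the paper intends.
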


For $\beta=\gamma$, we also write $\tnsr_{\beta,\omega^\beta}^{\omega_\beta}=\tnsr_{\beta,\beta,\omega^\beta}^{\omega_\beta}$.

The following remark will be used repeatedly.

\begin{remark} \label{rem: compat}
By \eqref{cntrbs}, given $\omega^\beta$, the choice of $\omega_\beta$ amounts to a choice of a character $\omega$ of $Z(G_r)$ that is prescribed
on $Z_{r,\sml}$.
More precisely, writing $\omega^\beta=\omega_1\otimes\dots\otimes\omega_k$ where $\omega_i$ is a genuine character of $Z(G_{r_i})$, $i=1,\dots,k$,
the condition on $\omega$ is
\begin{equation}\label{eqomegamulpiomega}
(\omega_1\rest_{Z_{r_1,\sml}}\otimes\dots\otimes\omega_k\rest_{Z_{r_k,\sml}})_{Z_{r,\sml}}=\omega\rest_{Z_{r,\sml}}.
\end{equation}
In this case we will say that $\omega$ is a \emph{compatible character} of $Z(G_r)$ with respect to $\omega^\beta$ or
$(\omega_1,\dots,\omega_k)$, or just a compatible character if $\omega^\beta$ is clear from the context.
\end{remark}

\begin{remark}
In the case $k=1$ (and $\beta=(r)$), $\omega^\beta$ and $\omega_\beta$ are characters of $Z(G_r)$ that coincide
on $Z(G_r)\cap H_r=Z_{r,\sml}$. We can extend $\omega_\beta(\omega^\beta)^{-1}$ to a character $\omega$ of the abelian group $G_r/H_r$.
Then, $\tnsr_{\beta,\omega^\beta}^{\omega_\beta}(\pi)\simeq\pi\cdot\omega$.
\end{remark}

By abuse of notation we will also write
\[
\tnsr_{\beta,\omega^\beta}^{\omega_\beta}:\Reps_{\omega_1}(G_{r_1})\times\dots\times\Reps_{\omega_k}(G_{r_k})\rightarrow\Reps_{\omega_\beta}(G_\beta)
\]
for the multiexact, multiadditive multifunctor which is the composition of \eqref{def: tnsrbeta} with the ordinary tensor product
\[
\otimes:\Reps_{\omega_1}(G_{r_1})\times\dots\times\Reps_{\omega_k}(G_{r_k})\rightarrow\Reps_{\omega^\beta}(G^\beta).
\]
We caution however that the latter commutes with the contragredient only in the admissible case.
At any rate, by Proposition \ref{prop: transferwm} and Remark \ref{rem: realtwist}, we get a bijection
\[
\Irr_{\epsilon,\twst}(G_{r_1})\times\dots\times\Irr_{\epsilon,\twst}(G_{r_k})\rightarrow\Irr_{\epsilon,\twst}(G_\beta).
\]
Moreover for each $i$ and $\pi_i\in\Reps_{\omega_i}(G_{\gamma_i})$, it is convenient to introduce
\begin{equation}\label{eqMTPdef}
(\pi_1\otimes\dots\otimes\pi_k)_\omega:=\tnsr_{\gamma,\beta,\omega^\beta}^{\omega_\beta}(\pi_1,\dots,\pi_k).
\end{equation}
Note that for brevity we omit $\omega^\beta=\omega_1\otimes\dots\otimes\omega_k$ from the notation since
it is encoded in the assumption on $\pi_i$.
We call \eqref{eqMTPdef} the \emph{metaplectic tensor product} with respect to $(\pi_1,\dots,\pi_k;\omega)$.

The following result is a consequence of Corollary \ref{cor: trns123}. It essentially says that the metaplectic tensor product is associative.

\begin{lemma}\label{lemmaMTPassociative}
Suppose that $\gamma,\beta_1,\beta_2,\beta_3$ are compositions such that $\gamma\prec\beta_1\prec\beta_2\prec\beta_3$.
Let $\chi_i$ be a genuine character of $Z(G_{\beta_1}^{\beta_i})$, $i=1,2,3$.
Assume that $\chi_i$ are pairwise compatible.
Let $\chi'_i$, $i=2,3$ be the restriction of $\chi_i$ to $Z(G_{\beta_2}^{\beta_i})$.
Then, we have an equivalence of functors
\[
\tnsr_{\gamma,\beta_2,\chi'_2}^{\beta_3,\chi'_3}\rest_{\Reps_{\chi_2}(G_2)}\circ\tnsr_{\gamma,\beta_1,\chi_1}^{\beta_2,\chi_2}=
\tnsr_{\gamma,\beta_1,\chi_1}^{\beta_3,\chi_3}.
\]
\end{lemma}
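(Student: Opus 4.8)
The plan is to deduce the identity from the transitivity statement of Corollary~\ref{cor: trns123}, applied to the three $\mu_n$-coverings $G_i:=G_\gamma^{\beta_i}$ of $\bas G:=\bas G_\gamma$, $i=1,2,3$, and to the four subgroups $\bas H:=\bas H_{\gamma,\beta_1}$, $\bas H':=\bas H_{\gamma,\beta_2}$, $\bas N:=\bas Z_{\beta_1}$, $\bas N':=\bas Z_{\beta_2}$ of $\bas G_\gamma$. Here $\bas H\le\bas H'$, because $\beta_1\prec\beta_2$ forces every $\beta_2$-block determinant of an element of $\bas H_{\beta_1}$ into $F^{\times n}$, and $\bas N'\le\bas N$, because $\bas G_{\beta_1}\le\bas G_{\beta_2}$ gives $Z(\bas G_{\beta_2})\le Z(\bas G_{\beta_1})$. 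With these choices the inverse images $H_i,H'_i,N_i,N'_i$ are exactly the groups appearing in \eqref{def: tnsrf}; since $Z_{N_i}(G_i)=Z(G_{\beta_1}^{\beta_i})$ and $Z_{N'_i}(G_i)=Z(G_{\beta_2}^{\beta_i})$, the characters $\chi_i$ and $\chi'_i$ of the statement are precisely the data the corollary requires, and its conclusion, read through \eqref{def: tnsrf}, is verbatim the claimed equivalence of functors. So the proof reduces to checking the hypotheses (1)--(5) of Corollary~\ref{cor: trns123}.

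Conditions (1)--(4) are essentially bookkeeping. Well-matchedness of $(H_i,N_i)$ in $G_i$ and $(H_j,N_j)$ in $G_j$ for $i<j$ is Lemma~\ref{lem: specbeta12}, used with $\gamma\prec\beta_1\prec\beta_{\max(i,j)}$ for the pairs $(1,2)$ and $(1,3)$; for the pair $(2,3)$ it follows because $H_{\gamma,\beta_1}^{\beta_2}$ and $H_{\gamma,\beta_1}^{\beta_3}$ are the canonical cover of $\bas H_{\gamma,\beta_1}$ and, together with $Z_{\beta_1}^{\beta_i}$ and $Z_{\beta_1,\sml}^{\beta_i}$, project onto $\bas H_{\gamma,\beta_1}$, $\bas Z_{\beta_1}$ and $\bas Z_{\beta_1,\sml}$ independently of the ambient composition. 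Well-matchedness of $(H'_2,N'_2)$ and $(H'_3,N'_3)$ is Lemma~\ref{lem: specbeta12} applied to $\gamma\prec\beta_2\prec\beta_3$. The relations (3) and (4) among the maps $\iota$ hold because every $\iota$ in sight is one of the canonical identifications of the covers $H_{\gamma,\beta_1}^{\bullet}$ or $H_{\gamma,\beta_2}^{\bullet}$, and these are visibly transitive and restrict correctly along $H_{\gamma,\beta_1}^{\bullet}\le H_{\gamma,\beta_2}^{\bullet}$.

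The only substantive point, and the step I would treat most carefully, is hypothesis (5): $Z_{N_i\cap H'_i}(G_i)\le H_i$ for $i=2,3$. By the notational convention this centralizer is $Z(G_i)\cap N_i\cap H'_i=Z(G_{\beta_1}^{\beta_i})\cap H'_i$, and I would unwind it using \eqref{eq: cntrofcfr} and \eqref{eqwtzwtgblock}: an element of $Z(G_{\beta_1}^{\beta_i})$ is, up to $\mu_n$, a $\beta_1$-block-scalar $\diag(\lambda_jI_{m_j})_j$ whose entries satisfy $\lambda_k\equiv_n\big(\prod_{l}\lambda_l^{m_l}\big)^{c'}$, with $l$ running over the $\beta_1$-blocks contained in the $\beta_i$-block of $k$; imposing in addition that this element lies in $\bas H_{\gamma,\beta_2}$, i.e.\ that all of its $\beta_2$-block determinants are $\equiv_n1$, forces (since $\beta_2\prec\beta_i$, so each $\beta_i$-block is a union of $\beta_2$-blocks) the product $\prod_l\lambda_l^{m_l}$ over any $\beta_i$-block to be $\equiv_n1$, hence $\lambda_k\equiv_n1$ for every $k$, i.e.\ the element lies in $\bas H_{\beta_1}$, which is exactly membership in $H_i$. (For $i=2$ this is immediate; for $i=3$ one reduces to the $i=2$ computation using $\beta_2\prec\beta_3$.) This is the one place where the explicit form of the Kazhdan--Patterson cocycle is used; it is a finite check, after which Corollary~\ref{cor: trns123} yields the lemma, the remainder being a translation of notation.
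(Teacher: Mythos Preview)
Your proposal is correct and follows exactly the paper's approach: the paper simply states that the lemma ``is a consequence of Corollary~\ref{cor: trns123}'' and gives no further argument, whereas you spell out the identifications $G_i=G_\gamma^{\beta_i}$, $\bas H=\bas H_{\gamma,\beta_1}$, $\bas H'=\bas H_{\gamma,\beta_2}$, $\bas N=\bas Z_{\beta_1}$, $\bas N'=\bas Z_{\beta_2}$ and verify hypotheses (1)--(5) of that corollary. Your check of condition~(5), unwinding $Z(G_{\beta_1}^{\beta_i})\cap H_i'$ via \eqref{cntrbs} block-by-block and using that $\beta_2$-block determinants being $\equiv_n1$ forces the $\beta_i$-block determinants (and hence each $\lambda_k$) to be $\equiv_n1$, is the correct computation and is more explicit than anything the paper records.
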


\begin{corollary}\label{cor: associativeMTP}
For $i=1,2,3$ let $\gamma_i$ be a composition of $r_i$, $\omega_i$ a genuine character of $Z(G_{r_i})$
and $\pi_i\in \Reps_{\omega_i}(G_{\gamma_i})$.
Let $\omega$, $\omega_{12}$ and $\omega_{23}$ be compatible characters of $Z(G_{r_1+r_2+r_3})$, $Z(G_{r_1+r_2})$ and $Z(G_{r_2+r_3})$ respectively.
Then,
\[
((\pi_1\otimes\pi_2)_{\omega_{12}}\otimes\pi_3)_\omega\simeq(\pi_1\otimes\pi_2\otimes\pi_3)_\omega\simeq
(\pi_1\otimes(\pi_2\otimes\pi_3)_{\omega_{23}})_\omega.
\]	
\end{corollary}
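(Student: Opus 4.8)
The plan is to reduce both isomorphisms to Lemma \ref{lemmaMTPassociative}, after unwinding the definition \eqref{eqMTPdef} of the metaplectic tensor product and using the associativity of the ordinary tensor product $\otimes$; the choice of the intermediate composition is dictated by the grouping that appears on the left-hand side.

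For the first isomorphism I would set $\gamma=(\gamma_1,\gamma_2,\gamma_3)$, a composition of $r:=r_1+r_2+r_3$, and consider the chain
\[
\gamma\prec\beta_1:=(r_1,r_2,r_3)\prec\beta_2:=(r_1+r_2,r_3)\prec\beta_3:=(r).
\]
I take $\chi_1=\omega_1\otimes\omega_2\otimes\omega_3$ on $Z(G_{\beta_1}^{\beta_1})=Z(G^{(r_1,r_2,r_3)})$, $\chi_3=\omega$ on $Z(G_{\beta_1}^{\beta_3})=Z(G_r)$, and $\chi_2$ the genuine character of $Z(G_{\beta_1}^{\beta_2})$ prescribed by $\omega_{12}$ on the first block and $\omega_3$ on the second. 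By \eqref{eqMTPdef} and the associativity of $\otimes$, evaluating the right-hand functor $\tnsr_{\gamma,\beta_1,\chi_1}^{\beta_3,\chi_3}$ of Lemma \ref{lemmaMTPassociative} on $\pi_1\otimes\pi_2\otimes\pi_3$ yields $(\pi_1\otimes\pi_2\otimes\pi_3)_\omega$, while evaluating the composite $\tnsr_{\gamma,\beta_2,\chi'_2}^{\beta_3,\chi'_3}\circ\tnsr_{\gamma,\beta_1,\chi_1}^{\beta_2,\chi_2}$ (suitably restricted) on the same object first produces $(\pi_1\otimes\pi_2)_{\omega_{12}}\otimes\pi_3$ and then $((\pi_1\otimes\pi_2)_{\omega_{12}}\otimes\pi_3)_\omega$. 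Here the identification of $\tnsr_{\gamma,\beta_1,\chi_1}^{\beta_2,\chi_2}(\pi_1\otimes\pi_2\otimes\pi_3)$ with $(\pi_1\otimes\pi_2)_{\omega_{12}}\otimes\pi_3$ uses only that this transfer ``fuses'' the first two blocks while acting as the identity on the third, which is immediate from the block descriptions of $H_{\gamma,\beta_1}^{\beta_i}$ and $Z_{\beta_1}^{\beta_i}$ in Lemma \ref{lem: specbeta12} together with the construction of $\trns$. Lemma \ref{lemmaMTPassociative} then gives the first isomorphism, and the second follows verbatim with $\beta_2:=(r_1,r_2+r_3)$ and $\omega_{23}$ in place of $\omega_{12}$.

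The part requiring care, and the only real obstacle, is checking that $\chi_1,\chi_2,\chi_3$ are pairwise compatible in the sense of Lemma \ref{lemmaMTPassociative} and that the side condition $Z_{N_i\cap H_i'}(G_i)\le H_i$ underlying Corollary \ref{cor: trns123} holds in this situation. This is precisely where the hypothesis that $\omega$, $\omega_{12}$ and $\omega_{23}$ are compatible characters (Remark \ref{rem: compat}, i.e.\ condition \eqref{eqomegamulpiomega}) enters, together with the center computations \eqref{cntrbs} and \eqref{eq: zsmlrg} identifying $Z(G_{\beta_1}^{\beta_i})$ and its small part; it amounts to routine bookkeeping with Hilbert symbols and the block formulas for the cocycle.
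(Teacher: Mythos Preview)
Your proposal is correct and takes essentially the same approach as the paper: the paper's proof is the single sentence ``we take $\gamma=(\gamma_1,\gamma_2,\gamma_3)$, $\beta_1=(r_1,r_2,r_3)$, $\beta_2=(r_1+r_2,r_3)$ (or $(r_1,r_2+r_3)$) and $\beta_3=(r_1+r_2+r_3)$ in Lemma \ref{lemmaMTPassociative}'', which is exactly your reduction. Your final paragraph about the side condition $Z_{N_i\cap H_i'}(G_i)\le H_i$ from Corollary \ref{cor: trns123} is unnecessary here, since that condition is already absorbed into the statement of Lemma \ref{lemmaMTPassociative}; only the pairwise compatibility of the $\chi_i$ needs to be checked, and that follows from the hypothesis on $\omega,\omega_{12},\omega_{23}$ as you indicate.
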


Indeed, we take $\gamma=(\gamma_1,\gamma_2,\gamma_3)$, $\beta_1=(r_1,r_2,r_3)$,
$\beta_2=(r_1+r_2,r_3)$ (or $(r_1,r_2+r_3)$) and $\beta_3=(r_1+r_2+r_3)$ in Lemma \ref{lemmaMTPassociative}.

\begin{remark}
The choice of characters $\omega_{12}$ and $\omega_{23}$ is immaterial.
Thus,  to simplify the notation we will henceforth write
$((\pi_1\otimes\pi_2)\otimes\pi_3)_\omega=((\pi_1\otimes\pi_2)_{\omega_{12}}\otimes\pi_3)_\omega$ and $(\pi_1\otimes(\pi_2\otimes\pi_3))_\omega=(\pi_1\otimes(\pi_2\otimes\pi_3)_{\omega_{23}})_\omega$.
\end{remark}

Finally, suppose that $\beta=(r_1,\dots,r_k)$ is a composition of $r$ and $\varsigma$ is a permutation of $\{1,\dots,k\}$.
Let $\beta'$ be the partition $(r'_1,\dots,r'_k)$ where $r'_i=r_{\varsigma(i)}$, $i=1,\dots,k$.
From the definition of $G^\beta$, we have a commutative diagram
\begin{equation} \label{eq: permblocks}
\begin{tikzcd}
G^\beta\arrow[r,"\varsigma"]\arrow[d,"\pr^\beta"]&G^{\beta'}\arrow[d,"\pr^{\beta'}"]\\
\bas{G}_\beta\arrow[r,"\varsigma"]&\bas{G}_{\beta'}
\end{tikzcd}
\end{equation}
where the horizonal arrows are permutation of the blocks of $G^\beta$ and $\bas{G}_\beta$.

Let $\omega^\beta$ be a character of $Z(G^\beta)$
and let $\omega^{\beta'}$ be the corresponding character of $Z(G^{\beta'})$ under \eqref{eq: permblocks}.
Let $\omega$ be a character of $Z(G_r)$ that is compatible with $\omega^\beta$ (or equivalently,
with $\omega^{\beta'}$) and let $\omega_\beta$ (resp., $\omega_{\beta'}$) be the corresponding character of $Z(G_\beta)$ (resp., $Z(G_{\beta'})$)
(see Remark \ref{rem: compat}).

Let $w\in W_r$ be the permutation matrix conjugating $\bas{G}_\beta$ to $\bas{G}_{\beta'}$
(and hence $G_\beta$ and $Z_\beta$ to $G_{\beta'}$ and $Z_{\beta'}$) such that
\[
{}^w\diag(g_1,\dots,g_k)=\diag(g_{\varsigma(1)},\dots,g_{\varsigma(k)}),\ \ \forall \diag(g_1,\dots,g_k)\in\bas{G}_\beta.
\]

Consider the embedding $j_{\beta,i}:G_{r_i}\rightarrow G_\beta$ in the $i$-th block.
Since $\pr\circ({}^wj_{\beta,\varsigma(i)})=\pr\circ j_{\beta',i}$, the embeddings
${}^wj_{\beta,\varsigma(i)}$ and $j_{\beta',i}$ differ by a twist by a homomorphism $\bas{G}_{r_i}\rightarrow\mu_n$,
i.e., by a character in $X_n(F^{\times})$.
In particular, ${}^wj_{\beta,\varsigma(i)}$ and $j_{\beta',i}$ coincide on $H_{r_i}$.
Since the groups $j_{\beta,i}(H_{r_i})$ pairwise commute, we get a commutative diagram
\[
\begin{tikzcd}
	H_{\beta}^\beta \arrow[d,"\iota_\beta"'] \arrow[r, "\varsigma"] &
	H_{\beta'}^{\beta'} \arrow[d,"\iota_{\beta'}"] \\
	H_{\beta}\arrow[r,"w"] & H_{\beta'}.
\end{tikzcd}
\]

Let $\gamma=(\gamma_1,\dots,\gamma_k)$ be a refinement of $\beta$ as before.
Let $\gamma'=(\gamma_1',\dots,\gamma_k')$ be the refinement of $\beta'$ given by $\gamma_i'=\gamma_{\varsigma(i)}$, $i=1,\dots, k$.
Thus, ${}^w\bas{G}_\gamma=\bas{G}_{\gamma'}$, ${}^wG_\gamma=G_{\gamma'}$,
$\varsigma(\bas{G}_\gamma)=\bas{G}_{\gamma'}$ and $\varsigma(G_\gamma^\beta)=G_{\gamma'}^{\beta'}$.
We have a commutative diagram
\[
\begin{tikzcd}
	H_{\gamma,\beta}^\beta \arrow[d,"\iota_{\gamma,\beta}"'] \arrow[r, "\varsigma"] &
	H_{\gamma',\beta'}^{\beta'} \arrow[d,"\iota_{\gamma',\beta'}"] \\
	H_{\gamma,\beta}\arrow[r,"w"] & H_{\gamma',\beta'}.
\end{tikzcd}
\]

From the definitions \eqref{def: tnsrf} and \eqref{def: trns}, we infer

\begin{proposition}\label{propweylinvariant}
The following diagram is commutative
\[
\begin{tikzcd}
	\Reps_{\omega^\beta}(G_\gamma^\beta) \arrow[d,"\tnsr_{\gamma,\beta,\omega^\beta}^{\omega_\beta}"'] \arrow[r, "\varsigma"] &
	\Reps_{\omega^{\beta'}}(G_{\gamma'}^{\beta'}) \arrow[d,"\tnsr_{\gamma',\beta',\omega^{\beta'}}^{\omega_{\beta'}}"] \\
	\Reps_{\omega_\beta}(G_\gamma)\arrow[r,"w"] & \Reps_{\omega_{\beta'}}(G_{\gamma'}).
\end{tikzcd}
\]
\end{proposition}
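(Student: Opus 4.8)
The statement is purely formal: I would unwind the definition of the metaplectic tensor products in terms of Lagrangian induction and restriction, and then paste together three elementary commutative squares, the middle one being the square relating $\iota_{\gamma,\beta}$, $\iota_{\gamma',\beta'}$, $\varsigma$ and $w$ displayed just before the statement. Concretely, recall from \eqref{def: tnsrf} and \eqref{def: trns} that, having chosen consistent genuine characters $\psi^\beta$ of $Z_{\beta,\lrg}^\beta$ and $\psi_\beta$ of $Z_{\beta,\lrg}$ (these are the relevant $N\cap H$-groups, by Lemma \ref{lem: specbeta12}), one has
\[
\tnsr_{\gamma,\beta,\omega^\beta}^{\omega_\beta}=\LInd_{H_{\gamma,\beta},\psi_\beta}^{G_\gamma,\omega_\beta}\circ I_{H_{\gamma,\beta}^\beta}^{H_{\gamma,\beta}}\circ\LRes_{H_{\gamma,\beta}^\beta,\psi^\beta}^{G_\gamma^\beta,\omega^\beta},
\]
where $I_{H_{\gamma,\beta}^\beta}^{H_{\gamma,\beta}}$ is the equivalence of \eqref{eqIH1H2} induced by $\iota_{\gamma,\beta}$, and similarly for $\beta'$ with chosen characters $\psi^{\beta'},\psi_{\beta'}$.

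The key tool is the naturality of $\LInd$ and $\LRes$ under isomorphisms of covering groups, which is immediate from the construction in the proof of Proposition \ref{prop: lind}. If $f\colon G\xrightarrow{\sim}G'$ is an isomorphism of $A$-coverings carrying a special pair $(H,N)$ to a special pair $(H',N')$ and consistent genuine characters $\chi,\psi$ to $\chi',\psi'$, then $f$ sends a Lagrangian $L$ of $N$ together with an extension $\theta$ of $\chi\psi$ to a Lagrangian $f(L)$ of $N'$ with extension $\theta\circ f^{-1}$, and carries the model $\Ind_{LH}^G\ex{\tau}\theta$ isomorphically onto $\Ind_{f(L)H'}^{G'}\ex{(f(\tau))}{\theta\circ f^{-1}}$, where $f$ also denotes the equivalence $\Reps(H)\to\Reps(H')$ it induces. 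By the independence-of-$(L,\theta)$ part of Proposition \ref{prop: lind} this yields a canonical natural isomorphism $f\circ\LInd_{H,\psi,N}^{G,\chi}\simeq\LInd_{H',\psi',N'}^{G',\chi'}\circ f$, and dually for $\LRes$.

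Now, by \eqref{eq: permblocks} and the defining property of the permutation matrix $w$, the functors $\varsigma$ and $w$ in the statement are induced by isomorphisms of coverings $G_\gamma^\beta\xrightarrow{\sim}G_{\gamma'}^{\beta'}$ and $G_\gamma\xrightarrow{\sim}G_{\gamma'}$ lying over the same block permutation of $\bas{G}_\gamma$; they carry the special pairs $(H_{\gamma,\beta}^\beta,Z_\beta^\beta)$ and $(H_{\gamma,\beta},Z_\beta)$ onto $(H_{\gamma',\beta'}^{\beta'},Z_{\beta'}^{\beta'})$ and $(H_{\gamma',\beta'},Z_{\beta'})$, and carry $\omega^\beta,\omega_\beta,\psi^\beta,\psi_\beta$ onto $\omega^{\beta'},\omega_{\beta'},\psi^{\beta'},\psi_{\beta'}$ (for the latter we are free to choose $\psi^{\beta'},\psi_{\beta'}$ as the transports of $\psi^\beta,\psi_\beta$). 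Applying the naturality principle to $\varsigma$ on the outer $\LRes$-factor and to $w$ on the outer $\LInd$-factor, and using for the middle $I$-factor the established commutative square
\[
\begin{tikzcd}
	H_{\gamma,\beta}^\beta \arrow[d,"\iota_{\gamma,\beta}"'] \arrow[r, "\varsigma"] &
	H_{\gamma',\beta'}^{\beta'} \arrow[d,"\iota_{\gamma',\beta'}"] \\
	H_{\gamma,\beta}\arrow[r,"w"] & H_{\gamma',\beta'}
\end{tikzcd}
\]
(which at the level of representation categories says $w\circ I_{H_{\gamma,\beta}^\beta}^{H_{\gamma,\beta}}\simeq I_{H_{\gamma',\beta'}^{\beta'}}^{H_{\gamma',\beta'}}\circ\varsigma$), the three squares compose to the square in the statement.

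The only genuine work is the bookkeeping just indicated: verifying that the genuine characters of the various central subgroups and of the $N\cap H$-groups entering \eqref{def: trns} for $\beta'$ are precisely the transports under $w$ and $\varsigma$ of those for $\beta$. Since $\omega^{\beta'}$ and $\omega_{\beta'}$ are \emph{defined} as the corresponding characters, the correspondence commutes with restriction to subgroups, and by Proposition \ref{prop: lind} the auxiliary choice of $\psi^\beta$ is immaterial, this is routine and presents no real obstacle; no analytic input is needed.
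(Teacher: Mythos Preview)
Your proposal is correct and follows essentially the same approach as the paper: the paper's proof is simply the one-line remark ``From the definitions \eqref{def: tnsrf} and \eqref{def: trns}, we infer'', and you have unpacked precisely what that inference entails---namely, writing $\tnsr$ as $\LInd\circ I\circ\LRes$, invoking naturality of $\LInd$ and $\LRes$ under covering-group isomorphisms, and using the commutative square for $\iota_{\gamma,\beta}$ displayed just before the proposition. One terminological quibble: the characters $\psi^\beta,\psi_\beta$ entering \eqref{def: trns} are required to be \emph{congruous} (i.e., related via $\iota_{\gamma,\beta}$), not merely consistent, though your later remark about transporting them makes clear you have the right condition in mind.
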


\section{Parabolic induction}

\subsection{Definition}

Let $\beta$ be a composition of $r$. As in the linear case,
following the notation of \cite{MR579172}*{\S 2.3},
we consider (normalized) parabolic induction and Jacquet functor
\[
i_\beta=i_{\bas{U}_\beta,1}:\Reps(G_\beta)\rightarrow\Reps(G_r),\ \ \
r_\beta=r_{\bas{U}_\beta,1}:\Reps(G_r)\rightarrow\Reps(G_\beta),
\]
defined with respect to the decomposition $P_\beta=G_\beta\ltimes\bas{U}_\beta$.
(As usual, we identify $\bas{U}_\beta$ with a subgroup of $P_\beta$.)
Thus, $i_\beta$ is the composition of the pullback $\Reps(G_\beta)\rightarrow\Reps(P_\beta)$,
twisting by $\modulus_{\bas{U}_\beta}^{\frac12}$ and $\Ind_{P_\beta}^{G_\beta}=\ind_{P_\beta}^{G_\beta}$,
while $r_\beta$ is the $\bas{U}_\beta$-coinvariants, twisted by $\modulus_{\bas{U}_\beta}^{-\frac12}$.
These functors commute with taking contragredient and preserve admissibility and finite length.

More generally, let $\beta$ and $\gamma$ be compositions of $r$ with $\gamma\prec \beta$.
Let $\ordPgp\gamma\beta=\bas{P}_\gamma\cap\bas{G}_\beta$, a standard parabolic subgroup of $\bas{G}_\beta$,
and let $\ordNgp\gamma\beta=\bas{U}_\gamma\cap G_\beta$ be its unipotent radical.
We define the normalized parabolic induction and Jacquet functor
\[
\Pind\gamma\beta:=i_{\ordNgp\gamma\beta,1}:\Reps(G_\gamma)\rightarrow\Reps(G_\beta),\ \ \
\Jac\beta\gamma:=r_{\ordNgp\gamma\beta,1}:\Reps(G_\beta)\rightarrow\Reps(G_\gamma)
\]
with respect to the decomposition $\Pgp\beta\gamma=G_\gamma\ordNgp\beta\gamma$.

Similarly, we define
\[
\Pindbar\gamma\beta:=i_{\ordNgpbar\gamma\beta,1}:\Reps(G_\gamma)\rightarrow\Reps(G_\beta),\ \ \
\Jacbar\beta\gamma:=r_{\ordNgpbar\gamma\beta,1}:\Reps(G_\beta)\rightarrow\Reps(G_\gamma)
\]
with respect to the opposite parabolic subgroup $\ordPgpbar\gamma\beta=\bas{P}_\gamma^-\cap G_\beta$ and its unipotent radical
$\ordNgpbar\gamma\beta=\bas{U}_\gamma\cap G_\beta$.

From now on we fix a composition $\beta=(r_1,\dots,r_k)$ of $r$.

Let $\delta$ and $\gamma$ be compositions of $r$ such that $\delta\prec\gamma\prec\beta$.
Define
\[
\Pindn\delta\gamma\beta=i_{\bas{U}_{\delta,\gamma},1}:\Reps(G_\delta^\beta)\rightarrow\Reps(G_\gamma^\beta)
\]
and similarly $\Jacn\gamma\delta\beta$, $\Pindbarn\delta\gamma\beta$ and $\Jacbarn\gamma\delta\beta$.

Let $\omega^\beta=\omega_1\otimes\dots\otimes\omega_k$ be a genuine character of $Z(G^\beta)$
and let $\omega$ be a compatible genuine character of $Z(G_r)$ (see \eqref{eqomegamulpiomega}).
Let $\omega_\beta$ be the character of $Z(G_\beta)$ that extends $\omega$ and is compatible with $\omega^\beta$.

\begin{proposition}\label{propcomatibleMTPparablicJacquet}
We have the following equivalences of functors
\begin{align*}
\Pind\delta\gamma\circ\tnsr_{\delta,\beta,\omega^\beta}^{\omega_\beta}=
\tnsr_{\gamma,\beta,\omega^\beta}^{\omega_\beta}\circ\Pindn\delta\gamma\beta&:
\Reps_{\omega^\beta}(G_\delta^\beta)\rightarrow\Reps_{\omega_\beta}(G_\gamma),\\
\Jac\gamma\delta\circ\tnsr_{\gamma,\beta,\omega^\beta}^{\omega_\beta}=
\tnsr_{\delta,\beta,\omega^\beta}^{\omega_\beta}\circ\Jacn\gamma\delta\beta&:
\Reps_{\omega^\beta}(G_\gamma^\beta)\rightarrow\Reps_{\omega_\beta}(G_\delta),\\
\Pindbar\delta\gamma\circ\tnsr_{\delta,\beta,\omega^\beta}^{\omega_\beta}=
\tnsr_{\gamma,\beta,\omega^\beta}^{\omega_\beta}\circ\Pindbarn\delta\gamma\beta&:
\Reps_{\omega^\beta}(G_\delta^\beta)\rightarrow\Reps_{\omega_\beta}(G_\gamma),\\
\Jacbar\gamma\delta\circ\tnsr_{\gamma,\beta,\omega^\beta}^{\omega_\beta}=
\tnsr_{\delta,\beta,\omega^\beta}^{\omega_\beta}\circ\Jacbarn\gamma\delta\beta&:
\Reps_{\omega^\beta}(G_\gamma^\beta)\rightarrow\Reps_{\omega_\beta}(G_\delta).
\end{align*}
\end{proposition}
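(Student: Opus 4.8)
The plan is to establish the first equivalence directly and to deduce the remaining three by passing to adjoints. By first and second adjointness (which hold for covering groups), $\Jac\gamma\delta$ is a left adjoint of $\Pind\delta\gamma$ and a right adjoint of $\Pindbar\delta\gamma$, while $\Jacbar\gamma\delta$ is a right adjoint of $\Pind\delta\gamma$ and a left adjoint of $\Pindbar\delta\gamma$; the analogous statements hold for $\Pindn\delta\gamma\beta$, $\Pindbarn\delta\gamma\beta$, $\Jacn\gamma\delta\beta$, $\Jacbarn\gamma\delta\beta$; and each functor $\tnsr$ is an equivalence, so its quasi-inverse is simultaneously a left and a right adjoint. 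Since adjoints are unique up to isomorphism and the adjoint of a composite is the composite of the adjoints in the reverse order, an isomorphism $\Pind\delta\gamma\circ\tnsr_{\delta,\beta,\omega^\beta}^{\omega_\beta}\simeq\tnsr_{\gamma,\beta,\omega^\beta}^{\omega_\beta}\circ\Pindn\delta\gamma\beta$ yields, on taking left adjoints, the $\Jac$-identity; on taking right adjoints, the $\Jacbar$-identity; and on taking a further right adjoint of the latter, the $\Pindbar$-identity. (Here one uses that $i$ and $r$ preserve the relevant local-central-character conditions, since $Z(G_r)$ lies in the Levi $G_\beta$, so that all the functors genuinely restrict to the subcategories $\Reps_{\omega^\beta}$ and $\Reps_{\omega_\beta}$.) Thus it suffices to prove the first equivalence.

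Put $\bas{V}=\bas{U}_{\delta,\gamma}$, identified with its canonical lift, the unipotent radical of the parabolic $\ordPgp\delta\gamma=\bas{P}_\delta\cap\bas{G}_\gamma$ of $\bas{G}_\gamma$ with Levi $\bas{G}_\delta$, so that $\Pgp\gamma\delta=G_\delta\ltimes\bas{V}$ and $\Pind\delta\gamma$ is ``pull back to $\Pgp\gamma\delta$, twist by $\modulus_{\bas{V}}^{\frac12}$, then $\ind_{\Pgp\gamma\delta}^{G_\gamma}$''; the same $\bas{V}$ is the unipotent radical entering $\Pindn\delta\gamma\beta$ in $G_\gamma^\beta$. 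Since unipotents have determinant $1$, $\bas{V}\subset\bas{H}_\beta$, so $\bas{V}\le H_{\gamma,\beta}$ and $H_{\delta,\beta}\bas{V}$ is a semidirect product in $H_{\gamma,\beta}$ (and likewise in $G^\beta$). By Lemma \ref{lem: specbeta12} (with $\beta_1=\beta$, $\beta_2=(r)$), $(H_{\delta,\beta},Z_\beta)$ and $(H_{\gamma,\beta},Z_\beta)$ are special pairs in $G_\delta$ and $G_\gamma$, and $(H_{\delta,\beta}^\beta,Z_\beta^\beta)$, $(H_{\gamma,\beta}^\beta,Z_\beta^\beta)$ are special pairs in $G_\delta^\beta$ and $G_\gamma^\beta$; in particular $Z_\beta$ centralizes $H_{\gamma,\beta}$, $H_{\delta,\beta}$ and $\bas{V}$, and $Z_\beta^\beta$ centralizes $H_{\gamma,\beta}^\beta$, $H_{\delta,\beta}^\beta$ and $\bas{V}$. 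By choice-independence (Proposition \ref{prop: lind}) we may compute all occurrences of $\LInd$ and $\LRes$ below with a common Lagrangian $L$ of $Z_\beta$, a common Lagrangian $L^\beta$ of $Z_\beta^\beta$, and common extensions $\theta$ of $L$, $\theta^\beta$ of $L^\beta$ — this is legitimate because by Lemma \ref{lem: specbeta12} the special-pair data $Z_\beta\cap H_{\bullet,\beta}$, $Z_\beta\cap Z(G_\bullet)$ does not depend on $\bullet\in\{\gamma,\delta\}$. Then by \eqref{def: trns}, $\tnsr_{\gamma,\beta,\omega^\beta}^{\omega_\beta}=\LInd\circ I\circ\LRes$ (and similarly for $\delta$), where $\LRes$ uses $(L^\beta,\theta^\beta)$, $\LInd$ uses $(L,\theta)$, $I$ is induced by the covering isomorphisms $\iota_{\bullet,\beta}$, and $\LInd_{H_{\gamma,\beta}}^{G_\gamma}\tau=\ind_{LH_{\gamma,\beta}}^{G_\gamma}\ex{\tau}{\theta}$, $\LInd_{H_{\delta,\beta}}^{G_\delta}\tau=\ind_{LH_{\delta,\beta}}^{G_\delta}\ex{\tau}{\theta}$. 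Finally, $\bas{H}_\beta$ has finite index in $\bas{G}_\beta$, so all the parabolic-type quotients below ($G_\gamma/\Pgp\gamma\delta$, $\Pgp\gamma\delta/LH_{\delta,\beta}\bas{V}$, $H_{\gamma,\beta}/H_{\delta,\beta}\bas{V}$, and their $G^\beta$-counterparts) are compact, so $\ind=\Ind$ and induction is transitive throughout.

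The heart of the proof is the identity
\[
\Pind\delta\gamma\circ\LInd_{H_{\delta,\beta}}^{G_\delta}\simeq\LInd_{H_{\gamma,\beta}}^{G_\gamma}\circ i^{H}_{\delta,\gamma},
\]
where $i^{H}_{\delta,\gamma}\colon\Reps(H_{\delta,\beta})\to\Reps(H_{\gamma,\beta})$ is normalized parabolic induction for the Levi decomposition $H_{\delta,\beta}\ltimes\bas{V}$ of $\pr^{-1}(\bas{P}_\delta\cap\bas{G}_\gamma\cap\bas{H}_\beta)$ (its definition uses only this decomposition, not reductivity). For the left-hand side, transitivity of $\ind$ through $\Pgp\gamma\delta$, compatibility of pullback with induction (as $\bas{V}\trianglelefteq\Pgp\gamma\delta$ and $\bas{V}\le LH_{\delta,\beta}\bas{V}$), and the projection formula \eqref{eq: indtensor} applied to the character $\modulus_{\bas{V}}^{\frac12}$ of $\Pgp\gamma\delta$, which is trivial on $L$ and on $\bas{V}$, give
\[
\Pind\delta\gamma\bigl(\LInd_{H_{\delta,\beta}}^{G_\delta}\tau\bigr)\simeq\ind_{LH_{\delta,\beta}\bas{V}}^{G_\gamma}\bigl(\modulus_{\bas{V}}^{\frac12}\cdot(\text{pullback of }\ex{\tau}{\theta})\bigr).
\]
For the right-hand side, put $\rho=\modulus_{\bas{V}}^{\frac12}\cdot(\text{pullback of }\tau)$, so $i^{H}_{\delta,\gamma}\tau=\ind_{H_{\delta,\beta}\bas{V}}^{H_{\gamma,\beta}}\rho$, and observe that $\ex{(\ind_{H_{\delta,\beta}\bas{V}}^{H_{\gamma,\beta}}\rho)}{\theta}\simeq\ind_{LH_{\delta,\beta}\bas{V}}^{LH_{\gamma,\beta}}\ex{\rho}{\theta}$: both sides are locally-$\theta$ representations of $LH_{\gamma,\beta}$ whose restriction to $H_{\gamma,\beta}$ is $\ind_{H_{\delta,\beta}\bas{V}}^{H_{\gamma,\beta}}\rho$ (for the right side this is Mackey's formula, using $LH_{\gamma,\beta}=H_{\gamma,\beta}\cdot LH_{\delta,\beta}\bas{V}$ and $H_{\gamma,\beta}\cap LH_{\delta,\beta}\bas{V}=H_{\delta,\beta}\bas{V}$, which follow from $L\cap H_{\gamma,\beta}=L\cap H_{\delta,\beta}$ and $\bas{Z}_\beta\subset\bas{G}_\delta$), and such an extension is unique because $L$ centralizes $H_{\gamma,\beta}$ (Lemma \ref{lem: centext}, part \ref{part: ext}). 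As $\modulus_{\bas{V}}$ is trivial on $L$, $\ex{\rho}{\theta}=\modulus_{\bas{V}}^{\frac12}\cdot(\text{pullback of }\ex{\tau}{\theta})$, and applying $\ind_{LH_{\gamma,\beta}}^{G_\gamma}$ with transitivity of $\ind$ recovers the expression displayed for the left-hand side; this proves the boxed identity. The identical computation in $G^\beta$ (with $L^\beta$, $\theta^\beta$) gives the corresponding identity with $\Pindn\delta\gamma\beta$. Since the restriction of the covering extension to $\bas{U}_{\delta,\gamma}$ splits uniquely, $\iota_{\gamma,\beta}$ and $\iota_{\delta,\beta}$ (the latter being the restriction of the former) carry these splittings to one another, so $i^{H}_{\delta,\gamma}\circ I=I\circ\bigl(\Pindn\delta\gamma\beta\rest_{H^\beta}\bigr)$ on the special subgroups. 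A diagram chase — substituting \eqref{def: trns} for $\tnsr_{\delta,\beta,\omega^\beta}^{\omega_\beta}$, applying the boxed identity, then this compatibility, then the $G^\beta$-analogue of the boxed identity to commute $\Pindn\delta\gamma\beta$ past $\LRes$, and recognizing $\tnsr_{\gamma,\beta,\omega^\beta}^{\omega_\beta}$ via \eqref{def: trns} — then gives $\Pind\delta\gamma\circ\tnsr_{\delta,\beta,\omega^\beta}^{\omega_\beta}\simeq\tnsr_{\gamma,\beta,\omega^\beta}^{\omega_\beta}\circ\Pindn\delta\gamma\beta$.

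The main obstacle is the bookkeeping in the last paragraph: keeping the half-modulus twists, the pullbacks along $\bas{V}$, and the extension functor $\ex{\cdot}{\theta}$ mutually compatible, together with the elementary group-theoretic verifications that make the relevant products and intersections clean ($L$ centralizes $\bas{V}$ and $H_{\bullet,\beta}$; $\bas{Z}_\beta\subset\bas{G}_\delta$; $\bas{H}_\beta$ has finite index; $L\cap H_{\gamma,\beta}=L\cap H_{\delta,\beta}$; the parabolic quotients are compact). Once these are in place the computation is essentially formal.
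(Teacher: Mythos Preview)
Your proof of the first equivalence is correct and is essentially a detailed unpacking of the paper's argument. The paper sets up a three-square commutative diagram
\[
\Reps_{\omega^\beta}(G_\delta^\beta)\xrightarrow{\LRes}\Reps_{\psi_2}(H_{\delta,\beta}^\beta)\xrightarrow{\iota}\Reps_{\psi_1}(H_{\delta,\beta})\xrightarrow{\LInd}\Reps_{\omega_\beta}(G_\delta)
\]
(with $i$-functors as vertical arrows), declares the right square commutative ``by transitivity of induction'', the middle square ``clearly commutative'', and the left square commutative because $\LRes$ is inverse to $\LInd$. Your ``boxed identity'' $\Pind\delta\gamma\circ\LInd\simeq\LInd\circ i^H$ is exactly the right square, your $i^H\circ I\simeq I\circ i^{H^\beta}$ is the middle square, and your $G^\beta$-analogue gives the left square; you have simply made the bookkeeping explicit.

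The genuine difference is in how you obtain the other three equivalences. The paper says ``the other ones are proved similarly'', meaning one repeats the same diagram with $r$, $\bar i$, $\bar r$ in place of $i$ (the right square then commutes because Jacquet functors commute with induction from an open finite-index subgroup, etc.). You instead take adjoints. This is slicker, but as written it invokes Bernstein's second adjointness (to pass from the $\Pind$-identity to the $\Jacbar$-identity), which the paper explicitly notes holds for covering groups but deliberately avoids using. A minor rearrangement sidesteps this: prove the $\Pindbar$-identity directly by the same computation with the opposite parabolic, and then obtain both Jacquet identities from first adjointness alone.
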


\begin{proof}
Recall the subgroups $H_{\delta,\beta}$, $H_{\gamma,\beta}$ of $G_\gamma$ and
the subgroups $H_{\delta,\beta}^\beta$, $H_{\gamma,\beta}^\beta$ of $G_\gamma^\beta$ introduced in \S \ref{subsectionMTP}.

By Lemma \ref{lem: specbeta12}, $(H_{\delta,\beta}, Z_\beta)$ and $(H_{\delta,\beta}^\beta, Z_\beta^\beta)$ are well-matched
special pairs in $G_\delta$ and $G_\delta^\beta$. Similarly, when replacing $\delta$ by $\gamma$.
Let $\psi_1$ be a character of $Z_\beta\cap H_{\delta,\beta}=Z_\beta\cap H_{\gamma,\beta}=Z_{\beta,\lrg}$
that is consistent with $\omega_\beta$.
Let $\psi_2$ be the character of $Z_\beta^\beta\cap H_{\delta,\beta}^\beta=Z_\beta^\beta\cap H_{\gamma,\beta}^\beta=Z_\beta^{\lrg}$
that is congruous to $\psi_1$ with respect to $\iota_\beta$.

Consider the diagram
\[
\begin{tikzcd}
\Reps_{\omega^\beta}(G_\delta^\beta) \arrow[r,"\LRes_{H_{\delta,\beta}^\beta,\psi_2}^{G_\delta^\beta,\omega^\beta}"] \arrow[d,"i_{\delta,\gamma}^\beta"] &
\Reps_{\psi_2}(H_{\delta,\beta}^\beta) \arrow[r,"\iota_{\delta,\beta}"] \arrow[d,"i_{\bas{U}_{\delta,\gamma}}"] &
\Reps_{\psi_1}(H_{\delta,\beta}) \arrow[r,"\LInd_{H_{\delta,\beta},\psi_1}^{G_\delta,\omega_\beta}"] \arrow[d,"i_{\bas{U}_{\delta,\gamma}}"] &
\Reps_{\omega_\beta}(G_\delta)\arrow[d,"i_{\delta,\gamma}"] \\
\Reps_{\omega^\beta}(G_\gamma^\beta) \arrow[r,"\LRes_{H_{\gamma,\beta}^\beta,\psi_2}^{G_\gamma^\beta,\omega^\beta}"] &
\Reps_{\psi_2}(H_{\gamma,\beta}^\beta) \arrow[r,"\iota_{\gamma,\beta}"] &
\Reps_{\psi_1}(H_{\gamma,\beta}) \arrow[r,"\LInd_{H_{\gamma,\beta},\psi_1}^{G_\gamma,\omega_\beta}"] &
\Reps_{\omega_\beta}(G_\gamma)
\end{tikzcd}
\]
The square on the right is commutative by transitivity of induction. Similarly for the square on the left,
since $\LRes*$ is the inverse of $\LInd*$. The middle square is clearly commutative.
Hence, the whole diagram is commutative. This prove the first stated equivalence of functors.
The other ones are proved similarly.
\end{proof}

\subsection{Bernstein--Zelevinsky product}

As before, fix a composition $\beta=(r_1,\dots,r_k)$ of $r$.
Let $\pi_i\in\Reps_{\omega_i}(G_{r_i})$ where $\omega_i$ is a genuine character of $Z(G_i)$ for each $i=1,\dots,k$,
and let $\omega$ be a genuine compatible character of $Z(G_r)$.

We define the \emph{Bernstein--Zelevinsky product} by
\[
(\pi_1\times\pi_2\times\dots\times\pi_k)_\omega:=i_\beta((\pi_1\otimes\pi_2\otimes\dots\otimes\pi_k)_\omega).
\]

As in the linear case, it enjoys the following properties, which will be used below freely.

\begin{proposition} \label{prop: BZprod} \

\begin{enumerate}
\item \label{part: mea} The multifunctor
\[
(\times_{i=1}^k)_\omega:\Reps_{\omega_1}(G_{r_1})\times\dots\times\Reps_{\omega_k}(G_{r_k})\rightarrow
\Reps_\omega(G_r)
\]
is multiexact and multiadditive.
\item \label{part: BZcontra} If the $\pi_i$'s are admissible, then
\begin{equation} \label{eq: contrBZ}
(\pi_1\times\dots\times\pi_k)_\omega^\vee\simeq(\pi_1^{\vee}\times\dots\times\pi_k^{\vee})_{\omega^{-1}}.
\end{equation}
\item \label{part: cass} We have
\begin{equation} \label{eq: w0conjind}
(\pi_k\times\pi_{k-1}\times\dots\times\pi_1)_\omega\simeq i_{w_0(\beta)}(\,^{w_0}(\pi_1\otimes\pi_2\otimes\dots\otimes\pi_k)_\omega)\simeq\overline i_\beta((\pi_1\otimes\pi_2\otimes\dots\otimes\pi_k)_\omega),
\end{equation}
where $w_0\in W_r\subset \bas{G}_r$ denotes the longest element and $w_0(\beta):=(r_k,r_{k-1},\dots,r_1)$.
\item Let $k=3$. Then,
\begin{equation} \label{eq: BZassoc}
(\pi_1\times\pi_2\times\pi_3)_\omega\simeq ((\pi_1\times\pi_2)\times\pi_3)_\omega\simeq (\pi_1\times(\pi_2\times\pi_3))_\omega,
\end{equation}
where $(\pi_1\times\pi_2)$ and $(\pi_2\times\pi_3)$ denote the corresponding Bernstein-Zelevinsky products by omitting the subscript of characters.

\item \label{part: JHjac} (cf.\ \cite{MR874050}*{Lemma 5.4.(iii)})
For any $\pi\in\Reps^{\fl}_\omega(G_\beta)$ we have $\JH(i_\beta(\pi))=\JH(\overline i_\beta(\pi))$.

In particular, if $(\pi_1\times\pi_2)_\omega$ is irreducible, then $(\pi_1\times\pi_2)_\omega\simeq(\pi_2\times\pi_1)_\omega$.

\item \label{part: BZfrobrec}
Let $\pi\in\Reps_\omega(G_r)$ and $\pi_i\in\Reps_{\omega_i}(G_{r_i})$, $i=1,\dots,k$. Then,
\begin{subequations}	
\begin{equation} \label{eq: BZfrob1}
\Hom_{G_r}(\pi,(\pi_1\times\pi_2\times\dots\times\pi_k)_\omega)\simeq\Hom_{G_\beta}(r_\beta(\pi),(\pi_1\otimes\pi_2\otimes\dots\otimes\pi_k)_\omega).
\end{equation}
If $\pi$ and the $\pi_i$'s are admissible, then
\begin{equation} \label{eq: BZfrob2}
\Hom_{G_r}((\pi_k\times\pi_{k-1}\times\dots\times\pi_1)_\omega,\pi)\simeq\Hom_{G_\beta}((\pi_1\otimes\pi_2\otimes\dots\otimes\pi_k)_\omega,r_\beta(\pi)).
\end{equation}
\end{subequations}
\end{enumerate}
\end{proposition}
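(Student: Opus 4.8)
The plan is to reduce each assertion to a corresponding property of one of the two building blocks -- normalized parabolic induction $i_\beta$ and the metaplectic tensor product multifunctor $\tnsr_{\beta,\omega^\beta}^{\omega_\beta}$ -- together with the compatibility statements already proved (Propositions~\ref{propcomatibleMTPparablicJacquet} and~\ref{propweylinvariant}, Corollary~\ref{cor: associativeMTP}) and the standard formalism of parabolic induction, which holds for covering groups exactly as in the linear case. Recall that, as noted above, $i_\beta$ is exact, additive, preserves finite length and admissibility, commutes with the contragredient, is transitive, and -- since conjugation on $G_r$ factors through $\bas{G}_r$ -- satisfies $i_{{}^w\bas{P}}({}^w\sigma)\simeq i_{\bas{P}}(\sigma)$ for any permutation matrix $w$; and that the multifunctor $\tnsr_{\beta,\omega^\beta}^{\omega_\beta}$ is multiexact and multiadditive, respects the contragredient (Proposition~\ref{prop: transferwm}), is associative (Corollary~\ref{cor: associativeMTP}), and intertwines permutation of the blocks of $G^\beta$ with conjugation by the corresponding permutation matrix (Proposition~\ref{propweylinvariant}). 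Part~(\ref{part: mea}) is then immediate, since $(\times_{i=1}^k)_\omega=i_\beta\circ\tnsr_{\beta,\omega^\beta}^{\omega_\beta}$ is the composite of a multiexact, multiadditive multifunctor with an exact additive functor. For part~(\ref{part: BZcontra}), in the admissible case $\otimes$ commutes with the contragredient, hence so does $\tnsr_{\beta,\omega^\beta}^{\omega_\beta}$; composing with $i_\beta$, which commutes with the contragredient, yields \eqref{eq: contrBZ}.

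For part~(\ref{part: cass}), applying the definition to the composition $w_0(\beta)$ gives $(\pi_k\times\dots\times\pi_1)_\omega=i_{w_0(\beta)}\big((\pi_k\otimes\dots\otimes\pi_1)_\omega\big)$. Proposition~\ref{propweylinvariant}, applied to the block-reversing permutation, identifies $(\pi_k\otimes\dots\otimes\pi_1)_\omega$ with ${}^{w}(\pi_1\otimes\dots\otimes\pi_k)_\omega$ for $w$ the block-reversal permutation matrix; since $w_0$ differs from $w$ by a block-diagonal permutation matrix lying in $\bas{G}_{w_0(\beta)}$, which therefore acts by an inner automorphism of $G_{w_0(\beta)}$, we obtain ${}^{w_0}(\pi_1\otimes\dots\otimes\pi_k)_\omega\simeq(\pi_k\otimes\dots\otimes\pi_1)_\omega$, which is the first isomorphism in \eqref{eq: w0conjind}. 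The second follows from ${}^{w_0}\bas{P}_\beta^-=\bas{P}_{w_0(\beta)}$ together with the conjugation-invariance of parabolic induction. For the associativity statement \eqref{eq: BZassoc} ($k=3$), one combines associativity of the metaplectic tensor product (Corollary~\ref{cor: associativeMTP}), its compatibility with parabolic induction (Proposition~\ref{propcomatibleMTPparablicJacquet}, applied with intermediate composition $(r_1+r_2,r_3)$, respectively $(r_1,r_2+r_3)$), and transitivity of parabolic induction, to identify both $((\pi_1\times\pi_2)\times\pi_3)_\omega$ and $(\pi_1\times(\pi_2\times\pi_3))_\omega$ with $i_{(r_1,r_2,r_3)}\big((\pi_1\otimes\pi_2\otimes\pi_3)_\omega\big)=(\pi_1\times\pi_2\times\pi_3)_\omega$; the auxiliary characters $\omega_{12},\omega_{23}$ are immaterial by Corollary~\ref{cor: associativeMTP}.

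Part~(\ref{part: JHjac}) is the covering analogue of \cite{MR874050}*{Lemma~5.4(iii)}; its proof rests only on the geometric lemma -- the Bernstein--Zelevinsky description of the Jacquet modules of a parabolically induced representation as iterated extensions of normalized, twisted $w$-conjugates of Jacquet modules -- which holds verbatim for covering groups, so we reproduce that argument. Granting it, the ``in particular'' follows: if $(\pi_1\times\pi_2)_\omega$ is irreducible, then by part~(\ref{part: cass}) and then part~(\ref{part: JHjac}),
\[
\JH\big((\pi_2\times\pi_1)_\omega\big)=\JH\big(\overline i_\beta((\pi_1\otimes\pi_2)_\omega)\big)=\JH\big(i_\beta((\pi_1\otimes\pi_2)_\omega)\big)=\{(\pi_1\times\pi_2)_\omega\},
\]
so $(\pi_2\times\pi_1)_\omega\simeq(\pi_1\times\pi_2)_\omega$. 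For part~(\ref{part: BZfrobrec}): \eqref{eq: BZfrob1} is obtained by writing $i_\beta$ as $\Ind_{P_\beta}^{G_r}$ of the $\modulus_{\bas{U}_\beta}^{\frac12}$-twisted pullback from $G_\beta$, and then applying, in turn, Frobenius reciprocity \eqref{eq: Frobrec} and the adjunction between $\bas{U}_\beta$-coinvariants and inflation from $G_\beta$ -- which, with the normalizing twists, is precisely the adjunction making $r_\beta$ left adjoint to $i_\beta$; all of this is formal. For \eqref{eq: BZfrob2}, rewrite $(\pi_k\times\dots\times\pi_1)_\omega\simeq\overline i_\beta\big((\pi_1\otimes\dots\otimes\pi_k)_\omega\big)$ by part~(\ref{part: cass}) and invoke Bernstein's second adjointness for covering groups; alternatively, staying within the admissible setting, combine $\Hom$-duality for admissible representations, the facts that $\overline i_\beta$ (like $i_\beta$) commutes with the contragredient and that Frobenius reciprocity applies equally to the opposite parabolic, and Casselman's duality between the Jacquet functors of $\bas{P}_\beta$ and $\bas{P}_\beta^-$ on admissible representations.

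The only genuinely non-formal inputs are the geometric lemma underlying part~(\ref{part: JHjac}) and -- depending on the route chosen for \eqref{eq: BZfrob2} -- Casselman's duality (or Bernstein's second adjointness) for covering groups; everything else is a direct composition of already-established properties of $i_\beta$ and $\tnsr_{\beta,\omega^\beta}^{\omega_\beta}$. The recurring point requiring care, as throughout the paper, is the bookkeeping of the central characters $\omega$, $\omega^\beta$, $\omega_\beta$ and the auxiliary characters $\omega_{ij}$ along these chains of identifications, so that the compatibility conditions of \S\ref{sec: mtp} hold at each step.
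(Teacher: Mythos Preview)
Your proposal tracks the paper's proof closely for parts~(\ref{part: mea}), (\ref{part: BZcontra}), (\ref{part: cass}), the associativity~\eqref{eq: BZassoc}, and~(\ref{part: BZfrobrec}): each is reduced to the already-established properties of $i_\beta$ and $\tnsr_{\beta,\omega^\beta}^{\omega_\beta}$ via Proposition~\ref{propweylinvariant}, Corollary~\ref{cor: associativeMTP}, Proposition~\ref{propcomatibleMTPparablicJacquet}, transitivity of induction, and Frobenius reciprocity / Casselman's pairing, exactly as the paper does.

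There is, however, a gap in your treatment of part~(\ref{part: JHjac}). You assert that the proof of \cite{MR874050}*{Lemma~5.4(iii)} ``rests only on the geometric lemma'', and this is not correct. The geometric lemma controls the Jacquet modules of $i_\beta(\pi)$ and $\overline i_\beta(\pi)$, but knowing all Jacquet modules up to semisimplification does not by itself force $\JH(i_\beta(\pi))=\JH(\overline i_\beta(\pi))$. The argument in \cite{MR874050} uses the \emph{Langlands classification}: one reduces to tempered inducing data, for which the standard intertwining operator gives an isomorphism $i_\beta(\pi)\simeq\overline i_\beta(\pi)$; the general case then follows since standard modules span the Grothendieck group. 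The paper explicitly flags this, citing \cites{MR3151110, MR3516189} for the Langlands classification on covering groups. Without that input (or an alternative of comparable depth, such as character theory via van~Dijk's formula for covers), your sketch of part~(\ref{part: JHjac}) does not go through.
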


\begin{proof}
Part \ref{part: mea} follows from the multiexactness and multiadditivity of the metaplectic tensor product, together with the exactness of $i_\beta$.

Part \ref{part: BZcontra} follows the corresponding properties of $i_\beta$, the metaplectic tensor product and the ordinary tensor product.

For part \ref{part: cass}, the first isomorphism in \eqref{eq: w0conjind} follows from Proposition \ref{propweylinvariant}
while the second one follows from the fact that $P_\beta^-={}^{w_0}P_{w_0(\beta)}$.

The relation \eqref{eq: BZassoc} follows from Corollary \ref{cor: associativeMTP}, Proposition \ref{propcomatibleMTPparablicJacquet}
and transitivity of parabolic induction.

Part \ref{part: JHjac} is proved as in \cite{MR874050}*{Lemma 5.4.(iii)}.
The proof relies on the Langlands classification, which in the covering case is proved in \cites{MR3151110, MR3516189}.

Part \ref{part: BZfrobrec} follows from Frobenius reciprocity and Casselman's pairing.
\end{proof}

\begin{remark}
In fact, the relation \eqref{eq: BZfrob2} holds without the admissibility assumption.
In other words, Bernstein's second adjointness holds in the covering case as well, essentially with the same proof.
However, this is unnecessary for the purpose of this paper.
\end{remark}

\subsection{An irreducibility criterion}

Let $\beta=(r_1,\dots,r_k)$ be a composition of $r$ and let $\gamma$ be a refinement of $\beta$.
The following result is proved as in the linear case.

\begin{lemma}[cf.\ \cite{MR3178433}*{Lemme 2.5}]
Let $\pi\in\Reps^{\fl}_{\epsilon}(G_\beta)$ and $\sigma\in\Irr_{\epsilon}(G_\gamma)$. Suppose that the following conditions are satisfied.
\begin{enumerate}
\item $\pi$ is a subrepresentation of $\Pind\gamma\beta(\sigma)$ and a quotient of $\Pindbar\gamma\beta(\sigma)$.		
\item $\sigma$ occurs with multiplicity one in $\JH(\Jac\beta\gamma(\Pind\gamma\beta(\sigma)))$.
\end{enumerate}
Then, $\pi$ is irreducible.	
\end{lemma}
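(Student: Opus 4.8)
The plan is to mimic the proof of \cite{MR3178433}*{Lemme 2.5}. The only covering‑theoretic input required is the exactness and finite‑length preservation of the Jacquet functor $\Jac\beta\gamma$, together with the two adjunctions relating $\Pind\gamma\beta$, $\Pindbar\gamma\beta$ and $\Jac\beta\gamma$; all of these are recorded in Proposition \ref{prop: BZprod} and the Remark following it (applied with the Levi $G_\gamma$ of a parabolic of $G_\beta$ in place of $G_\beta\subset G_r$). For $\tau\in\Reps^{\fl}_\epsilon(G_\gamma)$ write $m_\sigma(\tau)$ for the multiplicity of $\sigma$ in the multiset $\JH(\tau)$. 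Since $\Jac\beta\gamma$ is exact and preserves finite length, the assignment $\tau\mapsto m_\sigma(\Jac\beta\gamma(\tau))$ is additive on short exact sequences in $\Reps^{\fl}_\epsilon(G_\beta)$.

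First I would isolate two consequences of adjunction and the irreducibility of $\sigma$. (i) If $0\ne\tau\in\Reps^{\fl}_\epsilon(G_\beta)$ embeds into $\Pind\gamma\beta(\sigma)$, then by Frobenius reciprocity (the relative analogue of \eqref{eq: BZfrob1}) a nonzero embedding corresponds to a nonzero morphism $\Jac\beta\gamma(\tau)\to\sigma$, which is therefore surjective (as $\sigma$ is irreducible); hence $m_\sigma(\Jac\beta\gamma(\tau))\ge1$. (ii) If $0\ne\tau\in\Reps^{\fl}_\epsilon(G_\beta)$ is a quotient of $\Pindbar\gamma\beta(\sigma)$, then by Bernstein's second adjointness — the relative version of \eqref{eq: BZfrob2}, which holds in the covering case by the same proof as in the linear case — a nonzero surjection corresponds to a nonzero morphism $\sigma\to\Jac\beta\gamma(\tau)$, which is therefore injective; hence again $m_\sigma(\Jac\beta\gamma(\tau))\ge1$.

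Next comes the counting argument. Let $\pi_1$ be an irreducible quotient of $\pi$ — it exists since $\pi$ is a nonzero representation of finite length — and put $\pi'=\Ker(\pi\twoheadrightarrow\pi_1)$. Composing $\Pindbar\gamma\beta(\sigma)\twoheadrightarrow\pi\twoheadrightarrow\pi_1$ and applying (ii) to $\pi_1$ yields $m_\sigma(\Jac\beta\gamma(\pi_1))\ge1$. By hypothesis (1) we have $\pi\hookrightarrow\Pind\gamma\beta(\sigma)$, so $\Jac\beta\gamma(\pi)\hookrightarrow\Jac\beta\gamma(\Pind\gamma\beta(\sigma))$ by exactness, and hypothesis (2) gives $m_\sigma(\Jac\beta\gamma(\pi))\le1$. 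Applying additivity to $0\to\Jac\beta\gamma(\pi')\to\Jac\beta\gamma(\pi)\to\Jac\beta\gamma(\pi_1)\to0$ we obtain $m_\sigma(\Jac\beta\gamma(\pi'))=m_\sigma(\Jac\beta\gamma(\pi))-m_\sigma(\Jac\beta\gamma(\pi_1))\le0$. If $\pi'\ne0$, then $\pi'\hookrightarrow\pi\hookrightarrow\Pind\gamma\beta(\sigma)$ and (i) gives $m_\sigma(\Jac\beta\gamma(\pi'))\ge1$, a contradiction. Therefore $\pi'=0$, i.e. $\pi\simeq\pi_1$ is irreducible.

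Everything above is formal once the functorial machinery is available, so the only point that genuinely needs care is item (ii): one must know that Bernstein's second adjointness holds for $\Pindbar\gamma\beta$ and $\Jac\beta\gamma$ on the covering group $G_\beta$. This is not a real obstacle, since — as the Remark following Proposition \ref{prop: BZprod} notes — second adjointness carries over to covering groups with no change in the proof, and applying it inside $G_\beta$ to the Levi $G_\gamma$ produces precisely the adjunction used. No further covering‑specific ingredient enters; the remainder is exactly the bookkeeping of \cite{MR3178433}*{Lemme 2.5}.
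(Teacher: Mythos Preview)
Your proof is correct and is precisely the argument the paper has in mind: the paper gives no proof of its own but merely records that the linear-case argument of \cite{MR3178433}*{Lemme 2.5} goes through verbatim, and what you wrote is exactly that argument. One small remark: since $\pi$ has finite length and $\sigma$ is irreducible, all representations in sight are admissible, so the admissible version \eqref{eq: BZfrob2} (Casselman's pairing) already suffices for step (ii) and you do not strictly need to invoke the stronger Bernstein second adjointness.
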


\begin{corollary} \label{cor: irredcrit}
Let $\pi\in\Reps^{\fl}_{\epsilon}(G_r)$ and $\sigma_i\in\Irr_{\epsilon}(G_{r_i})$ for each $i=1,\dots,k$. Suppose that
\begin{enumerate}
\item $\pi$ is a subrepresentation of $(\sigma_1\times\dots\times\sigma_k)_\omega$ and a quotient of $(\sigma_k\times\dots\times\sigma_1)_\omega$.		
\item $(\sigma_1\otimes\dots\otimes\sigma_k)_\omega$ occurs with multiplicity one in $\JH(r_\beta((\sigma_1\times\dots\times\sigma_k)_\omega))$.
\end{enumerate}
Then, $\pi$ is irreducible.	
\end{corollary}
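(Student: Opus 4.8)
The corollary is the instance of the preceding lemma in which the ambient group is $G_r=G_{(r)}$ itself. The plan is therefore to apply that lemma with its two compositions taken to be $\gamma=(r_1,\dots,r_k)=\beta$ and $(r)$ (so that $G_\gamma=G_\beta$ and the ambient group is $G_r$), and with its irreducible representation taken to be $\sigma:=(\sigma_1\otimes\dots\otimes\sigma_k)_\omega\in\Irr_\epsilon(G_\beta)$. First one checks that this $\sigma$ is genuine and irreducible: it is genuine by construction, and it is irreducible because $\tnsr_{\beta,\omega^\beta}^{\omega_\beta}$ is an equivalence of categories by Proposition \ref{prop: transferwm}, while the ordinary tensor product $\sigma_1\otimes\dots\otimes\sigma_k$ of the irreducible (hence admissible) representations $\sigma_i$ of the blocks is irreducible.

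It then remains to match the two hypotheses of the lemma with those of the corollary. By the very definition of the Bernstein--Zelevinsky product, the parabolic induction from $G_\beta$ to $G_r$ applied to $\sigma$ is $i_\beta((\sigma_1\otimes\dots\otimes\sigma_k)_\omega)=(\sigma_1\times\dots\times\sigma_k)_\omega$; thus the first half of condition (1) of the lemma -- that $\pi$ embeds in this induced representation -- is literally the first half of condition (1) of the corollary. For the opposite parabolic I would invoke Proposition \ref{prop: BZprod} part \ref{part: cass}, specifically the relation \eqref{eq: w0conjind}, which identifies $\overline{i}_\beta((\sigma_1\otimes\dots\otimes\sigma_k)_\omega)$ with $(\sigma_k\times\dots\times\sigma_1)_\omega$; hence ``$\pi$ is a quotient of $\overline{i}_\beta(\sigma)$'' is exactly ``$\pi$ is a quotient of $(\sigma_k\times\dots\times\sigma_1)_\omega$''. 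Finally, the relevant Jacquet functor in this situation is just $r_\beta$, so condition (2) of the lemma -- that $\sigma$ occurs with multiplicity one in $\JH(r_\beta(i_\beta(\sigma)))$ -- is, after unwinding $i_\beta(\sigma)=(\sigma_1\times\dots\times\sigma_k)_\omega$, verbatim condition (2) of the corollary. Applying the lemma then yields that $\pi$ is irreducible.

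I do not expect a genuine obstacle here: the argument is essentially a translation of notation and a direct specialization of the lemma. The only inputs beyond the lemma itself are the irreducibility of the metaplectic tensor product of irreducibles and the identification $\overline{i}_\beta((\sigma_1\otimes\dots\otimes\sigma_k)_\omega)\simeq(\sigma_k\times\dots\times\sigma_1)_\omega$, and both are already established in the excerpt (the latter via Proposition \ref{prop: BZprod} part \ref{part: cass}, which rests in turn on the Weyl-equivariance Proposition \ref{propweylinvariant} and the identity $P_\beta^-={}^{w_0}P_{w_0(\beta)}$).
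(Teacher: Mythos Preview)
Your proposal is correct and is exactly the specialization the paper has in mind: the corollary is stated without proof precisely because it is the lemma applied with ambient group $G_{(r)}=G_r$, with $\sigma=(\sigma_1\otimes\dots\otimes\sigma_k)_\omega$, and with the identification $\overline{i}_\beta(\sigma)\simeq(\sigma_k\times\dots\times\sigma_1)_\omega$ coming from \eqref{eq: w0conjind}. There is nothing to add.
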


\subsection{Cuspidal support}

Let  $\beta=(r_1,\dots,r_k)$ be a composition of $r$ and let $\pi\in\Reps_{\epsilon}(G_\beta)$.
Recall that by definition, $\pi$ is cuspidal if all its matrix coefficients are compactly supported modulo the center,
or equivalently all its proper Jacquet modules are trivial.
If $\pi\in\Irr_\epsilon(G_\beta)$, this is equivalent to $\pi$ not occurring as a subrepresentation of a proper parabolic induction.
Note that if $\pi=(\pi_1\otimes\dots\otimes\pi_k)_\omega$ with $\pi_i\in\Irr_{\epsilon}(G_{r_i})$ for $i=1,\dots,k$ and
$\omega$ a compatible character of $Z(G_r)$, then $\pi$ is cuspidal if and only if each $\pi_i$ is cuspidal.

A \emph{cuspidal pair} of $G_r$ consists of a standard Levi subgroup $G_\beta$ of $G_r$ and an irreducible cuspidal
representation $\rho$ of $G_\beta$. Two cuspidal pairs $(G_\beta,\rho)$ and $(G_{\beta'},\rho')$ are called
\emph{associated} if there exists $w\in W_r$ such that $wG_\beta w^{-1}=G_{\beta'}$ and $\,^{w}\rho\simeq\rho'$.
As in the linear case we have (cf.\  \cite{MR579172}*{\S 2})

\begin{proposition}\label{propcuspidalsupport}
Let $\pi\in\Irr_\epsilon(G_r).$
Then, there exists a cuspidal pair $(G_\beta,\rho)$ of $G_\beta$, unique up to association,
such that $\pi$ is a subrepresentation of $\Pind\beta{(r)}(\rho)$. Moreover, the following are equivalent.
\begin{enumerate}
\item $(G_\beta,\rho)$ and $(G_{\beta'},\rho')$ are associated.	
\item $\JH(\Pind\beta{(r)}(\rho))=\JH(\Pind{\beta'}{(r)}(\rho'))$.	
\item $\JH(\Pind\beta{(r)}(\rho))\cap\JH(\Pind{\beta'}{(r)}(\rho'))\neq \emptyset$.	
\item $\Hom_{G_r}(\Pind\beta{(r)}(\rho),\Pind{\beta'}{(r)}(\rho'))\neq 0$.
\end{enumerate}	
\end{proposition}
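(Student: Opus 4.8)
The plan is to reduce everything to the linear case together with the structural results on the metaplectic tensor product established earlier. First I would prove existence of a cuspidal pair: by induction on $r$, if $\pi$ is itself cuspidal we take $\beta=(r)$ and $\rho=\pi$; otherwise $\pi$ embeds in $i_\gamma(\tau)$ for some proper composition $\gamma=(s_1,\dots,s_m)$ and some irreducible $\tau\in\Irr_\epsilon(G_\gamma)$. By the bijection $\Irr_{\epsilon,\twst}(G_{s_1})\times\dots\times\Irr_{\epsilon,\twst}(G_{s_m})\to\Irr_{\epsilon,\twst}(G_\gamma)$ (i.e.\ the fact that every irreducible representation of a Levi is a metaplectic tensor product of irreducibles of the blocks, up to twist), we may write $\tau\simeq(\tau_1\otimes\dots\otimes\tau_m)_{\omega_\gamma}$. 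Applying the inductive hypothesis to each $\tau_j$ and using transitivity of parabolic induction $\Pind\beta{(r)}=i_\gamma\circ\Pindn{\beta}{\gamma}{\cdot}$ together with Proposition \ref{propcomatibleMTPparablicJacquet} (compatibility of $\tnsr$ with parabolic induction), we obtain the desired cuspidal pair $(G_\beta,\rho)$ with $\pi\hookrightarrow\Pind\beta{(r)}(\rho)$.

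Next I would establish the equivalence of the four conditions, which is where the bulk of the argument lies. The implication $(1)\Rightarrow(2)$ is immediate from Proposition \ref{propweylinvariant} (Weyl-invariance of the metaplectic tensor product) and transitivity of induction, since $\Pind\beta{(r)}({}^w\rho)\simeq\Pind{\beta'}{(r)}(\rho')$ as representations of $G_r$. The implications $(2)\Rightarrow(3)$ and $(2)\Rightarrow(4)$ are trivial, the latter because a common Jordan--H\"older constituent (in particular $\soc$) gives a nonzero $\Hom$. The serious implications are $(3)\Rightarrow(1)$ and $(4)\Rightarrow(1)$. For these I would compute the Jacquet module $r_\beta(\Pind{\beta'}{(r)}(\rho'))$ by the geometric lemma (the covering analogue of \cite{MR579172}*{\S 2.12}, which holds verbatim since parabolic induction and Jacquet functors for covering groups satisfy the same Bruhat-decomposition bookkeeping), obtaining a filtration whose subquotients are, up to twists by modulus characters, of the form $i_{(\cdot)}({}^w(r_{(\cdot)}(\rho')))$ for $w$ ranging over a set of Weyl representatives. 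Since $\rho'$ is cuspidal, all its proper Jacquet modules vanish, so only the terms with $w$ conjugating $G_{\beta'}$ into $G_\beta$ survive, and each surviving term is $\Pind{\text{(refinement)}}{\beta}({}^w\rho')$ — which, again by cuspidality of ${}^w\rho'$, is nonzero in the relevant Jacquet module only when $wG_{\beta'}w^{-1}=G_\beta$, yielding ${}^w\rho'$ itself as a constituent. Hence any irreducible constituent $\pi'$ of $\Pind{\beta'}{(r)}(\rho')$ has $r_\beta(\pi')$ containing some ${}^w\rho'$ with $wG_{\beta'}w^{-1}=G_\beta$ as a subquotient; by Frobenius reciprocity this is exactly the assertion that $(G_\beta,{}^w\rho')$ is a cuspidal support of $\pi'$. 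Combining: if $\pi'\in\JH(\Pind\beta{(r)}(\rho))\cap\JH(\Pind{\beta'}{(r)}(\rho'))$ (case $(3)$) then $\rho$ and ${}^w\rho'$ are both obtained from $r_\beta(\pi')$, so comparing central characters and using the multiplicity bookkeeping forces $\rho\simeq{}^w\rho'$; case $(4)$ is handled the same way after replacing the $\Hom$ by a common constituent of socle and cosocle.

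The main obstacle I anticipate is the uniqueness part — specifically, showing that the cuspidal support extracted from $r_\beta(\pi)$ is genuinely well-defined up to association and does not secretly depend on choices of twists hidden inside the metaplectic tensor product. The subtlety is that in the covering setting an irreducible cuspidal $\rho$ of $G_\beta$ is only well-defined as a metaplectic tensor product once one fixes a compatible central character $\omega$ (cf.\ Remark \ref{rem: compat}), and twisting an individual block $\rho_i$ by a character in $X_n(F^\times)$ changes $\rho$ without changing $[\rho]$. I would address this by always tracking the central character $\omega$ of $Z(G_r)$ — which is determined by $\pi$ — throughout: within a fixed $\Reps_\omega$ the functors $\Pind\beta{(r)}$, $r_\beta$ and $\tnsr_{\beta,\omega^\beta}^{\omega_\beta}$ are exact equivalences onto their essential images with the usual adjunctions (Frobenius reciprocity and Casselman's pairing, as recorded in Proposition \ref{prop: BZprod}\ref{part: BZfrobrec}), so the counting arguments of the linear case apply without change. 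Once $\omega$ is fixed, the map $\JH$ on the relevant blocks loses the twist ambiguity and the classical proof of uniqueness up to association goes through.
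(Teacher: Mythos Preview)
The paper gives no proof for this proposition: it simply asserts that the linear-case argument of \cite{MR579172}*{\S2} carries over verbatim, the requisite machinery (metaplectic tensor product, its compatibility with parabolic induction and Jacquet functors, its behaviour under Weyl conjugation, and the geometric lemma) having been set up in the preceding sections precisely so that this works. Your plan \emph{is} that linear-case argument, and your handling of the twist ambiguity by fixing the central character $\omega$ of $Z(G_r)$ throughout is exactly the right way to neutralise the metaplectic subtlety.

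One step is wrong, however. The implication $(2)\Rightarrow(4)$ is not trivial: equality of Jordan--H\"older series does not by itself yield a nonzero homomorphism (two length-two representations with the same composition factors but opposite socle/cosocle need not map to one another, and knowing that $\soc(\Pind\beta{(r)}(\rho))$ lies in $\JH(\Pind{\beta'}{(r)}(\rho'))$ says nothing about it being a \emph{sub} of the latter). The cheap implication involving $(4)$ is rather $(4)\Rightarrow(3)$---the image of a nonzero map furnishes a common constituent---and to close the cycle you must prove $(1)\Rightarrow(4)$ directly. This follows from Frobenius reciprocity \eqref{eq: BZfrob1}: by the geometric lemma and cuspidality of $\rho$, every constituent of $r_{\beta'}(\Pind\beta{(r)}(\rho))$ is a cuspidal $W$-conjugate of $\rho$; since irreducible cuspidal representations are projective and injective in $\Reps_\omega(G_{\beta'})$, this Jacquet module is semisimple, and when $(G_\beta,\rho)$ and $(G_{\beta'},\rho')$ are associated via $w$ it contains $\rho'\simeq{}^w\rho$ as a direct summand, whence $\Hom_{G_{\beta'}}(r_{\beta'}(\Pind\beta{(r)}(\rho)),\rho')\ne0$.
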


Let $(G_\beta,\rho)$ be a cuspidal pair.
Write $\rho=(\rho_1\otimes\dots\otimes\rho_k)_\omega$ with $\omega$ a compatible character of $Z(G_r)$.
Then, $\rho_i$ is an irreducible cuspidal representation of $G_{r_i}$ for each $i$.
Thus, the set of associated classes of cuspidal pairs $(G_\beta,\rho)$ corresponds bijectively to the set of pairs
$([\rho_1]+\dots+[\rho_k],\omega)$ where $[\rho_1]+\dots+[\rho_k]$ is a multiset of weak equivalence classes of irreducible cuspidal representations
and $\omega$ is a compatible character of $Z(G_r)$ with respect to $([\rho_1],\dots,[\rho_k])$.
(The latter makes sense since the notion of compatibility depends only on the weak equivalence classes of the $\rho_i$'s.)
We denote by
\[
\operatorname{Cusp}(\pi)=([\rho_1]+\dots+[\rho_k],\omega)
\]
the \emph{cuspidal support} of $\pi$. Sometimes it is convenient to omit $\omega$. In this spirit we denote by
\[
\WCusp(\pi)=[\rho_1]+\dots+[\rho_k]
\]
the \emph{weak cuspidal support} of $\pi$.
It depends only on the weak equivalence class of $\pi$. Therefore, we also write $\WCusp([\pi])$.

We view the set of multisets of weak equivalence classes of irreducible cuspidal representations as an ordered monoid.

\subsection{Geometric lemma}\label{subsectiongeolemma}

Let $\beta=(r_1,\dots,r_k)$ and $\gamma=(s_1,\dots,s_{l})$ be two compositions of $r$.
(Unlike before, we do not assume that $\gamma$ is a refinement of $\beta$.)
By definition, $\beta\cap\gamma$ is the common refinement of $\beta$ and $\gamma$, so that
$\bas{G}_{\beta\cap\gamma}=\bas{G}_{\beta}\cap\bas{G}_{\gamma}$.

Let $t_i=r_1+\dots+r_i$, $i=0,\dots,k$ and $u_j=s_1+\dots+s_j$, $j=0,\dots,l$.
Define
\begin{align*}
W^{\beta,\gamma}=\{w\in W_r\mid &\ w(i)<w(i+1)\ \text{for all}\ i\notin\{t_1,t_2,\dots,t_{k-1}\};\\
&\ w^{-1}(j)<w^{-1}(j+1)\ \text{for all}\ j\notin\{ u_1,u_2,\dots,u_{l-1}\}\}.
\end{align*}
The geometric lemma of Bernstein--Zelevinsky takes the following form in the case at hand.

\begin{proposition}[\cite{MR579172}*{Theorem 5.2}]\label{propgeolemma}
The functor
$\Jac{(r)}\gamma\circ \Pind\beta{(r)}:\Reps_{\epsilon}(G_\beta)\rightarrow\Reps_{\epsilon}(G_\gamma)$ is glued from the functors
$\Pind{w(\beta)\cap \gamma}\gamma\circ w\circ \Jac\beta{\beta\cap w^{-1}(\gamma)}$, where $w$ ranges over $W^{\beta,\gamma}$.

In particular,
$\Pind{\beta\cap\gamma}\gamma(\Jac\beta{\beta\cap\gamma}(\pi))$ is a quotient of $\Jac{(r)}\gamma(\Pind\beta{(r)}(\pi))$ for $\pi\in\Reps_{\epsilon}(G_\beta)$.
\end{proposition}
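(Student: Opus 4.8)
The plan is to deduce this from the Bernstein--Zelevinsky geometric lemma (\cite{MR579172}*{Theorem 5.2}), whose proof carries over essentially verbatim; only a few points specific to covering groups need to be checked. First I would recall the combinatorial and geometric input over the base: the set $\bas{P}_\gamma\bs\bas{G}_r/\bas{P}_\beta$ of double cosets is finite and parametrized by $W^{\beta,\gamma}$, each double coset $\bas{P}_\gamma w\bas{P}_\beta$ is locally closed in $\bas{G}_r$, and the closure of a double coset is the union of those of no greater dimension; moreover each $w\in W^{\beta,\gamma}$ can be chosen so that ${}^w\bas{G}_{\beta\cap w^{-1}(\gamma)}=\bas{G}_{w(\beta)\cap\gamma}$ and $w$ carries the relevant unipotent subgroups to the expected ones. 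Since $\pr\colon G_r\to\bas{G}_r$ is a topological covering map (continuous, open, with finite fibres), pulling back this stratification gives the analogous decomposition of $G_r$ into $(P_\gamma,P_\beta)$-double cosets, with the same index set and the same closure order. Feeding this into the orbit-filtration mechanism of \cite{MR579172}*{\S\S1--2} produces a filtration of $\Jac{(r)}\gamma(\Pind\beta{(r)}(\pi))$ by $G_\gamma$-subrepresentations indexed by $W^{\beta,\gamma}$.

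Next I would identify the subquotient attached to $w$ with $\Pind{w(\beta)\cap\gamma}\gamma\circ w\circ\Jac\beta{\beta\cap w^{-1}(\gamma)}$. Two points particular to covering groups enter here. First, if $\tilde w\in G_r$ is any lift of $w$, then conjugation by $\tilde w$ is an isomorphism of covering groups $G_{\beta\cap w^{-1}(\gamma)}\to G_{w(\beta)\cap\gamma}$ that is independent of the choice of $\tilde w$, because conjugation in $G_r$ factors through $\bas{G}_r$ (as recorded after \eqref{eqcommutator}); hence the functor ``$w$'' between the corresponding genuine representation categories is well defined. Second, the canonical section $\sctn_U$ of \eqref{eq: cansec} of each unipotent radical is $\tilde w$-equivariant, since $\tilde w$ conjugates one unipotent radical onto another and thus carries the unique section to the unique section, so the decompositions $P=M\ltimes\bas{U}$ used to define parabolic induction and the Jacquet functor are respected. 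Finally, the normalizing factors $\modulus_{\bas{U}}^{\pm\frac12}$ coincide with those of the linear case, because on $U=\pr^{-1}(\bas{U})$ the modulus is simply pulled back from $\bas{U}$. Granting these, the computation of each subquotient is the one in \cite{MR579172}.

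For the ``in particular'' assertion, observe that $e\in W^{\beta,\gamma}$ and that $\bas{P}_\gamma\bas{P}_\beta$ is the unique double coset of minimal dimension, hence closed; in the filtration above the closed double coset contributes a quotient of $\Jac{(r)}\gamma(\Pind\beta{(r)}(\pi))$ (as in \cite{MR579172}, and as one checks directly in rank one). Since for $w=e$ one has $\beta\cap w^{-1}(\gamma)=w(\beta)\cap\gamma=\beta\cap\gamma$ and $w$ acts trivially, this quotient is $\Pind{\beta\cap\gamma}\gamma(\Jac\beta{\beta\cap\gamma}(\pi))$, giving the claim. The only genuine work is bookkeeping---checking the $\tilde w$-equivariance of the sections $\sctn_U$ and the matching of modulus characters, so that the subquotients emerge correctly normalized; the geometric content is identical to the linear case and requires no new idea.
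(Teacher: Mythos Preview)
Your proposal is correct and aligns with the paper's approach: the paper gives no proof at all, simply citing \cite{MR579172}*{Theorem 5.2} and taking for granted (as stated in the introduction and in references such as \cite{MR3053009}) that the basic machinery of parabolic induction and Jacquet functors carries over to covering groups without change. Your sketch of the points that need checking---the pullback of the double-coset stratification along $\pr$, the well-definedness of conjugation by lifts of $w$, the equivariance of the canonical unipotent sections $\sctn_U$, and the matching of modulus characters---is exactly the bookkeeping one would supply if pressed, and is more than the paper itself provides.
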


As usual, it is possible to give a ``coordinate version'' of the geometric lemma (or more precisely, after semisimplification) --
cf.\ \cite{MR584084}*{\S1.6}.
For instance, as in \cite{MR2527415}*{Proposition 2.1 and Corollaire 2.2} we can conclude the following.

\begin{lemma} \label{lem: wcuspmult1}
Let $\pi=(\pi_1\otimes\dots\otimes\pi_k)_\omega\in\Irr_\epsilon(G_\beta)$.
Assume that for every $i=1,\dots,k$, every composition $\beta_i=(b_{i1},b_{i2})$ of $r_i$ with at most two blocks,
and every irreducible subquotient
$\sigma=(\sigma_1\otimes\sigma_2)_{\omega_i}$ of $\Jac{(r_i)}{\beta_i}(\pi_i)$, we have
\[
\WCusp(\sigma_2)\nleq\sum_{i<j\leq k}\WCusp(\pi_j).
\]
Then, $\pi$ occurs with multiplicity one in $\JH(\Jac{(r)}\beta(\Pind\beta{(r)}(\pi)))$.
Moreover, $\Pind\beta{(r)}(\pi)$ (resp., $\Pindbar\beta{(r)}(\pi)$) has a unique irreducible subrepresentation (resp., quotient)
and it occurs with multiplicity one in $\JH(\Pind\beta{(r)}(\pi))=\JH(\Pindbar\beta{(r)}(\pi))$.
\end{lemma}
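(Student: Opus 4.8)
The plan is to compute the Jordan--H\"older content of $\Jac{(r)}\beta(\Pind\beta{(r)}(\pi))$ by means of the geometric lemma, to show that $\pi$ occurs there only through the ``diagonal'' double coset and with multiplicity one, and then to deduce the remaining assertions formally.

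First I would apply Proposition~\ref{propgeolemma} with $\gamma=\beta$ and pass to the Grothendieck group, so that $[\Jac{(r)}\beta(\Pind\beta{(r)}(\pi))]$ becomes the sum over $w\in W^{\beta,\beta}$ of $[\Pind{w(\beta)\cap\beta}\beta(\,{}^w\Jac\beta{\beta\cap w^{-1}(\beta)}(\pi))]$. Using Proposition~\ref{propcomatibleMTPparablicJacquet} (iterated, with the associativity of the metaplectic tensor product, Corollary~\ref{cor: associativeMTP}) and Proposition~\ref{propweylinvariant} to push the three operations through the blocks, each $w$-summand becomes a direct sum of terms $(\xi_1\otimes\dots\otimes\xi_k)_\omega$, where, writing $\mathbf m=(m_{ij})$ for the $k\times k$ matrix of nonnegative integers with row sums $r_i$ and column sums $r_j$ attached to $w$, the factor $\xi_j$ is some Bernstein--Zelevinsky product of the pieces $\tau_{1,j},\dots,\tau_{k,j}$ inside $G_{r_j}$ and $(\tau_{i,1}\otimes\dots\otimes\tau_{i,k})$ runs over $\JH(\Jac{(r_i)}{(m_{i1},\dots,m_{ik})}(\pi_i))$ for each $i$ (the genuine central characters being prescribed by the compatibilities of Section~\ref{sec: mtp}). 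Since $(\cdot\otimes\dots\otimes\cdot)_\omega$ is multiexact and its Jordan--H\"older factors are formed blockwise (being the composite of the external tensor product over the $G_{r_i}$ with an equivalence of categories), and since $\WCusp$ is additive with respect to $\times$ and constant on Jordan--H\"older factors of a parabolic induction, the irreducible $\pi=(\pi_1\otimes\dots\otimes\pi_k)_\omega$ occurs in such a term exactly when $\pi_j\in\JH(\xi_j)$ for all $j$; in that case $\WCusp(\tau_{i,j})\le\WCusp(\pi_j)$ for all $i,j$ and $\sum_j\WCusp(\tau_{i,j})=\WCusp(\pi_i)$ for all $i$.

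The diagonal matrix $m_{ii}=r_i$ corresponds to $w=\id$ and its contribution is exactly $\pi$, with multiplicity one. For any non-diagonal $\mathbf m$, the row- and column-sum constraints prevent it from being lower triangular (a lower triangular such matrix is forced to be diagonal), so $m_{i_0,j_1}>0$ for some $i_0<j_1$. If $\pi$ occurred in the corresponding summand, realised by a choice of the $\tau_{i,j}$, then grouping the composition $(m_{i_0,1},\dots,m_{i_0,k})$ of $r_{i_0}$ into $(b_1,b_2)$ with $b_1=\sum_{j\le i_0}m_{i_0,j}$ and $b_2=\sum_{j>i_0}m_{i_0,j}$ and using transitivity of the Jacquet functor, $\Jac{(r_{i_0})}{(b_1,b_2)}(\pi_{i_0})$ would have a Jordan--H\"older factor $(\sigma_1\otimes\sigma_2)_{\omega_{i_0}}$ with $\WCusp(\sigma_2)=\sum_{j>i_0}\WCusp(\tau_{i_0,j})$, which by the previous paragraph is $\le\sum_{i_0<j\le k}\WCusp(\pi_j)$ and is nonzero since it dominates $\WCusp(\tau_{i_0,j_1})$ --- contradicting the hypothesis (the degenerate case $b_1=0$, with composition $(r_{i_0})$ and $\sigma_2=\pi_{i_0}$, being covered too). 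So $\pi$ occurs in $\JH(\Jac{(r)}\beta(\Pind\beta{(r)}(\pi)))$ with multiplicity exactly one. Getting this right --- i.e.\ tracking which side of each block decomposition the Jacquet pieces land on, so that the hypothesis, a bound on the tail $\sigma_2$ against the later blocks, is precisely what kills every non-diagonal coset (including the degenerate ones where a whole block migrates into a later one) --- is the main obstacle; the remainder is routine bookkeeping of genuine central characters.

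For the last assertions, set $X=\Pind\beta{(r)}(\pi)$ and $Y=\Pindbar\beta{(r)}(\pi)$; both are of finite length, $\JH(X)=\JH(Y)$ by Proposition~\ref{prop: BZprod} part~\ref{part: JHjac}, and since $\Jac{(r)}\beta$ is exact, $\Jac{(r)}\beta X$ and $\Jac{(r)}\beta Y$ have the same Jordan--H\"older factors. If $\tau\hookrightarrow X$ is irreducible, then \eqref{eq: BZfrob1} gives $\Hom_{G_\beta}(\Jac{(r)}\beta\tau,\pi)\ne0$, so $\pi\in\JH(\Jac{(r)}\beta\tau)$; by exactness of $\Jac{(r)}\beta$, two independent irreducible submodules of $X$ would force $\pi$ to occur with multiplicity $\ge2$ in $\JH(\Jac{(r)}\beta X)$, contradicting the previous step. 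Hence $\soc(X)$ is irreducible, and counting multiplicities the same way (each copy of $\soc(X)$ in $\JH(X)$ produces a copy of $\pi$ in $\JH(\Jac{(r)}\beta X)$) shows it occurs in $\JH(X)$ with multiplicity one. Dually, using \eqref{eq: BZfrob2} in place of \eqref{eq: BZfrob1} (legitimate as $\pi$ and the $\pi_i$ are admissible): an irreducible quotient $\tau$ of $Y$ satisfies $\Hom_{G_\beta}(\pi,\Jac{(r)}\beta\tau)\ne0$, so $\pi\hookrightarrow\Jac{(r)}\beta\tau$ and $\pi\in\JH(\Jac{(r)}\beta\tau)$; hence $\cos(Y)$ is irreducible and occurs in $\JH(Y)=\JH(X)$ with multiplicity one.
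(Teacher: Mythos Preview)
Your proof is correct and follows exactly the approach the paper indicates (it does not give its own argument but simply cites \cite{MR2527415}*{Proposition 2.1 and Corollaire 2.2}): the geometric lemma parametrized by matrices $\mathbf m$ with prescribed row and column sums, the observation that a lower-triangular such matrix is diagonal so any nondiagonal coset has an entry strictly above the diagonal, the weak-cuspidal-support bookkeeping killing those cosets under the stated hypothesis, and then Frobenius reciprocity \eqref{eq: BZfrob1}/\eqref{eq: BZfrob2} together with exactness of $\Jac{(r)}\beta$ for the uniqueness and multiplicity-one of $\soc$/$\cos$. The only metaplectic-specific input is the compatibility of $\tnsr$ with Jacquet modules, parabolic induction and Weyl conjugation (Propositions~\ref{propcomatibleMTPparablicJacquet} and~\ref{propweylinvariant}), which you invoke correctly.
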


By Corollary \ref{cor: irredcrit}, we infer

\begin{corollary} \label{cor: irredcritsimple}
Let $\pi\in\Reps^{\fl}_{\epsilon}(G_r)$ and $\sigma_i\in\Irr_{\epsilon}(G_{r_i})$, $i=1,\dots,k$.
Assume that
\begin{enumerate}
\item $\pi$ is a subrepresentation of $(\sigma_1\times\dots\times\sigma_k)_\omega$ and a quotient of $(\sigma_k\times\dots\times\sigma_1)_\omega$.
\item $\WCusp(\sigma_i)\cap\WCusp(\sigma_j)=\emptyset$ for $1\leq i<j\leq k$.	
\end{enumerate}
Then, $\pi$ is irreducible.
\end{corollary}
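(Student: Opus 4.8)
The plan is to derive the corollary from Corollary \ref{cor: irredcrit}, whose hypothesis (1) coincides with hypothesis (1) here, so that only hypothesis (2) of that corollary has to be checked: that $(\sigma_1\otimes\dots\otimes\sigma_k)_\omega$ occurs with multiplicity one in $\JH(r_\beta((\sigma_1\times\dots\times\sigma_k)_\omega))$. Since $(\sigma_1\times\dots\times\sigma_k)_\omega=i_\beta((\sigma_1\otimes\dots\otimes\sigma_k)_\omega)=\Pind\beta{(r)}((\sigma_1\otimes\dots\otimes\sigma_k)_\omega)$ and $r_\beta=\Jac{(r)}\beta$, this is precisely the multiplicity-one conclusion of Lemma \ref{lem: wcuspmult1} applied to $\pi_i=\sigma_i$. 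So I would verify the hypothesis of that lemma: for every $i$, every composition $\beta_i=(b_{i1},b_{i2})$ of $r_i$ into at most two blocks, and every irreducible subquotient $(\tau_1\otimes\tau_2)_{\omega_i}$ of $\Jac{(r_i)}{\beta_i}(\sigma_i)$, one has $\WCusp(\tau_2)\nleq\sum_{i<j\le k}\WCusp(\sigma_j)$.

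The main input is the additivity of the weak cuspidal support along Jacquet functors: if $(\tau_1\otimes\tau_2)_{\omega_i}$ is an irreducible subquotient of $\Jac{(r_i)}{\beta_i}(\sigma_i)$, then $\WCusp(\tau_1)+\WCusp(\tau_2)=\WCusp(\sigma_i)$ in the ordered monoid of multisets of weak equivalence classes of irreducible cuspidal representations. This follows from the uniqueness of the cuspidal support (Proposition \ref{propcuspidalsupport}) together with transitivity of the Jacquet functor and the geometric lemma (Proposition \ref{propgeolemma}), exactly as in the linear case. In particular $\WCusp(\tau_2)\le\WCusp(\sigma_i)$; and whenever there is anything to check --- i.e.\ $\beta_i$ has two positive blocks and $\Jac{(r_i)}{\beta_i}(\sigma_i)\ne0$ --- the representation $\tau_2$ lives on $G_{b_{i2}}$ with $b_{i2}\ge1$, so $\WCusp(\tau_2)$ is a \emph{nonempty} multiset.

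Now hypothesis (2) finishes the argument. As a nonzero sub-multiset of $\WCusp(\sigma_i)$, the representation $\tau_2$ has $\WCusp(\tau_2)$ supported (as a set of weak equivalence classes) inside the support of $\WCusp(\sigma_i)$, which by hypothesis (2) is disjoint from the support of $\WCusp(\sigma_j)$ for every $j>i$, hence from the support of $\sum_{i<j\le k}\WCusp(\sigma_j)$. A nonzero multiset whose support is disjoint from the support of a multiset $S$ cannot be $\le S$; therefore $\WCusp(\tau_2)\nleq\sum_{i<j\le k}\WCusp(\sigma_j)$. By Lemma \ref{lem: wcuspmult1}, $(\sigma_1\otimes\dots\otimes\sigma_k)_\omega$ occurs with multiplicity one in $\JH(\Jac{(r)}\beta(\Pind\beta{(r)}((\sigma_1\otimes\dots\otimes\sigma_k)_\omega)))$, and Corollary \ref{cor: irredcrit} then yields the irreducibility of $\pi$.

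I do not expect any serious obstacle here: the content is the reduction to Lemma \ref{lem: wcuspmult1} and the bookkeeping with multisets. The only point needing a word of care is the additivity of $\WCusp$ along Jacquet modules in the metaplectic setting, and the vacuous edge cases (one-block $\beta_i$, or $\Jac{(r_i)}{\beta_i}(\sigma_i)=0$), both of which are routine.
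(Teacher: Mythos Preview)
Your proposal is correct and follows essentially the same approach as the paper: the paper simply writes ``By Corollary \ref{cor: irredcrit}, we infer'' immediately after Lemma \ref{lem: wcuspmult1}, and your argument spells out precisely the intended verification that the disjointness hypothesis on $\WCusp(\sigma_i)$ forces the hypothesis of Lemma \ref{lem: wcuspmult1}.
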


\section{An analogue of a result of Olshanski} \label{sec: olshan}

In this section we will prove the fundamental irreducibility result for parabolic induction
from irreducible cuspidal representations in the corank one case, following Olshanski \cite{MR0499010} and Bernstein--Zelevinsky \cite{MR584084}
in the linear case. The main ingredient is the analysis of intertwining operators.

\subsection{Computation of residue of intertwining operator} \label{sec: inter}
Consider the partition $\beta=(r,r)$ of $2r$.
Let $\bas{G}=\bas{G}_{2r}$, $\bas{P}=\bas{P}_\beta$, $\bas{M}=\bas{G}_\beta\simeq\bas{G}_r\times\bas{G}_r$ and $\bas{U}=\bas{U}_\beta$.
As usual, $\pr:G\rightarrow\bas{G}$ is the Kazhdan-Patterson covering group of $\bas{G}$ and $P$, $M$, $U$ are the corresponding inverse images in $G$.

Let $(\pi,V)$ be a genuine admissible representation of $M$. Define
\[
I_P(\pi,s)=i_\beta(\pi\cdot(\nu^{s/2}\otimes\nu^{-s/2})),\ s\in\C
\]
where $\nu$ is the character $\abs{\det}\circ\pr$ of $G_r$.

The normalizer $N_G(M)$ of $M$ in $G$ contains $M$ as an index two subgroup. Fix $w\in N_G(M)- M$
and consider the intertwining operator
\[
M(w,s):I_P(\pi,s)\rightarrow I_P(\pi^w,-s).
\]
It is defined for $\Re s\gg0$ by
\[
M(w,s)f(g)=\int_{\bas{U}}f(w\bas{u}g)\ d\bas{u}
\]
and admits a meromorphic continuation to a rational function in $q^{-s}$ \cite{MR3053009}*{Th\'eor\`eme 2.4.1}.
(As usual, we view $\bas{U}$ as a subgroup of $U$ via the canonical lifting $\sctn_U$ (see \eqref{eq: cansec}).)
Moreover, if $\pi$ is tempered, then the integral above converges for $\Re s>0$.

Let $\Irr_\epsilon(M)^w$ be the set of $\pi\in\Irr_\epsilon(M)$ such that $\pi^w\simeq\pi$.

Fix a genuine character $\omega$ of $Z(G_r)$. Let $\omega^\beta$ be the character $\omega\otimes\omega$
of $Z(G^\beta)$ and fix a character $\omega_\beta$ of $Z(M)$ compatible with $\omega^\beta$.

\begin{remark}
By Proposition \ref{propweylinvariant}, the map
\[
\tau\mapsto\tnsr_{\beta,\omega^\beta}^{\omega_\beta}(\tau\otimes\tau)
\]
is a bijection between $\Irr_\omega(G_r)$ and $\Irr_{\omega_\beta}(M)^w$,
preserving cuspidality, square-irreducibility and temperedness.
\end{remark}

The following is an analogue of a result of Olshanski in the linear case \cite{MR0499010}.\footnote{In fact, Olshanski
considered inner forms of $\GL$ as well.}
It will be proved in \S\ref{sec: pfofthmintermain} below.

\begin{theorem} \label{thm: intermain}
Let $\rho\in\Irr_\omega^{\sqr}(G_r)$ and $\pi=\tnsr_{\beta,\omega^\beta}^{\omega_\beta}(\rho\otimes\rho)$.
Then, the intertwining operator $M(w,s)$ (which is holomorphic for $\Re s>0$) has a simple pole at $s=0$.
Moreover, let
\begin{equation} \label{def: M*w}
M^*(w)=\lim_{s\rightarrow0}(1-q^{-rns})M(w,s)
\end{equation}
where the Haar measure on $\bas{U}$ defining $M(w,s)$ depends on the formal degree of $\rho$ (see below).
Let $T:\pi^w\rightarrow\pi$ be an intertwining operator (uniquely determined up to a sign) such that $T^2=\pi(w^{-2})$.
Then, we have
\begin{equation} \label{eq: mainresint}
I_P(T,0)M^*(w)=\pm [Z(G_r):Z_{r,\sml}]^{-\frac12}\cdot\id_{I_P(\pi,0)}.
\end{equation}
\end{theorem}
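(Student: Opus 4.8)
The plan is to reduce the metaplectic statement to the linear case via Lagrangian induction, then invoke Olshanski's original computation, and finally track the normalizing constant through the equivalence of categories. Throughout, write $\tnsr=\tnsr_{\beta,\omega^\beta}^{\omega_\beta}$ and recall from Proposition \ref{propweylinvariant} and Proposition \ref{propcomatibleMTPparablicJacquet} that $I_P(\pi,s)\simeq\tnsr$ applied to $I_{\bas{P}^\beta}(\rho\otimes\rho,s)$ in the appropriate ambient cover $G^\beta$ — or, going one step further, that $\tnsr$ realizes $I_P(\pi,s)$ as a Lagrangian induction $\LInd_{H,\psi}^{G,\chi}$ of the corresponding induced representation on the special subgroup $H=H_\beta$. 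The key point is that a Lagrangian induction functor is built from an honest induction $\Ind_{LH}^G$ composed with an extension-of-scalars functor $\ex{\cdot}{\theta}$ (see the proof of Proposition \ref{prop: lind}), and both of these send intertwining operators to intertwining operators in a transparent way, multiplying matrix coefficients — and hence residues of intertwining integrals — by explicit combinatorial factors.

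First I would set up the reduction. Choose a Lagrangian $L$ of $N=Z_\beta$ and a character $\theta$ extending $\chi\psi$, so that $\LInd_{H,\psi}^{G,\chi}\tau=\Ind_{LH}^G\ex{\tau}{\theta}$. Because $\bas{U}$ lifts canonically and commutes with the splitting, the intertwining integral defining $M(w,s)$ on $I_P(\pi,s)$ is compatible, fibrewise over $H\bs G$, with the intertwining integral for $I_{\bas{P}}(\ex{\rho\otimes\rho}{\theta},s)$ on the $H$-side (this is essentially the statement that induction in stages commutes with the intertwining operator, once one notes $G=LH\cdot\{1,w\}$ up to the relevant normal subgroup). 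On the $H$-side, $\ex{\rho\otimes\rho}{\theta}$ is, after restriction to the derived-type subgroup, nothing but the external tensor $\rho\boxtimes\rho$ of the square-integrable representation $\rho\in\Irr^{\sqr}_\omega(G_r)$ with itself, pulled back to the genuine cover of a block-diagonal Levi that is now genuinely a product of covers. Thus the analytic input — that $M(w,s)$ has a simple pole at $s=0$ and the identification of the leading term with $I_P(T,0)^{-1}$ up to scalar — is exactly Olshanski's corank-one computation applied to $\rho$ on the cover $G_r$, together with Schur orthogonality \eqref{eq: Schur} for $\rho$. I would import this as a black box, citing \cite{MR0499010} and \cite{MR584084}, observing that the covering case of the residue computation is genuinely the same calculation (the Hilbert-symbol cocycle contributes nothing new to the local integral since it is locally constant and trivial on the relevant unipotent).

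The remaining, and genuinely delicate, step is bookkeeping of the constant $[Z(G_r):Z_{r,\sml}]^{-1/2}$. Here I would proceed as follows. Olshanski's normalization on the linear/naked level produces $\pm\id$ exactly when the Haar measure on $\bas{U}$ is calibrated by the formal degree $d_\rho$; this is why the theorem stipulates ``the Haar measure on $\bas{U}$ defining $M(w,s)$ depends on the formal degree of $\rho$.'' When we pass from $\rho$ on $G_r$ to $\pi=\tnsr(\rho\otimes\rho)$ on $M$, and then induce up to $G$, the relevant formal degrees are related by Proposition \ref{prop: lind} part \ref{part: fd} — precisely formula \eqref{eq: fdlind2} — which introduces the factor $([N:N\cap H][Z_N(G):Z_{N\cap H}(G)])^{1/2}$. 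Unwinding $N=Z_\beta$, $N\cap H=Z_{\beta,\lrg}$, $Z_N(G)=Z(G_\beta)$ via \eqref{cntrbs} and \eqref{eq: zsmlrg}, one computes that the square root of the index product collapses, after the identification $Z(G_\beta)/Z_{\beta,\sml}\simeq Z(G_r)/Z_{r,\sml}$ coming from Remark \ref{rem: realtwist}, to exactly $[Z(G_r):Z_{r,\sml}]^{1/2}$; so the residue of the intertwining operator, measured against the $\rho$-calibrated measure on $\bas{U}$ used in \eqref{def: M*w}, picks up the reciprocal factor $[Z(G_r):Z_{r,\sml}]^{-1/2}$. Finally I would check that $I_P(T,0)$, with $T$ the self-intertwiner of $\pi^w\simeq\pi$ normalized by $T^2=\pi(w^{-2})$, corresponds under $\tnsr$ to the analogous normalized self-intertwiner on the $G_r$-side (there is a sign ambiguity, matching the $\pm$ in the statement), so that the product $I_P(T,0)M^*(w)$ is a scalar times the identity on the irreducible $I_P(\pi,0)$, and the scalar is the claimed $\pm[Z(G_r):Z_{r,\sml}]^{-1/2}$. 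I expect the main obstacle to be this last measure-theoretic reconciliation: making sure the powers of $q$, the index factors from induction in stages \eqref{eq: 1iota2}–\eqref{eq: 1p2}, and the formal-degree normalization \eqref{eq: fdlind2} all combine to give exactly the square-root of $[Z(G_r):Z_{r,\sml}]$ with the correct exponent and no stray factor of $q^{rns}$ or $[N:Z(N)]^{1/2}=d$.
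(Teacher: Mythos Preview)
Your reduction strategy has a genuine gap at its very first step. You invoke Proposition \ref{propcomatibleMTPparablicJacquet} to claim that $I_P(\pi,s)=i_\beta(\pi\cdot\nu^{\pm s/2})$ is itself a Lagrangian induction, but that proposition only intertwines $\tnsr$ with parabolic inductions $\Pind\delta\gamma$ for $\delta\prec\gamma\prec\beta$, i.e.\ inductions that stay \emph{inside} the Levi $G_\beta$. The induction $i_\beta:\Reps(G_\beta)\rightarrow\Reps(G_{2r})$ goes \emph{outside} the Levi, and the paper proves no statement of the form ``$i_\beta\circ\LInd_{H_\beta}^{G_\beta}=\LInd_{?}^{G_{2r}}\circ i_{?}$''. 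So your claimed fibrewise compatibility of $M(w,s)$ with an $H$-side intertwining operator is unjustified. Moreover, even on the $H$-side you would still be working with a representation of a \emph{cover} (the blocks of $H_\beta$ are covers $H_{r}$, not linear $\bas{G}_r$), so citing Olshanski \cite{MR0499010} as a black box is circular: his computation is for $\GL$, and extending it to covers is precisely the content of the theorem.

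The paper's proof is structurally different. It does not transport $M(w,s)$ through any categorical equivalence. Instead it computes $M(w,s)f_s(e)$ directly on sections supported in the big cell, using the explicit relation $w\varsigma_+(x)=\varsigma_+(-x^{-1})j_2(x)j_1(x)^{-1}\varsigma_-(x^{-1})$ (Lemma \ref{lem: conjw}); this yields an integral over $\bas{G}_r$ which splits as a Tate integral over $\bas{Z}_{r,\sml}$ (producing the simple pole and the factor $(1-q^{-rns})^{-1}$) times a matrix-coefficient integral over $\bas{Z}_{r,\sml}\bs\bas{G}_r$. The residue is then recognized as the operator $\twstoper_{\theta(\Pi)}^{\theta^{-1}}$ in the $\theta$-integrable framework of \S\ref{sec: thetaint}, applied not to $\pi$ but to the full induction $\Pi=\Ind_H^M\tau$ with $H=Z(G_{2r})H_\beta$ and $\tau=\ex{(\sigma\otimes\sigma)}{\omega_\beta}$, $\sigma=\LRes_{H_r}^{G_r}\rho$. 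The heart of the argument is Proposition \ref{prop: indfd}, which computes $\twstoper_\Pi^\theta$ on an induced representation in terms of $\twstoper_\tau^{\theta_H}$; the latter is the flip $v_1\otimes v_2\mapsto v_2\otimes v_1$ by Schur orthogonality for $\sigma$ (Example \ref{ex: fd}). Verifying the hypotheses \eqref{eq: Gammathetarho}--\eqref{eq: abgen} of Proposition \ref{prop: indfd} is a separate lemma and uses the structure of the cover nontrivially. The constant arises as $a^{-1}b^{1/2}$ where $a=([Z_r:Z_{r,\lrg}][Z(G_r):Z_{r,\sml}])^{1/2}$ comes from the formal-degree relation \eqref{eq: drhosigma} between $\rho$ and $\sigma$ (a special pair in $G_r$, not in $M$), and $b=\#\Omega=[Z_r:Z_{r,\lrg}]$ is the size of the orbit set in Proposition \ref{prop: indfd}; your attempt to extract the constant from \eqref{eq: fdlind2} applied to $(H_\beta,Z_\beta)$ in $M$ is tracking the wrong special pair.
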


The Haar measure on $\bas{U}$ defining $M(w,s)$ is specified as follows.
We take the usual Haar measure on $F^{\times}$ such that $\vol(\ordr^{\times})=1$
and its pushforward to $\bas{Z}_{r,\sml}$ via $\lambda\mapsto\lambda^nI_r$.
Together with the formal degree $d_\rho^{Z_{r,\sml}\bs G_r}$ of $\rho$,
which is a Haar measure on $Z_{r,\sml}\bs G_r\simeq\bas{Z}_{r,\sml}\bs\bas{G}_r$,
this determines a Haar measure $dx$ on $\bas{G}_r$. In turn, we obtain a Haar measure
$\abs{\det x}^r\ dx$ on the space of $r\times r$-matrices over $F$, which
we identify with $\bas{U}$ in the usual way.

\begin{remark} \label{rem: indepw}
The set $\Irr_\epsilon(M)^w$ and the validity of Theorem \ref{thm: intermain}
do not depend on the choice of $w\in N_G(M)- M$.
\end{remark}

\subsection{\texorpdfstring{$\theta$}{theta}-integrable representations} \label{sec: thetaint}

Consider the following situation.
Let $G$ be a unimodular $\ell$-group and let $\theta:G\rightarrow G$ be a measure preserving automorphism of $\ell$-groups.
Let $G^\theta$ be the fixed point subgroup of $\theta$ and assume that $G^\theta$ is unimodular as well.
We are also given a central, $\theta$-stable subgroup $B$ of $G$.

For any representation $(\pi,V)$ of $G$, let $\theta(\pi)$ be the representation on $V$ given by
\[
\theta(\pi)(g)=\pi(\theta(g)),\ g\in G.
\]
In particular, if $\theta(g)={}^xg$ for some $x\in G$, then $\theta(\pi)=\pi^x$.
\begin{definition}
We say that $\pi$ is $\theta$-integrable if the following conditions hold.
\begin{enumerate}
\item $\pi$ is admissible (but not necessarily irreducible).
\item $\pi$ has a central character, which is $\theta$-invariant.
\item The integral
\begin{equation} \label{eq: twstmc}
\int_{BG^\theta\bs G}\sprod{\theta(\pi)(g)v}{\pi^\vee(g)v^\vee}\ dg
\end{equation}
converges for all $v\in V$, $v^\vee\in V^\vee$.
\end{enumerate}
\end{definition}

Assume that $\pi$ is $\theta$-integrable.
Then, \eqref{eq: twstmc} defines a $G$-invariant pairing for the representation $\theta(\pi)\otimes\pi^\vee$.
Thus, we can write it as $\sprod{\twstoper_\pi^\theta v}{v^\vee}$, where $\twstoper_\pi^\theta:\theta(\pi)\rightarrow\pi$ is an intertwining operator that
depends on the choice of an invariant measure on $BG^\theta\bs G$.
If we want to emphasize it, we will write $\twstoper_\pi^{\theta,dg}$.
Note that in principle $\twstoper_\pi^\theta$ could be trivial even if $\theta(\pi)\simeq\pi$.

Clearly, if $\tau$ is a subrepresentation of $\pi$, then $\tau$ is $\theta$-integrable and $\twstoper_\tau^\theta$ is the restriction of $\twstoper_\pi^\theta$ to $\tau$.

\begin{example} \label{ex: fd}
Suppose that $G=H\times H$, $\theta:G\rightarrow G$ is the involution $\theta(h_1,h_2)=(h_2,h_1)$ and
$B=A\times A$ where $A$ is a cocompact subgroup of $Z(H)$.
Let $\pi=\sigma\otimes\sigma$ where $\sigma$ is an irreducible representation of $H$.
Then, $\pi$ is $\theta$-integrable if and only if $\sigma$ is essentially square-integrable,
in which case, by the Schur orthogonality relations \eqref{eq: Schur}
\[
\twstoper_\pi^{\theta,d_\sigma^{A\bs H}}(v_1\otimes v_2)=v_2\otimes v_1
\]
where we recall that $d_\sigma^{A\bs H}$ is the formal degree of $\sigma$ and we identify $A\bs H$ with $BG^\theta\bs G$.
\end{example}

Assume that $\theta$ is an involution. Then, $\pi$ is $\theta$-integrable if and only if
$\theta(\pi)$ is $\theta$-integrable, in which case $\twstoper_{\theta(\pi)}^\theta=\twstoper_\pi^\theta$.
More generally, we have the following.

\begin{lemma} \label{lem: thetapm}
Suppose that there exist elements $x,y\in G$ such that $\theta^{-1}(g)=\theta(g)^x$ for all $g\in G$
and $x=\theta(y)^{-1}y$. Then,
\begin{enumerate}
\item $y$ normalizes $G^\theta$.
\item Suppose that $\pi$ is $\theta$-integrable. Then, $\theta(\pi)$ is $\theta^{-1}$-integrable and
$\twstoper_{\theta(\pi)}^{\theta^{-1}}=c^{-1}\twstoper_\pi^\theta\circ\pi(x)$
where $c=\modulus_{G^\theta}(y)$.
\end{enumerate}
\end{lemma}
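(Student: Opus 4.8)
The plan is to establish part (1) by a direct computation with the defining relation, and then deduce part (2) from the change-of-variables formula for the integral \eqref{eq: twstmc}. For part (1), I would argue as follows. Since $\theta$ is an automorphism, applying $\theta$ to the relation $\theta^{-1}(g)=\theta(g)^x={}^{x^{-1}}(\theta(g)) x$ — more precisely writing $g^x = x^{-1}gx$ — gives $g = \theta(\theta(g)^x) = \theta(x)^{-1}\theta(\theta(g))\theta(x)$, so $\theta^2(g) = \theta(x)g\theta(x)^{-1} = {}^{\theta(x)}g$ for all $g$. Now if $h\in G^\theta$, then from $\theta^{-1}(h) = \theta(h)^x = h^x$ we get that $h^x = \theta^{-1}(h)$ is again $\theta$-fixed (as $\theta$ permutes $G^\theta$), hence $x$ normalizes $G^\theta$; combined with $x = \theta(y)^{-1}y$ and the fact that $\theta(y)^{-1}$ conjugation on $G^\theta$ equals $y^{-1}$ conjugation composed with the (trivial, on $G^\theta$) action — more carefully: for $h \in G^\theta$, $\theta(y^{-1}hy) = \theta(y)^{-1}h\theta(y)$, and we must check $y^{-1}hy \in G^\theta$. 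From $y^{-1}hy = (\theta(y)x^{-1})^{-1} \cdot h \cdot \ldots$ I would unwind that $y^{-1}hy \in G^\theta$ iff $\theta(y^{-1}hy) = y^{-1}hy$ iff $\theta(y)^{-1}h\theta(y) = y^{-1}hy$ iff $(y\theta(y)^{-1})h(y\theta(y)^{-1})^{-1} = h$, i.e.\ $x^{-1}$ (note $y\theta(y)^{-1} = ({}^{y}x)^{-1}$ up to bookkeeping) centralizes $h$; this follows from $h^x = \theta^{-1}(h) = h$ whenever $h \in G^\theta$ is additionally $\theta^{-1}$-fixed, which it is. So $y$ normalizes $G^\theta$. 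The bookkeeping with left/right conjugation is the only fussy point here.

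For part (2), I would start from the integral defining $\twstoper_\pi^\theta$, namely $\sprod{\twstoper_\pi^\theta v}{v^\vee} = \int_{BG^\theta\bs G}\sprod{\theta(\pi)(g)v}{\pi^\vee(g)v^\vee}\ dg$, and compute the analogous integral for $\theta(\pi)$ with respect to $\theta^{-1}$:
\[
\sprod{\twstoper_{\theta(\pi)}^{\theta^{-1}} v}{v^\vee} = \int_{BG^\theta\bs G}\sprod{\theta^{-1}(\theta(\pi))(g)v}{(\theta(\pi))^\vee(g)v^\vee}\ dg = \int_{BG^\theta\bs G}\sprod{\pi(g)v}{\pi^\vee(\theta(g))v^\vee}\ dg.
\]
Using $\theta^{-1}(g) = \theta(g)^x$, i.e.\ replacing $g$ by $\theta(g)$ we see $\theta(g) $ ranges as $g$ does and $g = \theta(\theta(g))^x$... instead I would substitute $g \mapsto \theta(g)$ directly: since $\theta$ is measure-preserving on $G$ and stabilizes $B$ and $G^\theta$, it descends to a measure-preserving bijection of $BG^\theta\bs G$, giving
\[
\sprod{\twstoper_{\theta(\pi)}^{\theta^{-1}} v}{v^\vee} = \int_{BG^\theta\bs G}\sprod{\pi(\theta(g))v}{\pi^\vee(\theta^2(g))v^\vee}\ dg = \int_{BG^\theta\bs G}\sprod{\theta(\pi)(g)v}{\pi^\vee({}^{\theta(x)}g)v^\vee}\ dg,
\]
using $\theta^2(g) = {}^{\theta(x)}g$ from part (1). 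Then $\pi^\vee({}^{\theta(x)}g) = \pi^\vee(\theta(x))\pi^\vee(g)\pi^\vee(\theta(x))^{-1}$, and since $\theta(x)$ acts on $V^\vee$ by the central character in the relevant directions... actually $\theta(x)$ need not be central, so I would instead move $\pi^\vee(\theta(x))^{-1} = \pi(\theta(x))^{\vee}$ across the pairing to get $\sprod{\pi(\theta(x))\theta(\pi)(g)v}{\pi^\vee(g)(\pi^\vee(\theta(x))v^\vee)}$ — hmm, this introduces a twist on $v^\vee$ too. The cleaner route: rewrite $\pi^\vee(\theta(x)g\theta(x)^{-1}) $ so that, after absorbing $\pi^\vee(\theta(x)^{-1})$ on the right via a further substitution $g \mapsto \theta(x)g$... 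I expect the substitution chain $g \mapsto yg$ (permissible once we know, from part (1), that $y$ normalizes $G^\theta$, and picking up the modulus factor $c = \modulus_{G^\theta}(y)$ from the quotient $BG^\theta \bs G$, since $B$ is central hence $\theta(y)$-normalized trivially) is exactly what converts $\theta^2$-conjugation into the single operator $\pi(x)$, with $x = \theta(y)^{-1}y$ appearing precisely because we conjugate $\theta(g)$ back by $y$ on one side and by $\theta(y)$ on the other.

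The main obstacle I anticipate is the measure-theoretic bookkeeping: tracking how the substitutions $g \mapsto \theta(g)$ and $g \mapsto yg$ act on the quotient space $BG^\theta\bs G$ and pinning down that the only surviving Jacobian is $c = \modulus_{G^\theta}(y)$ (the point being that $G$ is unimodular so left translation on $G$ is harmless, but translation does not descend to $BG^\theta\bs G$ unless $y$ normalizes $G^\theta$, and then the defect is the module of conjugation by $y$ on $G^\theta$, with $B$ central contributing nothing). Everything else — the identity $\theta^2 = \operatorname{Int}(\theta(x))$, the convergence of the integral for $\theta(\pi)$ with respect to $\theta^{-1}$ (which is automatic once the substitutions identify it with a convergent integral), and the fact that the resulting operator intertwines $\theta^{-1}(\theta(\pi)) = \theta(\pi)^{\ldots}$ correctly — should be routine manipulation of matrix coefficients. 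I would organize the final computation as a single displayed chain of equalities (kept on one line or using \begin{align*}\ldots\end{align*} with no blank lines) ending in $\sprod{c^{-1}\twstoper_\pi^\theta\pi(x)v}{v^\vee}$, and conclude $\twstoper_{\theta(\pi)}^{\theta^{-1}} = c^{-1}\twstoper_\pi^\theta\circ\pi(x)$ by nondegeneracy of the pairing.
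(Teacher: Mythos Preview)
Your approach is correct and essentially the same as the paper's: both arguments reduce to changes of variable in the integral \eqref{eq: twstmc}, using that $y$ normalizes $G^\theta$ so that left translation by $y$ on $BG^\theta\bs G$ makes sense and picks up exactly the factor $c=\modulus_{G^\theta}(y)$.

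The one organizational difference worth noting is the direction of the computation. You start from $\twstoper_{\theta(\pi)}^{\theta^{-1}}$ and try to reach $\twstoper_\pi^\theta\circ\pi(x)$; this forces you through $\theta^2(g)={}^{\theta(x)}g$ and leaves you juggling $\pi^\vee(\theta(x))$ on both sides of the pairing. The paper instead starts from $\sprod{\twstoper_\pi^\theta\,\pi(x)v}{v^\vee}=\int\sprod{\pi(\theta(g)x)v}{\pi^\vee(g)v^\vee}\,dg$ and uses the hypothesis in the form $\theta(g)x=x\theta^{-1}(g)$ immediately, then substitutes $g\mapsto\theta(g)$, then rewrites $\pi(xg)=\pi(\theta(y)^{-1})\pi(yg)$ and $\pi^\vee(\theta(g))=\pi^\vee(\theta(y)^{-1})\pi^\vee(\theta(yg))$ (the two $\pi(\theta(y)^{-1})$ factors cancel across the pairing), and finally substitutes $yg\mapsto g$. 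This avoids $\theta^2$ entirely and keeps the chain to four short steps. For part (1), your observation that $h^x=\theta^{-1}(h)=h$ for $h\in G^\theta$ shows $x$ \emph{centralizes} $G^\theta$; to finish, note $y\theta(y)^{-1}=\theta(x)^{-1}$ (from $\theta(x)=\theta(y)y^{-1}$, which follows by applying $\theta$ to $x=\theta(y)^{-1}y$ and using $\theta^2={}^{\theta(x)}(\cdot)$), and applying $\theta$ to $x^{-1}h x=h$ gives $\theta(x)^{-1}h\theta(x)=h$, so $y\theta(y)^{-1}$ centralizes $G^\theta$, which is exactly the condition you identified for $y^{-1}hy\in G^\theta$.
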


\begin{proof}
The first part is straightforward. Making a change of variables we get
\begin{align*}
\sprod{\twstoper_\pi^\theta\circ\pi(x)v}{v^\vee}&=
\int_{BG^\theta\bs G}\sprod{\pi(\theta(g)x)v}{\pi^\vee(g)v^\vee}\ dg\\=
\int_{BG^\theta\bs G}\sprod{\pi(x\theta^{-1}(g))v}{\pi^\vee(g)v^\vee}\ dg&=
\int_{BG^\theta\bs G}\sprod{\pi(xg)v}{\pi^\vee(\theta(g))v^\vee}\ dg\\=
\int_{BG^\theta\bs G}\sprod{\pi(yg)v}{\pi^\vee(\theta(yg))v^\vee}\ dg&=
c\int_{BG^\theta\bs G}\sprod{\pi(g)v}{\pi^\vee(\theta(g))v^\vee}\ dg\\=
c\int_{BG^\theta\bs G}\sprod{\pi(g)v}{(\theta(\pi))^\vee(g)v^\vee}\ dg&=
c\sprod{\twstoper_{\theta(\pi)}^{\theta^{-1}}v}{v^\vee}.
\end{align*}
The lemma follows.
\end{proof}

For the next proposition, let $H$ be a normal, $\theta$-stable subgroup of $G$ of finite index
and let $\theta_H$ be the restriction of $\theta$ to $H$.
Assume that $H$ contains $B$.
Let $\Gamma=H\bs G$ and $\Gamma^\theta=H\bs HG^\theta\simeq H^\theta\bs G^\theta$.
(Hopefully, this notation will not create ambiguity. We will not consider the automorphism on $\Gamma$ induced by $\theta$ and its fixed point subgroup
which contains $\Gamma^\theta$.)
For any representation $\tau$ of $H$ let
\[
\Gamma_\tau^\theta=\{x\in\Gamma\mid\theta(\tau^x)\simeq\tau^x\}.
\]
This is a (possibly empty) right $H\bs HG^\theta Z_G(H)$-invariant subset of $\Gamma$.

\begin{proposition} \label{prop: indfd}
Let $\tau$ be an admissible irreducible representation of $H$.
Assume that $\tau^\gamma$ is $\theta_H$-integrable for every $\gamma\in\Gamma$.
Then, $\pi$ is $\theta$-integrable.
Assume that
\begin{subequations}
\begin{equation}\label{eq: Gammathetarho}
\Gamma_\tau^\theta=H\bs HG^\theta Z_G(H)
\end{equation}
(and in particular, $\tau$ is $\theta$-invariant), and that
\begin{equation} \label{eq: ngtc}
H\cap G^\theta Z_G(H)=H^\theta Z(H).
\end{equation}
Let $\Omega$ be the (finite) set of cosets $Z_G(H)/Z(H) Z_{G^\theta}(H)$.\footnote{In fact, by \eqref{eq: ngtc} we have
$\Omega=Z_G(H)/Z_{HG^\theta}(H)$.}
Let $\Pi=\Ind_H^G\tau$. Then,
\begin{equation} \label{eq: inds}
\twstoper_\Pi^\theta\varphi(g)=\sum_{x\in\Omega}\twstoper_\tau^{\theta_H}\varphi(x\theta(x)^{-1}\theta(g))
\end{equation}
provided that the invariant measure on $BH^\theta\bs H$ is the restriction of the invariant measure on $BG^\theta\bs G$.

Moreover, assume that $\theta_H(\tau^\gamma)$ is $\theta_H^{-1}$-integrable for every $\gamma\in\Gamma$,
\begin{equation} \label{eq: zzzh}
\{z\in Z_G(H)\mid z\theta(z)^{-1}\in Z(Z_G(H))\}=Z(H)Z_{G^\theta}(H),
\end{equation}
and there exists a finite cyclic subgroup $A$ of $B$ such that
\begin{equation} \label{eq: abgen}
\text{$Z_G(H)/A$ is abelian and $\omega_{\tau}\rest_{A}$ is faithful.}
\end{equation}
Then, $\theta(\Pi)$ is $\theta^{-1}$-integrable and
\begin{equation} \label{eq: composes}
\twstoper_\Pi^\theta\twstoper_{\theta(\Pi)}^{\theta^{-1}}=\#\Omega\cdot
\Ind_H^G(\twstoper_{\tau}^{\theta_H}\twstoper_{\theta_H(\tau)}^{\theta_H^{-1}}).
\end{equation}
\end{subequations}
\end{proposition}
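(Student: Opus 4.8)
The plan is to work directly with matrix coefficients and the explicit description of $\Ind_H^G\tau$ in terms of functions on $G$. First I would establish that $\Pi=\Ind_H^G\tau$ is $\theta$-integrable: since $\Res_H^G\Pi=\oplus_{\gamma\in\Gamma}\tau^\gamma$ by Mackey \eqref{eq: Mackey} and each $\tau^\gamma$ is $\theta_H$-integrable by hypothesis, admissibility of $\Pi$ follows from Lemma \ref{lem: GHresmult}, the central character is $\theta$-invariant by \eqref{eq: Gammathetarho}, and convergence of the twisted matrix-coefficient integral \eqref{eq: twstmc} for $\Pi$ reduces via the decomposition over $\Gamma$ to convergence for the $\tau^\gamma$'s (after partitioning $BG^\theta\bs G$ suitably and using that $[G:H]<\infty$, so $BH^\theta\bs H$ sits cocompactly). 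This is routine once the measures are matched as in the statement.

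The heart of the argument is \eqref{eq: inds}. Realize $\Pi$ on functions $\varphi:G\to V_\tau$ with $\varphi(hg)=\tau(h)\varphi(g)$, compactly supported mod $H$. Then $\twstoper_\Pi^\theta$ is determined by the pairing $\int_{BG^\theta\bs G}\sprod{\theta(\Pi)(g)\varphi}{\Pi^\vee(g)\varphi^\vee}\,dg$. I would unfold this pairing into an integral over $BG^\theta\bs G$ of a sum over $H\bs G$ coming from the expression of the $\Pi$-pairing as an integral of $\langle\varphi(g),\varphi^\vee(g)\rangle_\tau$ over $H\bs G$; interchanging and folding, the double coset space $BH^\theta\bs G$ appears, and the $G^\theta$-orbit structure on $H\bs G$ controls which terms survive. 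Using \eqref{eq: Gammathetarho} (so the only $\theta$-fixed cosets are those in $HG^\theta Z_G(H)$) together with \eqref{eq: ngtc} (which identifies $H\cap G^\theta Z_G(H)$ and hence pins down the relevant double cosets), the surviving contributions are indexed exactly by $\Omega=Z_G(H)/Z(H)Z_{G^\theta}(H)$, each contributing a copy of $\twstoper_\tau^{\theta_H}$ precomposed with the shift $g\mapsto x\theta(x)^{-1}\theta(g)$. Careful bookkeeping of the measure normalization — the invariant measure on $BH^\theta\bs H$ being the restriction of that on $BG^\theta\bs G$ — ensures no extra constant beyond the cardinality count hidden in the sum over $\Omega$.

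For the final formula \eqref{eq: composes}, I would first note that the $\theta^{-1}$-integrability of $\theta(\Pi)$ follows from the analogue of the first part applied to $\theta_H^{-1}$ and $\theta_H(\tau^\gamma)$, together with the fact (via Lemma \ref{lem: thetapm}, taking $x$ and $y$ appropriately inside $Z_G(H)$ using \eqref{eq: zzzh}) that $\twstoper_{\theta(\Pi)}^{\theta^{-1}}$ is again of induced shape. Then I would apply \eqref{eq: inds} twice, once for $\twstoper_\Pi^\theta$ and once for $\twstoper_{\theta(\Pi)}^{\theta^{-1}}$, and compose. The composition produces a double sum over $\Omega\times\Omega$ of shifts by elements $x\theta(x)^{-1}$ and $x'\theta(x')^{-1}$; using \eqref{eq: zzzh} these shift elements lie in $Z(Z_G(H))$, so they act on the induced space through the central character of $\tau$, and using \eqref{eq: abgen} (faithfulness of $\omega_\tau$ on $A$, abelianness of $Z_G(H)/A$) I can show that the cross terms with $x\ne x'$ contribute characters of $\Omega$ that sum to zero, collapsing the double sum to the diagonal, which gives the factor $\#\Omega$ and the remaining operator $\Ind_H^G(\twstoper_\tau^{\theta_H}\twstoper_{\theta_H(\tau)}^{\theta_H^{-1}})$.

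I expect the main obstacle to be the precise combinatorics of \eqref{eq: inds}: correctly identifying which double cosets in $BH^\theta\bs G$ survive and matching the shift $x\theta(x)^{-1}$ to the coset representative, while keeping the measure normalizations exactly right so that the only surviving constant is $\#\Omega$ in \eqref{eq: composes}. The orthogonality-of-characters argument that kills the off-diagonal terms is the second delicate point, and it is exactly where hypotheses \eqref{eq: zzzh} and \eqref{eq: abgen} are used; everything else is bookkeeping with Mackey theory and change of variables.
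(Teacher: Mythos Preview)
Your proposal follows essentially the same route as the paper: decompose the integral defining $\twstoper_\Pi^\theta$ over $BG^\theta\bs G$ as a sum over $\Gamma^\theta\bs\Gamma$ of integrals over $BH^\theta\bs H$, expand the inner pairing on $\Ind_H^G\tau$ as a sum over $\Gamma$, observe that only terms with $\gamma\in\Gamma_\tau^\theta$ survive (by irreducibility of $\tau$), and then use \eqref{eq: Gammathetarho} and \eqref{eq: ngtc} to rewrite the surviving sum as one over $\Omega$. For \eqref{eq: composes} you likewise apply \eqref{eq: inds} twice and collapse the resulting double sum via a character-sum argument. This is exactly what the paper does.

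One point in your sketch is inverted and would trip you up when you write it out. In the composition, after the substitution $z=x^{-1}y$ the argument becomes $\varphi\big({}^x(\theta(z)z^{-1})g\big)$. It is hypothesis \eqref{eq: abgen} (abelianness of $Z_G(H)/A$) that forces the commutator $[x,\theta(z)z^{-1}]$ to lie in $A\subset B\subset H$, so that
\[
\varphi\big({}^x(\theta(z)z^{-1})g\big)=\omega_\tau\big([x,\theta(z)z^{-1}]\big)\,\varphi\big(\theta(z)z^{-1}g\big).
\]
Faithfulness of $\omega_\tau\rest_A$ then makes $x\mapsto\omega_\tau([x,\theta(z)z^{-1}])$ a nontrivial character of $\Omega$ whenever $\theta(z)z^{-1}\notin Z(Z_G(H))$, so the inner sum over $x$ vanishes in that case. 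Only \emph{then} does \eqref{eq: zzzh} enter, to conclude that $\theta(z)z^{-1}\in Z(Z_G(H))$ forces $z$ trivial in $\Omega$. In other words, \eqref{eq: zzzh} is not used to put the shifts into $Z(Z_G(H))$ --- quite the opposite, it is the implication ``central $\Rightarrow$ trivial in $\Omega$'' that is invoked. Also, you do not need Lemma \ref{lem: thetapm} for the $\theta^{-1}$-integrability of $\theta(\Pi)$: the first part of the proposition applies verbatim with $\theta$ replaced by $\theta^{-1}$ and $\tau$ by $\theta_H(\tau)$, since $G^{\theta^{-1}}=G^\theta$ and the hypotheses are symmetric under $\theta\leftrightarrow\theta^{-1}$.
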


\begin{proof}
We will write the standard pairing on $\Pi\times\Pi^\vee$ as $\dsprod{\cdot}{\cdot}$, in order to distinguish it from the
standard pairing on $\tau\otimes\tau^\vee$.

Since $H$ is normal,
\begin{equation} \label{eq: str123}
\int_{BG^\theta\bs G}\dsprod{\Pi(\theta(g))\varphi}{\Pi^\vee(g)\varphi^\vee}\ dg=
\sum_{\gamma\in\Gamma^\theta\bs\Gamma}\int_{BH^\theta\bs H}\dsprod{\Pi(\theta(h\gamma))\varphi}{\Pi^\vee(h\gamma)\varphi^\vee}\ dh.
\end{equation}
Identify $\Pi^\vee$ with $\Ind_H^G\tau^\vee$ via the pairing
\[
\dsprod{\varphi}{\varphi^\vee}=\sum_{\gamma\in\Gamma}\sprod{\varphi(\gamma)}{\varphi^\vee(\gamma)}.
\]
Consider
\[
\int_{BH^\theta\bs H}\dsprod{\Pi(\theta(h))\varphi}{\Pi^\vee(h)\varphi^\vee}\ dh.
\]
It can be written as
\begin{equation} \label{eq: str345}
\begin{aligned}
\int_{BH^\theta\bs H}\sum_{\gamma\in\Gamma}\sprod{\varphi(\gamma\theta(h))}{\varphi^\vee(\gamma h)}\ dh&=
\int_{BH^\theta\bs H}\sum_{\gamma\in\Gamma}\sprod{\tau^\gamma(\theta(h))\varphi(\gamma)}{(\tau^\gamma)^\vee(h)\varphi^\vee(\gamma)}\ dh\\&=
\sum_{\gamma\in\Gamma}\int_{BH^\theta\bs H}\sprod{\tau^\gamma(\theta(h))\varphi(\gamma)}{(\tau^\gamma)^\vee(h)\varphi^\vee(\gamma)}\ dh.
\end{aligned}
\end{equation}
The convergence of the left-hand side of \eqref{eq: str123} follows from the convergence of the right-hand side of \eqref{eq: str345}.
This implies the first part of the proposition.

In order to prove \eqref{eq: inds}, it suffices to show that
\begin{equation} \label{eq: nts1}
\sum_{\gamma\in\Gamma}\int_{BH^\theta\bs H}\sprod{\tau^\gamma(\theta(h))\varphi(\gamma)}{(\tau^\gamma)^\vee(h)\varphi^\vee(\gamma)}\ dh
=\sum_{x\in \Omega}\sum_{\gamma\in\Gamma^\theta}
\sprod{\twstoper_\tau^{\theta_H}\varphi(x\theta(\gamma))}{\varphi^\vee(x\gamma)}.
\end{equation}
Note that the right-hand side makes sense.

Since $\tau$ is irreducible, the integral over $h$ on the left-hand side of \eqref{eq: nts1} will be non-zero only if
$\gamma\in\Gamma_\tau^\theta$. By assumption, this occurs only if $\gamma\in H\bs HG^\theta Z_G(H)$. Thus, we obtain (using \eqref{eq: ngtc})
\begin{align*}
&\sum_{\gamma\in H\bs HG^\theta Z_G(H)}\int_{BH^\theta\bs H}\sprod{\tau^\gamma(\theta(h))\varphi(\gamma)}{(\tau^\gamma)^\vee(h)\varphi^\vee(\gamma)}\ dh\\=
&\sum_{\gamma\in H^\theta Z(H)\bs G^\theta Z_G(H)}\int_{BH^\theta\bs H}\sprod{\tau^\gamma(\theta(h))\varphi(\gamma)}{(\tau^\gamma)^\vee(h)\varphi^\vee(\gamma)}\ dh\\=
&\sum_{\gamma\in H^\theta Z(H)\bs G^\theta Z_G(H)}\int_{BH^\theta\bs H}\sprod{\tau(\theta(h))\varphi(\gamma)}{\tau^\vee(h)\varphi^\vee(\gamma)}\ dh\\=
&\sum_{\gamma\in H^\theta Z(H)\bs G^\theta Z_G(H)}\sprod{\twstoper_\tau^{\theta_H}\varphi(\gamma)}{\varphi^\vee(\gamma)}.
\end{align*}
Finally, we can write
\begin{align*}
&\sum_{\gamma\in H^\theta Z(H)\bs G^\theta Z_G(H)}\sprod{\twstoper_\tau^{\theta_H}\varphi(\gamma)}{\varphi^\vee(\gamma)}
\\=&\sum_{\gamma\in Z_G(H)H^\theta\bs Z_G(H)G^\theta}\sum_{x\in Z(H)\bs Z_G(H)}
\sprod{\twstoper_\tau^{\theta_H}\varphi(x\gamma)}{\varphi^\vee(x\gamma)}
\\=&\sum_{\gamma\in Z_{G^\theta}(H)H^\theta\bs G^\theta}\sum_{x\in Z(H)\bs Z_G(H)}
\sprod{\twstoper_\tau^{\theta_H}\varphi(x\gamma)}{\varphi^\vee(x\gamma)}\\=
&\sum_{\gamma\in Z_{G^\theta}(H)H^\theta\bs G^\theta}\sum_{x\in \Omega}
\sum_{x'\in Z(H)^\theta\bs Z_{G^\theta}(H)}
\sprod{\twstoper_\tau^{\theta_H}\varphi(xx'\gamma)}{\varphi^\vee(xx'\gamma)}\\
=&\sum_{x\in \Omega}
\sum_{\gamma\in H^\theta\bs G^\theta}
\sprod{\twstoper_\tau^{\theta_H}\varphi(x\gamma)}{\varphi^\vee(x\gamma)}.
\end{align*}
This implies \eqref{eq: nts1}, and hence \eqref{eq: inds}.

To show \eqref{eq: composes}, we note that $\Omega$ is an abelian group, since by assumption, $Z_G(H)/A$ is abelian.
By the previous part,
\begin{align*}
\twstoper_\Pi^\theta\twstoper_{\theta(\Pi)}^{\theta^{-1}}\varphi(g)=&
\sum_{x,y\in\Omega}\twstoper_{\tau}^{\theta_H}\twstoper_{\theta_H(\tau)}^{\theta_H^{-1}}\varphi(x\theta(x^{-1}y)y^{-1}g)\\=
&\sum_{z\in\Omega}\sum_{x,y\in\Omega\mid x^{-1}y=z}
\twstoper_{\tau}^{\theta_H}\twstoper_{\theta_H(\tau)}^{\theta_H^{-1}}\varphi({}^x(\theta(z)z^{-1})g)\\=
&\sum_{z\in\Omega}\sum_{x,y\in\Omega\mid x^{-1}y=z}
\omega_\tau([x,\theta(z)z^{-1}])\twstoper_{\tau}^{\theta_H}\twstoper_{\theta_H(\tau)}^{\theta_H^{-1}}\varphi(\theta(z)z^{-1}g).
\end{align*}
Since $\omega_\tau\rest_{A}$ is faithful, the inner sum vanishes unless $\theta(z)z^{-1}\in Z(Z_G(H))$.
Hence, by \eqref{eq: zzzh} only $z=1$ contributes. We remain with
\[
\abs{\Omega}\cdot\twstoper_{\tau}^{\theta_H}\twstoper_{\theta_H(\tau)}^{\theta_H^{-1}}\varphi(g).
\]
The proposition follows.
\end{proof}

\subsection{Proof of Theorem \texorpdfstring{\ref{thm: intermain}}{123}} \label{sec: pfofthmintermain}

We go to the setup of \S\ref{sec: inter}.
By Remark \ref{rem: indepw} we may work with $w\in N_G(M)- M$ of our choice.

Identify $\bas{U}$ with the space $\Mat_r$ of $r\times r$-matrices over $F$ via $X\mapsto\sm{I_r}{X}{}{I_r}$.
Thus, we view $\sctn_U$ as a group embedding $\varsigma_+:\Mat_r\rightarrow U$.
Similarly, we identify the unipotent radical $\bas{U}^-$ of the parabolic subgroup opposite to $\bas{P}$ with $\Mat_r$ via
$X\mapsto\sm{I_r}{}{X}{I_r}$ and consider $\sctn_{U^-}$ as a group embedding
$\varsigma_-:\Mat_r\rightarrow U^-$.
From now on let
\[
w=\varsigma_+(-I_r)\varsigma_-(I_r)\varsigma_+(-I_r),
\]
so that $\bas{w}=\pr(w)=\sm{}{-I_r}{I_r}{}$.

Let $j_1:G_r\rightarrow G$ be the embedding in the upper left corner.
Let $j_2(x)={}^w j_1(x)$.
Note that by \eqref{eqwtzwtg} we have
\[
{}^w j_2(x)={}^{w^2}j_1(x)=(-1,\det\bas{x})_nj_1(x)\ \ x\in G_r.
\]
Hence, by \eqref{eq: commblocks},
\begin{align*}
{}^w(j_1(x)j_2(y))&=(-1,\det\bas{y})_nj_2(x)j_1(y)\\&=(-1,\det\bas{y})_n(\det\bas{x},\det\bas{y})_n^{c'}j_1(y)j_2(x),\ \ x,y\in G_r.
\end{align*}
In particular,
\begin{equation} \label{eq: thetaH}
{}^w(j_1(x)j_2(y))=j_1(y)j_2(x),\ \ x,y\in H_r.
\end{equation}

Let $\bas{\Delta}=\{\diag(g,g)\mid g\in\bas{G}_r\}$.

\begin{lemma} \label{lem: conjw} \
\begin{enumerate}
\item \label{part: centw} The centralizer of $w$ in $M$ is $\Delta$.
\item For any $x\in\bas{G}_r$ we have
\begin{equation} \label{eq: wx+}
w\varsigma_+(x)=\varsigma_+(-x^{-1})j_2(x)j_1(x)^{-1}\varsigma_-(x^{-1}).
\end{equation}
Note that $j_2(x)j_1(x)^{-1}=[w,j_1(x)]$ is well defined.
\item Let $\bas{J}_1=\diag(-I_r,I_r)$. Then,
\begin{equation} \label{eq: w2}
w^2=[\bas{w},\bas{J}_1]_{\incvr}.
\end{equation}
\end{enumerate}
\end{lemma}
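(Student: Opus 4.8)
The plan is to treat the three parts in order, exploiting repeatedly that conjugation and the commutator map on $G$ factor through $\bas{G}_{2r}$, and that the canonical lifting of a unipotent radical is the \emph{unique} splitting over it (valid since $\chr F\ndiv n$, cf.\ \eqref{eq: cansec}), so that any conjugate of $\sctn_U$ by an element that normalizes or swaps the radicals is again a canonical lifting. For part \ref{part: centw}: a $2\times2$-block computation gives $Z_{\bas{M}}(\bas{w})=\bas{\Delta}$, hence $\pr(Z_M(w))\subseteq\bas{\Delta}$ and $Z_M(w)\subseteq\Delta$. Conversely, every element of $\Delta$ has the form $\zeta\,j_1(g)j_2(g)$ with $\zeta\in\mu_n$ (central) and $g\in G_r$, so it suffices to check ${}^w(j_1(g)j_2(g))=j_1(g)j_2(g)$; by the formula for ${}^w(j_1(x)j_2(y))$ recorded just before the lemma, with $x=y=g$, the left-hand side equals $(-1,\det\bas{g})_n(\det\bas{g},\det\bas{g})_n^{c'}\,j_1(g)j_2(g)$, and the scalar is trivial since, by the standard Hilbert-symbol identities $(a,a)_n=(-1,a)_n$ and $(-1,a)_n^2=1$, it equals $(-1,\det\bas{g})_n^{1+c'}=\bigl((-1,\det\bas{g})_n^2\bigr)^{c+1}=1$.

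For the second assertion, the identity $j_2(x)j_1(x)^{-1}=[w,j_1(x)]$ is immediate from $j_2(x)={}^wj_1(x)$, and it depends only on $\bas{w}$ and $\pr(j_1(x))=\diag(\bas{x},I_r)$ because commutators factor through $\bas{G}_{2r}$ — this is the ``well defined'' clause. For \eqref{eq: wx+} I will first establish the exact relations ${}^w\varsigma_+(X)=\varsigma_-(-X)$ and ${}^w\varsigma_-(X)=\varsigma_+(-X)$ for $X\in\Mat_r$: indeed $\bas{v}\mapsto{}^w\sctn_U\bigl(({}^{\bas{w}})^{-1}\bas{v}\bigr)$ is a splitting of $\pr$ over $\bas{U}^-$, hence equals $\sctn_{U^-}$ by uniqueness, and reading off coordinates gives the claim (and symmetrically with $\bas{U}$ and $\bas{U}^-$ exchanged). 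From $w\varsigma_+(x)=\varsigma_-(-x)w$, together with $w\varsigma_-(-x^{-1})=\varsigma_+(x^{-1})w$, a routine rearrangement shows that \eqref{eq: wx+} is equivalent to the relation
\[
{}^{j_1(x)}(w^{-1})=\varsigma_-(-x^{-1})\varsigma_+(x)\varsigma_-(-x^{-1}),\qquad x\in\bas{G}_r,
\]
which I abbreviate $(\star)$. By the same uniqueness principle, ${}^{j_1(x)}\varsigma_+(Y)=\varsigma_+(\bas{x}Y)$ and ${}^{j_1(x)}\varsigma_-(Z)=\varsigma_-(Z\bas{x}^{-1})$; applying ${}^{j_1(x)}$ to $(\star)$ for an argument $y$ and using $j_1(xy)=j_1(x)j_1(y)$ turns it into $(\star)$ for $xy$, so it suffices to prove $(\star)$ at $x=I_r$. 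There it reads $w^{-1}=\varsigma_-(-I_r)\varsigma_+(I_r)\varsigma_-(-I_r)=\bigl(\varsigma_-(I_r)\varsigma_+(-I_r)\varsigma_-(I_r)\bigr)^{-1}$, i.e.\ $\varsigma_-(I_r)\varsigma_+(-I_r)\varsigma_-(I_r)=w$; both sides lie over $\bas{w}$, and conjugating the left side by $w$ (using the relations above) produces $\varsigma_+(-I_r)\varsigma_-(I_r)\varsigma_+(-I_r)=w={}^ww$, whence equality since conjugation by $w$ is injective.

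For \eqref{eq: w2} I will feed \eqref{eq: wx+} at $x=-I_r$ into $w^2=w\cdot\varsigma_+(-I_r)\varsigma_-(I_r)\varsigma_+(-I_r)$: this gives $w^2=\varsigma_+(I_r)\,j_2(-I_r)j_1(-I_r)^{-1}\,\varsigma_+(-I_r)$ after cancelling $\varsigma_-(-I_r)\varsigma_-(I_r)=1$. Since $\pr(j_1(-I_r))=\bas{J}_1$, the middle factor is $[w,j_1(-I_r)]=[\bas{w},\bas{J}_1]_{\incvr}$, so $w^2={}^{\varsigma_+(I_r)}\bigl([\bas{w},\bas{J}_1]_{\incvr}\bigr)$. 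But $\pr([\bas{w},\bas{J}_1]_{\incvr})=[\bas{w},\bas{J}_1]=-I_{2r}$ is central in $\bas{G}_{2r}$, so conjugating this lift by $\varsigma_+(I_r)$ only introduces the factor $[\pr(\varsigma_+(I_r)),-I_{2r}]_{\incvr}$, which equals $1$ by \eqref{eqwtzwtg} (for $\bas{G}_{2r}$) as $\det\pr(\varsigma_+(I_r))=1$; hence $w^2=[\bas{w},\bas{J}_1]_{\incvr}$. The whole argument is essentially formal once ${}^w\varsigma_\pm(X)=\varsigma_\mp(-X)$ is available; the one genuinely delicate point is the bookkeeping of $\mu_n$-ambiguities, in particular settling the case $x=I_r$ of $(\star)$ through an identity that conjugation by $w$ \emph{fixes} (comparing $\varsigma_-(I_r)\varsigma_+(-I_r)\varsigma_-(I_r)$ with $w$) rather than one that $w$ merely swaps, which would only pin the discrepancy down up to a square.
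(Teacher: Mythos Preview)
Your proposal is correct, but it takes a noticeably more elaborate route than the paper's own proof, which is uniformly direct.

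For part~\ref{part: centw}, the paper first records the conjugation formulas
${}^{j_1(x)}\varsigma_+(y)=\varsigma_+(xy)$, $\varsigma_+(y)^{j_2(x)}=\varsigma_+(yx)$,
${}^{j_2(x)}\varsigma_-(y)=\varsigma_-(xy)$, $\varsigma_-(y)^{j_1(x)}=\varsigma_-(yx)$
and then computes $w^{j_1(x)j_2(x)}=w$ straight from the definition $w=\varsigma_+(-I_r)\varsigma_-(I_r)\varsigma_+(-I_r)$. You instead invoke the pre-lemma formula for ${}^w(j_1(x)j_2(y))$ and kill the resulting scalar with the Hilbert-symbol identity $(a,a)_n=(-1,a)_n$ together with $c'=2c+1$; this is fine and pleasantly conceptual, but relies on that earlier displayed formula rather than being self-contained.

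For part~2, the paper does it in four lines: write $\varsigma_+(x)=j_1(x)\varsigma_+(I_r)j_1(x)^{-1}$, move $w$ past $j_1(x)$ to produce $j_2(x)$, use $w\varsigma_+(I_r)=\varsigma_+(-I_r)\varsigma_-(I_r)$ (immediate from the definition of $w$), and conjugate the remaining $\varsigma_\pm$ by $j_2(x)$ and $j_1(x)^{-1}$. Your route via the auxiliary identity $(\star)$, the $j_1$-equivariance reduction to $x=I_r$, and the injectivity-of-${}^w$ trick is valid but substantially longer.

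For part~3, the paper simply computes ${}^{\bas{J}_1}w=\varsigma_+(I_r)\varsigma_-(-I_r)\varsigma_+(I_r)=w^{-1}$ from the same conjugation formulas (since $\bas{J}_1=j_1(-I_r)$ up to the second block), and \eqref{eq: w2} follows immediately. Your argument via \eqref{eq: wx+} at $x=-I_r$ plus the centrality-and-\eqref{eqwtzwtg} step is correct but again more indirect.

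In short: the paper's proof exploits the explicit triple-product form of $w$ and the $j_i$-conjugation rules to do everything by direct calculation; your proof emphasizes uniqueness of unipotent splittings and structural reductions. Both are sound; the paper's is shorter.
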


\begin{proof}
Clearly, the centralizer of $w$ in $M$ is contained in $\Delta$, since the centralizer of $\bas{w}$ in $\bas{M}$ is $\bas{\Delta}$.
Conversely, for any $x\in\bas{G}_r$ and $y\in\Mat_r$ we have
\begin{align*}
{}^{j_1(x)}\varsigma_+(y)=\varsigma_+(xy),\quad & \varsigma_+(y)^{j_2(x)}=\varsigma_+(yx)	,\\
{}^{j_2(x)}\varsigma_-(y)=\varsigma_-(xy),\quad & \varsigma_-(y)^{j_1(x)}=\varsigma_-(yx).
\end{align*}
Hence, from the definition of $w$, for any $x\in\bas{G}_r$
\[
w^{j_1(x)j_2(x)}=
\varsigma_+(-I_r)^{j_1(x)j_2(x)}\varsigma_-(I_r)^{j_1(x)j_2(x)}\varsigma_+(-I_r)^{j_1(x)j_2(x)}=
\varsigma_+(-I_r)\varsigma_-(I_r)\varsigma_+(-I_r)=w.
\]
It follows that the centralizer of $w$ contains $\Delta$.

Moreover,
\begin{align*}
w\varsigma_+(x)&=wj_1(x)\varsigma_+(I_r)j_1(x)^{-1}=j_2(x)w\varsigma_+(I_r)j_1(x)^{-1}\\&=
j_2(x)\varsigma_+(-I_r)\varsigma_-(I_r)j_1(x)^{-1}=\varsigma_+(-x^{-1})j_2(x)j_1(x)^{-1}\varsigma_-(x^{-1}).
\end{align*}

Finally,
\[
{}^{\bas{J}_1}w={}^{\bas{J}_1}\varsigma_+(-I_r){}^{\bas{J}_1}\varsigma_-(I_r){}^{\bas{J}_1}\varsigma_+(-I_r)=
\varsigma_+(I_r)\varsigma_-(-I_r)\varsigma_+(I_r)=w^{-1}.
\]
Hence,
\[
w^2=[\bas{w},\bas{J}_1]_{\incvr}.
\]
The lemma follows.
\end{proof}

Let $\pi$ be an admissible representation of $M$.
We compute $M(w,s)$ on vectors in $I_P(\pi,s)$ that are supported in the big cell $PU^-$.

Fix $v\in V$ and a Schwartz function $\Phi$ on $\Mat_r$.
Then, there is a unique vector $f_s$ in $I_P(\pi,s)$ such that
\begin{equation} \label{eq: defs}
f_s(\varsigma_-(x))=\Phi(x)v,\ \ \ x\in\Mat_r.
\end{equation}

It follows from \eqref{eq: wx+} that
\begin{align*}
M(w,s)f_s(e)&=\int_{\Mat_r}f_s(w\varsigma_+(x))\ dx=
\int_{\bas{G}_r}f_s(w\varsigma_+(x))\abs{\det x}_F^r\ dx\\
&=\int_{\bas{G}_r}f_s(j_2(x)j_1(x)^{-1}\varsigma_-(x^{-1}))\abs{\det x}_F^r\ dx\\&=
\int_{\bas{G}_r}f_s(j_2(x^{-1})j_1(x)\varsigma_-(x))\abs{\det x}_F^{-r}\ dx\\&=
\int_{\bas{G}_r}\abs{\det x}_F^s\Phi(x)\cdot \pi(j_2(x^{-1})j_1(x))v\ dx.
\end{align*}
Hence, for any $v^\vee\in\pi^\vee$ we have
\begin{align*}
\sprod{M(w,s)f_s(e)}{v^\vee}&=\int_{\bas{G}_r}\abs{\det x}_F^s\Phi(x)\sprod{\pi(j_2(x^{-1})j_1(x))v}{v^\vee}\ dx\\&=
\int_{\bas{G}_r}\abs{\det x}_F^s\Phi(x)\sprod{\pi(j_1(x))v}{\pi^\vee(j_2(x))v^\vee}\ dx.
\end{align*}
Assume from now on that $\pi$ has a central character, which is invariant under conjugation by $w$.
Then, we can write the above as
\begin{equation} \label{eq: withTate}
\int_{\bas{Z}_{r,\sml}\bs\bas{G}_r}\big(\abs{\det x}_F^s\int_{F^{\times}}\abs{\lambda}_F^{rns}\Phi(\lambda^nx)\ d\lambda\big)
\sprod{\pi(j_1(x))v}{\pi^\vee(j_2(x))v^\vee}\ dx.
\end{equation}
From now on we assume that the integral
\[
\int_{Z_{r,\sml}\bs G_r}\sprod{\pi(j_1(x))v}{\pi^\vee(j_2(x))v^\vee}\ dx
\]
converges. The inner integral in \eqref{eq: withTate} can be written as a sum over $\chi\in X_n(F^{\times})$ of Tate integrals with respect to $\Phi$ and $\chi$.
In particular, it is a rational function in $q^s$ with at most a simple pole at $s=0$ whose residue is $\Phi(0)$.
Moreover, let $C'=\max\abs{\Phi}$ and suppose that the support of $\Phi$ is contained in the ball $\{x\in\Mat_r(F)\mid\norm{x}\le C\}$
where $\norm{x}$ is the maximum of the absolute values of the coordinate of $x$. Then, for any $s>0$
and $x\in\bas{G}_r$
\begin{gather*}
\abs{\det x}^s\int_{F^{\times}}\abs{\lambda}^{rs}\abs{\Phi(\lambda x)}\ d\lambda
\le
C'\abs{\det x}^s\int_{\abs{\lambda}\le C\norm{x}^{-1}}\abs{\lambda}^{rs}\ d\lambda\\=
C'\cdot (1-q^{-rs})^{-1}(C\abs{\det x}\norm{x}^{-r})^s\le C'(1-q^{-rs})^{-1}C^s.
\end{gather*}
In particular, the double integral \eqref{eq: withTate} converges for $\Re s>0$.
By a lemma of Rallis (cf.\ \cite{MR1159430}*{Lemma 4.1}) it follows that $M(w,s)$ is holomorphic for $\Re s>0$
and has at most a simple pole at $s=0$. (Note that the argument of [ibid.]\ is valid
for any admissible representation, not necessarily irreducible.)
Moreover, if $M^*(w)$ is as in \eqref{def: M*w}, then we may take the residue at $s=0$ in \eqref{eq: withTate}
inside the inner integral and obtain
\begin{subequations}
\begin{equation} \label{eq: m*}
\sprod{M^*(w)f_0(e)}{v^\vee}=\Phi(0)\cdot
\int_{\bas{Z}_{r,\sml}\bs\bas{G}_r}\sprod{\pi(j_1(x))v}{\pi^\vee(j_2(x))v^\vee}\ dx.
\end{equation}

Let $\theta$ be the automorphism of $M$ (of order two or four, depending on whether or not $-1\equiv_n1$) given by $\theta(m)={}^wm$.
Then, by Lemma \ref{lem: conjw} part \ref{part: centw}, $M^\theta=\Delta$ and hence
\begin{equation} \label{eq: Mmtheta}
M=M^\theta j_1(G_r).
\end{equation}
Therefore, we can write the integral above as
\begin{equation} \label{eq: m*theta}
\int_{Z_{\beta,\sml}M^\theta \bs M}\sprod{\pi(m)v}{\pi^\vee(\theta(m))v^\vee}\ dm=
\int_{Z_{\beta,\sml}M^\theta \bs M}\sprod{\pi(\theta^{-1}(m))v}{\pi^\vee(m)v^\vee}\ dm.
\end{equation}
\end{subequations}

We will apply the above discussion in the following situation.
Let $\rho$ be a genuine, irreducible, essentially square-integrable representation of $G_r$.
Fix a character $\psi$ of $Z_{\beta,\lrg}$ that is consistent with $\omega_\rho$.
Since $(H_r,Z_r)$ is a special pair in $G_r$, we may write $\rho=\LInd_{H_r,\psi,Z_r}^{G_r,\omega_\rho}\sigma$ where
$\sigma\in\Irr_\psi(H_r)$.
Moreover, by Proposition \ref{prop: lind} part \ref{part: fd} we have
\begin{equation} \label{eq: drhosigma}
d_\sigma^{Z_{r,\sml}\bs H_r}=a\cdot d_\rho^{Z_{r,\sml}\bs G_r}\rest_{Z_{r,\sml}\bs H_r}
\end{equation}
where $a=([Z_r:Z_{r,\lrg}][Z(G_r):Z_{r,\sml}])^{\frac12}$.

Consider the irreducible representation $\sigma\otimes\sigma$ of $H_\beta$
and its extension $\tau=\ex{(\sigma\otimes\sigma)}{\omega_\beta}$ to the group $H=Z(G_{2r})H_\beta=Z(M)H_\beta$
(cf.\ Lemma \ref{lem: centext}).
Let $\intrchng_\tau$ be the intertwining operator (cf.\ \eqref{eq: thetaH})
\[
\intrchng_\tau:\tau^w\rightarrow\tau,\quad v_1\otimes v_2\mapsto v_2\otimes v_1.
\]
Let $\Pi=\Ind_H^M\tau$. Consider the intertwining operator
\[
\TwstOper_\Pi:\theta(\Pi)\rightarrow\Pi,\quad\quad\TwstOper_\Pi\varphi(g)=
\sum_{x\in Z_\beta/Z_{\beta,\lrg}Z_\beta^\theta}\intrchng_\tau\varphi(x\theta(x)^{-1}\theta(g)).
\]

Let $b=\#(Z_\beta/Z_{\beta,\lrg}Z_\beta^\theta)=[Z_r:Z_{r,\lrg}]$.
We claim that
\begin{equation} \label{eq: intrltnf}
I_P(\TwstOper_\Pi,0)M^*(w)=a^{-1}b\cdot\id_{I_P(\Pi,0)}.
\end{equation}
By Rallis's lemma, it is enough to show that
\begin{equation} \label{eq: intsclr}
(I_P(\TwstOper_\Pi,0)M^*(w)f)(e)=a^{-1}b\cdot f(e)
\end{equation}
for every $f\in I_P(\Pi,0)$ supported in $PU^-$.
Note that the left-hand side is
\[
\TwstOper_\Pi(M^*(w)f)(e).
\]
We take the Haar measure on $\bas{U}$ as specified following the statement of Theorem \ref{thm: intermain}.
Recall that by \eqref{eq: m*} and \eqref{eq: m*theta}, taking $f=f_0$ with $f_s$ as in \eqref{eq: defs} we have
\[
(M^*(w)f)(e)=\Phi(0)\cdot\twstoper_{\theta(\Pi)}^{\theta^{-1},d_\rho^{Z_{r,\sml}\bs G_r}} v
\]
in the notation of \S\ref{sec: thetaint}, where we identify $Z_{\beta,\sml}M^\theta\bs M$ with $G_r/Z_{r,\sml}$,
Note that $\Pi$ is $\theta$-integrable and $\theta(\Pi)$ is $\theta^{-1}$-integrable by the first part of Proposition \ref{prop: indfd} together with Example \ref{ex: fd}.
Thus,
\begin{equation} \label{eq: int345}
(I_P(\TwstOper_\Pi,0)M^*(w)f)(e)=\Phi(0)\cdot\TwstOper_\Pi\twstoper_{\theta(\Pi)}^{\theta^{-1},d_\rho^{Z_{r,\sml}\bs G_r}} v.
\end{equation}
We will compute the right-hand side using Proposition \ref{prop: indfd} applied with $G=M$,
$B=Z_{\beta,\sml}$, $A=\mu_n$ and $H$, $\theta$ as above.

\begin{lemma}
The conditions \eqref{eq: Gammathetarho}, \eqref{eq: ngtc}, \eqref{eq: zzzh} and \eqref{eq: abgen} of Proposition \ref{prop: indfd}
are satisfied.
\end{lemma}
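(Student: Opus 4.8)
The plan is to make each of the four conditions of Proposition~\ref{prop: indfd} explicit in the present situation and to verify it from the structural results already established; we work throughout inside $G=G_{2r}$. By Lemma~\ref{lemmaGbetaspecial} applied to the composition $(r,r)$, together with the inclusion $Z(M)\subseteq Z_M(H_\beta)$, we have $Z_G(H)=Z_M(H)=Z_M(H_\beta)=Z_\beta$; and by Lemma~\ref{lem: conjw}, part~\ref{part: centw}, together with \eqref{eq: Mmtheta}, we have $G^\theta=M^\theta=\Delta$. Granting these, condition \eqref{eq: abgen} is immediate with $A=\mu_n$: the group $\mu_n\subseteq Z_{\beta,\sml}=B$ is finite cyclic, $Z_G(H)/\mu_n=Z_\beta/\mu_n\simeq(F^\times)^2$ is abelian, and $\omega_\tau\rest_{\mu_n}=\epsilon$ is faithful since $\tau$ is genuine.

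The heart of the matter is condition \eqref{eq: Gammathetarho}. First, $\tau$ is $\theta$-invariant: the intertwining operator $\intrchng_\tau$ realises an isomorphism $\tau^w\simeq\tau$, its compatibility with the two actions of $H_\beta$ being exactly \eqref{eq: thetaH}, and its compatibility with the action of $Z(M)$ following from the invariance of $\omega_\beta$ under the block swap $m\mapsto{}^wm$ (which holds because $\omega^\beta=\omega\otimes\omega$ and $w$ acts trivially on $Z(G_{2r})$). Next, by Lemma~\ref{lem: specp}, part~\ref{part: stabtau}, applied to the special pair $(H,Z_\beta)$ in $M$, the stabiliser of $\tau$ in $M$ equals $M_\tau=H_\beta Z_\beta$; since $w$ normalises both $H_\beta$ and $Z_\beta$ and fixes the class of $\tau$, the stabiliser of $\tau$ inside $N_G(M)=M\sqcup wM$ is $M_\tau\cup wM_\tau$. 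Unwinding $\theta(g)={}^wg$, one then finds that for $x\in M$ the relation $\theta(\tau^x)\simeq\tau^x$ holds if and only if $x\,\theta(x)^{-1}\in M_\tau=H_\beta Z_\beta$. Writing $x=\delta\, j_1(y)$ with $\delta\in\Delta$ and $y\in G_r$ (possible by \eqref{eq: Mmtheta}), so that $\theta(x)=\delta\, j_2(y)$, this membership becomes $j_1(y)j_2(y)^{-1}\in H_\beta Z_\beta$; projecting to $\bas{G}_{2r}$ it reads $\det\bas{y}\in F^{\times r}F^{\times n}$, which is precisely the condition for $\diag(\bas{y},I_r)$, and hence for $x$, to lie in $H_\beta\Delta Z_\beta=HG^\theta Z_G(H)$. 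This gives the desired equality $\Gamma_\tau^\theta=H\bs HG^\theta Z_G(H)$.

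Conditions \eqref{eq: ngtc} and \eqref{eq: zzzh} are then a direct group-theoretic check. For \eqref{eq: ngtc} I would use $H^\theta=H\cap\Delta$ and $Z(H)=H\cap Z_\beta=Z(M)Z_{\beta,\lrg}$; the inclusion $H^\theta Z(H)\subseteq H\cap\Delta Z_\beta$ is obvious, while for the reverse one writes an element of $H\cap\Delta Z_\beta$ as $\delta z$ with $\delta\in\Delta$, $z\in Z_\beta$, and uses the determinant description of $\pr(H)=\pr(Z(M))\bas{H}_\beta$ afforded by \eqref{cntrbs} and \eqref{eq: cntrofcfr} to split off the $Z_\beta$-part up to an element of $Z(H)$. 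For \eqref{eq: zzzh} one first identifies $Z(Z_G(H))=Z(Z_\beta)$ and computes it explicitly from the commutator formula \eqref{eqwtzwtgblock}; for $z\in Z_\beta$ the element $z\,\theta(z)^{-1}$ is a lift of $\diag((\lambda_1\lambda_2^{-1})I_r,(\lambda_2\lambda_1^{-1})I_r)$, and imposing $z\,\theta(z)^{-1}\in Z(Z_\beta)$ translates, after unwinding the scalar congruences involving the exponents $rc'-1$ and $2rc'-1$, into $z\in Z(H)Z_{G^\theta}(H)$, where $Z_{G^\theta}(H)=Z_\Delta(H)=\pr^{-1}(\{\diag(\lambda I_r,\lambda I_r)\mid\lambda\in F^\times\})$.

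I expect the main obstacle to be condition \eqref{eq: Gammathetarho}: identifying the $\theta$-twisted stabiliser of $\tau$ is the only step that uses genuine representation-theoretic input, namely the Clifford-theoretic Lemma~\ref{lem: specp}, part~\ref{part: stabtau}, and the fact that $\intrchng_\tau$ realises $\tau^w\simeq\tau$. The remaining conditions amount to bookkeeping; the one mild trap there is that, because the Hilbert-symbol commutator \eqref{eqwtzwtgblock} makes $Z_\beta$ non-abelian and $Z(M)$ possibly strictly smaller than $Z(Z_\beta)$, one must keep careful track of the scalar congruences carrying the exponents $rc'-1$ and $2rc'-1$ when comparing the various central subgroups.
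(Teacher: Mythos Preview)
Your plan is correct and uses the same structural ingredients as the paper --- Lemma~\ref{lemmaGbetaspecial} for $Z_M(H)=Z_\beta$, Lemma~\ref{lem: conjw} for $M^\theta=\Delta$, and Lemma~\ref{lem: specp} part~\ref{part: stabtau} for the stabilizer. One point deserves comment. For condition~\eqref{eq: Gammathetarho} you compute $M_\tau=H_\beta Z_\beta$ via the special pair $(H,Z_\beta)$ in $M$ and then translate $j_1(y)j_2(y)^{-1}\in H_\beta Z_\beta$ into the determinant condition $\det\bas{y}\in(F^\times)^rF^{\times n}$. The paper instead observes that $\theta(\tau^{j_1(x)})\simeq\tau^{j_1(x)}$ is equivalent to $\sigma^x\simeq\sigma$ and applies Lemma~\ref{lem: specp} part~\ref{part: stabtau} to the special pair $(H_r,Z_r)$ in $G_r$, yielding $x\in H_rZ_r$ directly. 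Both routes give the same answer; the paper's reduction to a single block is a bit shorter because it avoids the explicit two-block determinant translation.

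For \eqref{eq: ngtc} and \eqref{eq: zzzh} your sketch is sound, but the paper's argument is organised differently and is worth knowing: it systematically uses the factorizations $H_\beta=H_\beta^\theta\, j_1(H_r)$ and $Z_\beta=Z_\beta^\theta\, j_1(Z_r)$ (consequences of $M=\Delta\, j_1(G_r)$) to reduce both checks to one-variable questions in $G_r$. For \eqref{eq: zzzh}, after identifying $Z(Z_\beta)=Z(H)=Z(G_{2r})Z_{\beta,\lrg}$, the paper writes $j_1(z)\theta(j_1(z))^{-1}=z_1z_2$ with $z_1\in Z(G_{2r})$, $z_2\in Z_{\beta,\lrg}$ and deduces $\lambda^{2r}\equiv_n1$ from $z_2\in Z_{\beta,\lrg}$ together with $\lambda^{2rc'-1}\equiv_n1$ from \eqref{eq: cntrofcfr}; since $\gcd(2r,2rc'-1)=1$ this forces $\lambda\equiv_n1$ and hence $z\in Z_{r,\lrg}$. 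This is exactly the ``unwinding of scalar congruences'' you allude to, made explicit.
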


\begin{proof}
In order to show \eqref{eq: Gammathetarho}, it is enough by \eqref{eq: Mmtheta} to show that
if $x\in G_r$ and $\theta(\tau^{j_1(x)})\simeq\tau^{j_1(x)}$, then $x\in H_rZ_r$.
Note that since $\theta(\tau)\simeq\tau$,
\[
\theta(\tau^{j_1(x)})\simeq\tau^{j_1(x)}\iff\tau^{j_2(x)}\simeq\tau^{j_1(x)}\iff
\sigma^x\simeq\sigma.
\]
Applying \eqref{eq: stabtau} with respect to the special pair $(H_r,Z_r)$ of $G_r$ we conclude that $x\in H_rZ_r$ as required.

By Lemma \ref{lemmaGbetaspecial}, \eqref{eq: znznghn} and \eqref{cntrbs} we have
\[
Z_M(H)=Z_\beta\text{ and }Z(Z_\beta)=Z(M)Z_{\beta,\lrg}=Z(G_{2r})Z_{\beta,\lrg}=Z(H).
\]

Next, we show \eqref{eq: ngtc}.
Since $H=Z(G_{2r})H_\beta$ and $Z(G_{2r})\le M^\theta$ we have
\[
H\cap M^\theta Z_\beta=Z(G_{2r})(H_\beta\cap M^\theta Z_\beta)
\]
and therefore, it is enough to show that
\[
H_\beta\cap M^\theta Z_\beta=H_\beta^\theta Z_{\beta,\lrg}.
\]
Since $H_\beta=(H_\beta\cap\Delta)j_1(H_r)=H_\beta^\theta j_1(H_r)$ we have
\[
H_\beta\cap M^\theta Z_\beta=H_\beta^\theta(j_1(H_r)\cap M^\theta Z_\beta),
\]
and it is easy to see that $j_1(H_r)\cap M^\theta Z_\beta=j_1(H_r)\cap Z_\beta=j_1(Z_{r,\lrg})\le Z_{\beta,\lrg}$.
The relation \eqref{eq: ngtc} follows.

We show \eqref{eq: zzzh}. Since $Z_\beta=(\Delta\cap Z_\beta)j_1(Z_r)=Z_\beta^\theta j_1(Z_r)$, it is enough to show that
\[
\{z\in Z_r\mid j_1(z)\theta(j_1(z))^{-1}\in Z(H)\}=Z_{r,\lrg}.
\]
Suppose that $z'=j_1(z)\theta(j_1(z))^{-1}\in Z(H)$. Write $z'=z_1z_2$ where $z_1\in Z(G_{2r})$ and $z_2\in Z_{\beta,\lrg}$.
Write $\bas{z}_1=\lambda I_{2r}$, $\bas{z}_2=\diag(\mu I_r,\nu I_r)$. Then, $\lambda\mu=\lambda^{-1}\nu^{-1}$ and
$\mu^r\equiv_n\nu^r\equiv_n1$. It follows that $\lambda^{2r}\equiv_n1$. On the other hand,
by \eqref{eq: cntrofcfr}, $\lambda^{2rc'-1}\equiv_n1$, and hence $\lambda\equiv_n1$.
Thus, $z\in Z_{r,\lrg}$ since $\bas{z}=\lambda\mu I_r$, as claimed.

Finally, condition \eqref{eq: abgen} is clear since $\rho$ is genuine.
\end{proof}

We can therefore apply Proposition \ref{prop: indfd}.

Note that $\theta_H$ acts identically on $Z(G_{2r})$ and by ``switching the blocks'' on $H_\beta$ (by \eqref{eq: thetaH}).
Thus, by Example \ref{ex: fd} we have
\[
\twstoper_\tau^{\theta_H,d_\sigma^{Z_{r,\sml}\bs H_r}}=\intrchng_\tau
\]
where we identify $BH^\theta\bs H$ with $H_r/Z_{r,\sml}$.
Hence, by \eqref{eq: inds} and \eqref{eq: drhosigma} we get
\[
\TwstOper_\Pi=\twstoper_\Pi^{\theta,a\cdot d_\rho^{Z_{r,\sml}\bs G_r}}.
\]
where we identify $BG^\theta\bs G$ with $G_r/Z_{r,\sml}$.
Also, since $\intrchng_\tau$ is an involution,
\[
\twstoper_\tau^{\theta_H,d_\sigma^{Z_{r,\sml}\bs H_r}}\twstoper_{\theta_H(\tau)}^{\theta_H^{-1},d_\sigma^{Z_{r,\sml}\bs H_r}}=
\id_\tau.
\]
Together with \eqref{eq: int345} and \eqref{eq: composes} we deduce that
\begin{gather*}
I(\TwstOper_\Pi,0)M^*(w)f(e)
=\Phi(0)\cdot\twstoper_\Pi^{\theta,a\cdot d_\rho^{Z_{r,\sml\bs G_r}}}\twstoper_{\theta(\Pi)}^{\theta^{-1},d_\rho^{Z_{r,\sml}\bs G_r}} v\\
=a^{-1}b\cdot\Phi(0)\cdot\Ind_H^M(
\twstoper_\tau^{\theta_H,d_\sigma^{Z_{r,\sml}\bs H_r}}\twstoper_{\theta_H(\tau)}^{\theta_H^{-1},d_\sigma^{Z_{r,\sml}\bs H_r}})v
=a^{-1}b\cdot\Phi(0)\cdot v=a^{-1}b\cdot f(e).
\end{gather*}
This implies \eqref{eq: intsclr} since the space of sections in $I_P(\Pi,0)$ supported in $PU^-$ is spanned
by $f_0$ where $f_s$ is as in \eqref{eq: defs}.
We infer \eqref{eq: intrltnf}.

By Lemma \ref{lem: thetapm} (which is applicable with $x=w^2$ and $y=J_1$ by \eqref{eq: w2}) we have
\[
\twstoper_{\theta(\Pi)}^{\theta^{-1}}=\twstoper_\Pi^\theta\circ\Pi(w^2).
\]
Thus, as before
\begin{gather*}
\TwstOper_\Pi^2\circ\Pi(w^2)=
(\twstoper_\Pi^{\theta,a\cdot d_\rho^{Z_{r,\sml}\bs G_r}})^2\circ\Pi(w^2)=
\twstoper_\Pi^{\theta,a\cdot d_\rho^{Z_{r,\sml}\bs G_r}}
\twstoper_{\theta(\Pi)}^{\theta^{-1},a\cdot d_\rho^{Z_{r,\sml}\bs G_r}}\\=
b\cdot\Ind_H^M
(\twstoper_\tau^{\theta_H,d_\sigma^{Z_{r,\sml}\bs H_r}}\twstoper_{\theta_H(\tau)}^{\theta_H^{-1},d_\sigma^{Z_{r,\sml}\bs H_r}})
=b\cdot\id_\Pi.
\end{gather*}

Recall that $\Pi$ is a semisimple, isotypic representation of $M$ of type $\tnsr_{\beta,\omega^\beta}^{\omega_\beta}(\rho\otimes\rho)$.
Also, $\TwstOper_\Pi$ preserves any subrepresentation of $\Pi$, since it is proportional to $\twstoper_\Pi^\theta$.
Thus, if $\pi$ is any irreducible constituent of $\Pi$, then the restriction $T$ of $b^{-\frac12}\cdot\TwstOper_\Pi$ to $\pi$
satisfies $T^2=\pi(w)^{-2}$.
Observe that $a^{-1}b^{\frac12}=[Z(G_r):Z_{r,\sml}]^{-\frac12}$.
Theorem \ref{thm: intermain} follows.

\subsection{Bernstein-Zelevinsky product for two cuspidal representations}\label{subsectioncuspidalinduction}

We can now state the fundamental irreducibility result for the Bernstein--Zelevinsky product for two cuspidal representations.

\begin{proposition}\label{prop: BZPtwocuspidals} \
\begin{enumerate}
\item Let $\rho_i$ be irreducible genuine cuspidal representations of $G_{r_i}$, $i=1,2$.
Then, $(\rho_1\times\rho_2)_\omega$ is irreducible unless $r_1=r_2$ and $[\rho_1\nu^s]=[\rho_2]$
for some real number $s$.
\item Let $\rho$ be a genuine irreducible cuspidal representation of $G_r$.
Then, there is a unique positive real number $s_\rho$ such that $(\rho\times\rho\nu^{s_\rho})_\omega$ is reducible.
Moreover, for $s\in\R$, $(\rho\times\rho\nu^s)_\omega$ is reducible if and only if $s=\pm s_\rho$.
\end{enumerate}	
\end{proposition}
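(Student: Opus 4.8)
The plan is to deduce Proposition~\ref{prop: BZPtwocuspidals} from the residue computation in Theorem~\ref{thm: intermain}, essentially by transporting Olshanski's and Bernstein--Zelevinsky's arguments to the covering setting. For part (1), I would first reduce to the case $r_1=r_2=r$: if $r_1\ne r_2$, the geometric lemma (Proposition~\ref{propgeolemma}) together with Lemma~\ref{lem: wcuspmult1} and Corollary~\ref{cor: irredcritsimple} shows $(\rho_1\times\rho_2)_\omega$ is irreducible, since the weak cuspidal supports $\WCusp(\rho_1)$ and $\WCusp(\rho_2)$ cannot meet (their members live in $G_{r_1}$ vs.\ $G_{r_2}$). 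Likewise, if $r_1=r_2=r$ but $[\rho_1\nu^s]\ne[\rho_2]$ for every real $s$, then $\WCusp(\rho_1)\cap\WCusp(\rho_2)=\emptyset$ after the relevant twist, and again Corollary~\ref{cor: irredcritsimple} applies. So the only possible reducibility is when $r_1=r_2$ and, after twisting by a real power of $\nu$ (which does not affect irreducibility of the product up to isomorphism of the constituents), we are in the situation $\rho_1=\rho$, $\rho_2=\rho\nu^s$ with $[\rho\nu^s]=[\rho]$; replacing $\rho$ by a twist we may even assume $\rho$ is unitary (hence square-integrable modulo center), which is precisely the setting of Theorem~\ref{thm: intermain}.

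For part (2), fix a genuine irreducible cuspidal $\rho$ of $G_r$; twisting by $\nu^{s/2}$ on one factor and $\nu^{-s/2}$ on the other, $(\rho\times\rho\nu^s)_\omega$ is a constituent-preserving twist of $I_P(\pi,s)$ where $\pi=\tnsr_{\beta,\omega^\beta}^{\omega_\beta}(\rho_0\otimes\rho_0)$ and $\rho_0$ is a unitary twist of $\rho$. \emph{Existence} of a reducibility point: I would use Theorem~\ref{thm: intermain}, which gives that $M(w,s)$ has a genuine simple pole at $s=0$, i.e.\ $M(w,0)$ is not holomorphic. By the standard argument (Harish-Chandra / Silberger, as in \cite{MR584084}), $I_P(\pi,s)$ is irreducible for generic $s$ and the poles of the normalized intertwining operator detect reducibility; the functional equation $M(w,-s)M(w,s)=\mathrm{const}$ combined with the pole at $s=0$ forces $I_P(\pi,s)$ to be reducible at some $s=s_\rho>0$ (the pole of $M^*(w)$ at $s=0$ means $M(w,0)$ fails to be an isomorphism, and by the rationality in $q^{-s}$ there must be a genuine reducibility point on the real axis). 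Alternatively, since by Theorem~\ref{thm: intermain} the operator $I_P(T,0)M^*(w)$ is a nonzero scalar, $I_P(\pi,0)$ is \emph{irreducible} (it is a tempered induced representation with an invertible self-intertwining operator of the required square), which together with the standard unitarity/square-integrability bookkeeping yields a unique reducibility point $s_\rho>0$ on $(0,\infty)$ via the Plancherel-theoretic description of complementary series.

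\emph{Uniqueness} of $s_\rho$ and the sign symmetry: the symmetry $s\mapsto-s$ is immediate because $(\rho\times\rho\nu^s)_\omega\simeq(\rho\nu^s\times\rho)_\omega\cdot\nu^{-s}$-type manipulations, or more directly because $I_P(\pi,s)$ and $I_P(\pi,-s)$ have the same Jordan--H\"older constituents (Proposition~\ref{prop: BZprod}\eqref{part: JHjac}, applied with $\overline i_\beta$), so reducibility at $s$ is equivalent to reducibility at $-s$. For uniqueness on $(0,\infty)$, I would argue as in \cite{MR3178433} and \cite{MR3573961}*{Appendix A}: if $(\rho\times\rho\nu^s)_\omega$ were reducible for two distinct positive values $s_1<s_2$, one builds (via parabolic induction in stages and the geometric lemma) a representation whose Jacquet module has too many constituents, or one uses that the point of reducibility must be a "real point of a pole" of a single rational function in $q^{-s}$ constrained by the known structure of the $R$-group, which in the corank-one case is at most $\Z/2\Z$; hence at most one positive reducibility point. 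The only subtlety is to ensure that the central character $\omega$, which must be compatible and is held fixed throughout, does not disconnect the relevant component of the Bernstein center — but since twisting by a character of $Z(G_r)/Z_{r,\sml}$ identifies the various choices of $\omega$ up to the weak equivalence class, this is harmless.

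The main obstacle I anticipate is the passage from "$M^*(w)$ is a nonzero scalar times the identity on $I_P(\pi,0)$" (Theorem~\ref{thm: intermain}) to the \emph{uniqueness} of the positive reducibility point: the existence and the $\pm$ symmetry are soft, but pinning down that there is exactly one $s_\rho>0$ requires either a rank-one $R$-group computation in the metaplectic setting or a careful reduction to the linear case. I expect the cleanest route is to follow \cite{MR3178433}*{Lemme 2.5 ff.} verbatim, using that all the required inputs — the geometric lemma, multiplicity-one Jacquet module statements, the irreducibility criterion (Corollary~\ref{cor: irredcrit}), and the key residue computation — are now available in the covering case from the earlier sections, so that the combinatorial argument for uniqueness carries over without change.
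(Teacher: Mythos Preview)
Your overall strategy is close to the paper's, and the parts on irreducibility at $s=0$, the $s\leftrightarrow -s$ symmetry, and deferring uniqueness of $s_\rho$ to the segment theory all match. However, there are two gaps.

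\textbf{Part (1), case $r_1=r_2$.} Invoking Corollary~\ref{cor: irredcritsimple} with $\pi=(\rho_1\times\rho_2)_\omega$ is circular: that corollary requires $\pi$ to be a quotient of $(\rho_2\times\rho_1)_\omega$, and exhibiting such a surjection is tantamount to the irreducibility you want. Note that the hypothesis $\WCusp(\rho_1)\cap\WCusp(\rho_2)=\emptyset$ holds whenever $[\rho_1]\ne[\rho_2]$, in particular for $\rho_2=\rho_1\nu^{s_{\rho_1}}$ where the product is \emph{reducible} --- so disjoint weak cuspidal support alone cannot yield irreducibility of the full induced representation. The paper instead cites \cite{MR2567785}*{Th\'eor\`eme~VII.1.3}, whose proof carries over to the covering case thanks to Proposition~\ref{propweylinvariant}. (Your argument for $r_1\ne r_2$ is fine: there the geometric lemma gives that $r_\beta$ of the induced representation is irreducible, which suffices directly.)

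\textbf{Part (2), existence.} Your first argument --- that the simple pole of $M(w,s)$ at $s=0$, via the functional equation, forces a reducibility point at some $s_\rho>0$ --- does not work as stated: the pole at $0$ does not by itself locate a zero of the relevant scalar on the positive real axis. Your second argument is in the right direction but too vague. The paper's argument is the standard one: once irreducibility at $s=0$ is known (from Theorem~\ref{thm: intermain}, via Savin or \cite{MR3573961}*{Lemma~A.5}), assume for contradiction that $\pi_s:=(\rho\nu^{-s/2}\times\rho\nu^{s/2})_\omega$ is irreducible for all $s>0$; then by the complementary-series argument (cf.\ \cite{MR584084}*{\S1.11}) $\pi_s$ is unitarizable for all $s>0$, contradicting the unboundedness of its matrix coefficients for large $s$.
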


The first statement follows from \cite{MR2567785}*{Th\'eor\`eme VII 1.3}
whose proof works directly in our case in view of Proposition \ref{propweylinvariant}.

To prove the second part, we may assume without loss of generality that $\rho$ is unitary.
	
The irreducibility of $(\rho\times\rho)_\omega$ is deduced from Theorem \ref{thm: intermain}
by a standard result.
(See Proposition 6.3 by Savin in \url{https://doi.org/10.48550/arXiv.1706.05145} for the covering case.)
Alternatively, we can use the argument of \cite{MR3573961}*{Lemma A.5} (which is special for $\GL$).

The existence of $s>0$ such that $(\rho\times\rho\nu^s)_\omega$ (or equivalently,
$\pi_s:=(\rho\nu^{-s/2}\times\rho\nu^{s/2})_\omega$) is reducible also follows from Theorem \ref{thm: intermain}.
Indeed, assume on the contrary that $\pi_s$ is irreducible for all $s>0$.
Then, by a standard argument (cf.\ \cite{MR584084}*{\S 1.11}) $\pi_s$ would be
unitarizable for all $s>0$, in contradiction to the fact that for $s$ large,
the matrix coefficients of $\pi_s$ are not bounded.

It remains to show the uniqueness of $s_\rho$.
(Clearly, $-s_\rho$ is then the unique negative real number $s$ such that $(\rho\times\rho\nu^s)_\omega$ is reducible.)
We will return to this point in the next section.

For the time being, by abuse of notation we write
\[
\nu_\rho=\nu^s
\]
for \emph{some} $s>0$ such that  $(\rho\times\rho\nu^s)_\omega$ is reducible.

\begin{remark}
Let $\pi$ be the irreducible, essentially square-integrable representation of $\bas{G}_r$ corresponding
to $\rho$ under the metaplectic correspondence introduced in \cite{MR876160}.
By Bernstein--Zelevinsky theory in the linear case, $\pi$ corresponds to a segment, say of length $m$.
One can prove that $s_\rho=\frac{m}{n}$. Details will be given elsewhere.
\end{remark}

\section{Segments, multisegments, standard modules and classification} \label{sec: seg}

In this section, we complete the analogue of the classification scheme of Bernstein--Zelevinsky
for the metaplectic groups $G_r$, $r\ge0$.
We have already developed the necessary ingredients in the previous sections.
The rest of the argument follows \cite{MR3573961}*{Appendix A} (and the references therein), so we will only sketch it.
The argument of [ibid.] uses among other things Casselman's criterion for square-integrability (cf.\ \cite{MR3151110}*{Theorem 3.4}) and repeated application of Corollary \ref{cor: irredcrit}
and Lemma \ref{lem: wcuspmult1} (see Corollary \ref{cor: irredcritsimple} in the simplest case).

\subsection{Segments}

The following definition is motivated by the linear case.

\begin{definition}
A \emph{segment} is a set of the form
\[
[a,b]_\rho=\{\rho\nu_\rho^a,\rho\nu_\rho^{a+1},\dots,\rho\nu_\rho^{b}\}
\]
where $\rho$ is an irreducible cuspidal representation of $G_{r_0}$, $r_0>0$ and $a,b$ are integers such that $a\leq b$.
\end{definition}

Let $\Delta=[a,b]_\rho$ be a segment.
Define $\deg \Delta=r_0(b-a+1)$ and $\Delta^{\vee}=[-b,-a]_{\rho^{\vee}}$.

For any compatible character $\omega$ of $Z(G_{\deg\Delta})$ let
\[
\Pi(\Delta)_\omega=(\rho\nu_\rho^a\times\rho\nu_\rho^{a+1}\times\dots\times\rho\nu_\rho^{b})_\omega,\ \overleftarrow{\Pi}(\Delta)_\omega=(\rho\nu_\rho^{b}\times\rho\nu_\rho^{b-1}\times\dots\times\rho\nu_\rho^a)_\omega\in\Reps_\omega(G_{\deg\Delta}).
\]

As in the linear case we have

\begin{proposition}
The representations $Z(\Delta)_\omega:=\soc(\Pi(\Delta)_\omega)$ and $L(\Delta)_\omega:=\cos(\Pi(\Delta)_\omega)$
are irreducible and occur with multiplicity one in $\JH(\Pi(\Delta)_\omega)$.
They satisfy the following properties.
\[
L(\Delta)_\omega=\soc(\overleftarrow{\Pi}(\Delta)_\omega)\text{ and }Z(\Delta)_\omega=\cos(\overleftarrow{\Pi}(\Delta)_\omega).
\]
\[
\Pi(\Delta)_\omega^{\vee}=\overleftarrow{\Pi}(\Delta^{\vee})_{\omega^{-1}},\
Z(\Delta)_\omega^{\vee}=Z(\Delta^{\vee})_{\omega^{-1}}\text{ and }
L(\Delta)_\omega^{\vee}=L(\Delta^{\vee})_{\omega^{-1}}.
\]
\begin{align*}
r_{(r_0,\dots,r_0)}(Z(\Delta)_\omega)&=(\rho\nu_\rho^a\otimes\rho\nu_\rho^{a+1}\otimes\dots\otimes\rho\nu_\rho^{b})_\omega\\
r_{(r_0,\dots,r_0)}(L(\Delta)_\omega)&=(\rho\nu_\rho^{b}\otimes\rho\nu_\rho^{b-1}\otimes\dots\otimes\rho\nu_\rho^a)_\omega.
\end{align*}
For any integer $0\leq s\leq\deg\Delta$ we have
\begin{align*}
r_{(s,\deg\Delta-s)}(Z(\Delta)_\omega)&=
\begin{cases} (Z([a,a+s_0-1]_\rho)\otimes Z([a+s_0,b]_\rho))_\omega\quad &\text{if}\ s=s_0r_0, \\
	0 \quad & \text{if}\ r_0\ndiv s.
\end{cases}\\
r_{(s,\deg\Delta-s)}(L(\Delta)_\omega)&=
\begin{cases} (L([b-s_0+1,b]_\rho)\otimes L([a,b-s_0]_\rho))_\omega\quad &\text{if}\ s=s_0r_0, \\
	0 \quad & \text{if}\ r_0\ndiv s.
\end{cases}
\end{align*}
Moreover, $L(\Delta)_\omega$ is essentially square integrable.
\end{proposition}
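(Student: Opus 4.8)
The plan is to run Zelevinsky's argument for segment representations in the form given by M\'{\i}nguez--S\'echerre and \cite{MR3573961}*{Appendix A}, substituting Proposition~\ref{prop: BZPtwocuspidals} for the linear linkage principle and carrying the compatible central characters along via Proposition~\ref{propcomatibleMTPparablicJacquet} and Corollary~\ref{cor: associativeMTP}. Put $\rho_i=\rho\nu_\rho^i$ and $m=b-a+1$, so $\Pi(\Delta)_\omega=(\rho_a\times\dots\times\rho_b)_\omega$ and $\overleftarrow{\Pi}(\Delta)_\omega=(\rho_b\times\dots\times\rho_a)_\omega$. Since each $\rho_i$ is cuspidal, all its proper Jacquet modules vanish, so the hypothesis of Lemma~\ref{lem: wcuspmult1} holds vacuously for the datum $(\rho_a\otimes\dots\otimes\rho_b)_\omega$ (and also, applied to the reversed datum, for $\overleftarrow{\Pi}(\Delta)_\omega$). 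This already yields that $\Pi(\Delta)_\omega$ and $\overleftarrow{\Pi}(\Delta)_\omega$ each have a unique irreducible subrepresentation and a unique irreducible quotient, each of multiplicity one in the common series $\JH(\Pi(\Delta)_\omega)=\JH(\overleftarrow{\Pi}(\Delta)_\omega)$; in particular $Z(\Delta)_\omega$ and $L(\Delta)_\omega$ are irreducible of multiplicity one. The identifications $L(\Delta)_\omega=\soc(\overleftarrow{\Pi}(\Delta)_\omega)$, $Z(\Delta)_\omega=\cos(\overleftarrow{\Pi}(\Delta)_\omega)$ follow exactly as in the linear case, by exhibiting, via the geometric lemma (Proposition~\ref{propgeolemma}), Proposition~\ref{propweylinvariant}, and Frobenius reciprocity (Proposition~\ref{prop: BZprod} part~\ref{part: BZfrobrec}), a nonzero intertwining operator between $\Pi(\Delta)_\omega$ and $\overleftarrow{\Pi}(\Delta)_\omega$, and then using the multiplicity-one statements. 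For the contragredient relations I would first note $s_{\rho^{\vee}}=s_\rho$, hence $\nu_{\rho^{\vee}}=\nu_\rho$ and $\rho_i^{\vee}=\rho^{\vee}\nu_\rho^{-i}$, by dualizing the reducibility assertion of Proposition~\ref{prop: BZPtwocuspidals}: $(\rho\times\rho\nu^s)_\omega$ is reducible iff $(\rho^{\vee}\times\rho^{\vee}\nu^{-s})_{\omega^{-1}}$ is. Then \eqref{eq: contrBZ} gives $\Pi(\Delta)_\omega^{\vee}=(\rho^{\vee}\nu_\rho^{-a}\times\dots\times\rho^{\vee}\nu_\rho^{-b})_{\omega^{-1}}=\overleftarrow{\Pi}(\Delta^{\vee})_{\omega^{-1}}$, and dualizing the socle/cosocle relations yields $Z(\Delta)_\omega^{\vee}=Z(\Delta^{\vee})_{\omega^{-1}}$ and $L(\Delta)_\omega^{\vee}=L(\Delta^{\vee})_{\omega^{-1}}$.

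Next the Jacquet-module formulas. Writing $\beta_0=(r_0,\dots,r_0)$, the geometric lemma together with Proposition~\ref{propweylinvariant} shows that $\JH(r_{\beta_0}(\Pi(\Delta)_\omega))$ consists of the $m!$ reorderings $(\rho_{\varsigma(a)}\otimes\dots\otimes\rho_{\varsigma(b)})_\omega$, each occurring once; Frobenius reciprocity (Proposition~\ref{prop: BZprod} part~\ref{part: BZfrobrec}) shows $(\rho_a\otimes\dots\otimes\rho_b)_\omega$ is a quotient of $r_{\beta_0}(Z(\Delta)_\omega)$, and the exchange argument, which rests precisely on the fact from Proposition~\ref{prop: BZPtwocuspidals} that $\rho_i\times\rho_j$ is reducible only for $|i-j|=1$, rules out every other reordering; hence $r_{\beta_0}(Z(\Delta)_\omega)=(\rho_a\otimes\dots\otimes\rho_b)_\omega$, and the formula for $L(\Delta)_\omega$ follows symmetrically (or by applying the contragredient). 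The two-block formulas then come from transitivity of the Jacquet functor: $r_{(s,\deg\Delta-s)}(Z(\Delta)_\omega)$ vanishes unless $r_0\mid s$, because $\WCusp(Z(\Delta)_\omega)=\sum_{i=a}^b[\rho_i]$ with each $[\rho_i]$ of degree $r_0$; and for $s=s_0r_0$ it is forced, by its further $\beta_0$-Jacquet module and the multiplicity-one characterization of the segment representations attached to $[a,a+s_0-1]_\rho$ and $[a+s_0,b]_\rho$, to equal $(Z([a,a+s_0-1]_\rho)\otimes Z([a+s_0,b]_\rho))_\omega$; likewise for $L(\Delta)_\omega$.

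For the final assertion, that $L(\Delta)_\omega$ is essentially square-integrable, I would invoke Casselman's criterion \cite{MR3151110}*{Theorem 3.4}. After twisting $L(\Delta)_\omega$ by the real unramified character $\nu^{-s_\rho(a+b)/2}$ to normalize its central exponent, it suffices to check that along every proper standard parabolic the exponents of the Jacquet module lie strictly in the open positive cone spanned by the relevant simple roots. By the formulas just proved, the only nonzero $r_{(s,\deg\Delta-s)}(L(\Delta)_\omega)$ with $0<s<\deg\Delta$ occur for $s=s_0r_0$ and equal $(L([b-s_0+1,b]_\rho)\otimes L([a,b-s_0]_\rho))_\omega$, whose two central exponents differ by $\tfrac{m}{2}s_\rho>0$; equivalently, reading $\overleftarrow{\Pi}(\Delta)_\omega$ block by block, the inducing exponents are the strictly decreasing string $b>b-1>\dots>a$, which after centering lies in the open positive cone --- precisely the Casselman datum of a generalized Steinberg representation. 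Transitivity of Jacquet functors propagates this strict inequality to every proper standard parabolic, so $L(\Delta)_\omega$ is essentially square-integrable.

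The step I expect to be the main obstacle is the Jacquet-module computation for $Z(\Delta)_\omega$ and $L(\Delta)_\omega$ --- the exchange argument pinning $r_{\beta_0}(Z(\Delta)_\omega)$ down to the single term $(\rho_a\otimes\dots\otimes\rho_b)_\omega$ --- together with the bookkeeping needed to carry the compatible central characters $\omega$ coherently through every application of parabolic induction, restriction, the metaplectic tensor product, and Weyl conjugation. Once those are in place, the contragredient relations and the deduction of essential square-integrability from Casselman's criterion are essentially formal.
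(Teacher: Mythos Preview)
Your proposal is essentially correct and follows the same route as the paper, which simply defers to \cite{MR3573961}*{Appendix A} (the paper gives no detailed proof beyond ``as in the linear case'').

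One point deserves care, however. You write that the exchange argument ``rests precisely on the fact from Proposition~\ref{prop: BZPtwocuspidals} that $\rho_i\times\rho_j$ is reducible only for $|i-j|=1$''. But at this stage the uniqueness of $s_\rho$ has \emph{not} yet been established --- the paper explicitly says so, and proves uniqueness only \emph{after} this proposition (using it, via the analogue of \cite{MR3573961}*{Lemma A.3}). So you cannot yet assert that $(\rho_i\times\rho_j)_\omega$ is irreducible for $|i-j|\ge2$; all you know is that $(\rho\times\rho\nu_\rho)_\omega$ is reducible for the particular $\nu_\rho$ that was fixed, and that $(\rho\times\rho)_\omega$ is irreducible. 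The argument in \cite{MR3573961}*{Lemma A.2} avoids this circularity by proceeding inductively on the length of $\Delta$: one shows directly, using the geometric lemma and the inductive hypothesis for the shorter segment $[a,b-1]_\rho$, that $r_{((m-1)r_0,r_0)}(Z(\Delta)_\omega)=(Z([a,b-1]_\rho)\otimes\rho_b)_\omega$, and then transitivity of Jacquet functors gives the full $r_{\beta_0}$ formula. This inductive route needs only the reducibility of $(\rho_{b-1}\times\rho_b)_\omega$ at each step, never the irreducibility of $(\rho_i\times\rho_j)_\omega$ for $|i-j|\ge2$. Once you reorganize your Jacquet-module computation this way, the rest of your sketch (contragredient, two-block formulas, Casselman's criterion) goes through unchanged.
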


Recall that hitherto, the character $\nu_\rho=\nu^{s_\rho}$ was chosen so that $(\rho\times\rho\nu_\rho)_\omega$
is reducible. At this point we can follow the argument of \cite{MR3573961}*{Lemma A.3}
to show the uniqueness of $s_\rho$, completing the proof of Proposition \ref{prop: BZPtwocuspidals}.

\begin{definition}
We say that two segments $\Delta_1,\Delta_2$ are weakly equivalent if we can write
$\Delta_i=[a,b]_{\rho_i}$, $i=1,2$ with $\rho_1$ weakly equivalent to $\rho_2$.
\end{definition}

The terminology is justified by the fact that the weak equivalence classes of $Z(\Delta)_\omega$ and $L(\Delta)_\omega$
depend only on the weak equivalence class of $\Delta$ (and not on $\Delta$ itself or on $\omega$).

Let $\Delta_1$ and $\Delta_2$ be two segments.
As in the ordinary case, we say that $\Delta_1$ and $\Delta_2$ are \emph{linked} if $\Delta_1\cup\Delta_2$
forms a segment and neither $\Delta_1\subset\Delta_2$ nor $\Delta_2\subset\Delta_1$.
In this case, we may write $\Delta_1=[a_1,b_1]_\rho$ and $\Delta_2=[a_2,b_2]_\rho$ for an irreducible cuspidal representation $\rho$ of $G_{r_0}$
and integers $b_1\geq a_1$, $b_2\geq a_2$ and moreover, either
$a_2>a_1$, $b_2>b_1$ and $b_1+1\geq a_2$ (in which case we say that $\Delta_1$ \emph{precedes} $\Delta_2$) or the symmetric condition.

The conditions above do not depend only on the weak equivalence class of $\Delta_1$ and $\Delta_2$.
We will say that $\Delta_1$ and $\Delta_2$ are \emph{weakly linked} if $\Delta'_1$ and $\Delta_2$ are linked
for some $\Delta'_1$ in the weak equivalence class of $\Delta_1$.
This notion will be more useful than linking itself.
Similarly, we say that $\Delta_1$ weakly precedes $\Delta_2$ if $\Delta'_1$ precedes $\Delta_2$
for some $\Delta'_1$ in the weak equivalence class of $\Delta_1$.
Of course, $\Delta_1$ and $\Delta_2$ are weakly linked if and only if either $\Delta_1$ weakly precedes $\Delta_2$
or $\Delta_2$ weakly precedes $\Delta_1$.

The crucial ingredient for the classification theorem is the following.

\begin{proposition} \label{prop: 2seg}
Let $\Delta_1$ and $\Delta_2$ be two segments and let $\omega$ be a compatible character of $Z(G_r)$
where $r=\deg\Delta_1+\deg\Delta_2$. Let
\[
\Pi=(Z(\Delta_1)\times Z(\Delta_2))_\omega,\ \ \Lambda=(L(\Delta_1)\times L(\Delta_2))_\omega.
\]
Then, $\Pi$ and $\Lambda$ are either both of length 2 or both irreducible, depending
on whether or not $\Delta_1$ and $\Delta_2$ are weakly linked.

Moreover, if $\Delta_1$ weakly precedes $\Delta_2$, then
\[
\soc\Pi\simeq (Z(\Delta_1\cap\Delta_2)\times Z(\Delta_1\cup\Delta_2))_\omega,\ \ \
\cos\Lambda\simeq (L(\Delta_1\cap\Delta_2)\times L(\Delta_1\cup\Delta_2))_\omega.
\]
\end{proposition}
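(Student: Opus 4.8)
The plan is to reduce Proposition \ref{prop: 2seg} to the well-understood linear case via the metaplectic tensor product, and then upgrade the resulting assertions about Jordan--Hölder length to statements about socle and cosocle. First I would note that, by Proposition \ref{propcomatibleMTPparablicJacquet} and the compatibility of the metaplectic tensor product with parabolic induction and Jacquet functors, together with the description of $r_{(s,\deg\Delta-s)}$ of $Z(\Delta)_\omega$ and $L(\Delta)_\omega$ in the preceding proposition, the analysis of $\Pi$ and $\Lambda$ is governed entirely by the ``support'' combinatorics of the underlying segments $\Delta_1,\Delta_2$, exactly as in \cite{MR584084} and \cite{MR3178433}. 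Concretely, if $\Delta_1$ and $\Delta_2$ are not weakly linked, then for \emph{every} proper $\beta$ the cuspidal supports appearing in $r_\beta(Z(\Delta_1)\times Z(\Delta_2))_\omega$ force the irreducibility criterion of Corollary \ref{cor: irredcritsimple} (or its refinement Lemma \ref{lem: wcuspmult1}) to apply, so $\Pi$ is irreducible; dually for $\Lambda$. This is the case where the two segments, after the unavoidable twisting within a weak equivalence class, are ``disjoint enough'' in the sense that no cuspidal line is shared in a linked way.

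Next, assume $\Delta_1$ weakly precedes $\Delta_2$. Replacing $\Delta_1$ by a twist in its weak equivalence class (which by the remark following the previous proposition does not change the weak equivalence class of $Z(\Delta_i)_\omega$, $L(\Delta_i)_\omega$, only possibly the central character bookkeeping), we may assume $\Delta_1$ actually precedes $\Delta_2$; the central character is then re-matched to $\omega$ at the end using Proposition \ref{prop: lind} part \ref{part: twistw} and Remark \ref{rem: realtwist}. Now I would compute $r_{(r_0,\dots,r_0)}(\Pi)$ using Proposition \ref{propgeolemma} (the geometric lemma) and the known Jacquet modules of $Z(\Delta_i)_\omega$; the outcome is, after semisimplification, the same multiset as in the linear case, and in particular the metaplectic tensor products $(Z(\Delta_1\cap\Delta_2)\otimes Z(\Delta_1\cup\Delta_2))_\omega$ and $(Z(\Delta_1)\otimes Z(\Delta_2))_\omega$ each occur with multiplicity one. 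Frobenius reciprocity \eqref{eq: BZfrob1} together with the fact that $(Z(\Delta_1)\otimes Z(\Delta_2))_\omega$ embeds in $r_{(r_0,\dots,r_0)}(\Pi)$ shows $\soc\Pi$ contains $(Z(\Delta_1)\times Z(\Delta_2))_\omega$-isotypic data; but the key point is to identify which irreducible sits in the socle. Here I would run the standard Zelevinsky argument: the representation $(Z(\Delta_1\cap\Delta_2)\times Z(\Delta_1\cup\Delta_2))_\omega$ is irreducible (the segments $\Delta_1\cap\Delta_2$ and $\Delta_1\cup\Delta_2$ are \emph{not} weakly linked, by the first part of this very proposition applied inductively), and one checks via the geometric lemma and Corollary \ref{cor: irredcrit} that it embeds into $\Pi$ and is not equal to $(Z(\Delta_1)\times Z(\Delta_2))_\omega$'s ``generic'' constituent, hence it is the socle; length $2$ then follows because the two candidate constituents exhaust $\JH(\Pi)$ by a cuspidal-support count. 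The cosocle statement for $\Lambda$ is the mirror image, obtained by applying the contragredient (using Proposition \ref{prop: BZprod} part \ref{part: BZcontra} and $Z(\Delta)_\omega^\vee=Z(\Delta^\vee)_{\omega^{-1}}$, $L(\Delta)_\omega^\vee=L(\Delta^\vee)_{\omega^{-1}}$) and reversing the order, or equivalently by the $w_0$-symmetry \eqref{eq: w0conjind}.

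I would organize the argument by induction on $\deg\Delta_1+\deg\Delta_2$, so that when I invoke irreducibility of $(Z(\Delta_1\cap\Delta_2)\times Z(\Delta_1\cup\Delta_2))_\omega$ and the analogous $L$-version, I am allowed to use the ``not weakly linked $\Rightarrow$ irreducible'' half of the proposition for strictly smaller total degree (note $\deg(\Delta_1\cap\Delta_2)+\deg(\Delta_1\cup\Delta_2)=\deg\Delta_1+\deg\Delta_2$, so one must be slightly careful and instead use that $\Delta_1\cap\Delta_2$ is a \emph{shorter} segment than $\Delta_2$ and $\Delta_1\cup\Delta_2$ is not weakly linked to it because it contains it — this is where the precise case analysis of linking from the linear theory is copied verbatim). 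The main obstacle, and the only place where genuinely new input beyond \cite{MR3573961}*{Appendix A} is needed, is bookkeeping the central characters: every time one passes through a metaplectic tensor product one must verify the compatibility relation \eqref{eqomegamulpiomega} is preserved, and that twisting a segment within its weak equivalence class (which is forced on us because $\nu_\rho=\nu^{s_\rho}$ need not have integral exponent) can always be absorbed into a twist by a character of $Z(G_r)/Z_{r,\sml}$ without affecting the socle/cosocle identification. Once one checks — using Remark \ref{rem: realtwist}, Remark \ref{rem: compat}, and Proposition \ref{prop: lind} part \ref{part: twistw} — that all the functors in play ($i_\beta$, $r_\beta$, $\tnsr$, contragredient) intertwine compatibly with these twists, the combinatorial skeleton of the proof is identical to the linear case and I would simply cite \cite{MR3573961}*{Appendix A} and \cite{MR3178433} for the remaining details.
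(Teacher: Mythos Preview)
Your proposal is correct and follows essentially the same route as the paper, which also defers the combinatorial argument to \cite{MR3573961}*{Appendix A} (Lemmas A.6, A.7, A.9) and \cite{MR584084}*{\S2, \S4}, after observing that the metaplectic tensor product, parabolic induction, Jacquet functors, the geometric lemma, and the irreducibility criteria all behave exactly as in the linear case, so the only new bookkeeping is the compatible central character $\omega$. One small point: in the unlinked case the paper, like \cite{MR3573961}, does not apply Corollary~\ref{cor: irredcritsimple} in one shot (unlinked segments may share cuspidal support, e.g.\ in the containment case) but rather runs an induction on $\deg\Delta_1+\deg\Delta_2$ with the base cases $(Z([0,1]_\rho)\times\rho)_\omega$, $(Z([0,1]_\rho)\times\rho\nu_\rho)_{\omega'}$ and their $L$-analogues treated separately --- your final citation to \cite{MR3573961}*{Appendix A} covers this, but your first paragraph slightly overstates what the irreducibility criterion alone buys.
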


For the unlinked case cf.\ \cite{MR3573961}*{Lemma A.7} which is proved by induction on $\deg\Delta_1+\deg\Delta_2$.
The special case of the irreducibility of the representations
\[
(Z([0,1]_\rho)\times \rho)_\omega,\ (Z([0,1]_\rho)\times \rho\nu_\rho)_{\omega'},\
(L([0,1]_\rho)\times \rho)_\omega,\ (L([0,1]_\rho)\times \rho\nu_\rho)_{\omega'},
\]
where $\rho$ is an irreducible cuspidal representation, is considered separately (cf.\ \cite{MR3573961}*{Lemma A.6}).

For the linked case see \cite{MR3573961}*{Lemma A.9} which relies on \cite{MR584084}*{\S 2 and \S 4}.

\subsection{Multisegments and classification}

By definition, a multisegment is a multiset of segments.

As usual, we view the set of multisegments as an ordered monoid.
Thus, we write a typical multisegment as
\[
\m=\sum_{i=1}^k\Delta_i
\]
where $\Delta_i$ is a segment.
We write $\deg \m=\sum_{i=1}^k\deg\Delta_i$ and
\[
\m^{\vee}=\Delta_1^\vee+\dots+\Delta_k^\vee.
\]

We say that two multisegments $\m$ and $\m'$ are weakly equivalent, denoted $\m\sim\m'$, if
we can write $\m=\Delta_1+\dots+\Delta_k$ and $\m'=\Delta'_1+\dots+\Delta'_k$
where $\Delta_i$ and $\Delta'_i$ are weakly equivalent segments for every $i$.

Clearly, the set of weak equivalence classes of multisegments is in bijection with the
set of multisets of weak equivalence classes of segments.

We can now state the classification theorem, which is analogous to the linear case
\cite{MR584084}*{Theorem 6.1}.

\begin{theorem}\label{thm: main}
Let $\m=\Delta_1+\dots+\Delta_k$ be a multisegment and $\omega$ a compatible character of $Z(G_{\deg\m})$.
Assume that the $\Delta_i$'s are enumerated such that $\Delta_i$ does not weakly precede $\Delta_j$ for every $1\leq i<j\leq k$.
(This is always possible.)

Then, up to isomorphism, the representations
\[
\zeta(\m)_\omega=(Z(\Delta_1)\times \dots\times Z (\Delta_k))_\omega\text{ and }
\lambda(\m)_\omega=(L(\Delta_1)\times \dots\times L (\Delta_k))_\omega
\]
depend only on $\m$ and $\omega$ and not on the enumeration of the $\Delta_i$'s subject to the above constraint.

Moreover, the representation
\[
Z(\m)_\omega=\soc(\zeta(\m)_\omega),\ \ \ (\text{resp., }L(\m)_\omega:=\cos(\lambda(\m)_\omega))
\]
is irreducible and occurs with multiplicity one in $\JH(\zeta(\m)_\omega)$ (resp., $\JH(\lambda(\m)_\omega)$).

We have $Z(\m)_\omega^{\vee}\simeq Z(\m^{\vee})_{\omega^{-1}}$ and $L(\m)_\omega^{\vee}\simeq L(\m^{\vee})_{\omega^{-1}}$.	

Let $\m'$ be a multisegment and $\omega'$ a compatible character of $Z(G_{\deg\m'})$.
Then,
\[
Z(\m)_\omega\simeq Z(\m')_{\omega'}\iff L(\m)_\omega\simeq L(\m')_{\omega'}\iff
\m\sim\m'\text{ and }\omega=\omega'.
\]

Finally, any genuine irreducible representation of $G_r$ with central character $\omega$
is of the form $Z(\m)_\omega$ (resp., $L(\m)_\omega$)
for a multisegment $\m$ such that $\omega$ is compatible with $\m$.

In particular, this gives rise to a bijection between the weak equivalence classes of irreducible
representations of $G_r$, $r\ge0$ and weak equivalence classes of multisegments
(i.e., multisets of weak equivalence classes of segments).
\end{theorem}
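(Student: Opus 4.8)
The plan is to follow Zelevinsky's argument in \cite{MR584084}, in the streamlined and extended form due to M\'{\i}nguez--S\'{e}cherre \cite{MR3178433} and recorded in \cite{MR3573961}*{Appendix A}, the only essential addition being the systematic bookkeeping of central characters through the notion of a compatible character (Remark \ref{rem: compat}). One works with a fixed \emph{weak cuspidal support}: fix a multiset $\mathfrak{S}$ of weak equivalence classes of irreducible cuspidal representations, say of total degree $r$, and a compatible character $\omega$ of $Z(G_r)$; the theorem is then proved for all multisegments $\m$ with $\WCusp(\m)=\mathfrak{S}$ simultaneously, by induction on $\deg\m$. Since by Proposition \ref{propcuspidalsupport} every genuine irreducible representation of $G_r$ has a well-defined weak cuspidal support, it is enough to produce, within each such class, a bijection between the irreducible representations with central character $\omega$ and the multisegments $\m$ with $\WCusp(\m)=\mathfrak{S}$ (and $\omega$ compatible with $\m$).

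First I would check that $\zeta(\m)_\omega$ and $\lambda(\m)_\omega$ are well defined. Any two enumerations of the segments of $\m$ satisfying the constraint ``$\Delta_i$ does not weakly precede $\Delta_j$ for $i<j$'' are linked by a chain of transpositions of adjacent segments $\Delta_i,\Delta_{i+1}$ neither of which weakly precedes the other; such a pair is not weakly linked, so by Proposition \ref{prop: 2seg} the product $(Z(\Delta_i)\times Z(\Delta_{i+1}))_\omega$ is irreducible, hence symmetric in the two factors by Proposition \ref{prop: BZprod} part \ref{part: JHjac}, and associativity \eqref{eq: BZassoc} then yields the claimed independence; the argument for $\lambda(\m)_\omega$ is identical. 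Next, the irreducibility of $Z(\m)_\omega=\soc(\zeta(\m)_\omega)$ and $L(\m)_\omega=\cos(\lambda(\m)_\omega)$ and their multiplicity one in the respective Jordan--H\"older series is proved inductively: the base cases are the one-segment case and the two-segment case of Proposition \ref{prop: 2seg}, and the inductive step computes the Jacquet modules of $\zeta(\m)_\omega$ along maximal parabolics by the geometric lemma (Proposition \ref{propgeolemma}) and Lemma \ref{lem: wcuspmult1}, and feeds them into the irreducibility criterion of Corollary \ref{cor: irredcrit} (in the simplest instances, Corollary \ref{cor: irredcritsimple}); the essential square-integrability of the segment representations $L(\Delta)_\omega$ used along the way rests on Casselman's criterion in the covering case \cite{MR3151110}*{Theorem 3.4}. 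The contragredient formulas $Z(\m)_\omega^\vee\simeq Z(\m^\vee)_{\omega^{-1}}$ and $L(\m)_\omega^\vee\simeq L(\m^\vee)_{\omega^{-1}}$ follow from \eqref{eq: contrBZ} and the segment-level formulas, once one notes that $\Delta\mapsto\Delta^\vee$ together with reversal of the enumeration preserves the ordering constraint.

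For the bijectivity, the key output of the same induction is that the transition between the ``standard'' classes $\{\zeta(\m)_\omega\}$ and the irreducible classes $\{Z(\m)_\omega\}$ is unitriangular with respect to a suitable partial order on multisegments (the Zelevinsky order), so that $\m$ is recovered from $\zeta(\m)_\omega$, hence from its socle $Z(\m)_\omega$; the character $\omega$ is recovered simply as the central character. This gives injectivity of $\m\mapsto Z(\m)_\omega$ up to weak equivalence of $\m$ and equality of $\omega$, and the analogous statement for $L$. Exhaustion is the dual assertion: given $\pi\in\Irr_\epsilon(G_r)$ with central character $\omega$, Proposition \ref{propcuspidalsupport} realizes $\pi$ inside a parabolic induction of an irreducible cuspidal representation $\rho$ of a Levi, one organizes the weak cuspidal support of $\rho$ into a multisegment $\m$ with the $\Delta_i$'s enumerated subject to the constraint, and then the Jacquet-module analysis above together with Corollary \ref{cor: irredcrit} shows $\pi\hookrightarrow\zeta(\m)_\omega$, whence $\pi\simeq Z(\m)_\omega$; the case of $L$ follows by applying this to $\pi^\vee$ and using the contragredient formulas.

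The main obstacle is exactly this bijectivity package -- proving the unitriangularity of the multiplicity matrix with respect to the Zelevinsky order, and the matching dual statement that each irreducible embeds into the standard module of the correct multisegment. In the linear case this is the technical core of \cite{MR584084} (in particular of \cite{MR584084}*{\S 2 and \S 4}, on which \cite{MR3573961}*{Appendix A} relies); in the covering setting the arguments transcribe with no new ideas once the geometric lemma, the metaplectic tensor product and the irreducibility criteria are in hand, as developed in \S\ref{subsectiongeolemma}--\S\ref{subsectioncuspidalinduction}. The one genuinely new bookkeeping point is that every statement must be phrased with weak equivalence classes and compatible characters in place of honest isomorphism classes, as in Remarks \ref{rem: realtwist} and \ref{rem: compat}, and one must keep track of how $\omega$ restricts to the various Levi subgroups along the induction; this is routine but pervasive.
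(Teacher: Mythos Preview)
Your proposal is correct and follows essentially the same approach as the paper: both defer to the Zelevinsky/M\'{\i}nguez--S\'{e}cherre/Lapid--M\'{\i}nguez framework, reducing everything to the two-segment case (Proposition \ref{prop: 2seg}), the geometric lemma, and the irreducibility criteria, with the only new feature being the bookkeeping of compatible central characters. The paper's proof is in fact terser than yours---it simply cites \cite{MR3049700}*{\S5.2--5.7} and \cite{MR3573961}*{Lemma A.7} for the various parts and omits the details---so your outline is if anything a more fleshed-out version of the same argument.
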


The proof relies on the special case of two segments (Proposition \ref{prop: 2seg}).

The first two parts are proved as in \cite{MR3049700}*{\S 5.2 and \S 5.3}.
An important technical step is to show that if the $\Delta_i$'s are pairwise unlinked, then the induced representations
$\zeta(\m)_\omega$ and $\lambda(\m)_\omega$ are irreducible.
This is proved as in \cite{MR3573961}*{Lemma A.7} by induction on $k$.

The proof of uniqueness and exhaustion parts follows \cite{MR3049700}*{\S 5.5, 5.6, 5.7}.
We omit the details.

\begin{remark}
When studying covering groups of classical groups one encounters a different covering of $\GL_r(F)$,
namely the pullback of Matsumoto's covering for the symplectic group of rank $r$ via the embedding of $\GL_r$ as a Levi subgroup.\footnote{In fact,
the resulting class in $H^2(\GL_r(F),\mu_n)$ is twice the class of $\sigma^{(0)}$ plus the class of $(\det g_1,\det g_2)_n$
\cite{1902.00880}*{Proposition 20}.}
This covering is simpler than the Kazhdan--Patterson covering (coming from the embedding of $\GL_r$ in $\SL_{r+1}$).
In particular, different blocks in the preimage of a Levi subgroup commute.
Therefore, the tensor product is the ordinary one and the classification theorem and its proof are as in the linear case
\cite{MR3573961}*{Appendix A}. We omit the details.
\end{remark}

\def\cprime{$'$} 
\begin{bibdiv}
\begin{biblist}

\bib{MR3151110}{article}{
      author={Ban, Dubravka},
      author={Jantzen, Chris},
       title={The {L}anglands quotient theorem for finite central extensions of
  {$p$}-adic groups},
        date={2013},
        ISSN={0017-095X},
     journal={Glas. Mat. Ser. III},
      volume={48(68)},
      number={2},
       pages={313\ndash 334},
         url={https://doi.org/10.3336/gm.48.2.07},
      review={\MR{3151110}},
}

\bib{MR3516189}{article}{
      author={Ban, Dubravka},
      author={Jantzen, Chris},
       title={The {L}anglands quotient theorem for finite central extensions of
  {$p$}-adic groups {II}: intertwining operators and duality},
        date={2016},
        ISSN={0017-095X},
     journal={Glas. Mat. Ser. III},
      volume={51(71)},
      number={1},
       pages={153\ndash 163},
         url={https://doi.org/10.3336/gm.51.1.09},
      review={\MR{3516189}},
}

\bib{MR1670203}{article}{
      author={Banks, William~D.},
      author={Levy, Jason},
      author={Sepanski, Mark~R.},
       title={Block-compatible metaplectic cocycles},
        date={1999},
        ISSN={0075-4102},
     journal={J. Reine Angew. Math.},
      volume={507},
       pages={131\ndash 163},
         url={http://dx.doi.org/10.1515/crll.1999.011},
      review={\MR{1670203 (99m:20100)}},
}

\bib{MR0244257}{article}{
      author={Bass, H.},
      author={Milnor, J.},
      author={Serre, J.-P.},
       title={Solution of the congruence subgroup problem for {${\rm
  SL}_{n}\,(n\geq 3)$} and {${\rm Sp}_{2n}\,(n\geq 2)$}},
        date={1967},
        ISSN={0073-8301},
     journal={Inst. Hautes \'Etudes Sci. Publ. Math.},
      number={33},
       pages={59\ndash 137},
      review={\MR{0244257 (39 \#5574)}},
}

\bib{MR579172}{article}{
      author={Bernstein, I.~N.},
      author={Zelevinsky, A.~V.},
       title={Induced representations of reductive {${\germ p}$}-adic groups.
  {I}},
        date={1977},
        ISSN={0012-9593},
     journal={Ann. Sci. \'{E}cole Norm. Sup. (4)},
      volume={10},
      number={4},
       pages={441\ndash 472},
         url={http://www.numdam.org/item?id=ASENS_1977_4_10_4_441_0},
      review={\MR{579172}},
}

\bib{MR874050}{article}{
      author={Bernstein, J.},
      author={Deligne, P.},
      author={Kazhdan, D.},
       title={Trace {P}aley-{W}iener theorem for reductive {$p$}-adic groups},
        date={1986},
        ISSN={0021-7670},
     journal={J. Analyse Math.},
      volume={47},
       pages={180\ndash 192},
         url={https://doi.org/10.1007/BF02792538},
      review={\MR{874050}},
}

\bib{MR0425030}{article}{
      author={Bern\v{s}te\u{\i}n, I.~N.},
      author={Zelevinski\u{\i}, A.~V.},
       title={Representations of the group {$GL(n,F),$} where {$F$} is a local
  non-{A}rchimedean field},
        date={1976},
        ISSN={0042-1316},
     journal={Uspehi Mat. Nauk},
      volume={31},
      number={3(189)},
       pages={5\ndash 70},
      review={\MR{0425030}},
}

\bib{MR1896177}{article}{
      author={Brylinski, Jean-Luc},
      author={Deligne, Pierre},
       title={Central extensions of reductive groups by {$\bold K\sb 2$}},
        date={2001},
        ISSN={0073-8301},
     journal={Publ. Math. Inst. Hautes \'Etudes Sci.},
      number={94},
       pages={5\ndash 85},
      review={\MR{MR1896177 (2004a:20049)}},
}

\bib{MR3038716}{article}{
      author={Chinta, Gautam},
      author={Offen, Omer},
       title={A metaplectic {C}asselman-{S}halika formula for {${\rm GL}_r$}},
        date={2013},
        ISSN={0002-9327},
     journal={Amer. J. Math.},
      volume={135},
      number={2},
       pages={403\ndash 441},
         url={http://dx.doi.org/10.1353/ajm.2013.0013},
      review={\MR{3038716}},
}

\bib{MR876160}{article}{
      author={Flicker, Yuval~Z.},
      author={Kazhdan, David~A.},
       title={Metaplectic correspondence},
        date={1986},
        ISSN={0073-8301},
     journal={Inst. Hautes \'Etudes Sci. Publ. Math.},
      number={64},
       pages={53\ndash 110},
         url={http://www.numdam.org/item?id=PMIHES_1986__64__53_0},
      review={\MR{876160}},
}

\bib{MR3802419}{incollection}{
      author={Gan, Wee~Teck},
      author={Gao, Fan},
       title={The {L}anglands-{W}eissman program for {B}rylinski-{D}eligne
  extensions},
        date={2018},
       pages={187\ndash 275},
        note={L-groups and the Langlands program for covering groups},
      review={\MR{3802419}},
}

\bib{MR3802417}{incollection}{
      author={Gan, Wee~Teck},
      author={Gao, Fan},
      author={Weissman, Martin~H.},
       title={L-groups and the {L}anglands program for covering groups: a
  historical introduction},
        date={2018},
       pages={1\ndash 31},
        note={L-groups and the Langlands program for covering groups},
      review={\MR{3802417}},
}

\bib{MR0254068}{article}{
      author={Gel{\cprime}fand, I.~M.},
      author={Ponomarev, V.~A.},
       title={Remarks on the classification of a pair of commuting linear
  transformations in a finite-dimensional space},
        date={1969},
        ISSN={0374-1990},
     journal={Funkcional. Anal. i Prilo\v zen.},
      volume={3},
      number={4},
       pages={81\ndash 82},
      review={\MR{0254068 (40 \#7279)}},
}

\bib{MR2552002}{article}{
      author={Gurevich, Shamgar},
      author={Hadani, Ronny},
       title={Quantization of symplectic vector spaces over finite fields},
        date={2009},
        ISSN={1527-5256},
     journal={J. Symplectic Geom.},
      volume={7},
      number={4},
       pages={475\ndash 502},
         url={http://projecteuclid.org/euclid.jsg/1256219055},
      review={\MR{2552002}},
}

\bib{MR1862025}{article}{
      author={Kable, Anthony~C.},
       title={The tensor product of exceptional representations on the general
  linear group},
        date={2001},
        ISSN={0012-9593},
     journal={Ann. Sci. \'{E}cole Norm. Sup. (4)},
      volume={34},
      number={5},
       pages={741\ndash 769},
         url={https://doi.org/10.1016/S0012-9593(01)01075-8},
      review={\MR{1862025}},
}

\bib{MR2801175}{article}{
      author={Kamgarpour, Masoud},
      author={Thomas, Teruji},
       title={Compatible intertwiners for representations of finite nilpotent
  groups},
        date={2011},
     journal={Represent. Theory},
      volume={15},
       pages={407\ndash 432},
         url={https://doi.org/10.1090/S1088-4165-2011-00395-2},
      review={\MR{2801175}},
}

\bib{1902.00880}{misc}{
      author={Kaplan, E.},
       title={Doubling constructions and tensor product ${L}$-functions:
  coverings of the symplectic group},
        date={2019},
        note={arxiv:1902.00880},
}

\bib{2206.06905}{misc}{
      author={Kaplan, Eyal},
      author={Szpruch, Dani},
       title={A note on the representation theory of central extensions of
  reductive $p$-adic groups},
        date={2022},
        note={arXiv:2206.06905},
}

\bib{MR743816}{article}{
      author={Kazhdan, D.~A.},
      author={Patterson, S.~J.},
       title={Metaplectic forms},
        date={1984},
        ISSN={0073-8301},
     journal={Inst. Hautes \'{E}tudes Sci. Publ. Math.},
      number={59},
       pages={35\ndash 142},
         url={http://www.numdam.org/item?id=PMIHES_1984__59__35_0},
      review={\MR{743816}},
}

\bib{MR204422}{article}{
      author={Kubota, Tomio},
       title={Topological covering of {${\rm SL}(2)$} over a local field},
        date={1967},
        ISSN={0025-5645},
     journal={J. Math. Soc. Japan},
      volume={19},
       pages={114\ndash 121},
         url={https://doi.org/10.2969/jmsj/01910114},
      review={\MR{204422}},
}

\bib{MR0255490}{book}{
      author={Kubota, Tomio},
       title={On automorphic functions and the reciprocity law in a number
  field},
      series={Lectures in Mathematics, Department of Mathematics, Kyoto
  University, No. 2},
   publisher={Kinokuniya Book-Store Co., Ltd., Tokyo},
        date={1969},
      review={\MR{0255490}},
}

\bib{MR3573961}{article}{
      author={Lapid, Erez},
      author={M{\'{\i}}nguez, Alberto},
       title={On parabolic induction on inner forms of the general linear group
  over a non-archimedean local field},
        date={2016},
        ISSN={1022-1824},
     journal={Selecta Math. (N.S.)},
      volume={22},
      number={4},
       pages={2347\ndash 2400},
         url={http://dx.doi.org/10.1007/s00029-016-0281-7},
      review={\MR{3573961}},
}

\bib{MR3053009}{article}{
      author={Li, Wen-Wei},
       title={La formule des traces pour les rev\^{e}tements de groupes
  r\'{e}ductifs connexes. {II}. {A}nalyse harmonique locale},
        date={2012},
        ISSN={0012-9593},
     journal={Ann. Sci. \'{E}c. Norm. Sup\'{e}r. (4)},
      volume={45},
      number={5},
       pages={787\ndash 859 (2013)},
         url={https://doi.org/10.24033/asens.2178},
      review={\MR{3053009}},
}

\bib{MR240214}{article}{
      author={Matsumoto, Hideya},
       title={Sur les sous-groupes arithm\'{e}tiques des groupes semi-simples
  d\'{e}ploy\'{e}s},
        date={1969},
        ISSN={0012-9593},
     journal={Ann. Sci. \'{E}cole Norm. Sup. (4)},
      volume={2},
       pages={1\ndash 62},
         url={http://www.numdam.org/item?id=ASENS_1969_4_2_1_1_0},
      review={\MR{240214}},
}

\bib{MR2060495}{article}{
      author={Mezo, Paul},
       title={Metaplectic tensor products for irreducible representations},
        date={2004},
        ISSN={0030-8730},
     journal={Pacific J. Math.},
      volume={215},
      number={1},
       pages={85\ndash 96},
         url={http://dx.doi.org/10.2140/pjm.2004.215.85},
      review={\MR{2060495}},
}

\bib{MR0349811}{book}{
      author={Milnor, John},
       title={Introduction to algebraic {$K$}-theory},
      series={Annals of Mathematics Studies, No. 72},
   publisher={Princeton University Press, Princeton, N.J.; University of Tokyo
  Press, Tokyo},
        date={1971},
      review={\MR{0349811}},
}

\bib{MR2527415}{article}{
      author={M\'{\i}nguez, Alberto},
       title={Sur l'irr\'{e}ductibilit\'{e} d'une induite parabolique},
        date={2009},
        ISSN={0075-4102},
     journal={J. Reine Angew. Math.},
      volume={629},
       pages={107\ndash 131},
         url={https://doi.org/10.1515/CRELLE.2009.028},
      review={\MR{2527415}},
}

\bib{MR3049700}{article}{
      author={M\'{\i}nguez, Alberto},
      author={S\'{e}cherre, Vincent},
       title={Repr\'{e}sentations banales de {${\rm GL}_m(D)$}},
        date={2013},
        ISSN={0010-437X},
     journal={Compos. Math.},
      volume={149},
      number={4},
       pages={679\ndash 704},
         url={https://doi.org/10.1112/S0010437X12000590},
      review={\MR{3049700}},
}

\bib{MR3178433}{article}{
      author={M\'{\i}nguez, Alberto},
      author={S\'{e}cherre, Vincent},
       title={Repr\'{e}sentations lisses modulo {$\ell$} de {${GL}_m({D})$}},
        date={2014},
        ISSN={0012-7094},
     journal={Duke Math. J.},
      volume={163},
      number={4},
       pages={795\ndash 887},
         url={https://doi.org/10.1215/00127094-2430025},
      review={\MR{3178433}},
}

\bib{MR1361168}{book}{
      author={M\oe~glin, C.},
      author={Waldspurger, J.-L.},
       title={Spectral decomposition and {E}isenstein series},
      series={Cambridge Tracts in Mathematics},
   publisher={Cambridge University Press, Cambridge},
        date={1995},
      volume={113},
        ISBN={0-521-41893-3},
         url={https://doi.org/10.1017/CBO9780511470905},
        note={Une paraphrase de l'\'{E}criture [A paraphrase of Scripture]},
      review={\MR{1361168}},
}

\bib{MR244258}{article}{
      author={Moore, Calvin~C.},
       title={Group extensions of {$p$}-adic and adelic linear groups},
        date={1968},
        ISSN={0073-8301},
     journal={Inst. Hautes \'{E}tudes Sci. Publ. Math.},
      number={35},
       pages={157\ndash 222},
         url={http://www.numdam.org/item?id=PMIHES_1968__35__157_0},
      review={\MR{244258}},
}

\bib{MR414775}{article}{
      author={Moore, Calvin~C.},
       title={Group extensions and cohomology for locally compact groups.
  {III}},
        date={1976},
        ISSN={0002-9947},
     journal={Trans. Amer. Math. Soc.},
      volume={221},
      number={1},
       pages={1\ndash 33},
         url={https://doi.org/10.2307/1997540},
      review={\MR{414775}},
}

\bib{MR0499010}{article}{
      author={Ol{\cprime}{\v{s}}anski{\u\i}, G.~I.},
       title={Intertwining operators and complementary series in the class of
  representations of the full matrix group over a locally compact division
  algebra that are induced by parabolic subgroups},
        date={1974},
     journal={Mat. Sb. (N.S.)},
      volume={93(135)},
       pages={218\ndash 253, 326},
      review={\MR{0499010 (58 \#16988)}},
}

\bib{MR2567785}{book}{
      author={Renard, David},
       title={Repr\'esentations des groupes r\'eductifs {$p$}-adiques},
      series={Cours Sp\'ecialis\'es [Specialized Courses]},
   publisher={Soci\'et\'e Math\'ematique de France, Paris},
        date={2010},
      volume={17},
        ISBN={978-2-85629-278-5},
      review={\MR{2567785 (2011d:22019)}},
}

\bib{MR1159430}{article}{
      author={Shahidi, Freydoon},
       title={Twisted endoscopy and reducibility of induced representations for
  {$p$}-adic groups},
        date={1992},
        ISSN={0012-7094},
     journal={Duke Math. J.},
      volume={66},
      number={1},
       pages={1\ndash 41},
         url={http://dx.doi.org/10.1215/S0012-7094-92-06601-4},
      review={\MR{1159430 (93b:22034)}},
}

\bib{MR0466335}{book}{
      author={Steinberg, Robert},
       title={Lectures on {C}hevalley groups},
   publisher={Yale University, New Haven, Conn.},
        date={1968},
        note={Notes prepared by John Faulkner and Robert Wilson},
      review={\MR{MR0466335 (57 \#6215)}},
}

\bib{MR3442519}{article}{
      author={Takeda, Shuichiro},
       title={Metaplectic tensor products for automorphic representation of
  {$\widetilde{GL}(r)$}},
        date={2016},
        ISSN={0008-414X},
     journal={Canad. J. Math.},
      volume={68},
      number={1},
       pages={179\ndash 240},
         url={https://doi.org/10.4153/CJM-2014-046-2},
      review={\MR{3442519}},
}

\bib{MR3673084}{article}{
      author={Takeda, Shuichiro},
       title={Remarks on metaplectic tensor products for covers of {${\rm
  GL}_r$}},
        date={2017},
        ISSN={0030-8730},
     journal={Pacific J. Math.},
      volume={290},
      number={1},
       pages={199\ndash 230},
         url={https://doi.org/10.2140/pjm.2017.290.199},
      review={\MR{3673084}},
}

\bib{MR0165033}{article}{
      author={Weil, Andr{\'e}},
       title={Sur certains groupes d'op\'erateurs unitaires},
        date={1964},
        ISSN={0001-5962},
     journal={Acta Math.},
      volume={111},
       pages={143\ndash 211},
      review={\MR{0165033 (29 \#2324)}},
}

\bib{MR1344916}{book}{
      author={Weil, Andr{\'e}},
       title={Basic number theory},
      series={Classics in Mathematics},
   publisher={Springer-Verlag},
     address={Berlin},
        date={1995},
        ISBN={3-540-58655-5},
        note={Reprint of the second (1973) edition},
      review={\MR{1344916 (96c:11002)}},
}

\bib{MR584084}{article}{
      author={Zelevinsky, A.~V.},
       title={Induced representations of reductive {${\germ p}$}-adic groups.
  {II}. {O}n irreducible representations of {${\rm GL}(n)$}},
        date={1980},
        ISSN={0012-9593},
     journal={Ann. Sci. \'Ecole Norm. Sup. (4)},
      volume={13},
      number={2},
       pages={165\ndash 210},
         url={http://www.numdam.org/item?id=ASENS_1980_4_13_2_165_0},
      review={\MR{584084 (83g:22012)}},
}

\end{biblist}
\end{bibdiv}

\end{document}